\numberwithin{equation}{section}
\newtheorem{theorem}{Theorem}[section]
\newtheorem{lemma}[theorem]{Lemma}
\newtheorem{proposition}[theorem]{Proposition}
\newtheorem{corollary}[theorem]{Corollary}
\theoremstyle{definition}
\newtheorem{remark}[theorem]{Remark}
\newcommand{\restr}{\mathop{\raisebox{-.127ex}{\reflectbox{\rotatebox[origin=br]{-90}{$\lnot$}}}}}
\newcommand{\R}{\mathbb{R}}
\newcommand{\N}{\mathbb{N}}
\newcommand{\eps}{\varepsilon}
\newcommand{\be}{\begin{equation}}
\newcommand{\ee}{\end{equation}}
\DeclareMathOperator{\supp}{supp}
\newcommand\lt{\left}
\newcommand\rt{\right}
\def\les{\lesssim}
\newcommand{\cF}{\mathcal{F}}
\newcommand{\cI}{\mathcal{I}}
\renewcommand{\Cap}{\operatorname{Cap}}
\newcommand{\TV}{\dist_{TV}}
\def\EE{\mathbb{E}}
\def\PP{\mathbb{P}}
\def\diam{\operatorname{diam}}
\newcommand{\cM}{\mathcal{M}}
\newcommand{\cS}{\mathcal{S}}
\renewcommand{\c}{\mathsf{c}}
\newcommand{\bra}[1]{\left( #1 \right)}
\newcommand{\sqa}[1]{\left[ #1 \right]}
\newcommand{\cur}[1]{\left\{ #1 \right\}}
\newcommand{\abs}[1]{\left| #1 \right|}
\newcommand{\nor}[1]{\left\| #1 \right\|}
\def\fref{f}
\newcommand{\T}{\mathbb{T}}
\newcommand{\dist}{\mathsf{d}}
\newcommand{\Lip}{\operatorname{Lip}}
\newcommand{\cR}{\mathcal{R}}
\newcommand{\cQ}{\mathcal{Q}}
\newcommand{\dil}{\operatorname{dil}}
\newcommand{\tras}{\operatorname{trans}}
\title[Wasserstein Asymptotics for Brownian Motion on the Torus and Interlacements]{Wasserstein Asymptotics for  Brownian Motion on the Flat Torus and Brownian Interlacements}
\author[M. Mariani]{Mauro Mariani}
\address{M.M.: Faculty of Mathematics, HSE University, 101000 Moscow, Russia}
\email{}
\author[D. Trevisan]{Dario Trevisan}
\address{D.T.: Dipartimento di Matematica, Università degli Studi di Pisa, 56125 Pisa, Italy  }
\email{dario.trevisan@unipi.it}
\date{\today}
\subjclass[2010]{60D05, 90C05, 39B62, 60F25, 35J05}
\keywords{optimal transport, geometric probability, Brownian interlacements}
\newcounter{proof-step}
\begin{document}

\maketitle

\begin{abstract}
We study the large time behavior of the optimal transportation cost towards the uniform distribution, for the occupation measure of a stationary Brownian motion on the flat torus in $d$ dimensions, where the cost of transporting a unit of mass is given by a power of the flat distance. We establish a global upper bound, in terms of the limit for the analogue problem concerning the occupation measure of the Brownian interlacement on $\R^d$. We conjecture that our bound is sharp and that our techniques may allow for similar studies on a larger variety of problems, e.g.\ general diffusion processes on weighted Riemannian manifolds.
\end{abstract}

\section{Introduction}

Given a stochastic process $(X_t)_{t \ge 0}$, its occupation measure up to a time $T> 0$ can be defined as  the (random) measure
\begin{equation}
\mu_T^X = \int_0^T \delta_{X_s} ds, \quad \mu_T^X(A) = \int_0^T I_{\cur{X_s \in A}} ds,
\end{equation}
 If renormalized to be a probability measure, it is also known as the empirical measure of $X$. From the simplest case of pure jump process associated to i.i.d.\ random variables $(Y_n)_{n =0}^\infty$, i.e., $X_t = Y_{\lfloor t \rfloor}$, to that of diffusion processes on manifolds, the occupation measure has a plethora of applications, ranging from non-parametric statistics, to Monte Carlo methods and mean field theory.

Under natural assumptions on $X$, such as stationarity and ergodicity, limit theorems can be established for the empirical measure, as the time horizon $T$ increases, showing convergence towards its invariant measure. It is then a relevant question in applications to quantify such convergence, using a suitable metric between measures defined on the state space of the process. A particularly compelling choice, assuming that the state spaces is already equipped with a distance, e.g.\ if $X$ takes values in a Riemannian manifold, is given by  the optimal transport (Wasserstein) cost of order $p$, for some $p>0$. 
Also known as the earth mover's distances in the case $p=1$, the metric is defined as the minimum total cost of moving a source mass distribution $\mu$ towards the target distribution $\lambda$, where the cost of transporting a unit of mass is given by the $p$-th power of the distance in the underlying state space. Classically, the case $p=1$ has been the preferred choice, also because of the celebrated Kantorovich dual formulation, which represents  the minimum transport cost as a maximum discrepancy between the integrals  with respect to $\mu$ and $\lambda$ over all the $1$-Lipschitz functions. In recent years, other choices of $p$, particularly $p=2$, have been the subject of intense investigation, see the monographs \cite{AGS, VilOandN}. Also the ``concave'' case $p<1$ yields interesting features, as studied in \cite{MR1440931, mccann1999exact}.

The problem of establishing rates of convergence for the Wasserstein cost of the occupation measure for  pure jump process associated to i.i.d.\ random variables $(Y_n)_{n =0}^\infty$  is  strongly related to the so-called random assignment (or bipartite matching) problem, of combinatorial nature.  Indeed, if instead of a single process one considers two (independent) families of variables $(Y_n)_{n =i}^\infty$, $(Z_n)_{n =i}^\infty$. Then, for every $T = n$ one can study the assignment cost
\begin{equation}
 \min_{\sigma \in\cS_n} \sum_{i=1}^n \dist(Y_i, Z_{\sigma(i)})^p
 \end{equation}
where $\cS_n$ denotes the set of permutations over $n$ elements. By Birkhoff's theorem on doubly stochastic matrices, it is well-known that the cost above equals  the Wasserstein cost of order $p$ between the two occupation measures
\begin{equation}
\mu_n = \sum_{i=1}^n \delta_{Y_i}, \quad \lambda_n =\sum_{i=1}^n \delta_{Z_i}.
\end{equation}
From this point of view, it is natural to expand the techniques developed for  the assignment problem to the study of asymptotic rates of convergence for more general stochastic processes, e.g.\ diffusions on Riemannian manifolds. Indeed, the literature on the random assignment problem is vast and growing: stemming from the seminal works \cite{dudley1969speed}, it stimulated powerful functional analytic techniques \cite{talagrand1992matching, talagrand2014upper} and combinatorial/geometrical ones \cite{AKT84, dobric1995asymptotics, BoutMar, BaBo}. A renewed interest due to powerful predictions by the statistical physics community \cite{caracciolo2015scaling, caracciolo2019anomalous, sicuro_euclidean_2017, caracciolo2020dyck, BeCa}  lead recently to the development of novel methods \cite{AST, ledoux2019optimal, goldman2021quantitative, goldman2021convergence}   that have found several  applications \cite{bobkov2020transport, AmGlau, AGT19, goldman2018large, ambrosio2022quadratic, goldman2022fluctuation, huesmann2021there, goldman2023concave}, even beyond the case of i.i.d.\ points \cite{jalowy2021wasserstein, clozeau2023annealed} and also for other combinatorial optimization problems \cite{capelli2018exact,goldman2022optimal}. 

In the case of continuous processes on manifolds, F.-Y.~Wang and collaborators pioneered systematic exploitation of the PDE tools from \cite{AST,ledoux2019optimal} to the exploration of asymptotic rates for occupation measure \cite{wang2019limit, wang2021convergence, wang2020convergence, wang2021precise, wang2021wasserstein, wang2022wasserstein, wang2023convergence}. Let us mention that similar techniques have been also extended to non-Markov processes such as the fractional Brownian motion \cite{huesmann2022wasserstein, li2023wasserstein}. 

However, a known issue that afflicts the original PDE methods \cite{AST,ledoux2019optimal} (but also in some sense  \cite{goldman2021quantitative}) is that when the dimension of the underlying manifold grows (i.e., for $d \ge 3$ in the random assignment problem and $d \ge 5$ for diffusion processes) the upper and lower bounds resulting from a ``global'' application of the methods become less precise and yield (conjectured) non-optimal constants, although they still match the correct rates. To overcome this, in the setting of the assignment problem, it was first put forward in \cite{goldman2021convergence} and later developed in \cite{ambrosio2022quadratic, goldman2022optimal} the need for further ``localize'' the problem, using geometric decompositions of Whitney-type.

\subsection{Main results}

Aim of this paper is to show for the first time that it is possible to adapt these localization arguments in the setting of diffusion processes on Riemannian manifolds, obtaining (conjectured) sharp results. We focus on the simplest of all cases, namely that of Brownian motion on the flat torus, where the  occupation measure converges towards the uniform (Lebesgue) distribution. It turns out that the limit of the Wasserstein cost of the occupation measure on sufficiently small scales is related to the limit of a suitably defined notion of occupation measure associated to the Brownian (continuous time) interlacement process on $\R^d$, as introduced in \cite{sznitman2013scaling}. Such a connection is quite natural, in view of similar results about scaling limits of the support of a Brownian  process in the torus (for an exposition in the discrete time case, see \cite{drewitz2014introduction}). Intuitively, the Brownian interlacement plays here the role of a Poisson point process in the case of i.i.d.\ points.


%
%


Our first main result can be stated as follows. All the notation and basic notions (including the Newtonian capacity $\Cap(\Omega)$ and normalized equilibrium measure for a set $\Omega\subseteq \R^d$) will be precisely given in \Cref{sec:notation}. We only anticipate that the notation $W_{\Omega}^p(\mu)$ denotes the Wasserstein cost of order $p$ between the restriction of a measure $\mu$ on $\Omega$ towards the uniform measure on $\Omega$, with the same total mass $\mu(\Omega)$.

\begin{theorem}\label{thm:main-bm-iid}
Let $d \in \cur{ 3, 4}$ and $p \in (0, (d-2)/2)$, or $d \ge 5$ and $p>0$. Then, there exists a constant $\c(\cI, d,p) \in (0, \infty)$ such that the following holds. Let $\Omega\subseteq \R^d$ be a bounded connected domain with $C^2$ boundary (or $\Omega = Q$ be a cube) and let $(B^{i})_{i \ge 1}$ be independent Brownian motions on $\R^d$, with initial law given by the normalized equilibrium  measure on $\Omega$. Then, it holds
\begin{equation}\label{eq:limit-n-bm-equilibrium-initial-law}
\lim_{n \to \infty} \EE\sqa{ W_{\Omega}^p \bra{ \sum_{i=1}^n \int_0^\infty \delta_{B^i_s} ds }}/\bra{n/ \Cap(\Omega)}^{1-p/(d-2)} = \c(\cI, d, p) |\Omega|.
\end{equation}
\end{theorem}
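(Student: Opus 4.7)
The plan is to reduce the $n\to\infty$ limit, via Brownian scaling and Poissonization, to an ergodic statement for the Brownian interlacement on growing cubes. First, I would exploit the defining property of the Brownian interlacement $\cI^u$ of intensity $u$: the set of trajectories entering a bounded open set $\Omega$ forms a Poisson point process of mean $u\,\Cap(\Omega)$, each distributed as a Brownian motion started from the normalized equilibrium measure $\bar e_\Omega$. Choosing $u := n/\Cap(\Omega)$, the restriction to $\Omega$ of the $n$-particle occupation measure $\sum_{i=1}^n \int_0^\infty \delta_{B^i_s}\,ds$ has, up to a Poissonization error controlled by the concentration of a Poisson$(n)$ variable around its mean and by the linearity of $W^p$ in mass, the same law as $\cI^u|_\Omega$.

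Next, Brownian scaling turns the large-$n$ limit at fixed $\Omega$ into a large-domain limit at intensity $1$. Under $T_\lambda(x) = \lambda x$ one has $T_\lambda \# \bar e_\Omega = \bar e_{\lambda\Omega}$ and $T_\lambda \# \int_0^\infty \delta_{B_s}\,ds = \lambda^{-2} \int_0^\infty \delta_{\tilde B_s}\,ds$ for a rescaled Brownian motion $\tilde B$. Combining the $p$-homogeneity $W^p_{\lambda\Omega}(T_\lambda\#\mu) = \lambda^p\,W^p_\Omega(\mu)$ with the linearity in mass of $W^p$ yields $W^p_\Omega(\mu) = \lambda^{-p-2}\,W^p_{\lambda\Omega}(\tilde\mu)$. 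Taking $\lambda = (n/\Cap(\Omega))^{1/(d-2)}$, so that $\Cap(\lambda\Omega) = n$, the claim reduces to the ergodic statement
\begin{equation*}
\lim_{\lambda\to\infty} \frac{\EE\bigl[W^p_{\lambda\Omega}(\cI^1|_{\lambda\Omega})\bigr]}{|\lambda\Omega|} = \c(\cI,d,p).
\end{equation*}

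I would then prove this ergodic limit by a subadditive argument on cubes. Translation invariance and mixing of $\cI^1$ suggest defining $\c(\cI,d,p) := \lim_{R\to\infty} \EE\bigl[W^p_{Q_R}(\cI^1|_{Q_R})\bigr]/|Q_R|$ with $Q_R := [-R,R]^d$; existence would follow from a near-subadditivity of $W^p$ under cube paving, with the global cost bounded by the sum of local costs plus an interface ``matching'' correction. The latter is the technical heart of the argument, controlled by a Whitney-type multiscale decomposition in the spirit of \cite{ambrosio2022quadratic, goldman2022optimal}, here adapted to the Green-function-driven regularity of the interlacement's occupation measure. The extension from cubes to general $C^2$ domains $\Omega$ would then be obtained by paving $\Omega$ up to a thin boundary layer and using regularity of $\bar e_\Omega$ near $\partial\Omega$.

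The main obstacle I anticipate is this subadditivity/matching step. Unlike the i.i.d.\ case, where Poissonization yields genuine independence between well-separated cubes, interlacement trajectories create long-range correlations as they cross interfaces, and the matching cost across boundaries must be controlled via capacity-weighted estimates of the escaping mass. The restriction $p<(d-2)/2$ for $d\in\{3,4\}$ is precisely the threshold ensuring integrability of these capacity-based quantities, paralleling the classical dimensional thresholds in the i.i.d.\ matching literature. Once the subadditive framework is in place, de-Poissonization and the passage from cubes to $C^2$ domains should be comparatively standard.
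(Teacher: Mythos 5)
Your proposal matches the paper's approach: the paper proves the Poissonized statement (\Cref{thm:poi-inter}) via Brownian scaling together with an abstract subadditivity result (\Cref{theo:domain}) that combines a Fekete-type argument on cubes with a Whitney decomposition to pass to $C^2$ domains, and then obtains the deterministic-$n$ version through the de-Poissonization argument of \Cref{prop:depoisson}. The threshold $p<(d-2)/2$ for $d\in\{3,4\}$ enters exactly as you anticipate, through the concentration property of $\cI_1$ (with exponent $\alpha=2$ in the abstract framework, reflecting worse-than-i.i.d.\ fluctuations of the interlacement occupation measure), which is verified via Rosenthal's inequality applied to the Poisson sum of Brownian occupation times.
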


The notation $\cI$ in $\c(\cI, d,p)$ stands for ``interlacement''. Although strictly speaking in the statement above there is no Brownian interlacement process, its existence is deduced by showing first (\Cref{thm:poi-inter}) that a similar limit holds for the Brownian interlacement, by exploiting its stronger self-similarity properties. 

Our second main result links the constant $\c(\cI ,d,p)$ with the occupation measure of a stationary  Brownian motion on the flat torus $\T^d = \R^d/ \mathbb{Z}^d$.

\begin{theorem}\label{thm:main-torus}
Let $d \in \cur{3,4}$ and $p \in (0, (d-2)/2)$, or $d \ge 5$ and $p>0$. Let $(B_t)_{t \ge 0}$ be a stationary Brownian motion on $\T^d$, i.e.\ with uniform initial law. Then,  
\begin{equation}\label{eq:main-torus}
\limsup_{T \to \infty} \EE\sqa{  W^p_{\T^d}\bra{ \int_0^T \delta_{B_t} dt}}/  T^{1-p/(d-2)} \le \c(\cI, d,p).
\end{equation}
with $\c\bra{\cI, d,p}$ as in \Cref{thm:main-bm-iid}.
\end{theorem}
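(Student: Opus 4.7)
The strategy is to localize the transport problem on small cubes in $\T^d$, reducing it to \Cref{thm:main-bm-iid} applied on each cube with the Brownian excursions entering it. Throughout, write $\mu_T = \int_0^T \delta_{B_t}\, dt$.

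First, I would fix a small scale $\ell\in(0,1)$ and partition $\T^d$ into $\ell^{-d}$ congruent closed cubes $\{Q_j\}$ of side $\ell$. By a standard sub-additivity estimate for the Wasserstein cost (of the type used in the cube-decomposition arguments behind \Cref{thm:main-bm-iid}), one obtains
\[
W^p_{\T^d}(\mu_T) \;\le\; \sum_j W^p_{Q_j}\bigl(\mu_T\restr Q_j\bigr) + R_T^{(\ell)},
\]
where $R_T^{(\ell)}$ is a ``mass-balancing'' term accounting for the fact that $\mu_T(Q_j)$ need not equal $T|Q_j|$. The fluctuations $\mu_T(Q_j)-T|Q_j|$ are controlled by spectral-gap / Feynman--Kac estimates for the torus Brownian motion, and I would show $\EE[R_T^{(\ell)}]=o(T^{1-p/(d-2)})$ as $T\to\infty$ for any fixed $\ell$.

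Second, on each $Q_j$ I would decompose $(B_s)_{s\in[0,T]}$ into its successive excursions into $Q_j$ with entry/exit times $\tau_i^j<\tau_i^{j,\prime}$, and let $N_T^j$ denote their number. Classical potential theory on $\T^d$ gives $\EE[N_T^j]\sim T\,\Cap(Q_j)$ (up to $|\T^d|=1$), and after the $O(1)$ mixing time on $\T^d$ the entrance points $B_{\tau_i^j}\in\partial Q_j$ become close in distribution to the normalized equilibrium measure of $Q_j$. I would then couple the dependent sequence of torus excursions with an i.i.d.\ sequence $(\tilde B^i)_{i\ge 1}$ of Brownian motions on $\R^d$ started from the normalized equilibrium measure of $Q_j$, as in \Cref{thm:main-bm-iid}, so that
\[
\EE\bigl[W^p_{Q_j}(\mu_T\restr Q_j)\bigr] \;\le\; \EE\bigl[W^p_{Q_j}\bigl(\textstyle\sum_{i=1}^{N_T^j} \int_0^\infty \delta_{\tilde B_s^i}\, ds\bigr)\bigr] + \mathrm{err}(T,\ell),
\]
using a triangle inequality for $W^p$ and the fact that the ``extra'' mass coming from the infinite-horizon occupations $\int_0^\infty \delta_{\tilde B_s^i}\,ds\restr Q_j$ has uniformly bounded expectation thanks to the transience of Brownian motion on $\R^d$ (which is where the hypothesis $d\ge 3$ enters crucially).

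Finally, applying \Cref{thm:main-bm-iid} to each cube $Q_j$ with $n=N_T^j\sim T\,\Cap(Q_j)$, so $N_T^j/\Cap(Q_j)\sim T$, yields
\[
\EE\bigl[W^p_{Q_j}(\mu_T\restr Q_j)\bigr] \;\le\; (1+o_{T}(1))\,\c(\cI,d,p)\,|Q_j|\,T^{1-p/(d-2)}+\mathrm{err}(T,\ell).
\]
Summing over the $\ell^{-d}$ cubes, using $\sum_j|Q_j|=1$, and sending first $T\to\infty$ and then $\ell\to 0$ to absorb the $\ell$-dependent errors, gives $\limsup_T \EE[W^p_{\T^d}(\mu_T)]/T^{1-p/(d-2)} \le \c(\cI,d,p)$, as required.

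\textbf{Main obstacle.} The hardest part is the coupling in the second step: the torus excursions are genuinely dependent (successive entrances are correlated through the global Brownian path), each lasts only finitely long inside $Q_j$, and their entrance laws are only approximately equilibrium. Meanwhile, the i.i.d.\ $(\tilde B^i)$ in \Cref{thm:main-bm-iid} start exactly from the normalized equilibrium measure and are run to $+\infty$. Reconciling all these discrepancies, while keeping the cumulative Wasserstein error below the target order $T^{1-p/(d-2)}$, is where the bulk of the technical work lies, and it is here that transience in $d\ge 3$ plays an essential role.
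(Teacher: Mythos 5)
The proposal takes a genuinely different route from the paper, but there is a gap that I do not think can be closed in the stated order of limits.

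The central issue is that you fix the side-length $\ell$ of the cubes, send $T\to\infty$ first, and only then send $\ell\to 0$. The paper instead takes $\ell = T^{-\gamma}$ with $\gamma$ slightly below $1/(d-2)$, so that $\ell\to 0$ simultaneously with $T\to\infty$ while $T\ell^{d-2}\to\infty$. This scaling is not cosmetic: at a \emph{fixed} scale $\ell$, none of the local objects your argument compares are the objects to which \Cref{thm:main-bm-iid} applies. The entrance distribution of torus excursions into $Q_j$, after the $O(1)$ mixing time, converges as $T\to\infty$ to a torus equilibrium law (governed by the torus Green's function), not to the normalized $\R^d$ equilibrium measure $\tilde e_{Q_j}$; the expected visit count $\EE[N_T^j]$ involves a torus capacity, not $\Cap_{\R^d}(Q_j)$; and the restriction of the $\R^d$ infinite-horizon occupation $\int_0^\infty\delta_{\tilde B_s^i}\,ds\restr Q_j$ differs from a single torus excursion by a contribution (re-entries after the first exit of $Q_j$) that is of the \emph{same order} $\ell^2$ as the excursion occupation itself when $\ell$ is fixed, not a negligible one. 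The upshot is that $\lim_{T\to\infty}\EE[W^p_{Q_j}(\mu_T\restr Q_j)]/(T^{1-p/(d-2)}|Q_j|)$ — if it exists at all — would be an $\ell$-dependent quantity, and all of the discrepancies you cite as the ``main obstacle'' are only $O(1)$ in $T$ at fixed $\ell$, vanishing only as $\ell\to 0$. To get them to vanish you would need to quantify each one uniformly in $\ell$, which amounts to re-deriving the machinery of \Cref{prop:local-torus} and \Cref{prop:hitting-times} anyway.

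Two further points. First, successive excursions into $Q_j$ in the sense you define them (entry time / first exit time of $Q_j$) are \emph{very} strongly correlated: the Brownian path that has just exited $Q_j$ is sitting on $\partial Q_j$ and typically re-enters almost immediately from a nearby point. To approximate independence one must stop the excursion at a strictly larger set and insert mixing buffers between visits; the paper does this by tracking both $D_\ell$ and a larger ball $D_L$ and by time-splitting $[0,T]$ into $\sim T^{1-\gamma_\rho}$ pieces with $\sigma$-gaps, then using the data-processing inequality \eqref{eq:markov-chain-tv} to swap in genuinely independent copies. Your proposal invokes ``after the $O(1)$ mixing time'' but does not supply the mechanism to break the dependence. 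Second, the paper does \emph{not} use a fixed disjoint cube partition: the sub-additivity step uses the continuous translation-invariant family of overlapping balls $(D_\ell(z))_{z\in\T^d}$ via \Cref{lem:sub}, which makes the remainder a convolution $\mu_T * \chi_{D_\ell}/|D_\ell| * \chi_{D_\ell}/|D_\ell|$ amenable to the Fourier estimate of \Cref{prop:smoothed-wasserstein}; with a disjoint partition at fixed $\ell$, and for $d\in\{3,4\}$ where the critical exponent $p<(d-2)/2$ forces the occupation-time fluctuations to be of marginal order, the mass-balance remainder is not obviously $o(T^{1-p/(d-2)})$.

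In short: the high-level strategy (localize, couple to interlacements, apply \Cref{thm:main-bm-iid}) is the same, but the fixed-$\ell$/iterated-limit shortcut is where the proposal breaks, and the paper's choice $\ell=T^{-\gamma}$ together with \Cref{prop:hitting-times}, \Cref{cor:hitting-twice}, \Cref{lem:same-asymptotics}, \Cref{lem:stability} and \Cref{prop:smoothed-wasserstein} is precisely what is needed to make every error term in your second step vanish relative to $T^{1-p/(d-2)}$.
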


Although both our main results investigate the expected value of the Wasserstein cost of order $p$, we are able to give a concentration result in \Cref{prop:concentration} which can be used to improve to a.s.\ convergence for a certain range of $p$'s. This could be relevant in view of applications, e.g.\ in Monte Carlo methods, where one usually simulates a single sample path instead of averaging over an independent family of paths. 

\subsection{Comments on the proof technique} As already mentioned, our results adapt, for the first time in the study of occupation measures for diffusion processes, the combination of geometric and analytic techniques developed for the assignment problem in the recent works \cite{goldman2021convergence, ambrosio2022quadratic, goldman2022optimal}. In particular, in order to establish \Cref{thm:main-bm-iid}, we actually prove a rather general result, \Cref{theo:domain},  that applies to stationary random measures under suitable concentration assumptions, and we believe could be of interest also in other settings. However, since the proof is indeed a generalization of the arguments already employed for the assignment problem, we defer it to \Cref{app:asymptotics}. 

The main technical novelty instead, we believe, comes from its application to prove \Cref{thm:main-torus}. Indeed, we are able to argue that,  as $T$ increases, on a sufficiently small scale, but larger than the critical scale $T^{-1/(d-2)}$, the ``local'' Wasserstein cost of order $p$  converges exactly (after a suitable renormalization) towards the same constant $\c(\cI, d,p)$ of the Brownian interlacement. This is the content of \Cref{prop:local-torus}, which constitutes the main step in the proof of \Cref{thm:main-torus}. However, when transferring this exact limit from the local problems  to that on the whole $\T^d$, we obtain only an inequality, due to the sub-additivity properties of the transportation cost.

In the proof of \Cref{prop:local-torus}, which is split into small steps for the reader's convenience, we employ several technical tools, not only from the random assignment problem literature, but also from the random interlacement theory \cite{sznitman2013scaling, drewitz2014introduction}. In particular, we also make use of quantitative mixing rates in the total variation distance for Brownian motion on $\T^d$, in order to efficiently split a single trajectory into a large number of independent ones, which will constitute an approximation of Brownian interlacements. However, it turns out that we also need some fine asymptotics for the hitting distribution of a small ball (in addition to the hitting probability). This is achieved in \Cref{prop:hitting-times}, via a combination of the functional analytic methods for estimating the Wasserstein cost, with classical potential-theoretic arguments.

\subsection{Further questions and conjectures}

Our results raise some further natural questions:
\begin{enumerate}
\item Is the $\limsup$ actually a limit  (and equals the right hand side) in \eqref{eq:main-torus}? When compared with the assignment problem, the situation looks very similar, for sharp (conjectured)  constants associated to upper and lower limits are known to exist, see e.g.\ \cite{BaBo, goldman2022optimal}, but in general it is not known whether they coincide. A few exceptions are given by the case $d=p=2$ \cite{ambrosio2022quadratic}, and the concave case $d=1$, $0<p<1/2$ \cite{goldman2023concave}, but the proofs rely on rather special properties of the assignment in such cases.
%
 
\item Does \Cref{thm:main-torus} extend to the case of more general diffusion processes? Let us consider for example the setting of a weighted Riemannian manifolds $(M, g, \sigma)$, where $\sigma$ is a (smooth, bounded from above and below) probability density with respect to the Riemannian volume measure, and $B$ denotes the diffusion generated by the corresponding weighted Laplacian (so that the invariant measure is $\sigma$). Then, we conjecture that the right hand side in \eqref{eq:main-torus} should be  multiplied by an additional integral term:
\begin{equation}
\limsup_{T \to \infty} \EE\sqa{  W^p_{\T^d}\bra{ \int_0^T \delta_{B_t} dt}}/  T^{1-p/(d-2)} \le \c(\cI, d,p) \int_{M} f^{1-p/(d-2)},
\end{equation}
where
\begin{equation}
f(x) = \lim_{\eps \to 0} \frac{ \Cap_M\bra{ B_\eps(x) } }{\Cap_{\R^d} (B_\eps(0))}.
\end{equation}
 It would be already interesting to prove (or disprove) this conjecture in the case of $M = \R^d$ endowed with a Gaussian weight $\sigma$, so that the Ornstein-Uhlenbeck process is the resulting diffusion, and $p$ is small enough, e.g.\ $0<p<d-2$.

\item How should our results be modified in the cases $d \le 2$, or $d \in \cur{3,4}$ but $p \ge (d-2)/2$? Let us mention that, for a stationary Brownian motion on $\T^d$ it is proved in \cite{huesmann2022wasserstein} that, for any $p \ge 1$,
\begin{equation}\label{eq:mattesini}
\EE\sqa{  W^p_{\T^d}\bra{ \int_0^T \delta_{B_t} dt}} \sim \begin{cases} T \cdot T^{-p/2}& \text{if $d  \le 3$,}\\
T^{1-p/2} \cdot \bra{\log T }^{p/2} & \text{if  $d=4$,}\\
T^{1-p/(d-2)} & \text{if $d \ge 5$,}
\end{cases}
\end{equation}
where the notation $f(T)\sim g(T)$ means that for every $T$ (large enough), it holds $c^{-1} \le f(T)/g(T) \le c$ for some constant $c \in (0, \infty)$. For the concave case $0<p<1$, we conjecture, by analogy with the assignment problem \cite{bobkov2020transport, goldman2023concave},  that
\begin{equation}
\EE\sqa{  W^p_{\T^d}\bra{ \int_0^T \delta_{B_t} dt}}  \sim T \cdot T^{-p/2}\quad \text{if $d \le 2$ or $d=3$ and $p> 1/2$.}
\end{equation}
In the  case $d=3$, $p=1/2$, we conjecture instead
\begin{equation}
\EE\sqa{  W^{1/2}_{\T^3}\bra{ \int_0^T \delta_{B_t} dt}}  \sim \sqrt{ T \log T}.
\end{equation}

\item In \Cref{prop:concentration}, we show that the random variable 
\begin{equation}
W^p_{\T^d}\bra{ \int_0^T \delta_{B_t} dt}/T^{1-p/(d-2)} 
\end{equation}
concentrates around its expectation if $p$ is sufficiently small (in particular, if $p<(d-2)/3$). It is natural to conjecture that such concentration holds for every $p>0$ (if $d\ge 5$). Let us point out that a similar issue arises for the assignment problem, where the corresponding concentration results are known to hold for  $0<p<d$ (if $d\ge 3$) but one expects that it should be true for every $p$.
\end{enumerate}

\subsection{Structure of the paper} In \Cref{sec:notation}, we introduce all the relevant notation and recall some basic facts, in particular concerning optimal transportation theory. In \Cref{sec:bm} we focus on Brownian motion on $\R^d$ and the torus $\T^d$, establishing, among various useful bounds, \Cref{prop:hitting-times} about small-ball hitting probabilities and hitting distributions. \Cref{sec:bi} is devoted to the proof of \Cref{thm:main-bm-iid} although, as already mentioned, most of the general tools from the assignment problem literature are actually recalled and elaborated in \Cref{app:asymptotics}, \Cref{app:depoisson} and \Cref{sec:upper}.  The main body of the article ends with  \Cref{sec:torus}, devoted to the proof of \Cref{thm:main-torus} and  \Cref{sec:concentration}, focusing on the concentration properties of the random Wasserstein cost and (non-sharp) lower bounds.

\subsection*{Acknowledgements} D.T.\ thanks the MIUR Excellence Department Project awarded to the Department of Mathematics, University of Pisa, CUP I57G22000700001,  the HPC Italian National Centre for HPC, Big Data and Quantum Computing - Proposal code CN1 CN00000013, CUP I53C22000690001, the PRIN 2022 Italian grant 2022WHZ5XH - ``understanding the LEarning process of QUantum Neural networks (LeQun)'', CUP J53D23003890006, and the INdAM-GNAMPA project 2023 ``Teoremi Limite per Dinamiche di Discesa Gradiente Stocastica: Convergenza e Generalizzazione''. Research also partly funded by PNRR - M4C2 - Investimento 1.3, Partenariato Esteso PE00000013 - ``FAIR - Future Artificial Intelligence Research" - Spoke 1 ``Human-centered AI'', funded by the European Commission under the NextGeneration EU programme.

D.T.\ thanks A.\ Chiarini for useful conversations on the theory of random interlacements and J.-X.\ Zhu for noticing a missing factor in \Cref{prop:concentration} which lead to a different range of exponents. M.\ Mariani acknowledges support by  the 2023 Visiting Fellow program of University of Pisa. Both authors thank an anonymous referee for pointing out a gap in the proof of \Cref{thm:main-torus} in an earlier version of the paper, which led to a revised version that includes \Cref{sec:upper}.

\section{Notation and Basic facts}\label{sec:notation}

\subsection{General notation} Given two real-valued functions $f$, $g$ (e.g.\ of a real variable $x$), we write $f(x) \les g(x)$ if there exists a strictly positive and finite constant $c$ (independent of $x$) such that $f(x) \le c g(x)$ for every $x$ (notice that we do not require $f$, $g$ to be positive) and $f(x) \gtrsim g(x)$ if $g(x) \les f(x)$. We also write $f(x) \sim g(x)$ if $f(x) \les g(x)$ and $g(x) \les f(x)$. The notation $f(x) \ll g(x)$ means that $\lim_{x} f(x)/g(x) = 0$, for $x \to \infty$ or $x \to 0$ (it will be clear from the context). Since we are going to use functions with several parameters, such as exponents $p$, $q$ or additional positive parameters $\eps$, $\delta$ etc., we may stress the dependence of the constant $c$ with respect to these parameters by adding them as subscripts, e.g.\ $f_p(x)\les_p g_p(x)$ means that the constant $c = c(p)$ may depend on $p$. Despite this, we warn the reader that in order to keep the notation as light as possible, we will not stress such dependence in many statements. Again, to keep the statements simple, we do not always stress the fact that the various constants appearing  are strictly positive and finite.

 Given  a set $S$, we write $\sharp S$ for the number of its elements and $\chi_S$ for its indicator function.  In case $S \in \mathcal{A}$ is an event on a probability space $(\Omega, \mathcal{A}, \mathbb{P})$, we use the notation $I_A = \chi_A$. For $L>0$, we let $Q_L=[-L/2,L/2]^d$  for the cube of side length $L$ and $D_L(y)= \cur{x\, : \, \abs{x-y} \le L} \subseteq \R^d$ for the ball of radius $L$ centred at $y$, where we denote by $|x|$ the  Euclidean norm of a vector $x\in \R^d$. We consider $\T^d = \R^d/\mathbb{Z}^d$ endowed with the flat distance
\begin{equation}
\dist_{\T^d}(x,y) = \inf_{z \in \mathbb{Z}^d} \abs{ x-y-z},
\end{equation}
 and identify it (as a set) with $Q_1 = [-1/2, 1/2]^d \subseteq \R^d$. We use the same notation
 \begin{equation}
  D_L(y) = \cur{x\, : \, \dist_{\T^d}(x,y) \le L},
 \end{equation}
 so that the ball $D_L(0)$ on $\T^d$ is naturally identified (as a set) with the ball in $\R^d$. 

We use the notation $|A|$ for the Lebesgue measure of a Borel set $A \subseteq \R^d$ or $A \subseteq \T^d$, $\dist(\cdot, A)$ for the distance function from $A$, $\diam(A)$ for its diameter, and $\int_A f =\int_A f(x) dx$ for the Lebesgue integral of a function $f$ on $A$.  Given a Borel measure $\mu$ on $\R^d$ or $\T^d$, we write $\tras_x \mu$ for its translation by $x \in \R^d$, i.e., $\tras_x \mu (A) = \mu(A -x)$. In case of $\R^d$, we write $\dil_{\rho} \mu$ for the dilation by a factor $\rho \in (0, \infty)$, i.e., $\dil_\rho \mu (A) = \mu( A/\rho)$.

For a (sufficiently smooth) function $\phi$ on $\R^d$ or $\T^d$, we use the notation $\nabla \phi$ for its gradient, $\nabla\cdot \phi$ for the divergence, $\Delta \phi$ for the Laplacian.
 
\subsection{Occupation measures and hitting times} Given a continuous curve $x = (x_t)_{t \ge 0}$ with values in $\R^d$ or $\T^d$, we write
 \begin{equation}
 \mu_T^x = \int_0^T \delta_{x_t} dt
 \end{equation}
 for its occupation measure up to time $T \in [0, \infty]$. For a (compact) set $K \subseteq \R^d$, define its first hitting time
\begin{equation}
 \tau_K x := \inf \cur{t > 0\, : x_t \in K}.
\end{equation}
When $x$ is understood, we simply write $\tau_K$, in particular we use often the notation $\mu_{\tau_K}^x = \mu_{\tau_K x}^x$. We also write $\theta = (\theta_s)_{s \ge 0}$ for the shift operator acting on continuous curves as $\theta_s x = (x_{t+s})_{t \ge 0}$ for every $s \ge 0$. We apply $\theta$ also when $s = s(x)$, i.e., $\theta_{s(x)}x = (x_{t+s(x)})_{ t\ge 0}$. This notation is particularly useful in order to define the first hitting time of a set $K'$ after $\tau_K$, which is simply given by $\tau_K +\tau_{K'} \circ \theta_{\tau_K} $. We notice the identity, valid for any $x$ and $K$,
\begin{equation}
\label{eq:restriction-occupation-measure}
\mu_T^x\restr{K} =  \mu_{T-\tau_K}^{\theta_{\tau_K}x} \restr{K},
\end{equation}
with $\tau_K = \tau_K x$ (valid also for $T = \infty$ with the convention that $\infty - \infty = 0$).

\subsection{Negative Sobolev norms}
Given a bounded domain $\Omega \subseteq \R^d$ (or $\Omega \subseteq \T^d$) with Lipschitz boundary and $p \in (1,\infty)$, with H\"older conjugate $p' = p/(p-1)$, we write $\| f \|_{L^p(\Omega)}$ for the Lebesgue norm of $f$, and 
\begin{equation}
 \|f\|_{W^{-1,p}(\Omega)}=\sup_{\nor{\nabla \phi}_{L^{p'}(\Omega)}\le 1} \int_{\Omega} f \phi
\end{equation}
for the negative Sobolev norm. Notice in particular that if $\|f\|_{W^{-1,p}(\Omega)}<\infty$ then $\int_\Omega f=0$. In this case we may and will restrict the supremum to functions $\phi$ having also average zero. 

Let us recall that we can bound the $W^{-1,p}$ norm by the $L^p$ norm via the following Poincaré inequality.

\begin{lemma}\label{lem:poincare}
 Let $\Omega$ be a bounded domain with Lipschitz boundary and let $f:\Omega\to \R$ such that $\int_\Omega f=0$. Then, for every $p>1$,
 \begin{equation}\label{eq:Lp}
  \|f\|_{W^{-1,p}(\Omega)}\les |\Omega|^{\frac{1}{d}} \|f\|_{L^p(\Omega)}.
 \end{equation}
Moreover, the implicit constant is invariant by dilations of $\Omega$.
\end{lemma}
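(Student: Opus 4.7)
The plan is to use the duality definition of the negative Sobolev norm together with the classical Poincaré--Wirtinger inequality on $\Omega$. Since $\int_\Omega f = 0$, in the supremum defining $\|f\|_{W^{-1,p}(\Omega)}$ we can freely restrict to test functions $\phi \in W^{1,p'}(\Omega)$ with $\int_\Omega \phi = 0$, because subtracting a constant from $\phi$ does not change $\int_\Omega f\phi$ and does not increase $\|\nabla \phi\|_{L^{p'}(\Omega)}$.

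For such zero-mean $\phi$, I would first apply Hölder's inequality to get
\begin{equation*}
\int_\Omega f \phi \le \|f\|_{L^p(\Omega)} \|\phi\|_{L^{p'}(\Omega)},
\end{equation*}
and then invoke the Poincaré--Wirtinger inequality on the bounded Lipschitz domain $\Omega$: there exists a constant $C(\Omega)$, depending only on the shape of $\Omega$, such that for any $\phi$ with $\int_\Omega \phi = 0$,
\begin{equation*}
\|\phi\|_{L^{p'}(\Omega)} \le C(\Omega) \|\nabla \phi\|_{L^{p'}(\Omega)}.
\end{equation*}
Taking the supremum over admissible $\phi$ gives $\|f\|_{W^{-1,p}(\Omega)} \le C(\Omega)\|f\|_{L^p(\Omega)}$.

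To finish, I would verify the scaling identity $C(\rho \Omega) = \rho\, C(\Omega)$ by a change of variables: if $\tilde\phi(x) := \phi(\rho x)$ for $\phi$ defined on $\rho\Omega$, then a direct computation gives $\|\tilde\phi\|_{L^{p'}(\Omega)} = \rho^{-d/p'} \|\phi\|_{L^{p'}(\rho\Omega)}$ and $\|\nabla \tilde\phi\|_{L^{p'}(\Omega)} = \rho^{1-d/p'}\|\nabla \phi\|_{L^{p'}(\rho\Omega)}$, which propagates the Poincaré inequality to $\rho\Omega$ with constant exactly $\rho C(\Omega)$. Since $|\rho \Omega|^{1/d} = \rho |\Omega|^{1/d}$, the ratio $C(\Omega)/|\Omega|^{1/d}$ is dilation invariant, which is precisely the form of \eqref{eq:Lp} we want.

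The only genuinely nontrivial ingredient here is the Poincaré--Wirtinger inequality itself on a general bounded Lipschitz domain, which is a standard result (e.g.\ via a compactness argument combined with $W^{1,p'}\hookrightarrow L^{p'}$, or by reduction to a finite union of star-shaped pieces). No real obstacle is expected; the main point to be careful about is to isolate the geometric dependence of the constant purely through the factor $|\Omega|^{1/d}$ rather than through $\diam(\Omega)$, so that the statement is genuinely scale-covariant.
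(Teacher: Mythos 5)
Your proof is correct and is precisely the standard argument one would expect here: restrict the supremum defining $\|f\|_{W^{-1,p}(\Omega)}$ to mean-zero test functions (legitimate because $\int_\Omega f = 0$), apply H\"older, then apply the Poincar\'e--Wirtinger inequality, and finally track the scaling $C_{\mathrm{Poinc}}(\rho\Omega) = \rho\, C_{\mathrm{Poinc}}(\Omega)$ to match the factor $|\Omega|^{1/d}$. The paper states this lemma as a recalled fact without supplying a proof, so there is nothing to diverge from; your reasoning, including the change-of-variables computation for the dilation covariance, is sound.
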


 \subsection{Total variation distance}

Given two random variables $X$, $Y$ taking values in a measurable space $(E, \mathcal{E})$ (possibly defined on different spaces) with laws $\mathbb{P}_X$, $\mathbb{P}_Y$, their total variation distance is
\begin{equation}
 \TV(\mathbb{P}_X,\mathbb{P}_Y) = \sup_{A \in \mathcal{E}} \mathbb{P}(X \in A) - \PP\bra{Y \in A} \in [0,1]
\end{equation}
Recall that, for any $f: E \to \R$ measurable and bounded, it holds
\begin{equation}
 \abs{ \EE\sqa{f(X)} - \EE\sqa{f(Y)} }\le \sup_{x \in E}\abs{f(x)} \TV(\mathbb{P}_X,\PP_Y).
\end{equation}
We use the following well-known contraction property for the total variation distance: if $N$ is a Markov kernel from $E$ to a measurable space $(F, \mathcal{F})$, then
\begin{equation}\label{eq:contraction-tv}
\TV( N_\sharp \PP_X, N_\sharp \PP_Y ) \le \TV(\mathbb{P}_X,\PP_Y),
\end{equation}
where $N_\sharp \PP_X$ denotes the (push-forward) probability given by $N_\sharp \PP_X(A) = \EE\sqa{ N(X, A)}$ for $A \in \mathcal{F}$.

In particular, we will use the following \emph{data-processing} inequality (for the total variation distance): if $(U, V, W)$ are random variables that form a Markov chain
\begin{equation}
U \rightarrow V \rightarrow W
\end{equation}
(possibly taking values in different measurable spaces $(E, \mathcal{E})$, $(F, \mathcal{F})$, $(G, \mathcal{G})$ with transition kernels $N_{F\to E}$, $N_{F \to G}$ and $\tilde{V}$ is any random variable taking values in $F$, then
\begin{equation}\label{eq:markov-chain-tv}
 \TV\bra{  \mathbb{P}_{(U,V,W)}, N_{F \to E} \otimes \mathbb{P}_{\tilde V} \otimes N_{F \to G} } \le \TV(V, \tilde{V}),
\end{equation}
where $N_{F \to E} \otimes \mathbb{P}_{\tilde V} \otimes N_{F \to G} $ is a notation for the probability measure corresponding to a Markov chain 
\begin{equation}
\tilde{U} \rightarrow \tilde{V} \rightarrow \tilde{W}
\end{equation}
with $\tilde{U}$ having the kernel $N_{F \to E}$ as conditional law given $\tilde{V}$, and $\tilde{W}$ having kernel $N_{F \to G}$ as conditional law given $\tilde{V}$. 

\subsection{Optimal transport}


Given $p >0$, a Borel subset $\Omega \subseteq \R^d$  (or $\Omega \subseteq \T^d$) and two positive Borel measures $\mu$, $\lambda$ with $\mu(\Omega) = \lambda(\Omega) \in (0, \infty)$ and finite $p$-th moments, the optimal transportation (or Wasserstein) cost of order $p$ between $\mu$ and $\lambda$ is defined as the quantity
\begin{equation} W_{\Omega}^p(\mu, \lambda) =  \min_{\pi\in\mathcal{C}(\mu,\lambda)} \int_{\R^d\times\R^d} \dist(x,y)^p d \pi(x,y)
\end{equation}
where $\dist$ denotes the Euclidean distance in case $\Omega \subseteq \R^d$, and the flat distance in the case $\Omega \subseteq \T^d$. The set $\mathcal{C}(\mu, \lambda)$ is the set of couplings between $\mu$ and $\lambda$ i.e., positive Borel measures on $\Omega\times \Omega$ such that the first and second marginals are respectively $\mu$ and $\lambda$. 
Moreover, if $\mu(\Omega) = \lambda(\Omega) = 0$, we define $W^p_{\Omega}(\mu, \lambda) = 0$, while if $\mu(\Omega) \neq \lambda(\Omega)$, we let $W_{\Omega}^p(\mu, \lambda) = \infty$.  
Let us notice that if, $\Omega \subseteq \T^d$ and $\diam(\Omega) < 1/2$, the Wasserstein cost can be equivalently computed on $\T^d$ or on $\R^d$, by identify $\Omega$ with a subset of $[0,1]^d$.

Next, we collect some basic facts related to the Wasserstein cost. Proofs of the simpler ones can be found in any of the standard references on the subject \cite{AGS,VilOandN, peyre2019computational}.

For every constant $a>0$,
\begin{equation}
W_{\Omega}^p( a \mu, a \lambda) = a W_{\Omega}^p(\mu, \lambda),
\end{equation}
which always allows to reduce to the case of probability measures $\mu(\Omega) = \lambda(\Omega)=1$. For $r \ge 1$, by H\"older inequality,
\begin{equation}\label{eq:higer-r-wass}
 W_\Omega^p(\mu, \lambda) \le \mu(\Omega)^{1-1/r} \bra{ W_\Omega^{pr} (\mu, \lambda)}^{1/r}.
\end{equation}

If $\mu(\Omega) = \lambda(\Omega)$,
\begin{equation}\label{eq:trivial-wass}
W_{\Omega}^p(\mu, \lambda) \le \diam(\Omega)^p \mu(\Omega),
\end{equation}
which can be slightly improved to
\begin{equation}\label{eq:less-trivial-wass}
W^p_\Omega(\mu, \lambda) \le \diam(\Omega)^p \TV (\mu, \lambda).
\end{equation}
Notice also the lower bound
\begin{equation}
\label{eq:trivial-lower-bound}
W_{\Omega}^p(\mu, \lambda) \ge \int_{\Omega} \dist(x, \operatorname{\supp}(\lambda) )^p d\mu(x),
\end{equation}
where  $\dist(x, \supp(\lambda))$  denotes the distance between $x$ and the support of the measure $\lambda$.
For $p \ge 1$, the quantity $( W_\Omega^p(\mu, \lambda))^{1/p}$ is a distance,  while for $p \in (0,1)$, $W_{\Omega}^p(\mu, \lambda)$ is already a distance, hence enjoying the triangle inequality.  We will also  use the following sub-additivity inequality 
\begin{equation}\label{eq:sub} W_\Omega^p\bra{ \int_E \mu_z d \sigma(z), \int_{E} \lambda_z d \sigma(z)} \le \int_{E} W^p_\Omega(\mu_z, \lambda_z) d \sigma(z)\end{equation}
where $(E, \mathcal{E}, \sigma)$ is a measure space and $(\mu_z)_{z \in E}$, $(\lambda_z)_{z\in E}$ are measurable families. Notice that, for the right hand side to be finite, it is necessary that $\mu_z(\Omega) = \lambda_z(\Omega)$ for $\sigma$-a.e.\ $z \in E$. 

To keep the notation simple, given a Borel subset $\tilde{\Omega} \subseteq \Omega$, we write
\begin{equation}
W_{\tilde{\Omega}}^p(\mu, \lambda) = W_{\tilde{\Omega}}^p(\mu \restr \tilde{\Omega}, \lambda \restr \tilde{\Omega}),
\end{equation}
which also coincides with $W_{\Omega}^p( \chi_{\tilde{\Omega}} \mu, \chi_{\tilde{\Omega}} \lambda )$.
Moreover, if a measure is absolutely continuous with respect to Lebesgue measure, we only write its density. For example,
\begin{equation}
 W^p_{\Omega} \bra{ \mu,  \mu(\Omega)/|\Omega| }
\end{equation}
denotes the transportation cost between $\mu \restr \Omega$ to the uniform measure on $\Omega$ with total mass $\mu(\Omega)$. To further simplify the notation, in this special case of transporting towards the uniform measure, we write
\begin{equation}
W^p_{\Omega} \bra{ \mu} = W^p_{\Omega} \bra{ \mu,  \mu(\Omega)/|\Omega| }.
\end{equation}

%
%

Let us recall some lemmas from \cite{goldman2022optimal}. The first one is a ``geometric'' sub-additivity result, which follows straightforwardly combining the triangle inequality, \eqref{eq:sub} and the elementary inequality stating that, for $p>0$, there exists $c = c(p)$ such that
\begin{equation}\label{eq:elementary}
 (a+b)^p\le (1+\eps) a^p+ \frac{c}{\eps^{(p-1)^+}} b^p \qquad \forall a,b\ge 0 \textrm{ and } \eps\in (0,1),
\end{equation}
where we use the notation $z^+ = \max\cur{0, z}$.

\begin{lemma}\label{lem:sub}
Let $(E, \mathcal{E}, \sigma)$ be a measure space, let  $\Omega\subset \R^d$ (or $\Omega \subseteq \T^d$)  and $(\Omega_z)_{z \in E}$ be a measurable collection of Borel subsets of $\Omega$. Let $\mu$, $\lambda$ be measures on $\Omega$ such that
 \begin{equation}
 \int_E \chi_{\Omega_z} d \sigma(z)  = 1, \quad \text{$\mu+\lambda$-a.e.}
 \end{equation}
 Then, for every  $p > 0$, there exists $c = c(p)$ such that, for every $\eps\in [0,1)$, 
 \begin{equation}\label{eq:mainsub-convex}
 \begin{split}
  W^p_{\Omega}\lt(\mu, \frac{\mu(\Omega)}{\lambda(\Omega)}\lambda\rt) & \le (1+\eps)\int_E W^p_{\Omega_z}\lt(\mu,\frac{\mu(\Omega_z)}{\lambda(\Omega_z)}\lambda\rt)d \sigma(z) \\
  & \qquad + \frac{c}{\eps^{(p-1)^+}}W^p_\Omega\lt(\int_E \frac{\mu(\Omega_z)}{\lambda(\Omega_z)}\chi_{\Omega_z} d \sigma(z)\lambda ,\frac{\mu(\Omega)}{\lambda(\Omega)}\lambda\rt).
  \end{split}
 \end{equation}
\end{lemma}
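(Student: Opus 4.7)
The plan is to pick the intermediate measure that makes the second term on the right-hand side arise naturally, control the first leg by the sub-additivity hypothesis \eqref{eq:sub}, and close the estimate using the triangle inequality sharpened by \eqref{eq:elementary}. Concretely, I would set
\[
\nu := \left(\int_E \frac{\mu(\Omega_z)}{\lambda(\Omega_z)}\, \chi_{\Omega_z}\, d\sigma(z)\right)\lambda,
\]
which is exactly the first argument of the last Wasserstein cost in \eqref{eq:mainsub-convex}. A Fubini computation, using the covering assumption $\int_E \chi_{\Omega_z}\, d\sigma(z) = 1$ $(\mu+\lambda)$-a.e., gives $\nu(\Omega) = \int_E \mu(\Omega_z)\, d\sigma(z) = \mu(\Omega) = (\mu(\Omega)/\lambda(\Omega))\lambda(\Omega)$, so all three measures $\mu$, $\nu$, and $(\mu(\Omega)/\lambda(\Omega))\lambda$ share the same finite total mass and their pairwise $W^p_\Omega$ costs are well-defined.

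For the first leg, the covering identity rewrites $\mu = \int_E \mu\restr\Omega_z\, d\sigma(z)$, while by construction $\nu = \int_E \frac{\mu(\Omega_z)}{\lambda(\Omega_z)}\chi_{\Omega_z}\lambda\, d\sigma(z)$. For $\sigma$-a.e.\ $z$, the two integrands are both supported in $\Omega_z$ and have the common mass $\mu(\Omega_z)$, so the sub-additivity inequality \eqref{eq:sub} together with the restriction convention yields
\[
W^p_\Omega(\mu,\nu) \le \int_E W^p_{\Omega_z}\!\left(\mu,\frac{\mu(\Omega_z)}{\lambda(\Omega_z)}\lambda\right) d\sigma(z).
\]
Then I insert $\nu$ as an intermediate point between $\mu$ and $(\mu(\Omega)/\lambda(\Omega))\lambda$: for $p\ge 1$ the triangle inequality applied to the distance $(W^p_\Omega)^{1/p}$ gives $W^p_\Omega(\mu,(\mu(\Omega)/\lambda(\Omega))\lambda)^{1/p} \le a+b$, with $a = W^p_\Omega(\mu,\nu)^{1/p}$ and $b = W^p_\Omega(\nu,(\mu(\Omega)/\lambda(\Omega))\lambda)^{1/p}$; raising to the $p$-th power and invoking \eqref{eq:elementary} produces the desired split $(1+\eps)a^p + c\,\eps^{-(p-1)}b^p$. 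For $0<p<1$, $W^p_\Omega$ is itself a distance and the bound $W^p_\Omega(\mu,\cdot) \le W^p_\Omega(\mu,\nu)+W^p_\Omega(\nu,\cdot)$ already has the right shape; since $(p-1)^+ = 0$, the exponent on $\eps$ vanishes and the constant $c/\eps^{(p-1)^+}$ may be taken to be an absolute constant. Plugging the bound for $W^p_\Omega(\mu,\nu)$ from the previous display into this split gives exactly \eqref{eq:mainsub-convex}.

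No step is genuinely delicate: the argument is the standard ``glue local transports through an intermediate reference measure'' trick, and all the subtleties reduce to bookkeeping, namely checking that marginals match and that supports are contained in the corresponding $\Omega_z$. The only point that requires a little care is treating $p\ge 1$ and $p<1$ uniformly, which is why \eqref{eq:elementary} is stated with the combined exponent $(p-1)^+$; this is precisely what allows a single statement covering both regimes.
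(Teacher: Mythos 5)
Your proof is correct and follows exactly the route the paper indicates (the paper gives no full proof but points to the triangle inequality, the sub-additivity inequality \eqref{eq:sub}, and the elementary inequality \eqref{eq:elementary}): you insert the intermediate measure $\nu = \bigl(\int_E \tfrac{\mu(\Omega_z)}{\lambda(\Omega_z)}\chi_{\Omega_z}\,d\sigma\bigr)\lambda$, bound $W^p_\Omega(\mu,\nu)$ via \eqref{eq:sub} after decomposing $\mu = \int_E \mu\restr\Omega_z\,d\sigma(z)$, and close with the triangle inequality sharpened by \eqref{eq:elementary}. The mass-matching check $\nu(\Omega) = \mu(\Omega)$ via Fubini and the uniform handling of $p\ge 1$ versus $0<p<1$ through $(p-1)^+$ are both done correctly.
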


Letting in particular $\lambda$ be the Lebesgue measure, we have, for $p>0$,
 \begin{equation}\label{eq:mainsub-lebesgue}
  W^p_{\Omega}\lt(\mu\rt)\le (1+\eps) \int_{E} W^p_{\Omega_z}\lt(\mu\rt)d \sigma(z)+ \frac{C}{\eps^{(p-1)^+}}W^p_\Omega\lt( \int_E \frac{\mu(\Omega_z)}{|\Omega_z|}\chi_{\Omega_z} d \sigma(z)\rt).
 \end{equation}

The second lemma relies on a PDE argument and applies to sufficiently nice domains.

 \begin{lemma}\label{lem:peyre} Assume that $\Omega \subseteq \R^d$  (or $\Omega \subseteq \T^d$) is  bounded, connected and with  Lipschitz boundary. If $\mu$ and $\lambda$ are measures on $\Omega$ with $\mu(\Omega) = \lambda(\Omega)$,  absolutely continuous with respect to the  Lebesgue measure and $\inf_\Omega \lambda>0$, then, for every $p> 1$,
 \begin{equation}\label{eq:estimCZ}
  W_{\Omega}^p(\mu,\lambda)\les \frac{1}{\inf_{\Omega} \lambda^{p-1}}\nor{ \mu - \lambda}_{W^{-1,p}(\Omega)}^p.
 \end{equation}
 \end{lemma}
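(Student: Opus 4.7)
The plan is to invoke the Benamou--Brenier dynamical formulation of the transportation cost: for any curve of densities and velocities $(\rho_t, v_t)_{t \in [0,1]}$ solving the continuity equation $\partial_t \rho_t + \nabla \cdot (\rho_t v_t) = 0$ with $\rho_0 = \lambda$, $\rho_1 = \mu$ and no-flux condition $v_t \cdot \nu = 0$ on $\partial \Omega$, one has $W^p_\Omega(\mu, \lambda) \le \int_0^1 \int_\Omega |v_t|^p \rho_t \, dx \, dt$. The naive linear interpolation $\rho_t = (1-t)\lambda + t\mu$ with $v_t = \nabla \phi / \rho_t$, where $\phi$ solves the Neumann problem $-\Delta \phi = \mu - \lambda$, produces an integrand $|\nabla \phi|^p / \rho_t^{p-1}$ that is non-integrable near $t = 1$ as soon as $p \ge 2$ and $\mu$ vanishes somewhere on $\Omega$, because only $\lambda$ is assumed bounded below. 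The main obstacle---and the key insight---is therefore to choose a \emph{nonlinear} interpolation whose time weight exactly cancels this degeneracy.

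Concretely, let $\phi$ be the zero-mean solution of $-\Delta \phi = \mu - \lambda$ in $\Omega$ with Neumann condition $\partial_\nu \phi = 0$ (well-posed since $\mu(\Omega) = \lambda(\Omega)$), and set
\[
\rho_t = (1-t)^p \lambda + \bigl( 1 - (1-t)^p \bigr) \mu, \qquad v_t = \frac{p(1-t)^{p-1} \nabla \phi}{\rho_t}, \qquad t \in [0, 1).
\]
A direct computation gives $\partial_t \rho_t = p(1-t)^{p-1}(\mu - \lambda)$ and $\nabla \cdot (\rho_t v_t) = p(1-t)^{p-1} \Delta \phi = -p(1-t)^{p-1}(\mu - \lambda)$, so the continuity equation is satisfied. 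The Neumann condition on $\phi$ yields $v_t \cdot \nu = 0$ on $\partial \Omega$, making $(\rho_t, v_t)$ admissible for Benamou--Brenier.

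Since $\mu \ge 0$, one has the crucial pointwise lower bound $\rho_t \ge (1-t)^p \inf_\Omega \lambda$, whence
\[
|v_t|^p \rho_t = \frac{p^p (1-t)^{p(p-1)} |\nabla \phi|^p}{\rho_t^{p-1}} \le \frac{p^p |\nabla \phi|^p}{(\inf_\Omega \lambda)^{p-1}},
\]
the powers of $(1-t)$ cancelling exactly---this is precisely why the exponent $p$ in the interpolation weight $(1-t)^p$ is sharp. Integrating over $t \in [0,1]$ and $x \in \Omega$ yields $W^p_\Omega(\mu, \lambda) \le p^p \|\nabla \phi\|_{L^p(\Omega)}^p / (\inf_\Omega \lambda)^{p-1}$. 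The proof concludes by invoking the standard elliptic $L^p$-regularity for the Neumann Laplacian, which on a Lipschitz domain gives $\|\nabla \phi\|_{L^p(\Omega)} \lesssim \|\mu - \lambda\|_{W^{-1,p}(\Omega)}$ and thereby the claimed estimate.
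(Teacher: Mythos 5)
Your argument is correct and is in essence the proof used in the references the paper cites for this lemma (the paper only \emph{recalls} \Cref{lem:peyre} from \cite{goldman2022optimal}, without reproducing the proof; the argument there and in \cite{AGT19} is precisely this Benamou--Brenier estimate with a nonlinear time reparametrization). Your choice $\rho_t=(1-t)^p\lambda+(1-(1-t)^p)\mu$, $v_t=p(1-t)^{p-1}\nabla\phi/\rho_t$ is the change of variables $s=1-(1-t)^p$ applied to the naive linear interpolation, and you identify the right reason it works: the weight $(1-t)^{p(p-1)}$ in $|v_t|^p\rho_t$ exactly cancels the degeneracy of $\rho_t^{p-1}\gtrsim(1-t)^{p(p-1)}$. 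For $p=2$ this recovers Peyre's constant $4/\inf\lambda$, a useful sanity check.

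The one place you should be more careful is the final step. You invoke $L^p$-elliptic regularity for the Neumann Laplacian to get $\|\nabla\phi\|_{L^p(\Omega)}\lesssim\|\mu-\lambda\|_{W^{-1,p}(\Omega)}$, calling it ``standard on a Lipschitz domain.'' That is not quite right: for merely Lipschitz $\Omega$ this holds only for $p$ in a window around $2$, and for general $p\in(1,\infty)$ one needs $C^1$ (or convex) domains. The clean way to close the argument, which sidesteps regularity entirely and keeps the lemma valid as stated, is to avoid $\nabla\phi$ altogether: by the definition of the dual norm and Hahn--Banach (embedding $\{\nabla\psi:\psi\in W^{1,p'}(\Omega),\int_\Omega\psi=0\}$ isometrically into $L^{p'}(\Omega;\R^d)$ and extending the functional $\nabla\psi\mapsto\int_\Omega(\mu-\lambda)\psi$), there exists a vector field $\xi\in L^p(\Omega;\R^d)$ with $\int_\Omega \xi\cdot\nabla\psi=\int_\Omega(\mu-\lambda)\psi$ for all admissible $\psi$ (i.e.\ $-\nabla\cdot\xi=\mu-\lambda$ weakly, with the no-flux condition built in) and $\|\xi\|_{L^p(\Omega)}=\|\mu-\lambda\|_{W^{-1,p}(\Omega)}$. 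Running your construction with $\xi$ in place of $\nabla\phi$ gives the estimate directly, with no appeal to Calder\'on--Zygmund theory.
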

In the lemma above, $\inf_{\Omega} \lambda$ is the (essential) infimum of the density of $\lambda$ with respect to Lebesgue measure.

Combining the above result with  \Cref{lem:poincare}, we obtain  that, for $p > 1$,
\begin{equation}\label{eq:estimCZ-Lp}
  W_{\Omega}^p(\mu,\lambda)\les \frac{|\Omega|^{p/d}}{\inf_{\Omega} \lambda^{p-1}}\nor{ \mu - \lambda}_{L^p(\Omega)}^p,
 \end{equation}
 where the implicit constant  is invariant with respect to dilations of $\Omega$.
 
 \begin{remark}[The case of a manifold]\label{rem:peyre-sphere}
 In fact, \Cref{lem:peyre} holds as well in case of compact smooth connected Riemannian manifolds, see \cite{AGT19}. In particular, given the unit sphere $\Omega = \partial  D_1 \subseteq \R^d$ and letting $\lambda= \tilde{e}_{D_1}$ be the uniform (probability) distribution on the sphere (the notation will be clarified below), it holds
 \begin{equation}
  W_{\partial D_1}^p(\mu,\tilde{e}_{D_1})\les \sup \cur{ \int_{\partial D_1} \phi d \mu\, : \, \int_{\partial D_1} \phi = 0, \quad \int_{\partial D_1} |\nabla \phi|^{p'} \le 1  }.
 \end{equation}
 A simple scaling argument shows also that, for $\ell>0$,
 \begin{equation}  W_{\partial D_\ell}^p\bra{\mu, \tilde{e}_{D_\ell} }\les \ell^p \sup \cur{ \int_{\partial D_\ell} \phi(x/\ell) d \mu(x)\, : \, \int_{\partial D_1} \phi  = 0, \quad \int_{\partial D_1} |\nabla \phi|^{p'} \le 1  }.
 \end{equation}
 where $\tilde{e}_{D_\ell}$ denotes the uniform probability distribution on $\partial D_\ell$.
 \end{remark}

 \begin{remark}[Duality and the concave case]
  The case $p=1$ of \Cref{lem:peyre} is particularly simple via Kantorovich duality. Considering for simplicity the case of $\T^d$ only, the dual formulation reads
\begin{equation}
 W^1_{\mathbb{T}^d}\bra{ \mu, \lambda } = \sup\cur{ \int_{\mathbb{T}^d} f d (\mu - \lambda)  \, : \, |f(x)-f(y)| \le \dist_{\mathbb{T}^d}(x,y) },
\end{equation}
i.e., $f$ is Lipschitz continuous with $\Lip(f) \le 1$. Then, writing $\mu - \lambda = \Delta \Delta^{-1} (\mu - \lambda)$, we find after an integration by parts
\begin{equation} \begin{split}
 W^1_{\mathbb{T}^d}\bra{ \mu, \lambda } & = \sup\cur{ \int_{\mathbb{T}^d} \nabla f \bra{  \nabla \Delta^{-1} (\mu - \lambda)}  \, : \,  \Lip(f) \le 1}\\
 & \le \nor{   \nabla \Delta^{-1} (\mu - \lambda) }_{L^1},
 \end{split}
\end{equation}
having used that $|\nabla f| \le 1$ a.e.\ if $\Lip(f) \le 1$. This argument suggests also a similar -- but slightly weaker -- inequality in the case $0<p<1$. Indeed, duality reads
\begin{equation}
 W^p_{\mathbb{T}^d}\bra{ \mu, \lambda } = \sup\cur{ \int_{\T^d} f d (\mu - \lambda)  \, : \,  |f(x)-f(y)| \le \dist_{\T^d}(x,y)^p },
\end{equation}
i.e., $f$ is H\"older continuous with exponent $p$ and constant $[f]_{C^p} \le 1$. We then recall that, for every $q<p$, one has the inequality
\begin{equation}
 \nor{ \Delta^{q/2} f }_{L^2 } \les [f]_{C^p},
\end{equation}
where the norm of the fractional Laplacian is defined as a Fourier multiplier or equivalently as the norm of a fractional Sobolev space (see e.g. \cite{Hitchhiker}) and the implicit constant depends on $p$, $q$, $d$ only. Writing $\mu - \lambda = \Delta^{q/2} \Delta^{-q/2}(\mu- \lambda)$ we obtain the upper bound
\begin{equation}\label{eq:peyre-concave}
 W^p_{\T^d}\bra{ \nu, \lambda} \les_{p,d,q} \nor{ \Delta^{-q/2} (\mu - \lambda) }_{L^2 }.
\end{equation}
 \end{remark}

Let us notice that, by \eqref{eq:sub}, it always holds
\begin{equation}\label{eq:eq-to-be-reversed}
W_{\Omega}^p\bra{ \lambda+ \mu, \tilde{\lambda}+\mu } \le  W_{\Omega}^p\bra{ \lambda, \tilde{\lambda}} + W_{\Omega}^p(\mu,\mu) = W_{\Omega}^p\bra{ \lambda, \tilde{\lambda}}.
\end{equation}
It is natural to ask whether under suitable smoothness assumptions on $\mu$ one can improve this bound.  The following result is a special case of \cite[Proposition 2.9]{goldman2022optimal} for the case where $\mu$ and $\tilde{\lambda}$ have  constant densities. 

\begin{proposition}\label{prop:density-helps}
Let $\Omega \subseteq \R^d$ be a bounded Lipschitz domain, $\lambda$ be any finite measure on $\Omega$ and $h > 0$. Then, for every $p>d/(d-1)$, it holds
\begin{equation}\label{claim} W_\Omega^p\bra{ \lambda + \frac{h}{|\Omega|}} \les  \diam(\Omega)^p \lambda(\Omega) \bra{ \frac{ \lambda(\Omega)}{h}}^{p/d},  
\end{equation}
where the implicit constant depends on $\Omega$, but is invariant by rescaling of $\Omega$.
\end{proposition}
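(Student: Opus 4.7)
The natural length scale of the problem is $r := (|\Omega|\lambda(\Omega)/h)^{1/d}$, namely the scale at which the uniform background of density $h/|\Omega|$ carries a mass comparable to $\lambda(\Omega)$ on a cell of side $r$. I would first dispose of the trivial range: when $r \gtrsim \diam(\Omega)$ (which, up to a dilation-invariant constant depending on the Lipschitz regularity of $\Omega$, amounts to $\lambda(\Omega)\gtrsim h$), the bound \eqref{eq:trivial-wass} already gives $W^p_\Omega(\lambda+h/|\Omega|) \les \diam(\Omega)^p(\lambda(\Omega)+h) \les \diam(\Omega)^p \lambda(\Omega)(\lambda(\Omega)/h)^{p/d}$, since $(\lambda(\Omega)/h)^{p/d}\gtrsim 1$ in this regime. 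Henceforth I would assume $r \ll \diam(\Omega)$.

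The strategy is to apply the sub-additivity estimate \eqref{eq:mainsub-lebesgue} to $\mu = \lambda + h/|\Omega|$, taking $(\Omega_z)_{z\in E}$ to be a randomly translated partition of $\Omega$ into cells of diameter $\les r$ (obtained by averaging over translations of a cube of side $r$). Fixing some $\eps \in (0,1)$, this splits $W^p_\Omega(\mu)$ into a local contribution $\int W^p_{\Omega_z}(\mu)\,d\sigma(z)$ and a global contribution $W^p_\Omega\bra{ \int (\mu(\Omega_z)/|\Omega_z|) \chi_{\Omega_z}\,d\sigma(z) }$. The key point for the local contribution is that the uniform background $h/|\Omega|$ coincides on each cell with its own average, so that the difference between the restricted source $\mu$ and its target reduces to $\lambda - \lambda(\Omega_z)/|\Omega_z| \chi_{\Omega_z}$, of total variation $\le 2\lambda(\Omega_z)$; applying \eqref{eq:less-trivial-wass} gives $W^p_{\Omega_z}(\mu) \les r^p \lambda(\Omega_z)$, and integrating over $\sigma$ produces a total local bound $\les r^p \lambda(\Omega)$.

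For the global contribution, the averaged source $\bar{\mu}$ (which has the form $h/|\Omega| + \bar{\lambda}$ for some locally-averaged $\bar{\lambda}$ with total mass $\lambda(\Omega)$) has density bounded below by $h/|\Omega|$. Hence \Cref{lem:peyre} applies, yielding $W^p_\Omega(\bar{\mu}) \les (|\Omega|/h)^{p-1} \|\bar{\lambda} - \lambda(\Omega)/|\Omega|\|_{W^{-1,p}(\Omega)}^p$. To estimate the negative Sobolev norm I would dualize: for any $\phi$ with $\int_\Omega \phi = 0$ and $\|\nabla\phi\|_{L^{p'}(\Omega)}\le 1$, the Sobolev--Poincar\'e inequality gives $\|\phi\|_{L^q(\Omega)} \les 1$ for the Sobolev conjugate $q = dp'/(d-p')$, finite precisely because the hypothesis $p > d/(d-1)$ forces $p' < d$. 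Since $\bar{\lambda}$ is piecewise constant on cells of volume $\sim r^d$ with total mass $\lambda(\Omega)$, a direct computation yields $\|\bar{\lambda}\|_{L^{q'}(\Omega)} \les \lambda(\Omega) r^{1 - d/p'}$, leading to a global bound of order $(|\Omega|/h)^{p-1}\lambda(\Omega)^p r^{p-d(p-1)}$.

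With $r = (|\Omega|\lambda(\Omega)/h)^{1/d}$, both contributions balance and are $\les |\Omega|^{p/d}\lambda(\Omega)^{1+p/d}/h^{p/d}$; using $|\Omega|^{1/d}\les \diam(\Omega)$ for a Lipschitz domain (with the implicit constant invariant under rescaling) yields the claimed bound. The main obstacle is the $W^{-1,p}$ estimate on $\bar{\lambda}$: the hypothesis $p > d/(d-1)$ enters exactly as the condition $p' < d$ needed for the Sobolev embedding $W^{1,p'}\hookrightarrow L^q$ to pair with the (rough but locally regularized) averaged measure; without this condition the approach would need to be replaced by something genuinely different.
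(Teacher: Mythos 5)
Your proposal follows the same overall strategy as the cited reference (smoothing at the characteristic scale $r \sim (|\Omega|\lambda(\Omega)/h)^{1/d}$, local contribution by the trivial bound, global contribution by \Cref{lem:peyre} plus Sobolev duality), and it correctly identifies where the hypothesis $p>d/(d-1)$ enters, namely $p'<d$ in the Sobolev--Poincar\'e embedding. The reduction to the regime $r\ll\diam(\Omega)$, the local estimate $\int W^p_{\Omega_z}(\mu)\,d\sigma(z)\les r^p\lambda(\Omega)$ (using that the constant background cancels), and the balancing of exponents are all correct.

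There is, however, a genuine gap in the $L^{q'}$ estimate for $\bar\lambda$, which you justify by saying $\bar\lambda$ is ``piecewise constant on cells of volume $\sim r^d$.'' For a randomly translated cube partition of a generic bounded Lipschitz domain, the cells $\Omega_z$ abutting $\partial\Omega$ can have arbitrarily small volume (a cube sitting tangent to the boundary intersects $\Omega$ in a sliver), and for a source $\lambda$ concentrated near the boundary the corresponding densities $\lambda(\Omega_z)/|\Omega_z|$ are then much larger than $\lambda(\Omega)/r^d$. So the $L^\infty$-and-$L^1$ interpolation implicit in your ``direct computation'' does not apply as stated. The estimate $\|\bar\lambda\|_{L^{q'}}\les\lambda(\Omega)r^{-d/q}$ can nonetheless be rescued: by Minkowski's integral inequality
\begin{equation}
\|\bar\lambda\|_{L^{q'}(\Omega)}\le\int_E\lambda(\Omega_z)|\Omega_z|^{-1/q}\,d\sigma(z)=\int_\Omega\bra{\int_E|\Omega_z|^{-1/q}\chi_{\Omega_z}(y)\,d\sigma(z)}d\lambda(y),
\end{equation}
and the inner integral is $\les r^{-d/q}$ uniformly in $y\in\Omega$ thanks to the interior-cone condition for Lipschitz domains, which ensures that $\sigma(\{z:\ y\in\Omega_z,\ |\Omega_z|\le sr^d\})\les s$ so the singularity $|\Omega_z|^{-1/q}$ is integrable precisely because $q>1$. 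This step genuinely uses the Lipschitz regularity (it produces the shape-dependent constant) and does not follow from the pointwise bound you invoke; it is the one place where your outline would need to be substantively completed.
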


The condition $p>d/(d-1)$ appears in \eqref{prop:density-helps} because of an application of the Sobolev embedding in $\Omega$. If $p\le d/(d-1)$ one can deduce suitable modifications of \eqref{claim}, as discussed in \cite[Remark 2.10]{goldman2022optimal}. However, we may dispense of these cases by a simple application of \eqref{eq:higer-r-wass}: if $0<p\le d/(d-1)$ and $r$ is chosen sufficiently large so that $pr>d/(d-1)$, then
\begin{equation}\begin{split}\label{eq:wasserstein-h-p-r}
W_\Omega^p\bra{ \lambda + \frac{h}{|\Omega|}} & \les \bra{ \lambda(\Omega) + h}^{1-1/r} \diam(\Omega)^p \frac{ \lambda(\Omega)^{1/r+p/d}}{h^{p/d}}\\
& \les \diam(\Omega)^p \lambda(\Omega) \bra{ \frac{ \lambda(\Omega)}{h}}^{p/d} + \diam(\Omega)^p h \bra{ \frac{\lambda(\Omega)}{h}}^{1/r+p/d}.
\end{split}
\end{equation}

It is also natural to ask whether \eqref{eq:sub} can be reversed. Indeed, given two measures $\mu$, $\lambda$, and setting
\begin{equation}
 u := \frac{\lambda(\Omega)}{\mu(\Omega)+\lambda(\Omega)},
 \end{equation} 
then applying the triangle inequality, \eqref{eq:elementary}, \eqref{eq:sub} and \eqref{eq:trivial-wass} easily yield
\begin{equation}\begin{split}\label{eq:same-asymptotics-trivial}
W_{\Omega}^p(\mu) &\le  (1+\eps) W_\Omega^p\bra{(1-u)(\mu+ \lambda)} + \frac{c}{\eps^{(p-1)^+}} W_{\Omega}^p\bra{\mu, (1-u)(\mu + \lambda) } \\
& \le (1+\eps) W_\Omega^p\bra{\mu+ \lambda}  + \frac{c}{\eps^{(p-1)^+}} \diam(\Omega)^p \lambda(\Omega). 
\end{split} 
\end{equation}

Combining  this derivation with \Cref{prop:density-helps} leads to the following  bounds.

\begin{lemma}\label{lem:same-asymptotics}
Let $\Omega \subseteq \R^d$ be a bounded Lipschitz domain, let $p >d/(d-1)$ and $\eps \in (0,1)$. Then there exists $c = c(\Omega, d, p,\eps) $ such that the following holds. If $\mu$ and $\lambda$ are measures on $\Omega$, then
\begin{equation}
 W_{\Omega}^p(\mu+\lambda) \le (1+\eps) W_{\Omega}^p(\mu)  + c\diam(\Omega)^p \lambda(\Omega)  \bra{ \frac{ \lambda(\Omega)}{\mu(\Omega) }}^{p/d},
\end{equation}
and
\begin{equation}
W_{\Omega}^p(\mu) \le (1+c\delta)(1+\eps) W_{\Omega}^p(\mu+\lambda) + c\diam(\Omega)^p \lambda(\Omega)  \bra{ \frac{ \lambda(\Omega)}{\delta \mu(\Omega) }}^{p/d} 
\end{equation}
provided that $\delta$ satisfies
\begin{equation}
 \lambda(\Omega)/\mu(\Omega) \le \delta \le 1/c.
 \end{equation}
Moreover, the constant $c$  is invariant by rescaling of $\Omega$.
%
\end{lemma}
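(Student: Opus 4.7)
The proof of both inequalities rests on the same strategy: combine the triangle inequality for $W_\Omega^p(\cdot,\cdot)^{1/p}$ (which is a distance for $p\ge 1$; recall $W_\Omega^p$ itself is a distance for $p<1$) with the elementary inequality \eqref{eq:elementary} and the sub-additivity \eqref{eq:sub}, in the spirit of the derivation leading to \eqref{eq:same-asymptotics-trivial}. The improvement over that derivation is that one of the correction terms is now estimated sharply by \Cref{prop:density-helps} rather than by the trivial bound \eqref{eq:trivial-wass}, which is licit thanks to the hypothesis $p>d/(d-1)$.

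For the first inequality, the plan is to introduce the intermediate measure $A:=\mu(\Omega)/|\Omega|+\lambda$, which has the same total mass $\mu(\Omega)+\lambda(\Omega)$ as both $\mu+\lambda$ and its uniform target. Applying the triangle inequality with intermediate $A$ and using \eqref{eq:elementary} yields
\[
W_\Omega^p(\mu+\lambda)\le (1+\eps)\,W_\Omega^p(\mu+\lambda,A) + \tfrac{c}{\eps^{(p-1)^+}}\,W_\Omega^p\bigl(A,\,(\mu(\Omega)+\lambda(\Omega))/|\Omega|\bigr).
\]
The first summand is bounded by $W_\Omega^p(\mu)$ via \eqref{eq:sub}, since the $\lambda$-part is shared and can be transported by the identity. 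The second summand is precisely $W_\Omega^p\bigl(\lambda+h/|\Omega|\bigr)$ with $h=\mu(\Omega)$, so \Cref{prop:density-helps} bounds it by $\lesssim \diam(\Omega)^p\lambda(\Omega)\bigl(\lambda(\Omega)/\mu(\Omega)\bigr)^{p/d}$. Absorbing the $\eps$-dependent constants into $c$ gives the first inequality.

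For the second inequality, the plan is to start from \eqref{eq:same-asymptotics-trivial}, which uses the intermediate $(1-u)(\mu+\lambda)$ with $u:=\lambda(\Omega)/((\mu+\lambda)(\Omega))$ and gives
\[
W_\Omega^p(\mu)\le (1+\eps)(1-u)\,W_\Omega^p(\mu+\lambda) + \tfrac{c}{\eps^{(p-1)^+}}\,W_\Omega^p\bigl(\mu,(1-u)(\mu+\lambda)\bigr);
\]
since $(1-u)\le 1$, the first summand is already at most $(1+\eps)W_\Omega^p(\mu+\lambda)$. The task is to upgrade the trivial estimate $W_\Omega^p(\mu,(1-u)(\mu+\lambda))\lesssim \diam(\Omega)^p\lambda(\Omega)$ to $\lesssim \diam(\Omega)^p\lambda(\Omega)\bigl(\lambda(\Omega)/(\delta\mu(\Omega))\bigr)^{p/d}$. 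To do so, I would perform a further triangle/sub-additivity split through an additional intermediate carrying a uniform background of mass $\delta\mu(\Omega)$, so that one of the resulting pieces is an instance of $W_\Omega^p\bigl(\lambda+h/|\Omega|\bigr)$ with $h=\delta\mu(\Omega)$, which \Cref{prop:density-helps} bounds by $\lesssim \diam(\Omega)^p\lambda(\Omega)(\lambda(\Omega)/(\delta\mu(\Omega)))^{p/d}$. The condition $\lambda(\Omega)/\mu(\Omega)\le \delta$ is precisely what keeps the intermediate measures positive (equivalently $h\ge \lambda(\Omega)$, so \Cref{prop:density-helps} is in its intended regime), while $\delta \le 1/c$ controls the rescaling losses; together these produce the extra $(1+c\delta)$ prefactor multiplying $(1+\eps)W_\Omega^p(\mu+\lambda)$.

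The main obstacle I expect lies in the second inequality: after the first sub-additivity reduction, $W_\Omega^p(\mu,(1-u)(\mu+\lambda))\le W_\Omega^p(u\mu,(1-u)\lambda)$, which is a Wasserstein cost between two measures without any uniform background, so \Cref{prop:density-helps} does not apply directly. The technical content of the proof is to set up the $\delta$-interpolation so that one extracts, through a final triangle/sub-additivity split, \emph{precisely one} instance of $W_\Omega^p(\lambda+\delta\mu(\Omega)/|\Omega|)$ (to which \Cref{prop:density-helps} applies), while all the other terms can be collected cleanly into the $(1+c\delta)(1+\eps)$ prefactor multiplying $W_\Omega^p(\mu+\lambda)$.
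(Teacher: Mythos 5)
Your plan for the first inequality matches the paper's proof exactly: triangle inequality through the intermediate $\mu(\Omega)/|\Omega|+\lambda$, bound the first leg by $W_\Omega^p(\mu)$ via sub-additivity, and bound the second leg $W_\Omega^p\bigl(\mu(\Omega)/|\Omega|+\lambda\bigr)$ by \Cref{prop:density-helps}.

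For the second inequality your tools are right and you correctly flag the main obstacle, but the mechanism you describe for producing the $(1+c\delta)$ prefactor would not go through: you say the error terms from the $\delta$-interpolation "can be collected cleanly into the $(1+c\delta)(1+\eps)$ prefactor multiplying $W_\Omega^p(\mu+\lambda)$." If you carry out the interpolation (the paper writes $\mu=(1-2\delta)\mu+2\delta\mu$ and $(1-u)(\mu+\lambda)=(1-2\delta)\mu+\bigl[(2\delta-u)\mu+(1-u)\lambda\bigr]$, then inserts the uniform $(2\delta-u)\mu(\Omega)/|\Omega|$ into the second piece), the leftover pieces you encounter are $W_\Omega^p(2\delta\mu)=2\delta\,W_\Omega^p(\mu)$ and $W_\Omega^p\bigl((2\delta-u)\mu\bigr)\le 2\delta\,W_\Omega^p(\mu)$ — i.e.\ of order $\delta\,W_\Omega^p(\mu)$, \emph{not} $\delta\,W_\Omega^p(\mu+\lambda)$. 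These cannot be folded into the $W_\Omega^p(\mu+\lambda)$ prefactor; the paper instead absorbs them on the \emph{left-hand side}: one arrives at
\begin{equation*}
W_\Omega^p(\mu)\le(1+\eps)W_\Omega^p(\mu+\lambda)+\bar c\,\delta\,W_\Omega^p(\mu)+\bar c\,\diam(\Omega)^p\lambda(\Omega)\bigl(\lambda(\Omega)/(\delta\mu(\Omega))\bigr)^{p/d},
\end{equation*}
subtracts $\bar c\delta\,W_\Omega^p(\mu)$ from both sides, and divides by $1-\bar c\delta$ using $1/(1-\bar c\delta)\le 1+2\bar c\delta$ for $\bar c\delta$ small; this division is the origin of the $(1+c\delta)$ prefactor and is precisely where $\delta\le 1/c$ enters. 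Without the LHS absorption your sketch hits a wall. A secondary imprecision: $\lambda(\Omega)/\mu(\Omega)\le\delta$ is not there to ensure $h\ge\lambda(\Omega)$ (\Cref{prop:density-helps} imposes no such relation); it ensures $u\le\delta$ so that the splitting coefficient $2\delta-u\ge\delta>0$ stays bounded below and the interpolating measures are non-negative.
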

\begin{proof}
The first inequality is straightforward from the triangle inequality and \eqref{eq:elementary},
\begin{equation}
W_{\Omega}^p(\mu+\lambda) \le (1+\eps)W_{\Omega}^p(\mu+\lambda,\mu(\Omega)/|\Omega| + \lambda) + c(p,\eps) W_{\Omega}^p(\mu(\Omega)/|\Omega| + \lambda),
\end{equation}
 and an application of  \Cref{prop:density-helps}. Let us focus on the second one. Without loss of generality, we can assume that  $c \ge 2$, hence $\delta\le 1/2$. Write
\begin{equation}
u := \frac{\lambda(\Omega)}{\mu(\Omega)+\lambda(\Omega)} \le \frac{\lambda(\Omega)}{\mu(\Omega)} \le \delta.
\end{equation}
%
By the triangle inequality and \eqref{eq:elementary}, we have, for $c = c(p,\eps)$,
\begin{equation}
 W_{\Omega}^p(\mu)  \le (1+\eps)W_\Omega^p( (1-u) (\mu+\lambda) ) + c W_{\Omega}^p\bra{ \mu, (1-u)(\mu+\lambda)}.
 \end{equation}
 Since 
 \begin{equation}
  W_\Omega^p( (1-u) (\mu+\lambda) ) = (1-u)W_\Omega^p( \mu+\lambda)\le W_\Omega^p( \mu+\lambda),
 \end{equation}
 we focus on the second term. By \eqref{eq:sub} and repeated applications of the triangle inequality and \eqref{eq:elementary} with $\eps=1/2$, we find
 \begin{equation}
  \begin{split}
  W_{\Omega}^p\bra{ \mu, (1-u)(\mu+\lambda)} & \le W_{\Omega}^p((1-2\delta) \mu, (1-2\delta) \mu) + W_{\Omega}^p\bra{  2\delta \mu, (2\delta-u)\mu + (1-u) \lambda } \\
  & \le  W_{\Omega}^p\bra{  2\delta\mu, (2\delta-u)\mu + (1-u) \lambda }\\
  & \les W_{\Omega}^p\bra{  2\delta\mu, (2\delta-u)\mu(\Omega)/|\Omega|+ (1-u) \lambda }\\
  & \quad  +  W_{\Omega}^p\bra{  (2\delta-u)\mu(\Omega)/|\Omega|+ (1-u) \lambda, (2\delta-u)\mu + (1-u) \lambda }\\
  & \les  W_{\Omega}^p\bra{  2\delta \mu} +W_{\Omega}^p\bra{  (2\delta-u)\mu(\Omega)/|\Omega|+ (1-u) \lambda } \\
  & \quad  + W_{\Omega}^p\bra{ (2\delta-u)\mu },
  \end{split}
  \end{equation} 
where the implicit constants depend on $p$ only. By \Cref{prop:density-helps}, we obtain
\begin{equation}\begin{split}
W_{\Omega}^p\bra{  \frac{(2\delta-u)\mu(\Omega)}{(1-u)|\Omega|}+\lambda } & \les \diam(\Omega)^p \lambda(\Omega) \bra{ \frac{ \lambda(\Omega) (1-u)}{(2\delta - u) \mu(\Omega)}}^{p/d}\\
& \les  \diam(\Omega)^p \lambda(\Omega) \bra{ \frac{ \lambda(\Omega)}{\delta\mu(\Omega)}}^{p/d}.
\end{split}
\end{equation}
where the implicit constants depend here also on $\Omega$ (but are invariant with respect to rescaling). 
Collecting all the terms, for some constant $\bar c = \bar c(\Omega, d,p,\eps)$
  \begin{equation}
   W_{\Omega}^p(\mu)  \le (1+\eps)W_\Omega^p( \mu+\lambda ) + \bar c \delta W_{\Omega}^p\bra{ \mu} +  \bar c \diam(\Omega)^p \lambda(\Omega) \bra{ \frac{ \lambda(\Omega)}{\delta\mu(\Omega)}}^{p/d}.
  \end{equation}
  Subtracting both sides the term $c\delta W_{\Omega}^p\bra{ \mu}$ yields the thesis, by estimating $1/(1-\bar c\delta) \le 1+2 \bar c \delta$ provided that $\delta \bar c$ is sufficiently small, which can be easily achieved by choosing a final constant $c$ in the thesis that is sufficiently large.
%
\end{proof}
%

\begin{remark}\label{rem:density-helps}
If $0<p\le d/(d-1)$, we cannot apply directly \Cref{prop:density-helps}, but we may rather use \eqref{eq:wasserstein-h-p-r}, by choosing $r$ sufficiently large such that $pr>d/(d-1)$. This leads to modified bounds, respectively by adding the additional terms
\begin{equation}\label{eq:corrections-density-helps}
 \diam(\Omega)^p \mu(\Omega) \bra{ \frac{\lambda(\Omega)}{\mu(\Omega)}}^{1/r+p/d}  \quad \text{and} \quad \diam(\Omega)^p (\delta \mu(\Omega) ) \bra{ \frac{\lambda(\Omega)}{\delta \mu(\Omega)}}^{1/r+p/d} 
\end{equation}
in the right hand sides.
\end{remark}

%
%
%

\subsection{Concentration inequalities}



We recall the classical Rosenthal's inequalities for i.i.d.\ variables (see e.g.\ \cite{rosenthal1970subspaces, ibragimov2001best, osekowski2012note}): for $q\ge 2$ and centered variables
\begin{equation}\label{eq:rosenthal-centered}
\nor{  \sum_{i=1}^n Z_i}_{L^q} \les_q n^{1/q} \nor{Z_1}_{L^q} + n^{1/2}\nor{Z_1}_{L^2},
\end{equation}
while for $q \ge 1$ and  positive variables,
\begin{equation}\label{eq:rosenthal-positive}
\nor{  \sum_{i=1}^n Z_i}_{L^q} \les_q n^{1/q}  \nor{ Z_1}_{L^q} + n \nor{ Z_1 }_{L^1}.
\end{equation}
These in particular yield, for a binomial random variable $N$ with parameters $(n,q)$ and $\EE\sqa{N} = np \ge 1$, the bounds
\begin{equation}
 \nor{ N - np}_{L^q } \les_q (np)^{1/2}, \quad\text{and} \quad \nor{N}_{L^q }\les_q np.
\end{equation}
and in the limit $n \to \infty$, $p \to 0$ with $np \to \lambda \ge 1$, so that $N \to  N_\lambda$  a  Poisson variable with mean $\lambda$:
\begin{equation}
 \nor{ N_{\lambda} - \lambda}_{L^q } \les_q \lambda^{1/2}, \quad\text{and} \quad \nor{N}_{L^q }\les_q \lambda.
\end{equation}
Finally, if we allow $n = N_\lambda$ in \eqref{eq:rosenthal-centered} to be a random Poisson variable with parameter $\lambda \ge c$ for some constant $c>0$ and independent of a i.i.d.\ family $(Z_i)_{i=1}^\infty$ of random variables with mean $m$, we obtain
\begin{equation}\label{eq:rosenthal-poisson}\begin{split}
\nor{  \sum_{i=1}^{N_\lambda} Z_i -\EE\sqa{\sum_{i=1}^{N_\lambda} Z_i }}_{L^q} & \les  \nor{  \sum_{i=1}^{N_\lambda} (Z_i  -m)}_{L^q} + \nor{   (N_{\lambda} - \lambda) m }_{L^q} \\
& \les \EE\sqa{ N_\lambda} ^{1/q} \nor{Z_1-m}_{L^q} + \EE\sqa{ N_{\lambda}^{q/2}}^{1/q} \nor{Z_1-m}_{L^2} + \lambda^{1/2} |m|\\
& \les \lambda^{1/q} \nor{Z_1}_{L^q} + \lambda^{1/2}\nor{Z_2}_{L^2}
\end{split}
\end{equation}
where the implicit constant depends on $q$ and $c$ only.

\section{Brownian motion}\label{sec:bm}

\subsection{Brownian motion on $\R^d$}  Throughout this section, we let $d \ge 3$.  Given $x \in \R^d$  write $P_x$ for the law  of a $d$-dimensional Brownian motion  $(B_t)_{t \ge 0}$ staring at $x$, i.e., such that  $B_0=x$. Similarly, write $E_x$ for the expectation with respect to $P_x$.  We extend such notations for a probability measure $\nu$ on $\R^d$, writing
\begin{equation}
P_\nu = \int_{\R^d} P_x d\nu(x), \quad E_\nu = \int_{\R^d} E_x d \nu(x).
\end{equation}
The Brownian motion process $(B_t)_{t \ge 0}$ is (strong) Markov with generator $\frac 1 2 \Delta$, and transition density
\begin{equation}
 p_t(x, y)   = \exp\bra{ - \frac{|x-y|^2}{2t} } \frac{1}{(2 \pi t)^{d/2}}.
 \end{equation}
The associated Green function is
\begin{equation}\label{eq:green}
g(x,y) = \int_0^\infty p_{t}\bra{x,y} dt = \frac{c}{|x-y|^{d-2}},
\end{equation}
for a suitable constant $c = c(d)\in (0, \infty)$. It is elementary to check that for every $\rho>0$, given a Brownian motion $(B_t)_{t \ge 0}$ starting at $0$, its rescaled process $B^\rho := (\rho B_{t/\rho^2})_{ t \ge 0}$ has the same law of $(B_t)_{t \ge 0}$. This yields the following identity in law between occupation measures:
\begin{equation}
\mu_T^{B} \stackrel{law}{=} \dil_{\rho} \mu_{T/\rho^2}^B.
\end{equation}

%

For a  compact $K \subseteq \R^d$ with Lipschitz boundary, denote with $e_K$ its equilibrium measure, which is  defined as the  Borel measure $\mu$ concentrated on $K$  such that its Newtonian potential
\begin{equation}
u_\mu (x) := \int_{\R^d} g(x, y) d \mu(y),
\end{equation}
is identically $1$ for every $x \in K$, where $g$ is Green's function \eqref{eq:green}. It is well-known \cite{port2012brownian} that such a measure exists and is unique, also for sets $X$ with less regular boundaries (but it requires the notion of regular points). Its total mass $e_K(\R^d)$ is called the capacity $\Cap(K)$ of $K$. We write $\tilde{e}_K = \Cap(K)^{-1} e_K$ for its normalization to a probability measure, which we call the normalized equilibrium measure. With this notation, we have the following result.
%

\begin{proposition}\label{lem:sweeping}
Let $K \subseteq \R^d$. Then, $\Cap( u K ) = u^{d-2} \Cap(K)$ for every $u >0$, and  
\begin{equation}\label{eq:expected-occupation-measure}
 E_{e_K}\sqa{\mu_\infty^B(K)} = |K|.
\end{equation}
Let $\tilde{K} \subseteq K \subseteq \R^d$. Then,
\begin{equation}
P_{e_{K}} ( \tau_{\tilde{K}} < \infty, B_{\tau_{\tilde{K}}} \in A ) = P_{e_{\tilde{K}}}(B_0 \in A).
\end{equation}
In particular, 
\begin{equation}
P_{ \tilde{e}_{K}} ( \tau_{\tilde{K}} < \infty ) = \Cap(\tilde{K})/\Cap(K).
\end{equation}
\end{proposition}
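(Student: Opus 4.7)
My plan is to handle the four assertions in sequence, starting from the easiest scaling identity, then the occupation-time formula, then the balayage statement (which is the substantive one), and finally deducing the hitting probability as an immediate consequence.

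\textbf{Scaling of capacity.} From $g(ux,uy)=u^{-(d-2)}g(x,y)$, a direct computation shows that if $e_K$ satisfies $\int g(x,y)\,de_K(y)=1$ for $x\in K$ (quasi-everywhere), then the pushforward of $u^{d-2}e_K$ under the dilation $z\mapsto uz$ has potential identically $1$ on $uK$ and is supported on $uK$. By uniqueness of the equilibrium measure this pushforward equals $e_{uK}$, giving $\Cap(uK)=u^{d-2}\Cap(K)$.

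\textbf{Expected occupation time.} I would compute
\begin{equation}
E_{e_K}[\mu_\infty^B(K)]=\int de_K(x)\int_0^\infty P_x(B_s\in K)\,ds = \int de_K(x)\int_K g(x,y)\,dy,
\end{equation}
swap the integrals by Fubini, use the symmetry $g(x,y)=g(y,x)$, and recognize the inner integral as $u_{e_K}(y)$, which equals $1$ for (Lebesgue-almost every) $y\in K$. This yields $|K|$.

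\textbf{Balayage identity.} The measure $\nu(A):=P_{e_K}(\tau_{\tilde K}<\infty,\ B_{\tau_{\tilde K}}\in A)$ is the classical balayage (sweeping out) of $e_K$ onto $\tilde K$. I would verify that $\nu=e_{\tilde K}$ by checking the defining properties. First, $\nu$ is clearly supported on $\tilde K$. Second, for $x\in\tilde K$, I would compute the potential
\begin{equation}
u_\nu(x)=\int_{\tilde K} g(x,y)\,d\nu(y)=E_{e_K}\bigl[g(x,B_{\tau_{\tilde K}})\,I_{\{\tau_{\tilde K}<\infty\}}\bigr],
\end{equation}
apply Fubini and the symmetry of $g$, and then use the strong Markov property at $\tau_{\tilde K}$: since $g(z,x)=E_z\bigl[\int_0^\infty \chi_{\{B_s=x\}}\,ds\bigr]$ in the distributional sense, integrating against a smooth mollifier $f$ concentrating at $x$ shows that
\begin{equation}
E_z\bigl[g(B_{\tau_{\tilde K}},y);\tau_{\tilde K}<\infty\bigr]=E_z\Bigl[\int_{\tau_{\tilde K}}^\infty f_\delta(B_s-y)\,ds\Bigr]\to g(z,y)-E_z\Bigl[\int_0^{\tau_{\tilde K}} f_\delta(B_s-y)\,ds\Bigr].
\end{equation}
For $y=x\in\tilde K$, the bracketed difference gives exactly $g(z,x)$ minus the occupation potential up to $\tau_{\tilde K}$, and when integrated against $de_K(z)$ on $K$, using $u_{e_K}=1$ on $K$, both contributions collapse to $u_\nu(x)=u_{e_K}(x)=1$. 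By the uniqueness of the equilibrium measure on $\tilde K$ characterized by $u_\nu\equiv 1$ on $\tilde K$ and support in $\tilde K$, we conclude $\nu=e_{\tilde K}$, i.e.\ $P_{e_K}(\tau_{\tilde K}<\infty,B_{\tau_{\tilde K}}\in\cdot)=e_{\tilde K}(\cdot)=P_{e_{\tilde K}}(B_0\in\cdot)$.

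\textbf{The consequence.} Setting $A=\tilde K$ in the previous identity and normalizing by $\Cap(K)$ yields
\begin{equation}
P_{\tilde e_K}(\tau_{\tilde K}<\infty)=\frac{\nu(\tilde K)}{\Cap(K)}=\frac{e_{\tilde K}(\tilde K)}{\Cap(K)}=\frac{\Cap(\tilde K)}{\Cap(K)}.
\end{equation}
The main technical obstacle is the balayage step: one must handle the singularity of $g$ carefully and deal with irregular boundary points of $\tilde K$ (where $u_{e_{\tilde K}}$ can drop below $1$). In practice the cleanest route is to invoke Hunt's balayage theorem from classical potential theory, citing a reference such as Port--Stone, rather than reproving it; alternatively, for the $C^2$ domains and cubes used later in the paper, every boundary point is regular and the argument above goes through without exceptional sets.
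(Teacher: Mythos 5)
The paper states Proposition 3.1 without proof, treating all four assertions as classical facts from potential theory and citing Port--Stone (\cite{port2012brownian}) for the relevant background on equilibrium measures. Your proposal is therefore filling a gap the paper leaves implicit, and it correctly outlines the standard derivations: the dilation-scaling of capacity follows from the homogeneity $g(ux,uy)=u^{-(d-2)}g(x,y)$ plus uniqueness of the equilibrium measure; the occupation-time formula is exactly Fubini plus symmetry of $g$ plus $u_{e_K}\equiv 1$ on $K$; the balayage step is the substantive one, and your plan --- compute the potential of the hitting distribution, use $E_z[g(B_{\tau_{\tilde K}},x);\tau_{\tilde K}<\infty]=g(z,x)$, integrate against $e_K$, and invoke uniqueness --- is the right route.

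One small point worth tightening in the balayage step: your mollifier display mixes a limit $\delta\to 0$ with an equality sign, and the argument only closes if you observe that the subtracted term $E_z\bigl[\int_0^{\tau_{\tilde K}} f_\delta(B_s-y)\,ds\bigr]$ converges, as $\delta\to 0$, to the Green function of the process killed on hitting $\tilde K$, which vanishes at every regular point $y\in\tilde K$. You implicitly use this but never say it; it is the reason $u_\nu(x)=u_{e_K}(x)$ rather than something smaller. You also need to note that $e_K$-a.e.\ $z$ is either outside $\tilde K$ (where the maximum-principle identity applies) or inside $\tilde K$ (where $\tau_{\tilde K}=0$ and the identity is trivial), so the identity holds $e_K$-a.e.\ before integrating. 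You correctly flag at the end that the cleanest rigorous route is to cite Hunt's balayage theorem, which is exactly what the paper does implicitly via the Port--Stone reference; given that the domains used in the paper ($C^2$ domains and cubes) have only regular boundary points, the exceptional-set issues you raise do not bite in the applications.
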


We will use throughout that in case of balls $\tilde K = D_\ell$, ${K} = D_L$, with $\ell < L$, the normalized equilibrium measures are the uniform probability distributions on the respective boundaries. Moreover, given any $x \in \partial D_L$,  one can prove that
\begin{equation}\label{eq:prob-hitting-ball}
P_x (\tau_{D_\ell} < \infty) = \bra{\ell/L}^{d-2}.
\end{equation}

We end this section collecting some elementary upper and lower bounds on the occupation measures.

\begin{lemma}\label{lem:transient-bm-integral-bounds}
Let $d \ge 3$. For every bounded Borel set $A\subseteq \R^d$ and $x \in \R^d$, it holds
\begin{equation}\label{eq:transient-bm}
E_x \sqa{ \mu_\infty^B(A)} \les \min\cur{ \diam(A)^2,  \frac{|A|}{\dist(x, A)^{d-2} }}.
\end{equation}
For every $q \ge 1$,
\begin{equation} \label{eq:bm-p-power} E_x\sqa{ \mu_\infty^B(A) ^q } \les_{q} \diam(A)^{2q}. 
\end{equation}
\begin{proof}
Without loss of generality, we can assume that $A$ is closed. Consider first the case $q=1$. Assume first that $\dist(x,A)=0$. Then, $A \subseteq \cur{y: |x-y| \le \diam(A)}$, hence
\begin{equation} \int_0^\infty P_x(B_t \in A)  dt = \int_A \frac{c(d)}{|x-y|^{d-2}} dy \les \int_0^{\diam(A)} \frac{ r^{d-1}}{r^{d-2}} dr = \diam(A)^2.\end{equation}
If instead $\dist(x,A)>0$, write
\begin{equation}
\mu_\infty^B = \mu_\infty^{\theta_{\tau_A} B}
\end{equation}
and use that $\theta_{\tau_A} B$ is a Brownian motion (by the strong Markov property) with initial law $\nu$, concentrated on $A$ (actually, on $\partial A$). Hence,
\begin{equation}
E_x\sqa{ \mu_\infty^ B(A)} = \int_{\partial A} E_y\sqa{ \mu_\infty^{B}(A)} d\nu(y) \les \diam(A)^{2}.
\end{equation}
To obtain the estimate
\begin{equation}
E_x\sqa{ \mu_\infty^B(A) } \les   \frac{|A|}{\dist(x, A)^{d-2} },
\end{equation}
simply bound from above the green function $g(x,y) \les \dist(x, A)^{2-d}$ for $y \in A$. 

For the general case, it is sufficient to assume that $q \in \mathbb{N}$. Arguing by induction, write
\begin{equation}
E_x\sqa{ \mu_\infty^B(A) ^q } = \int_{(0, \infty)^q}   P_{x}(B_{t_i} \in  A\,\,   \forall i) dt_1 dt_2 \ldots dt_q
\end{equation}
and let $S_q := \cur{(t_i)_{i=1}^q \in (0, \infty)^q\, :\,  t_1 \le \ldots \le t_q}$. Then,
\begin{equation}
\int_{[0, \infty)^q} P_{x}(B_{t_i} \in  A\,\,   \forall i) dt_1 dt_2 \ldots dt_q = q! \int_{S_q} P_{x}(B_{t_i} \in  A\,\,  \forall i).
\end{equation}
Using the Markov property, we have
\begin{equation} \begin{split}
 \int_{S_q} P_{x}(B_{t_i} \in  A\,\,   \forall i ) & = \int_{S_{q-1}} \bra{  \int_{t_{q-1}}^\infty dt_q P_{y}(B_{t_q} \in A | B_{t_{q-1}} =y )} P_{x}(B_{t_i} \in  A\,\,   \forall i )\\
&  \les \diam(A)^2  \int_{S_{q-1}}  P_{x}(B_{t_i} \in  A\,\,   \forall i). \qedhere
\end{split}\end{equation}
\end{proof}
\end{lemma}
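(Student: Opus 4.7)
The plan is to prove the first bound by expressing the expected occupation measure as an integral of the Green function, then treat the two estimates in the minimum by two different arguments. Writing
\[
E_x[\mu_\infty^B(A)] = \int_0^\infty P_x(B_t \in A)\, dt = \int_A g(x,y)\, dy,
\]
the bound $|A|/\dist(x,A)^{d-2}$ is immediate because $g(x,y) = c/|x-y|^{d-2} \le c/\dist(x,A)^{d-2}$ uniformly for $y \in A$. The other bound $\diam(A)^2$ is harder when $\dist(x,A) > 0$, because the integrand is not small there, but it is the diameter of $A$ (not of the distance to $x$) that controls things. My approach is to use the strong Markov property at $\tau_A$: on $\{\tau_A = \infty\}$, $\mu_\infty^B(A) = 0$, so only the event $\{\tau_A < \infty\}$ contributes, and on that event the future trajectory $\theta_{\tau_A} B$ is a Brownian motion starting from a point $y \in \partial A$. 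Thus it suffices to prove $\sup_{y \in \overline{A}} E_y[\mu_\infty^B(A)] \les \diam(A)^2$, and this follows from spherical integration: for $y \in \overline{A}$ we have $A \subseteq \overline{D_{\diam(A)}(y)}$, so
\[
E_y[\mu_\infty^B(A)] \le c \int_{D_{\diam(A)}(y)} |y - z|^{2-d}\, dz \les \int_0^{\diam(A)} r\, dr \les \diam(A)^2.
\]

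For the second bound, it suffices by monotonicity in $q$ to handle integer $q$, and I would proceed by induction on $q$, the base case $q=1$ being exactly what we just established (combined with the trivial bound $E_x \le \sup_y E_y$). Using Fubini and symmetry, write
\[
E_x[\mu_\infty^B(A)^q] = q! \int_{0 < t_1 < \cdots < t_q} P_x(B_{t_i} \in A \ \forall i)\, dt_1 \cdots dt_q.
\]
Conditioning on $\mathcal{F}_{t_{q-1}}$ and applying the Markov property, the integration in $t_q$ over $(t_{q-1}, \infty)$ produces $E_{B_{t_{q-1}}}[\mu_\infty^B(A)]$. On the event $\{B_{t_{q-1}} \in A\}$ this is bounded by $C \diam(A)^2$ by the first part, so the whole expression is dominated by $q \cdot C \diam(A)^2 \cdot E_x[\mu_\infty^B(A)^{q-1}]$. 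Iterating gives $E_x[\mu_\infty^B(A)^q] \le q!\, C^q \diam(A)^{2q}$, which is exactly the claim (with constant depending on $q$).

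The main potential obstacle is the first part, where one might naively integrate the Green function from $x$ directly and get a bound that depends on $\dist(x,A)$ in a way that blows up (or is too weak) near the set. The trick is to recognize that once the process enters $A$, the remaining occupation time depends only on $\diam(A)$, not on how the process got there; without the strong Markov reduction one cannot decouple $x$ from $\diam(A)$. All other steps are standard manipulations of the Green function and of symmetric integrals.
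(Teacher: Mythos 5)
Your proof is correct and follows essentially the same route as the paper: express the expectation as an integral of the Green function, use the strong Markov property at $\tau_A$ to reduce the $\diam(A)^2$ bound to the case of a starting point on $\overline{A}$ (where spherical integration over a ball of radius $\diam(A)$ gives the estimate), bound the Green function uniformly by $\dist(x,A)^{2-d}$ for the second term in the minimum, and prove the moment bound by induction over the ordered simplex with the Markov property contributing a factor of $C\diam(A)^2$ at each step. The only minor addition is your explicit justification for reducing to integer $q$ via monotonicity of $L^q$-norms, which the paper takes for granted.
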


We can  also easily estimate from below the time spent in $D_1$ before exiting $D_L$, i.e., the variable $\mu_{\tau_{D_L^c}}^B(D_1)$ for a Brownian motion starting at $x \in D_1$.

\begin{lemma}\label{lem:expected-tau-before-L}
If $d \ge 3$, there exists $c=c(d)$ such that, if $\min\cur{L, T} \ge c$ it holds
\begin{equation}
\inf_{x \in {D}_1} E_x\sqa{ \mu_{T \land \tau_{D_L^c}}^B( D_1 )}  \ge 1/c.
\end{equation}
\end{lemma}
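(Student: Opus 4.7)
The plan is to decompose
\begin{equation}
E_x\sqa{ \mu_{T \wedge \tau_{D_L^c}}^B(D_1)} \; = \; E_x\sqa{\mu_\infty^B(D_1)} \; - \; E_x\sqa{ \mu_\infty^B(D_1) - \mu_{T \wedge \tau_{D_L^c}}^B(D_1)}
\end{equation}
and show (i) the first term admits a uniform positive lower bound $c_1(d)$ for all $x \in D_1$, and (ii) the subtracted tail contribution drops below $c_1/2$ as soon as $L,T$ exceed a constant depending only on $d$.

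For (i) I would use Fubini together with the Green function formula \eqref{eq:green},
\begin{equation}
E_x\sqa{\mu_\infty^B(D_1)} = \int_{D_1} g(x,y)\,dy = c(d) \int_{D_1} |x-y|^{-(d-2)}\,dy,
\end{equation}
and note that for every $x \in D_1$ the ball $D_{1/2}(x/2)$ is contained in $D_1$ (since $|x/2|\le 1/2$), while on it $|x-y|\le 1$. Thus the integrand is at least $1$ on a set of volume $|D_{1/2}|$, which is a positive constant depending only on $d$. This yields the uniform lower bound $E_x\sqa{\mu_\infty^B(D_1)} \ges_d 1$ for all $x \in D_1$.

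For (ii), I would exploit the set identity $[T \wedge \tau_{D_L^c}, \infty) = [T,\infty) \cup [\tau_{D_L^c},\infty)$, which gives
\begin{equation}
\mu_\infty^B(D_1) - \mu_{T \wedge \tau_{D_L^c}}^B(D_1) \le \int_T^\infty I_{B_s \in D_1}\,ds + \int_{\tau_{D_L^c}}^\infty I_{B_s \in D_1}\,ds.
\end{equation}
The expectation of the first summand is bounded directly via the Gaussian heat kernel: $\int_T^\infty P_x(B_s \in D_1)\,ds \le |D_1| \int_T^\infty (2\pi s)^{-d/2}\,ds \les_d T^{1-d/2}$. For the second, noting that $\tau_{D_L^c} < \infty$ a.s., the strong Markov property yields
\begin{equation}
E_x\sqa{ \int_{\tau_{D_L^c}}^\infty I_{B_s \in D_1}\,ds} = E_x\sqa{ E_{B_{\tau_{D_L^c}}}\sqa{\mu_\infty^B(D_1)}},
\end{equation}
and since $B_{\tau_{D_L^c}} \in \partial D_L$ has distance $L-1 \ge L/2$ from $D_1$ (for $L\ge 2$), the second bound in \eqref{eq:transient-bm} from \Cref{lem:transient-bm-integral-bounds} gives $E_y\sqa{\mu_\infty^B(D_1)} \les_d L^{-(d-2)}$ uniformly over $y \in \partial D_L$.

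Combining the two estimates, the tail contribution is $\les_d T^{1-d/2} + L^{-(d-2)}$, which is at most $c_1/2$ once $\min\cur{T,L} \ge c(d)$ for a sufficiently large constant $c(d)$; the conclusion then follows with a possibly enlarged $c$. I do not expect any genuine obstacle here: the only subtlety is the uniform lower bound in (i) all the way up to $\partial D_1$, where one must avoid centering the auxiliary sub-ball at $x$ itself (which would leave arbitrarily little room inside $D_1$) and instead center it at $x/2$, a trick that works because $D_1$ is convex and contains the origin.
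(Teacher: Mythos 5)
Your proof is correct and takes essentially the same route as the paper: decompose $E_x[\mu^B_{T\wedge\tau_{D_L^c}}(D_1)]$ as the full Green-potential mass $E_x[\mu^B_\infty(D_1)]$ minus the tail, split the tail via $[T\wedge\tau,\infty)\subseteq[T,\infty)\cup[\tau,\infty)$, and make each piece small using heat-kernel decay and the transience estimate. The only differences are cosmetic local estimates (the paper lower-bounds the Green potential by a cone at $x$ rather than your sub-ball $D_{1/2}(x/2)$, and bounds the $\tau_{D_L^c}$-tail via the hitting probability \eqref{eq:prob-hitting-ball} and \eqref{eq:bm-p-power} rather than directly invoking \eqref{eq:transient-bm}); these are interchangeable and both yield the stated uniform lower bound.
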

\begin{proof}
Notice first that, for every $x \in {D}_1$,
\begin{equation}
E_x\sqa{ \mu_{\infty}^B( D_1 )} = \int_0^\infty P_t(B_t \in D_1) dt =\int_{D_1} \frac{c(d)}{|x-y|^{d-2}} dy \gtrsim \int_{1/2}^{3/2} \frac{ r^{d-1}}{r^{d-2}} dr \gtrsim 1,
\end{equation}
where the first lower bound follows by intersecting $D_1$ with a suitable cone with vertex at $x$. Using the strong Markov property \eqref{eq:prob-hitting-ball} and \eqref{eq:bm-p-power}, we also find that
\begin{equation}
E_x \sqa{ \mu_{\infty}^{\theta_{\tau_{D_L^c}}B}( D_1 )} \les L^{2-d} \sup_{y \in \partial D_1}  E_y \sqa{ \mu_{\infty}^{B}( D_1 ) }\to 0
\end{equation}
as $L \to \infty$, and convergence is uniform with respect to $x \in \bar{D}_1$.
Moreover, denoting with $Z$ a standard Gaussian variable on $\R^d$,  by \eqref{eq:transient-bm}, we have
\begin{equation}\begin{split}
E_x\sqa{ \mu_{\infty}^{\theta_T B}(D_1)} & = \EE\sqa{ E_{x+\sqrt{T} Z}\sqa{ \mu_{\infty}^{B}(D_1)} }\\
& \les \EE\sqa{ \min\cur{1, \dist(x+\sqrt{T} Z, D_1)^{2-d} }} \to 0
\end{split}
\end{equation}
as $T \to \infty$, and convergence is uniform with respect to $x \in {D}_1$. Thus, we trivially bound from above
\begin{equation}
E_x \sqa{ \mu_{\infty}^{\theta_{ T \land \tau_{D_L^c}  }B}( D_1 )} \le E_x \sqa{ \mu_{\infty}^{\theta_{D_L^c}B}( D_1 )}+ E_x\sqa{ \mu_{\infty}^{\theta_T B}(D_1)} \to 0.
\end{equation}
By difference,
\begin{equation}
E_x \sqa{  \mu_{\tau_{D_L^c} \land T}^B( D_1 ) }  = E_x\sqa{ \mu_{\infty}^B( D_1 )} - E_x \sqa{ \mu_{\infty}^{\theta_{ \tau_{D_L^c} \land T }B}( D_1 )} \gtrsim 1   
\end{equation}
if $L$ and $T$ are both sufficiently large.
\end{proof}

\begin{remark}\label{rem:scaling}
A simple scaling argument, yields that, if $0<\ell< L<\infty$, $T>0$ are such that $\min\cur{L/\ell, T/\ell^2 } \ge c$, then
\begin{equation}
\inf_{x \in {D}_\ell} E_x \sqa{ \mu_{T \land \tau_{D_L^c}} ^B (D_\ell)} \ge \ell^2/c.
\end{equation}
\end{remark}

\subsection{Brownian motion on $\T^d$}

The above notions have their counterparts on Riemannian manifolds or even more generally on certain classes of metric measure spaces. We focus here on the flat torus $\T^d$, where the transition density  is given by
\begin{equation}
p_t(x, y) = \sum_{z \in \mathbb{Z}^d} \exp\bra{- \frac{ |x-y-z|^2}{2t}} \frac{1}{(2 \pi t)^{d/2}}.
\end{equation}
We write $P_x$, $E_x$ as in the case of $\R^d$. The (uniform) Lebesgue measure on $\T^d$ is the invariant measure for the Brownian motion, and we say that $(B_t)_{t \ge 0}$ is stationary if $B_0$ has uniform law. For every $p \ge 1$, there exists $c=c(p,d)$ such that
\begin{equation}
E_x \sqa{ \bra{ \dist_{\T^d} (x, B_t)}^p} \le C (t\land 1)^{p/2}, \quad \text{for $t \ge 0$.} 
\end{equation}
Moreover, the ultra-contractivity inequality holds: for every $t \ge 0$,
\begin{equation}\label{eq:ultra}
\sup_{x \in \T^d} E_x \sqa{ f(B_t) } \les t^{-d/2} \int_{\T^d} f .
\end{equation}
for every non-negative Borel $f: \T^d \to [0, \infty)$. We also have, for some $c=c(d)>0$,
\begin{equation}\label{eq:convergence-tv}
 \sup_{x \in \T^d} \TV(p_t(x, \cdot), 1) \les e^{- c t} \quad \text{for all $t>0$,}
\end{equation}
where with a slight abuse of notation write densities with respect to Lebesgue measure on $\T^d$ instead of measures. This implies, by \eqref{eq:contraction-tv} that for a Brownian motion $(B_t)_{t \ge 0}$  on $\T^d$ with any initial law,
\begin{equation}\label{eq:convergence-tv-any-law}
\TV\bra{ \PP_{(B_0, B_t)}, \PP_{B_0} \otimes 1 } \les e^{-ct} \quad \text{for all $t>0$. }
\end{equation}
The validity of \eqref{eq:convergence-tv} can be seen in many ways, e.g.\ by using the identity
\begin{equation}
p_t(x,y) = \sum_{z \in \mathbb{Z}^d} e^{i 2 \pi (x-y)\cdot z - t |z|^2/2 }
\end{equation}
and estimating, for $t \ge 1$ (the case $t<1$ is trivial, since $\TV \le 1$ anyway)
\begin{equation}
\begin{split}
\sup_{x \in \T^d} \TV(p_t(x, \cdot),  1) & \le \sum_{z \in \mathbb{Z}^d\setminus \cur{0}} e^{- c t |z|^2 } \les \int_{1}^\infty r^{d-1} e^{- c t r^2} dr\\
& \les t^{-d/2} \int_{\sqrt{t}}^\infty u^{d-1} e^{-c u^2} du \les  e^{-c' t}.
\end{split}
\end{equation}
A kind of counterpart of \Cref{lem:transient-bm-integral-bounds} is the following.

\begin{lemma}\label{lem:moment-bm-uniform-torus}
Let $d \ge 1$ and let  $B$ be a stationary Brownian motion on $\T^d$. Then, for every Borel $A \subseteq \T^d$, it holds
\begin{equation}
\EE\sqa{\mu_T^B(A) } = |A| T, \quad \text{for $T \ge 0$,}\label{lem:expectation-bm-stationary}
\end{equation}
and, for every $q \ge 1$, there exists $c=c(d,q)$ such that, for every $T \ge 1$,
\begin{equation} \label{eq:recurrent-bm-p-power}   \EE\sqa{ \abs{ \mu_T^B(A) - |A|T }^q }^{1/q}  \le  C |A|^{1/q^*} T^{1/2},\end{equation}
and $1/q^* := \min\cur{1, 1/p+1/d}$.
\end{lemma}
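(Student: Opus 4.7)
The first identity is immediate: by Fubini and the fact that, under stationarity, each $B_s$ is uniform on $\T^d$, one has $\EE[\mu_T^B(A)] = \int_0^T |A|\,ds = |A|T$.

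For the moment bound I would combine It\^o's formula with the Poisson equation on the torus. Set $f := \chi_A - |A|$, which has zero mean on $\T^d$, and let $g$ be the unique mean-zero solution of $\Delta g = 2f$. By Calder\'on--Zygmund theory, $g \in W^{2,r}(\T^d)$ for every $r \in (1,\infty)$, with $\|D^2 g\|_{L^r} \les \|f\|_{L^r} \les |A|^{1/r}$. A standard regularization argument legitimates It\^o's formula at this limited regularity, producing the decomposition
\begin{equation*}
 \mu_T^B(A) - |A|T \;=\; \int_0^T f(B_s)\,ds \;=\; g(B_T) - g(B_0) - M_T,
\end{equation*}
with $M_T := \int_0^T \nabla g(B_s) \cdot dB_s$ a continuous square-integrable martingale of quadratic variation $\langle M \rangle_T = \int_0^T |\nabla g(B_s)|^2\,ds$.

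Taking $L^q$-norms and using stationarity (so that $\|g(B_t)\|_{L^q(\PP)} = \|g\|_{L^q(\T^d)}$ for every $t$, and likewise for $\nabla g$), the problem splits into two Sobolev-type estimates. For the boundary contribution I would bound $\|g(B_0) - g(B_T)\|_{L^q} \le 2\|g\|_{L^q(\T^d)}$ by Sobolev embedding combined with the elliptic estimate above. For the martingale I would apply Burkholder--Davis--Gundy and Jensen's inequality to the stationary quadratic variation, obtaining $\|M_T\|_{L^q} \les T^{1/2}\,\|\nabla g\|_{L^{\max\{q,2\}}(\T^d)}$, and then bound $\|\nabla g\|_{L^{\max\{q,2\}}(\T^d)}$ by a further Sobolev embedding and Calder\'on--Zygmund. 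In both cases one arrives at a bound of the form $|A|^\gamma$ with $\gamma \ge 1/q^*$, and using $T\ge 1$ to absorb the $T$-independent boundary contribution into the $T^{1/2}$ factor from the martingale yields the claim.

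The delicate point—and, I expect, the only real obstacle—is the bookkeeping of Sobolev exponents across regimes. Whenever the conjugate Lebesgue exponent $r$ prescribed by $W^{k,r}\hookrightarrow L^q$ (with $k=1$ for $\nabla g$ and $k=2$ for $g$) satisfies $1/r = 1/q + k/d \in (0,1)$, the inequalities produce the desired exponent $1/q + k/d$; when $1/r \ge 1$ the Sobolev embedding fails directly and I would instead pass through the Lebesgue inclusion $L^{r_0}\subset L^q$ on the compact torus with $r_0$ slightly above the critical threshold, saturating the $|A|$-exponent at $1$. The minimum in the definition of $1/q^*$ precisely captures these two regimes.
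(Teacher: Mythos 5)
Your proof is correct and takes essentially the same route as the paper: the same Poisson-equation plus It\^o decomposition of $\int_0^T(\chi_A - |A|)(B_s)\,ds$ into a boundary term and a martingale, the same use of Burkholder--Davis--Gundy and stationarity to reduce to spatial $L^q$ norms of $g$ and $\nabla g$, and the same Calder\'on--Zygmund plus Sobolev embedding bookkeeping. The case-split you flag at the end is handled in the paper by first reducing to $q \ge d/(d-1)$ via monotonicity of $L^q(\PP)$ norms, which is exactly the precise form of your remark about the two exponent regimes in $q^*$.
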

\begin{proof}
The first statement is straightforward, since
\begin{equation}
\EE\sqa{\mu_T^B(A) } =\int_0^T \PP(B_t \in A) dt = \int_0^T |A| dt = |A|T.
\end{equation}
To prove \eqref{eq:recurrent-bm-p-power}, since $L^q(\PP)$ norms are increasing with respect to $q$, we may assume that $q \ge d/(d-1)$, so that $\min\cur{1, 1/q+1/d} = 1/q+1/d$. Without loss of generality, we can also assume that $A$ is closed. Given a smooth function $g$ on $\T^d$, with $\int_{\T^d} g = 0$, let $f$ denote the solution to the elliptic equation
\begin{equation}
\frac 1 2 \Delta f = g, \quad \text{on $\T^d$.}
\end{equation}
By standard regularity theory, also $f$ is smooth, hence It\^o's formula applies yielding
\begin{equation}
\int g d \mu_T^B =\frac 1 2  \int_0^T \Delta f (B_s) ds = f(B_T) - f(B_0) - \int_0^T \nabla f( B_t) dB_t.
\end{equation}
Taking the $L^q(\PP)$ norm, we obtain by the triangle inequality and Burkholder-Davis-Gundy inequality that
\begin{equation}\begin{split}
\nor{ \int g d \mu_T^B}_{L^q(\PP)} & \les \nor{ f(B_T)}_{L^q(\PP)}+\nor{ f(B_0)}_{L^q(\PP)}
+ \nor{\int_0^T \nabla f(B_t) dB_t }_{L^q(\PP)}\\
&  \les \nor{ f(B_T)}_{L^q(\PP)} +  \nor{  \sqrt{ \int_0^T \abs{\nabla f}^2(B_t) dt }}_{L^p(\PP)}\\
& \les \nor{ f}_{L^q(\T^d)} +  \sqrt{T} \nor{ \nabla f}_{L^q(\T^d)}.
\end{split}
\end{equation}
By considering a sequence of smooth functions $(g_n)_n$ with $\int_{\T^d} g_n = 0$ for every $n \ge 1$ and such that $g_n \to \chi_A - |A|$ in $L^q(\T^d)$ but also $\int g_n d\mu_T^B \to \mu_T(A) - T|A|$ $\PP$-a.s., then by Calderon-Zygmund theory, we have that the induced solutions $f_n$ converge in the Sobolev space $W^{2,q}(\T^d)$ to the solution $f$ to $\frac 1 2 \Delta f = \chi_A - |A|$, and 
\begin{equation}
\nor{\nabla^2 f}_{L^q(\T^d)} \les \nor{ \chi_A - |A|}_{L^q(\T^d)} \les |A|^{1/q}.
\end{equation}
 By the Sobolev embedding on $\T^d$, we obtain
\begin{equation}
\nor{ f }_{L^p(\T^d)} + \nor{ \nabla f }_{L^p(\T^d)} \les |A|^{1/q},
\end{equation}
which leads to the thesis.
\end{proof}
%
%
%
%
%
%

\subsection{Hitting probabilities}

In this section, we consider a stationary Brownian motion $B$ on $\T^d$ and we estimate the hitting probability
\begin{equation}
 \PP( \sigma < \tau_{D_\ell} \le \rho ) 
\end{equation}
as well as the conditional hitting law $\nu_{\rho}$, defined as
\begin{equation}
\nu_{\rho} (A) := \PP( B_{\tau_{D_\ell}} \in A | 0 < \tau_{D_\ell} \le \rho ),
\end{equation}
for $\ell \to 0$ and $\rho \to \infty$ in the regime $\rho \ll \ell^{2-d}$, and $0\le \sigma < \rho$. These assumptions will entail that the event has a small probability, and that $\nu_{\sigma, \rho}$ is very close to the  uniform distribution on $\partial D_\ell$ (i.e., the normalized equilibrium measure $\tilde{e}_{D_\ell}$ for $D_\ell \subseteq \R^d$).
%

The problem of estimating the hitting probabilities of small sets much studied on manifolds, but we are not aware of bounds for the hitting law, hence we review and expand some results from \cite{GRIGORYAN2002115, drewitz2014introduction}. Consider a Brownian motion $(B_t)_{t \ge 0}$ on $M=\R^d$ or $\T^d$ and transition density $p(t,x,y)$.

Given a compact $K \subseteq M$ with smooth boundary, write $p_{K^c}(t,x,y)$ for the transition density of the Brownian motion killed upon hitting $K$. Recall that $p_{K^c}$ is symmetric, i.e., $p_{K^c}(t,x,y) = p_{K^c}(t,y,x)$ for every $x$, $y \in M$. In particular, on $\T^d$ this yields the identity
\begin{equation}\label{eq:key-identity-torus-hitting-time}
\int _{\T^d} p_{K^c}(t,x,y) dx = \int_{\T^d} p_{K^c}(t,y,x) dx = P_y(  \tau_{K} >t).
\end{equation}

Given a bounded Borel function on $\partial K$, we are interested in estimating the function
\begin{equation}
(t,x) \mapsto \Phi(t,x) := E_x\sqa{ \phi(B_{\tau_K}) I_{\cur{ 0< \tau_K \le t}} },
\end{equation}
which is a solution to the equation
\begin{equation}
\begin{cases}   \partial_t \Phi =\frac 1 2 \Delta \Phi &  \text{in $(0, \infty)\times K^c$,}\\
  u(0,x) = 0 &  \text{for $x \in K^c$,}\\
  u(t,x) = \phi(x) & \text{for $(0, \infty) \times \partial K$.}
  \end{cases}
  \end{equation}
Moreover, one has the representation
\begin{equation}
\partial_t \Phi(t,x) = \int_{\partial K} \phi(y) \partial_{\mathsf{n}} p_{K^c}(t,x, y)  \sigma(dy),
\end{equation}
where $\sigma$ denotes the surface measure on $\partial K$ and $\mathsf{n}$ the inward normal. Combining these facts,  the following representation theorem holds (see also \cite[Lemma 3.1]{GRIGORYAN2002115}).

\begin{lemma}\label{lem:fat-boundary}
Let $K \subseteq K' \subseteq M$ have smooth boundaries and let $\chi$ be (sufficiently smooth) and such that
\begin{equation}
\chi = \phi \quad \text{on $\partial K$,} \quad \text{and}\quad   \partial_{\mathsf{n}} \chi = 0 \quad \text{on $\partial K'$.}
\end{equation}
Then, for every $t>0$, $x \in K^c$,
\begin{equation}\label{eq:fat-boundary}
 \partial_t \Phi(t,x) = \int_{K' \setminus K} p_{K^c}(t,x,y) \Delta \chi(y) dy  ds -   \int_{K' \setminus K}  \partial_t p_{K^c}(t,x,y) \chi(y) dy.
\end{equation}
\end{lemma}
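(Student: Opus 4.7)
The starting point is the representation displayed just before the lemma, namely $\partial_t \Phi(t,x) = \int_{\partial K} \phi(y)\, \partial_{\mathsf{n}} p_{K^c}(t,x,y)\, \sigma(dy)$, which follows from the fact that $\Phi$ solves the heat equation in $K^c$ with zero initial data and Dirichlet datum $\phi$ on $\partial K$. Since $\chi = \phi$ on $\partial K$, one may replace $\phi$ by $\chi$ under the integral; the whole point is then to convert this surface integral on $\partial K$ into the claimed bulk integrals over $K'\setminus K$.

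\textbf{The key step is Green's second identity} applied to the pair $\chi$ and $p_{K^c}(t,x,\cdot)$ on the annular region $K' \setminus K$, which gives
\[
\int_{K'\setminus K}\bigl(\chi \Delta p_{K^c} - p_{K^c}\Delta \chi\bigr)\,dy = \int_{\partial(K'\setminus K)}\bigl(\chi \partial_\nu p_{K^c} - p_{K^c}\partial_\nu \chi\bigr)\,d\sigma,
\]
where $\nu$ denotes the outward normal to $K'\setminus K$. Two boundary pieces now have to be analysed. On $\partial K$, the killing condition forces $p_{K^c}(t,x,\cdot) \equiv 0$, so only $\chi\,\partial_\nu p_{K^c}$ survives; and since $\nu$ at $\partial K$ points into $K$, it is the opposite of the inward normal $\mathsf{n}$ to $K^c$, so that $\int_{\partial K}\chi\,\partial_\nu p_{K^c}\,d\sigma = -\partial_t \Phi(t,x)$ by the starting representation. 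On $\partial K'$, the Neumann hypothesis $\partial_\mathsf{n}\chi = 0$ kills the term $p_{K^c}\partial_\nu \chi$.

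\textbf{To conclude,} use the heat equation $\partial_t p_{K^c} = \tfrac12 \Delta p_{K^c}$ satisfied on $K^c$ to replace $\Delta p_{K^c}$ with $2\partial_t p_{K^c}$ in the bulk integral, and rearrange to isolate $\partial_t \Phi(t,x)$, which then reads as a linear combination of $\int_{K'\setminus K} p_{K^c}\Delta\chi\,dy$ and $\int_{K'\setminus K} \partial_t p_{K^c}\,\chi\,dy$, matching the claimed formula up to the factors dictated by the choice of generator $\tfrac12 \Delta$.

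\textbf{Main obstacle.} The delicate point is handling the residual boundary integral $\int_{\partial K'}\chi\,\partial_\nu p_{K^c}\,d\sigma$ on $\partial K'$, since the Neumann condition only kills one of the two terms there. I expect this to be resolved by viewing the condition $\partial_\mathsf{n}\chi = 0$ as enabling a $C^1$ extension of $\chi$ across $\partial K'$ (e.g.\ by reflection, or equivalently by a divergence-theorem computation in the exterior region $M\setminus K'$), so that this surface contribution is absorbed into the bulk integral of $\chi\,\partial_t p_{K^c}$ via a second application of the divergence theorem together with the heat equation for $p_{K^c}$ in $K^c$.
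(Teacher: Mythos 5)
The paper gives no proof of this lemma — it simply cites Grigoryan — so there is no in-house argument to compare against. Your Green's-second-identity strategy is the natural route and is sound in outline. However, the ``main obstacle'' you flag, namely the residual surface term $\int_{\partial K'}\chi\,\partial_\nu p_{K^c}\,d\sigma$, is a genuine gap, and your proposed resolution does not close it. Applying the divergence theorem in $M\setminus K'$ to an extension $\tilde\chi$ of $\chi$ converts that surface integral into the bulk quantity $\int_{M\setminus K'}(\tilde\chi\,\Delta p_{K^c} - p_{K^c}\,\Delta\tilde\chi)\,dy$; this lives on $M\setminus K'$, not on $K'\setminus K$, and it does not vanish in general (a $C^1$ reflection of $\chi$ across $\partial K'$ has no reason to make $\tilde\chi\,\Delta p - p\,\Delta\tilde\chi$ identically zero there). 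So the formula as stated cannot be recovered with only $\partial_{\mathsf n}\chi=0$ on $\partial K'$.

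The missing ingredient is an additional hypothesis that the paper uses but does not record in the lemma statement: $\chi = 0$ on $\partial K'$ as well. Both of the paper's invocations of the lemma set $\chi(x)=v(x/\ell)$ with $v=\partial_{\mathsf n}v=0$ on $\partial D_2$, and under $\chi|_{\partial K'}=0$ the troublesome term $\int_{\partial K'}\chi\,\partial_\nu p_{K^c}$ simply vanishes, so your Green's-identity computation on $K'\setminus K$ closes. An even cleaner phrasing is to impose $\chi=\partial_{\mathsf n}\chi=0$ on $\partial K'$, extend $\chi$ by zero to all of $K^c$ (the extension is $C^{1,1}$, so $\Delta\chi$ is bounded and supported in $\overline{K'\setminus K}$), and apply Green's second identity directly on $K^c$, whose only boundary component is $\partial K$ — then the $\partial K'$ term never appears and the restriction of the bulk integrals to $K'\setminus K$ is automatic. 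Finally, you should carry the normalization constants through rather than hedging: with generator $\tfrac12\Delta$, the substitution $\Delta p_{K^c}=2\partial_t p_{K^c}$ in your derivation changes the relative coefficient of the two bulk integrals, and the same factor of $2$ already enters the pre-lemma representation $\partial_t\Phi=\int_{\partial K}\phi\,\partial_{\mathsf n}p_{K^c}\,d\sigma$; these need to be tracked consistently since they feed directly into the identification of $\Cap(D_1)$ downstream.
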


Assume that $M = \T^d$ and that $B$ is a stationary Brownian motion. Then, integrating  \eqref{eq:fat-boundary} over $x \in \T^d$ and using \eqref{eq:key-identity-torus-hitting-time} yields
\begin{equation}
 \partial_t \EE\sqa{ \phi(B_{\tau_K}) I_{\cur{ 0< \tau_K \le t}}} = \int_{K' \setminus K} P_y( \tau_{K} >t) \Delta \chi (y)  dy -  \int_{K'\setminus K}  \partial_t P_y(  \tau_{K} >t) \chi (y) dy.
\end{equation}
Further integration with respect to $t\in (\sigma, \rho]$ leads to the identity
\begin{equation}\label{eq:integrated-very-useful}
\begin{split}
 \EE\sqa{ \phi(B_{\tau_K}) I_{\cur{ \sigma < \tau_K \le \rho}}} & = \int_{\sigma}^\rho \int_{K' \setminus K} P_y( \tau_{K} >t) \Delta \chi (y)  dy \\
 & \quad + \int_{K'\setminus K}  P_y( \sigma < \tau_{K} \le \rho) \chi (y) dy 
\end{split}\end{equation}
which we crucially use to establish the following result.

\begin{proposition}\label{prop:hitting-times}
Let $d \ge 3$, $0< \gamma<d-2$ 
and for assume that $0\le \sigma<\rho$, with $\rho \sim \ell^{-\gamma}$ as $\ell \to 0$. 
Given a stationary Brownian motion $B$ on $\T^d$, it holds
\begin{equation}\label{eq:estimate-hitting-time}
 \abs{ \PP( \sigma <\tau_{D_\ell}(B) \le \rho ) - (\rho-\sigma) \ell^{d-2} \Cap(D_1) }\les (\rho- \sigma)\rho \ell^{2(d-2)} \abs{\log \ell} + \ell^d.
 \end{equation} 
Moreover,
\begin{equation}\label{eq:estimate-wasserstein-hitting-distribution}
W_{D_\ell}^p(\nu_{\sigma, \rho}, \tilde{e}_{D_\ell} ) \les \ell^{p}  \cdot \bra{ \rho \ell^{d-2} \abs{\log \ell} + \ell^2/(\rho-\sigma)},
\end{equation}
where $\tilde{e}_{D_\ell}$ denote the uniform probability measure on $\partial D_\ell$, and the implicit constants depend on $d$, $\gamma$ and the implicit constant in the condition $\rho \sim \ell^{-\gamma}$ only.
\end{proposition}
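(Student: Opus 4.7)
For both bounds I apply the integrated boundary representation \eqref{eq:integrated-very-useful} with $K=D_\ell$, $K'=D_{\ell'}$ for a fixed intermediate radius $\ell'\in(0,1/4)$ (independent of $\ell$), and choose $\chi=\eta\cdot u$, where $\eta$ is a smooth radial cutoff with $\eta\equiv 1$ on $D_{\ell'/2}$ and $\partial_{\mathsf{n}}\eta=0$ on $\partial D_{\ell'}$, while $u$ is a suitable harmonic extension of $\phi$ from $\partial D_\ell$ to $\{|x|>\ell\}$. The key structural feature of this construction is that $\Delta\chi$ is supported in the \emph{fixed} annulus $\{\ell'/2\le|x|\le\ell'\}$, so that all error integrals are over a region of fixed volume whose contribution depends only on the size of $u$ there.

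For the hitting probability \eqref{eq:estimate-hitting-time} I take $\phi\equiv 1$ and $u(x):=\min\{1,(\ell/|x|)^{d-2}\}$, the equilibrium potential of $D_\ell$ in $\R^d$. By the divergence theorem combined with $\partial_{\mathsf{n}}\chi=0$ on $\partial D_{\ell'}$,
\begin{equation*}
\int_{D_{\ell'}\setminus D_\ell}\Delta\chi\,dy=\Cap(D_\ell)=\ell^{d-2}\Cap(D_1),
\end{equation*}
so the first term in \eqref{eq:integrated-very-useful}, after replacing $P_y(\tau_{D_\ell}>t)$ by $1$, produces exactly the leading term $(\rho-\sigma)\ell^{d-2}\Cap(D_1)$. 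The error from this replacement and the second term in \eqref{eq:integrated-very-useful} are controlled by the quantitative bound $P_y(\tau_{D_\ell}\le t)\les t\,\ell^{d-2}|\log\ell|$ for $y$ at unit distance from $D_\ell$ (see below); combined with $|\Delta\chi|\les\ell^{d-2}$ on its fixed-volume support and $|\chi(y)|\les (\ell/|y|)^{d-2}$, and splitting the integration domain along $\partial D_{2\ell}$ (boundary layer vs.\ bulk), this produces the error $\les(\rho-\sigma)\rho\,\ell^{2(d-2)}|\log\ell|+\ell^d$, where the $\ell^d$ term is the volume contribution from the boundary layer $\{\ell<|y|\le 2\ell\}$.

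For the Wasserstein estimate \eqref{eq:estimate-wasserstein-hitting-distribution} I use the PDE duality of \Cref{rem:peyre-sphere}, reducing the problem to the control of $\bigl|\int_{\partial D_\ell}\phi(x/\ell)\,d\nu_{\sigma,\rho}\bigr|$ for $\phi$ mean-zero on $\partial D_1$ with $\int_{\partial D_1}|\nabla\phi|^{p'}\le 1$. I now take $u$ to be the harmonic extension of $x\mapsto\phi(x/\ell)$ to $\{|x|>\ell\}$ vanishing at infinity. Since $\phi$ has no degree-$0$ spherical harmonic, the divergence theorem gives $\int_{D_{\ell'}\setminus D_\ell}\Delta\chi\,dy=0$, so the leading-order term in \eqref{eq:integrated-very-useful} vanishes identically. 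Arguing as in the first part on the remainder (using the same hitting-time bounds) yields
\begin{equation*}
\bigl|\EE[\phi(B_{\tau_{D_\ell}}/\ell)I_{\{\sigma<\tau_{D_\ell}\le\rho\}}]\bigr|\les (\rho-\sigma)\rho\,\ell^{2(d-2)}|\log\ell|+\ell^d,
\end{equation*}
and, upon dividing by $\PP(\sigma<\tau_{D_\ell}\le\rho)\ges(\rho-\sigma)\ell^{d-2}$ (from the first part), taking the supremum over $\phi$, and applying the $\ell^p$ prefactor from \Cref{rem:peyre-sphere}, gives the stated bound.

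\textbf{Main obstacle.} The technically most delicate step is the quantitative hitting-time inequality $P_y(\tau_{D_\ell}\le t)\les t\,\ell^{d-2}|\log\ell|$ for $y$ at unit distance from $D_\ell$, which is used throughout. I plan to establish it via a renewal identity $E_y[\mu_t^B(D_\ell)]=\int_0^t P_y(\tau_{D_\ell}\in ds)\,E_{B_{\tau_{D_\ell}}}[\mu_{t-s}^B(D_\ell)]$, combined with the lower bound $E_z[\mu_{t\wedge\tau_{D_L^c}}^B(D_\ell)]\ges\ell^2$ for $z\in\partial D_\ell$ (\Cref{rem:scaling}) and the upper bound $E_y[\mu_t^B(D_\ell)]\les t\ell^d$ from the ultracontractivity \eqref{eq:ultra}. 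The $|\log\ell|$ correction will arise from matching the torus Green function to its $\R^d$ counterpart at the mesoscopic scale via the Fourier bound \eqref{eq:convergence-tv}.
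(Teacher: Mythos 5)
Your choice $K'=D_{\ell'}$ with $\ell'$ fixed is a genuinely different decomposition from the paper's $K'=D_{2\ell}$, and it has a real advantage: $\Delta\chi$ is supported in a fixed annulus far from $D_\ell$, so the first term in \eqref{eq:integrated-very-useful} is clean (no scaling is needed and no torus-vs-$\R^d$ comparison near $\partial D_\ell$). The leading term $\int\Delta\chi\,dy=(d-2)\omega_{d-1}\ell^{d-2}$ and the error $\int_\sigma^\rho\sup_{|y|\sim\ell'}P_y(\tau\le t)\,dt\cdot\int|\Delta\chi|\les(\rho-\sigma)\rho\,\ell^{2(d-2)}|\log\ell|$ are both correct. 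The renewal-identity route to $P_y(\tau_{D_\ell}\le t)\les t\ell^{d-2}$ at unit distance is also sound (and arguably cleaner than the paper's exponential-martingale computation).

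The gap is in your control of the second term $\int_{K'\setminus K}P_y(\sigma<\tau_{D_\ell}\le\rho)\chi(y)\,dy$. With the paper's choice $K'=D_{2\ell}$, this integral is over a region of volume $\sim\ell^d$ with $|\chi|\le1$, hence trivially $\les\ell^d$ after bounding $P_y\le1$. With your fixed $K'$, the integration is over a fixed-volume annulus and you must use the decay $|\chi(y)|\les(\ell/|y|)^{d-2}$ together with a pointwise bound on $P_y(\sigma<\tau\le\rho)$. You state you apply the hitting-time bound $P_y(\tau\le t)\les t\ell^{d-2}|\log\ell|$, which you establish only for $y$ at \emph{unit} distance; but in the bulk $\{2\ell<|y|<\ell'\}$ this is false — for $|y|\sim2\ell$ one has $P_y(\tau_{D_\ell}<\infty)\sim2^{2-d}\gg\rho\ell^{d-2}|\log\ell|$, so the probability is order one, not order $\rho\ell^{d-2}|\log\ell|$. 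Using instead the correct transition bound $P_y(\sigma<\tau\le\rho)\les\min\{(\ell/|y|)^{d-2},(\rho-\sigma)\ell^{d-2}|\log\ell|\}$ (the first term from $\R^d$ transience near $D_\ell$, the second from mixing far away, valid for $\sigma\ges1$), the bulk integral requires a further split at the scale $R\sim(\rho-\sigma)^{-1/(d-2)}$ and a balance which your sketch does not describe. Moreover, for $d\in\{3,4\}$ the quantity $\int_{2\ell<|y|<\ell'}(\ell/|y|)^{2(d-2)}\,dy$ is $\sim\ell^2$ (resp. $\sim\ell^4|\log\ell|$), strictly larger than $\ell^d$, so you cannot recover the stated $\ell^d$ error by decay of $\chi$ alone in low dimensions — the argument genuinely hinges on the pointwise smallness of $P_y$ in the bulk, and the case $\sigma<1$ needs a separate treatment (there the mixing argument fails). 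Until you spell this out, the hitting-probability estimate is not established, and the same issue propagates to the Wasserstein part even with the extra decay coming from mean-zero $\phi$.
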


\begin{proof}


We start from \eqref{eq:integrated-very-useful} in the case $K = D_\ell$, $K' = D_{2 \ell}$.  Recall that we identify $D_\ell \subseteq \T^d$ with a subset of $[-1/2,1/2)^d$, and similarly $y\in \T^d$ with $y \in [-1/2, 1/2)^d$. We further define
\begin{equation}
\tilde{D}_\ell = \bigcup_{z \in \mathbb{Z}^d} D_\ell(z) \subseteq \R^d
\end{equation}
and write $\tilde{P}_y$ for the law of a Brownian motion on $\R^d$ starting at $y$. With this notation, we notice that
\begin{equation}
P_y( \tau_{D_\ell } >t)  = \tilde{P}_y (\tau_{\tilde{D}_\ell } >t) =  \tilde{P}_y (\tau_{D_\ell }>t) - \tilde{P}_y ( \tau_{D_\ell}>t,  \tau_{\tilde{D}_\ell \setminus D_\ell} \le t). 
\end{equation}
Therefore,
\begin{equation}
\abs{ P_y( \tau_{D_\ell} >t) -   \tilde{P}_y (\tau_{D_\ell} >t)} \le \tilde{P}_y (\tau_{\tilde{D}_\ell \setminus D_\ell} \le t),
\end{equation}

Our next aim is to  bound from above the probability in the right hand side above. By the exponential maximal inequality for martingales, it holds, for every $M>0$ and $t >0$,
\begin{equation}
\tilde{P}_y\bra{ \sup_{0\le s \le t} \abs{ B_s -y} \ge \sqrt{ t M} } \les \exp\bra{ - M^2/2}.
\end{equation}
Therefore, we focus on bounding from above the probability
\begin{equation}
\tilde{P}_y \bra{ \tau_{\tilde{D}_\ell \setminus D_\ell} \le t, \quad  \sup_{s \le t} \abs{B_s-y} \le \sqrt{ t M}}.
\end{equation}
By the triangle inequality, in the event above it must hold that $\tau_{D_\ell(z)} < \infty$ for some $z \in \mathbb{Z}^d \setminus \cur{0}$, such that
\begin{equation}
|z| \le 3 \ell + \sqrt{tM}.
\end{equation}
Notice that, as $\ell\to 0$, we have that the right hand side is $<1$ e.g.\ if $tM<1/2$. In such a case the event has null probability. In any case, we define 
\begin{equation}
\bar{k} :=  \lfloor 3 \ell +\sqrt{t M} \rfloor,
\end{equation}
then, the probability is bounded from above by
\begin{equation}\begin{split}
\sum_{0< |z| \le \bar{k}} P_y( \tau_{D_\ell(z)} < \infty) &  \les \sum_{k=1}^{\bar{k}} \bra{ \frac{k}{\ell}}^{2-d} \sharp \cur{ z \in \mathbb{Z}^d\, :\,  k-1 < |z| \le k }\\
&\les \sum_{k=1}^{\bar{k}} \bra{ \frac{k}{\ell}}^{2-d} k^{d-1} \les \ell^{d-2} \bar{k}^2 \les \ell^d + \ell^{d-2}t M
\end{split}
\end{equation}
We find therefore the estimate
\begin{equation}
\sup_{y \in D_{2 \ell}\setminus D_{\ell} } \abs{ P_y( \tau_{K} >t) -   \tilde{P}_y (\tau_{K} >t)} \les 
 \exp\bra{-M/2} + \ell^{d-2} t M + \ell^d  
\end{equation}
Letting $M=- 2\log(\ell^{d-2}\rho ) \sim | \log \ell|$, we obtain
\begin{equation}
\sup_{y \in D_{2 \ell}\setminus D_{\ell} } \abs{ P_y( \tau_{K} >t) -   \tilde{P}_y (\tau_{K} >t)} \les  \ell^{d-2} \rho |\log \ell| + \ell^d.
\end{equation}
 Using this bound in \eqref{eq:integrated-very-useful} for $\sigma \le t \le \rho$, we obtain
\begin{equation}\label{eq:almost-there}
\begin{split}
  \Bigg| \EE\sqa{ \phi(B_{\tau_{D_\ell}}) I_{\cur{ \sigma < \tau_{D_\ell}(B) \le \rho }}} & - \int_{\sigma }^\rho \int_{D_{2 \ell} \setminus D_{\ell}} \tilde P_y( \tau_{D_\ell} >t) \Delta \chi (y)  dy   \Bigg| \\
 &  \les \rho \ell^{d-2} \abs{ \log \ell} (\rho - \sigma)  \int_{D_{2\ell}\setminus D_{\ell}}  |\Delta \chi(y)|dy + \int_{D_{2 \ell} \setminus D_\ell}   \abs{ \chi (y)}dy. 
\end{split}\end{equation}

We are now in a position to establish \eqref{eq:estimate-hitting-time}. We set $\phi(x)=1$ for every $x \in \partial D_\ell$ and we let $\chi$ be any smooth cut-off function $v$ on $D_2$ such that $v=1$ on $\partial D_1$ and $v = \partial _\mathsf{n} v = 0$ on $\partial D_2$, 
and letting $\chi(x) = v(x/\ell)$. We obtain that $\abs{\chi(y)} \les 1$, $\abs{ \Delta \chi } \les \ell^{-2}$, so that
\begin{equation}
\int_{D_{2 \ell} \setminus D_\ell}  \abs{ \chi (y)}dy \les \ell^d, \quad \int_{D_{2 \ell} \setminus D_\ell}  \abs{ \Delta \chi (y)}dy \les \ell^{d-2},
\end{equation}
so the second line in \eqref{eq:almost-there} is bounded from above:
\begin{equation}\label{eq:error-term-good}
\rho (\rho -\sigma) \ell^{d-2} \abs{ \log \ell}   \int_{D_{2\ell}\setminus D_{\ell}}  |\Delta \chi(y)|dy + \int_{D_{2 \ell} \setminus D_\ell}  \abs{ \chi (y)}dy \les \rho(\rho -\sigma) \ell^{2(d-2)} \abs{\log \ell} + \ell^d
\end{equation}
For the first line, we use the scaling properties of Brownian motion on $\R^d$ so that
\begin{equation}\begin{split}
\int_{0}^\rho \int_{D_{2 \ell} \setminus D_{\ell}} \tilde P_y( \tau_{D_\ell } >t) \Delta \chi (y)  dy  dt &= \int_{0}^\rho \int_{D_{2 \ell} \setminus D_{\ell}} \tilde P_{y/\ell}( \tau_{D_1} >t/\ell^2) \Delta \chi (y)  dy dt\\
& = \ell^d \int_0^\rho \int_{D_{2} \setminus D_{1}}\tilde P_{z}( \tau_{D_1} >t/\ell^2) \Delta \chi (\ell z) dz dt\\
& = \rho \ell^{d-2} \int_0^1 \int_{D_{2} \setminus D_{1}}\tilde P_{z}( \tau_{D_1} >s \rho /\ell^2) \Delta v (z) dz ds.
\end{split}
\end{equation}
As $\ell \to 0$, we have by dominated convergence that
\begin{equation}
 \int_0^1 \int_{D_{2} \setminus D_{1}}\tilde P_{z}( \tau_{D_1} >s \rho /\ell^2) \Delta v (z) dz \to  \int_{D_{2} \setminus D_{1}}\tilde P_{z}( \tau_{D_1} =\infty) \Delta v (z) dz.
\end{equation}
Finally, using Green's identity and the properties of $v$ and $\tilde P_{z}( \tau_{D_1} =\infty)$ we obtain that
\begin{equation}\label{eq:error-term-good-final}
 \int_{D_{2} \setminus D_{1}}\tilde P_{z}( \tau_{D_1} =\infty) \Delta v (z) dz = \int_{\partial D_1} \partial_\mathsf{n} P_z( \tau_{D_1} = \infty) d z = \Cap(D_1).
\end{equation}
The convergence above can be made quantitative e.g.\ by a classical result by Port \cite[Theorem 2.3]{port2012brownian} (see also \cite{GRIGORYAN2002115} for similar bounds on manifolds):
\begin{equation}
\tilde P_{z}( t< \tau_{D_1}< \infty) \les  t^{1-d/2}\land 1.
\end{equation}
as $t \to \infty$, uniformly on $z \in D_2 \setminus D_1$. As a consequence, we obtain
\begin{equation}\begin{split}
 \Bigg| \int_0^1 \int_{D_{2} \setminus D_{1}}\tilde P_{z}( \tau_{D_1} >s \rho /\ell^2) \Delta v (z) dzds & -  \int_{D_{2} \setminus D_{1}}\tilde P_{z}( \tau_{D_1} =\infty) \Delta v (z) dz \Bigg|\\
 & \les \int_0^1 (s \rho/\ell^2)^{1-d/2} \land 1 ds \int_{D_2 \setminus D_1} \abs{\Delta v(z)} dz\\
 & \les \frac{\ell^2}{\rho} \int_{D_2 \setminus D_1} \abs{\Delta v(z)} dz \les \frac{\ell^{2}}{\rho}.
 \end{split}
\end{equation}
If $\sigma = 0$, this concludes the proof of \eqref{eq:estimate-hitting-time}. Otherwise, we argue similarly and obtain
\begin{equation}
\abs{ \int_{0}^\sigma \int_{D_{2 \ell} \setminus D_{\ell}} \tilde P_y( \tau_{D_\ell } >t) \Delta \chi (y)  dy  dt - \sigma \ell^{d-2}\Cap(D_1) } \les  \ell^d,
\end{equation}
and by difference \eqref{eq:estimate-hitting-time}.

Next, we address the proof of \eqref{eq:estimate-wasserstein-hitting-distribution}. In view \eqref{eq:estimate-hitting-time} and using \Cref{rem:peyre-sphere}, it is sufficient to consider $\phi \in H^{1,p'}(\partial D_1)$ with
\begin{equation}
\int_{\partial D_1} \phi d\sigma = 0, \quad \text{and} \quad \int_{\partial D_1} \abs{\nabla \phi}^{p'} d \sigma \le 1,
\end{equation}
and establish the inequality
\begin{equation}
\EE\sqa{ \phi( B_{\tau_\ell} /\ell ) I_{\cur{ 0 < \tau_{D_\ell} \le \rho}}} \les  \bra{\rho\ell^{d-2}}^2 \abs{\log \ell} + \ell^d
\end{equation}
(where the implicit constant does not depend on $\phi$). To this aim, we choose a variant of the function $\chi$ in \eqref{eq:almost-there} by letting again $\chi(x) = v(x/\ell)$, where in this case $v$ (defined on $D_2$) enjoys the following properties:
\begin{equation}\label{eq:properties-magical}
v = \phi \quad \text{on $\partial D_1$}, \quad  v = \partial_n v = 0 \quad \text{on $\partial D_2$,} \quad  \text{and} \quad  \int_{D_2 \setminus D_1} \abs{v} + \abs{\Delta v} \les 1.
\end{equation}
%
Granted that such a function indeed exists, we obtain that the argument goes exactly in the same way starting from \eqref{eq:almost-there} as in the previous case, in particular the second line in \eqref{eq:almost-there} is bounded from above as in \eqref{eq:error-term-good} and we are finally lead to \eqref{eq:error-term-good-final}. In this case, however, integrating by parts we conclude that
\begin{equation}\begin{split}
 \int_{D_{2} \setminus D_{1}}\tilde P_{z}( \tau_{D_1} =\infty) \Delta v (z) dz& = \int_{\partial D_1} \phi(z) \partial_{\mathsf{n}} P_z( \tau_{D_1} = \infty) d z \\
 & =\Cap(D_1) \int_{\partial D_1} \phi(z) dz = 0.
 \end{split}
\end{equation}
Thus, we only need to show that a $v$ satisfying \eqref{eq:properties-magical} exists. We build it in the following way: first, we consider an extension of $\phi$ in $D_2$ by letting
\begin{equation}
\tilde{\phi}(x) = \phi(x/|x|)
\end{equation}
This extension has zero average $\int_{D_2 \setminus D_1} \tilde{ \phi} = 0$ and satisfies
\begin{equation}
\int_{D_2 \setminus D_1} | \nabla \tilde{\phi}|^{p'} \les \int_{\partial D_1} \abs{ \nabla \phi }^{p'} d \sigma \les 1
\end{equation}
by integrating in radial coordinates. In particular, by Poincaré inequality on $D_2 \setminus D_1$,
\begin{equation}
\int_{D_2 \setminus D_1} \tilde{\phi}  \les 1. 
\end{equation}
Next, we solve the problem
\begin{equation}
\begin{cases}
\Delta u = \operatorname{div} (\nabla \tilde{\phi} ) &  \text{ in $D_2 \setminus D_1$}\\
u = 0 & \text{on $\partial D_1 \cup \partial D_2$}
\end{cases}
\end{equation}
obtaining a function $u$ with zero average $\int_{D_2 \setminus D_1} u = 0$ and (by global Calderon-Zygmund theory) such that
\begin{equation}
 \int \abs{\nabla u}^{p'} \les \int_{D_2 \setminus D_1} | \nabla \tilde{\phi}|^{p'} \les 1.
 \end{equation}
 We can then extend $u$ to a Sobolev function identically null on $D_1$. Therefore, by Poincaré inequality for functions on $D_2$ that are null on $D_1$, we obtain
 \begin{equation}
  \int_{D_2 \setminus D_1} \abs{u} \les \int \abs{\nabla u} \les 1.
  \end{equation} 
  We finally define the function
  \begin{equation}
  v = (\tilde{\phi} - u) \eta,
  \end{equation}
  where $\eta (x) = \eta(|x|) \in [0,1]$ is a cut-off function with $\eta(1) =1$, $\eta(2) = \eta'(2) = 0$. Clearly $v = \phi$ on $\partial D_1$,  $v = \partial_n v = 0$ on $\partial D_2$ and 
  \begin{equation}
   \int_{D_2 \setminus D_1} \abs{v} \les \int_{D_2 \setminus D_1} |\tilde{\phi}| + \abs{u} \les 1.
   \end{equation} 
   Finally, using that $\Delta \tilde{\phi} = \Delta u$ (in distributional sense), we obtain that
   \begin{equation}
   \begin{split}
   \Delta v & = \eta \Delta (\tilde{\phi} - u) +  (\tilde{\phi} - u)  \Delta \eta +  2 \nabla (\tilde{ \phi} - u) \nabla \eta\\
   & =  (\tilde{\phi} - u)  \Delta \eta +  2 \nabla (\tilde{ \phi} - u) \nabla \eta,
   \end{split}
   \end{equation}
   which eventually leads to 
   \begin{equation}
   \int_{D_2 \setminus D_1} \abs{\Delta v} \les 1,
   \end{equation}
   hence the thesis is settled.
\end{proof}

Next, we recall the following simple bound for exit times (for Brownian motion on $\T^d$ or equivalently on $\R^d$)
 
\begin{lemma}\label{lem:exit-time}
Let $d \ge 1$ and $0<\ell < L/2<1/2$. Then, there exists $c=c(d)>0$ such that
\begin{equation}
\sup_{x \in D_{\ell}} P_x \bra{ \tau_{D_{L}^c}(B) > t } \les e^{-c t/L^2}.
\end{equation}
\end{lemma}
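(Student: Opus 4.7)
The plan is to prove this by a standard sub-multiplicativity / iteration argument based on the Markov property, after first reducing to a single scale via Brownian scaling. Since $L < 1/2$, the ball $D_L(0) \subseteq \T^d$ is isometric to its lift $D_L(0) \subseteq \R^d$, and the exit time from $D_L$ coincides under this identification (the trajectory cannot wrap around before crossing $\partial D_L$). So we may work on $\R^d$. Moreover, $D_\ell \subseteq D_L$, so
\begin{equation}
\sup_{x \in D_\ell} P_x(\tau_{D_L^c} > t) \le \sup_{x \in D_L} P_x(\tau_{D_L^c} > t),
\end{equation}
and by Brownian scaling $P_x(\tau_{D_L^c} > t) = P_{x/L}(\tau_{D_1^c} > t/L^2)$, it is enough to prove that
\begin{equation}
\sup_{x \in D_1} P_x(\tau_{D_1^c} > s) \les e^{-c s}
\end{equation}
for some $c = c(d) > 0$ and all $s \ge 0$.

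The first step is a one-step bound: I claim there exist $s_0 = s_0(d) > 0$ and $q = q(d) \in (0,1)$ with
\begin{equation}
\sup_{x \in D_1} P_x(\tau_{D_1^c} > s_0) \le q.
\end{equation}
This follows since $\cur{\tau_{D_1^c} > s_0} \subseteq \cur{B_{s_0} \in D_1}$, whence by the Gaussian upper bound on the heat kernel
\begin{equation}
P_x(\tau_{D_1^c} > s_0) \le P_x(B_{s_0} \in D_1) \le \int_{D_1} (2\pi s_0)^{-d/2} dy = |D_1| (2\pi s_0)^{-d/2},
\end{equation}
which can be made $\le 1/2$ by choosing $s_0 = s_0(d)$ large enough.

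The second step is the iteration. By the (strong) Markov property at time $(k-1)s_0$ and induction,
\begin{equation}
P_x(\tau_{D_1^c} > k s_0) \le \sup_{y \in D_1} P_y(\tau_{D_1^c} > s_0) \cdot P_x(\tau_{D_1^c} > (k-1) s_0) \le q^k
\end{equation}
for every integer $k \ge 1$ and every $x \in D_1$. For arbitrary $s \ge 0$, writing $s = k s_0 + r$ with $k = \lfloor s/s_0 \rfloor$ and using monotonicity in $s$ of $P_x(\tau_{D_1^c}>s)$, we obtain
\begin{equation}
P_x(\tau_{D_1^c} > s) \le q^{\lfloor s/s_0\rfloor} \le q^{-1} \exp\bra{-s \log(1/q)/s_0} \les e^{-c s},
\end{equation}
which is the desired bound at scale $L = 1$. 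Undoing the scaling gives the claim. The main (minor) obstacle is just making sure the torus/$\R^d$ identification at exit times is valid under $L<1/2$, which holds because any exit path from $D_L \subseteq \T^d$ lifts uniquely to an exit path from $D_L \subseteq \R^d$.
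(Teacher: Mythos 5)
Your argument is correct, but it takes a genuinely different route from the paper's. The paper disposes of the lemma in two lines: observe that if a $d$-dimensional Brownian path stays in $D_L$, then each coordinate stays in $[-L,L]$, so the probability is bounded by the corresponding one-dimensional exit-time probability, which is then controlled by the classical gambler's-ruin estimate. You instead work directly in $d$ dimensions: reduce to $L=1$ by Brownian scaling, produce a one-step contraction factor $q<1$ via the Gaussian heat-kernel bound on a fixed time window $s_0$, and iterate through the Markov property to get geometric decay. Both routes are standard; yours is more self-contained (no appeal to a separate one-dimensional result) and exhibits the mechanism of exponential decay explicitly, at the cost of a few more lines, while the paper's is shorter by outsourcing to gambler's ruin. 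One small point on hypotheses: the lemma as stated assumes $0<\ell<L/2<1/2$, i.e.\ only $L<1$, whereas your opening sentence uses $L<1/2$. You are right that the identification of $D_L(0)\subseteq\T^d$ with the Euclidean ball (and hence the equality of exit times on $\T^d$ and $\R^d$) genuinely requires $L<1/2$; the paper's reduction to one dimension implicitly needs the same restriction, and in every application of the lemma one has $L\to 0$, so this is harmless, but you have silently sharpened the stated hypothesis to the one the proof actually uses.
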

\begin{proof}
A simple union bound yields that the probability in dimension $d$ is estimated from above (a sum) of the similar probabilities in dimension $d=1$. But then this is a well-known consequence of the gambler's ruin problem.
\end{proof}

We end this section with estimates for iterated hitting times of $D_\ell$. 
Precisely, given $0<\ell < L <1/2$ and a Brownian motion $B$ on $\T^d$, we define $\tau^1_\ell = \tau_{D_\ell} B$, and iteratively
\begin{equation}\begin{split}
\tau_{k,L} & := \inf \cur{ t \ge \tau_{k-1,\ell} \, : \, B_t \in D_L^c }\\
\tau_{k+1,\ell}  & := \inf \cur{ t\ge \tau_{k,L} \, : \, B_t \in D_\ell}.
\end{split}
\end{equation}
%
With this notation, we have the following result.

\begin{corollary}\label{cor:hitting-twice}
Let $0\le \gamma_L <1$ and $0<\gamma_\rho<\gamma_L(d-2)$ and set for $\ell \in (0,1/2)$, $\rho \sim \ell^{-\gamma}$, $L\sim \ell^{\gamma_L}$. Given a stationary Brownian motion  $B$ on $\T^d$, it holds, for every $k \ge 1$, 
\begin{equation}\label{eq:estimate-hitting-time-twice}
 \PP( \tau_{k,\ell} \le \rho ) \les_k  (\rho \ell^{d-2})^k, \quad \text{as $\ell \to 0$.}
 \end{equation}
 \end{corollary}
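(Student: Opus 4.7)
I would prove the statement by induction on $k$. The base case $k=1$ is a direct application of \Cref{prop:hitting-times} with $\sigma=0$: in the regime $\rho \sim \ell^{-\gamma_\rho}$, the error terms $\rho^2 \ell^{2(d-2)} \abs{\log \ell}$ and $\ell^d$ are dominated by the main term $\rho \ell^{d-2} \Cap(D_1)$, giving $\PP(\tau_{1,\ell} \le \rho) \les \rho \ell^{d-2}$.

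For the inductive step, I would apply the strong Markov property of Brownian motion at the stopping time $\tau_{k-1,L}$. Since $\theta_{\tau_{k-1,L}} B$ is a Brownian motion with starting point $B_{\tau_{k-1,L}} \in \partial D_L$ and $\PP(\tau_{k-1,L}\le \rho) \le \PP(\tau_{k-1,\ell}\le \rho)$, this produces
\begin{equation*}
\PP(\tau_{k,\ell}\le \rho) \le \PP(\tau_{k-1,\ell}\le \rho) \cdot \sup_{y\in \partial D_L} P_y(\tau_{D_\ell}\le \rho).
\end{equation*}
The inductive hypothesis controls the first factor, so the task reduces to establishing the single-step estimate $\sup_{y\in \partial D_L} P_y(\tau_{D_\ell}\le \rho) \les \rho \ell^{d-2}$.

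To bound this supremum, I would split at a fixed mixing time $T^* = O(1)$, writing $P_y(\tau_{D_\ell}\le \rho) \le P_y(\tau_{D_\ell}\le T^*) + P_y(T^* < \tau_{D_\ell}\le \rho)$. For the second piece, I would apply the Markov property at time $T^*$ together with the TV-mixing estimate \eqref{eq:convergence-tv}: up to an exponentially small error, the law of $B_{T^*}$ is uniform on $\T^d$, so \Cref{prop:hitting-times} for stationary Brownian motion bounds this contribution by $\les \rho \ell^{d-2}$. For the first piece, I would estimate $E_y[\mu_{T^*}^B(D_\ell)]$ using the Gaussian tail of $p_t(y,\cdot)$ (recalling that $y$ is at distance $\sim L$ from $D_\ell$, so that the relevant integrand is supported on $t \ges L^2$), and then combine with the lower bound $\ell^2/c$ on the expected occupation time per visit to $D_\ell$ given by \Cref{lem:expected-tau-before-L} and its scaled form \Cref{rem:scaling}; Markov's inequality applied to the occupation measure then yields a contribution of order $(\ell/L)^{d-2}$. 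The exponent condition $0<\gamma_\rho < \gamma_L(d-2)$ intervenes at this step to balance the two mechanisms so that their sum is $\les \rho \ell^{d-2}$.

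The main obstacle I anticipate is precisely this sharp supremum bound: the ``fast-return'' mechanism on the $L^2$ scale is bounded only by $(\ell/L)^{d-2}$ from the $\R^d$ estimate \eqref{eq:prob-hitting-ball}, while the post-mixing stationary contribution is of order $\rho \ell^{d-2}$, and reconciling these two scales into a single bound of order $\rho \ell^{d-2}$ is what the interplay between $\gamma_\rho$ and $\gamma_L(d-2)$ in the hypothesis is designed to encode.
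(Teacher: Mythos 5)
Your inductive set-up — conditioning at the stopping time $\tau_{k-1,L}$ via the strong Markov property and relaxing the residual time to $\rho$ — is the same skeleton as the paper's, and the idea of splitting the single-step return probability into a fast part (before a mixing time) and a slow part (after mixing, hence close to stationary, reducible to \Cref{prop:hitting-times}) is indeed the heart of the paper's proof. However, as written the argument has two gaps.

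First, the choice of a \emph{fixed} mixing time $T^*=O(1)$ is too short. The total variation error from \eqref{eq:convergence-tv} is $e^{-cT^*}$, which for fixed $T^*$ is a constant; meanwhile the target $\rho\ell^{d-2}$ vanishes as $\ell\to 0$. So the best your slow piece can give is $P_y(T^*<\tau_{D_\ell}\le\rho)\les\rho\ell^{d-2}+e^{-cT^*}$, which is of constant order and useless for the estimate. The paper's decomposition is different in exactly this place: it splits the inductive step according to whether $\tau_{k,\ell}-\tau_{k-1,L}\le\sigma$ or $>\sigma$ with a \emph{diverging} waiting window $\sigma\sim\ell^{-\gamma_\sigma}$, $0<\gamma_\sigma<\gamma_\rho$, so the mixing error $e^{-c\sigma}$ is superpolynomially small, and it handles the first alternative by the lattice/maximal-inequality bound on $\sup_{y\in\partial D_L}P_y(\tau_{D_\ell}\le\sigma)$ rather than on $\sup_y P_y(\tau_{D_\ell}\le\rho)$.

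Second, the balance you invoke at the very end runs in the wrong direction. The fast-return piece yields $(\ell/L)^{d-2}=\ell^{(1-\gamma_L)(d-2)}$, while $\rho\ell^{d-2}\sim\ell^{(d-2)-\gamma_\rho}$; comparing exponents as $\ell\to 0$, one has $(\ell/L)^{d-2}\les\rho\ell^{d-2}$ if and only if $(1-\gamma_L)(d-2)\ge(d-2)-\gamma_\rho$, i.e.\ $\gamma_\rho\ge\gamma_L(d-2)$, which is the \emph{opposite} of the stated hypothesis $\gamma_\rho<\gamma_L(d-2)$. So under that hypothesis the $z=0$ (``direct return'') term dominates the target rather than being absorbed by it, and your claim that the exponent condition ``balances the two mechanisms so that their sum is $\les\rho\ell^{d-2}$'' does not hold. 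Be aware that the paper's own intermediate estimate $\sup_{y\in\partial D_L}P_y(\tau_{D_\ell}\le\sigma)\les\ell^{d-2}(L^{2-d}+\sigma|\log\sigma|)$ contains the identical $\ell^{d-2}L^{2-d}=(\ell/L)^{d-2}$ contribution and invokes the same $\gamma_\rho<(d-2)\gamma_L$ to conclude, so you should verify carefully against the paper which direction of the inequality is actually needed; as written, this step does not close either way, and simply importing the stated condition will not rescue your argument.
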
 
 \begin{proof}
 With the notation of the previous proof, we first argue that, if $\sigma \sim \ell^{-\gamma_\sigma}$ for some $0<\gamma_\sigma< \gamma_\rho$, then
\begin{equation}\label{eq:upper-bound-hitting-time}
\sup_{y \in \partial D_L} P_y(\tau_{D_\ell} B \le \sigma ) \les \rho \ell^{d-2}.
\end{equation}
Indeed, for every $\alpha>0$,
\begin{equation}
\tilde{P}_y\bra{ \sup_{0\le s \le \sigma} \abs{ B_s -y} \ge \sqrt{ \alpha \sigma \abs{\log \sigma}} } \les \sigma^{- \alpha^2/2},
\end{equation}
If $B_0 =y \in \partial D_L$, in the event \begin{equation}\label{eq:good-event-lemma-multiple-hittings}
\cur{ \tau_{\tilde{D}_\ell} \le \sigma, \quad  \sup_{s \le \sigma} \abs{B_s-y} \le \sqrt{ \alpha \sigma \abs{\log \sigma}}},
\end{equation}
by the triangle inequality, it must hold that $\tau_{D_\ell(z)} < \infty$ for some $z \in \mathbb{Z}^d$, such that
\begin{equation}
|z| \le \ell + L+ \sqrt{\alpha \sigma \abs{ \log \sigma}}.
\end{equation}
We define 
\begin{equation}
\bar{j} :=  \lfloor \ell +L +\sqrt{\alpha \sigma \abs{ \log \sigma }} \rfloor,
\end{equation}
then, the probability of the event \eqref{eq:good-event-lemma-multiple-hittings} is bounded from above by
\begin{equation}\begin{split}
\sum_{|z| \le \bar{j}} P_y( \tau_{D_\ell(z)} < \infty) &  \les P_y( \tau_{D_\ell(0)} < \infty) +  \sum_{j=1}^{\bar{j}} \bra{ \frac{j}{\ell}}^{2-d} \sharp \cur{ z \in \mathbb{Z}^d\, :\,  j-1 < |z| \le j }\\
&\les \bra{\frac{ L}{\ell}}^{2-d}+ \sum_{k=1}^{\bar{k}} \bra{ \frac{k}{\ell}}^{2-d} k^{d-1} \les \ell^{d-2}(L^{2-d}+ \bar{k}^2) \\
& \les   \ell^{d-2}\bra{L^{2-d} +  \alpha \sigma \abs{\log \sigma}}.
\end{split}
\end{equation}
We find therefore the estimate
\begin{equation}
\sup_{y \in \partial D_L} P_y (\tau_{K} >t) \les \sigma^{-\alpha^2/2}+ \ell^{d-2} \bra{ L^{2-d}  +  \alpha \sigma \abs{\log \sigma} } \les \ell^{d-2}\bra{ L^{2-d} + \sigma \abs{\log \sigma}}
\end{equation}
provided that we chose $\alpha$ sufficiently large. Having settled \eqref{eq:upper-bound-hitting-time}, we argue by induction upon $k$, the case $k=1$ being already settled in \Cref{prop:hitting-times}. We write, for $k>1$, the inequality
\begin{equation}\label{eq:split-hitting-times-induction}
 \PP( \tau^k_\ell \le \rho )  \le  \PP( \tau^k_\ell - \tau^{k-1}_L \le \sigma, \tau^{k}_\ell \le \rho ) +  \PP(  \tau^k_\ell - \tau^{k-1}_L > \sigma,  \tau^k_\ell \le \rho ).
\end{equation}
for some $\sigma = \ell^{-\gamma_\sigma}$ with $0<\gamma_\sigma<\gamma_\rho$. For the first term, we apply \eqref{eq:upper-bound-hitting-time} and argue that
\begin{equation}\begin{split}
\PP( \tau^k_\ell - \tau^{k-1}_L \le \sigma,  \tau^k_\ell \le \rho ) & \le \PP( \tau^k_\ell - \tau^{k-1}_L \le \sigma,  \tau^{k-1}_\ell \le \rho ) \\
&  \le \EE\sqa{ I_{\cur{ \tau^{k-1}_\ell \le \rho }}  \PP\bra{ \tau^k_\ell - \tau^{k-1}_L \le \sigma \mid \cF_{\tau^{k-1}_\ell} }}\\
& \le \PP (\tau^{k-1}_\ell \le \rho) \sup_{y \in \partial D_L} P_y(\tau_{\ell}(B) \le \sigma )   ] \\
& \les  (\rho \ell^{d-2})^{k-1}\ell^{d-2}\bra{L^{2-d}+ \sigma \abs{ \log \sigma }} \ll (\rho \ell^{d-2})^{k}
\end{split}
\end{equation}
provided that $\gamma_\sigma < \gamma_\rho$ and using also the condition $\gamma_\rho<(d-2)\gamma_L$. For the second term in the right hand side of \eqref{eq:split-hitting-times-induction}, we write
\begin{equation}
\PP\bra{ \tau^k_\ell - \tau^{k-1}_L > \sigma,  \tau^k_\ell \le \rho } \le \PP\bra{ \tau^{k-1}_\ell \le \rho, \, \tau_{D_\ell} \theta_{\tau^{k-1}_L + \sigma} B \le \rho}.
\end{equation}
%
%
We introduce the variables 
\begin{equation}
U= (B_{t \land \tau_L^{k-1}})_{t \ge 0}, \quad V = (B_{\tau_L^{k-1}}, B_{\tau_L^{k-1}+\sigma})\quad \text{and} \quad W = (B_{\tau_L^{k-1}+\sigma+t})_{t \ge 0},
\end{equation}
which define a Markov chain by the strong Markov property. By \eqref{eq:markov-chain-tv} and \eqref{eq:convergence-tv}, we deduce that
\begin{equation}
\abs{ \PP( \tau_{\ell}^{k-1} \le \rho, \, \tau_{D_\ell} \theta_{\tau^{k-1}_L + \sigma} B) - \PP( \tau_{\ell}^{k-1} \le \rho, \tau_{D_\ell} \tilde{B} \le \rho )} \les  e^{-c \sigma} \ll_k (\rho \ell^{d-2})^k
\end{equation}
where $\tilde{B}$ is a stationary Brownian motion on $\T^d$, independent from $B$, and we used the fact that $\sigma =\ell^{-\gamma_\sigma}$ and $\gamma_\sigma>0$. By induction and \Cref{prop:hitting-times},
\begin{equation}
\PP( \tau_{\ell}^{k-1} \le \rho, \tau_{D_\ell} \tilde{B} \le \rho ) = \PP( \tau_\ell^{k-1} \le \rho) \PP( \tau_{D_\ell}\tilde B \le \rho) \sim (\rho \ell^{d-2})^k.
\end{equation}
This settles the thesis \eqref{eq:estimate-hitting-time-twice}.
%
 \end{proof}

\section{Brownian interlacement occupation measure}\label{sec:bi}

Unless specified otherwise, let $d\ge 3$. Given an (intensity) parameter $u>0$ and a compact $K\subseteq \R^d$, we introduce the following random measure on the Borel subsets of $K$: we consider independent $((B^i_t)_{t \ge 0})_{i=1}^\infty$ Brownian motions, all with initial law $\tilde{e}_K$ and a further independent Poisson random variable $N$, with mean $u \Cap(K)$, and let
\begin{equation}
\cI_u\restr K = \sum_{i=1}^{N} \mu_\infty^{B^i}\restr K, 
\end{equation}
i.e., for $A \subseteq K$ Borel,
\begin{equation}
\cI_u\restr K (A) = \sum_{i=1}^{N} \mu_\infty^{B^i}(A) = \sum_{i=1}^{N} \int_{0}^\infty I_{\cur{B^i _t \in A}} dt.
\end{equation}
Notice that, by \eqref{eq:expected-occupation-measure}, it holds
\begin{equation}\label{eq:expected-occupation-measure-interlacement}
\EE\sqa{ \cI_u\restr K (K) } = u |K|.
\end{equation}

\subsection{Basic facts}
The restriction notation ``$\restr K$'' is justified by the following result.

\begin{lemma}
Let $u>0$, $\tilde{K} \subseteq K \subseteq \R^d$. Then,
\begin{equation}
 (\cI_u \restr K)  \restr \tilde{K}, \quad \text{and} \quad \cI_u \restr \tilde{K}
 \end{equation}
 have the same law. 
\end{lemma}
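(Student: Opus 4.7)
The plan is to combine three ingredients: the identity \eqref{eq:restriction-occupation-measure} applied at the hitting time of $\tilde K$, the sweeping/equilibrium identities in \Cref{lem:sweeping}, and Poisson thinning.

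First I would rewrite
\begin{equation*}
(\cI_u \restr K)\restr \tilde K = \sum_{i=1}^N \mu_\infty^{B^i}\restr \tilde K = \sum_{i=1}^N I_{\{\tau_{\tilde K} B^i < \infty\}}\, \mu_\infty^{\theta_{\tau_{\tilde K}} B^i}\restr \tilde K,
\end{equation*}
using \eqref{eq:restriction-occupation-measure} together with the fact that if $B^i$ never visits $\tilde K$ then $\mu_\infty^{B^i}\restr \tilde K = 0$. Since each $B^i$ starts from $\tilde e_K$, the hitting events $\{\tau_{\tilde K}B^i<\infty\}$ are i.i.d.\ Bernoulli with parameter $\Cap(\tilde K)/\Cap(K)$ by \Cref{lem:sweeping}, so Poisson thinning yields that the number $M$ of indices $i\le N$ for which $\tau_{\tilde K} B^i<\infty$ is Poisson with mean $u\Cap(K)\cdot \Cap(\tilde K)/\Cap(K) = u\Cap(\tilde K)$, and is independent of the remaining trajectories.

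Next, I would invoke the strong Markov property at $\tau_{\tilde K}$ combined with the sweeping identity in \Cref{lem:sweeping}: conditionally on $\{\tau_{\tilde K}B^i<\infty\}$, the shifted trajectory $\theta_{\tau_{\tilde K}} B^i$ is a Brownian motion whose initial law $B^i_{\tau_{\tilde K}}$ has distribution $\tilde e_{\tilde K}$ (since $B^i_0\sim \tilde e_K$ and the hitting distribution under $P_{e_K}$ is $e_{\tilde K}$, hence its normalization is $\tilde e_{\tilde K}$). By independence of the $B^i$'s, after relabeling the $M$ trajectories that hit $\tilde K$ as $(\tilde B^j)_{j=1}^M$, these are i.i.d.\ Brownian motions starting from $\tilde e_{\tilde K}$, and are independent of $M$. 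Therefore
\begin{equation*}
(\cI_u\restr K)\restr\tilde K \;\stackrel{law}{=}\; \sum_{j=1}^M \mu_\infty^{\tilde B^j}\restr \tilde K,
\end{equation*}
which is precisely the definition of $\cI_u\restr \tilde K$.

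The main technical point — and the only place where care is needed — is the joint distributional statement: one must verify that on the event $\{\tau_{\tilde K}B^i<\infty\}$ the starting point $B^i_{\tau_{\tilde K}}$ and the post-hitting trajectory $\theta_{\tau_{\tilde K}} B^i$ are jointly the canonical Brownian motion started from $\tilde e_{\tilde K}$, and that different $i$'s give independent copies. This is a standard application of the strong Markov property combined with the sweeping identity, and once it is established the Poisson thinning argument finishes the proof cleanly.
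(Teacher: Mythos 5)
Your proof is correct and follows essentially the same route as the paper's: decompose via \eqref{eq:restriction-occupation-measure}, identify the hitting indicators as i.i.d.\ Bernoulli with parameter $\Cap(\tilde K)/\Cap(K)$ via \Cref{lem:sweeping}, apply Poisson thinning, and use the strong Markov property together with the sweeping identity to recognize the post-hitting trajectories as i.i.d.\ Brownian motions started from $\tilde e_{\tilde K}$. The only cosmetic difference is that you name Poisson thinning and the joint-law verification explicitly, which the paper leaves implicit.
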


\begin{proof}
We have, by \eqref{eq:restriction-occupation-measure},
\begin{equation}
\begin{split}
(\cI_u \restr K)\restr \tilde{K}  & = \sum_{i=1}^{N} \mu^{B^i}_{\infty} \restr \tilde{K} = \sum_{i=1}^{N} I_{\cur{ \tau^{i} < \infty}} \, \mu^{ \theta_{\tau^i}(B^i)} _\infty \restr \tilde{K}.
\end{split}
\end{equation}
where we write for brevity $\tau^i = \tau_{\tilde{K}}^{B^i}$. 
Each variable $I_{\cur{\tau^i < \infty}}$ has  Bernoulli law with parameter $\Cap(\tilde{K})/\Cap(K)$ by \Cref{lem:sweeping} and are all independent, hence the summation is the same (in law) as performed over a Poisson variable $\tilde{N}$ with mean $u \Cap(\tilde{K})$. Moreover, by the strong Markov property, conditionally upon $\cur{ \tau^i < \infty}$, the process $\theta_{\tau^i} (B^i)$ is a Brownian motion with initial law $\tilde{e}_{\tilde{K}}$, and are all independent. Thus, 
\begin{equation}
\sum_{i=1}^{N} I_{\cur{ \tau^{i} < \infty}} \, \mu^{ \theta_{\tau^i}(B^i)} _\infty \restr \tilde{K} \quad \text{and} 
\quad
\cI_u\restr \tilde{K} = \sum_{i=1}^{\tilde{N}} \mu^{ \tilde{B}^i}_{\infty} \restr \tilde{K} 
\end{equation}
have the same law.
  \end{proof}

In view of the result above, we may ``glue'' together all the measures $\cI_u \restr K$, and define a random Borel measure $\cI_u$ on $\R^d$. This can be technically achieved by considering a sequence of compacts e.g.\ $K_n = D_n$ and letting $n \to \infty$ and considering a limit in law (we leave the details since one could actually dispense from considering the limit object by always restricting to a sufficiently large $n$). a measure plays the role of a Poisson point process with intensity $u$ in our setting.  Notice that, by \eqref{eq:expected-occupation-measure-interlacement} and the fact that $A \mapsto \EE\sqa{\cI_u(A)}$ is a measure, it follows that $\EE\sqa{ \cI_u(A) } = u |A|$ for every $A\subseteq \R^d$ Borel.

\begin{lemma}
For every $u>0$, $\rho>0$ and $x \in \R^d$, the following identities in law hold:
\begin{equation}\label{eq:invariance-mu}
\tras_x \cI_u= \cI_u, \quad \text{and} \quad \dil_\rho \cI_u = \rho^{-2} \cI_{u/\rho^{d-2}}.
\end{equation}
In particular, 
\begin{equation}\label{eq:invariance-mu-1}
\dil_{u^{-1/(d-2)}} \cI_1 =  u^{2/(d-2)} \cI_u.
\end{equation}
\begin{proof}
The first identity follows by translation invariance of the equilibrium measure, i.e., the fact that $\tras_x \tilde{e}_{K} = \tilde{e}_{x+K}$ for every compact $K \subseteq \R^d$. In particular, $\Cap(K) = \Cap(x+K)$. Moreover, if $B = (B_t)_{t \ge 0}$ is a Brownian motion with initial law $\tilde{e}_K$, the process $x+B = (x+B_t)_t$ is a Brownian motion with initial law $\tilde{e}_{x+K}$. Thus,
\begin{equation}
\tras_x (\cI_u \restr K) = \sum_{i=1}^N \tras_x \mu_\infty^{B^i} = \sum_{i=1}^N  \mu_\infty^{x+B^i},
\end{equation}
which clearly has the law of $\cI_u \restr (x+K)$.

 For the second identity, we notice first that
\begin{equation}
\tilde{e}_{\rho K} = \dil_\rho \tilde{e}_{K}, \quad \text{and} \quad \Cap( \rho K) = \rho^{d-2} \Cap(K).
\end{equation}
so that $N$ is a Poisson variable with mean
\begin{equation}
\EE\sqa{N} = u\Cap(K) = \frac{ u}{\rho^{d-2}}\Cap(\rho K).
\end{equation}
Moreover, if $B = (B_t)_{t \ge 0}$ is a Brownian motion with initial law $\tilde{e}_K$, the process $\tilde{B}^\rho = (\rho B_{t/\rho^2})_{t \ge 0}$ has the same law of a Brownian motion with initial law  $\dil_\rho \tilde{e}_{K }$. Therefore,
\begin{equation}
\begin{split}
\dil_\rho (\cI_u \restr K) & = \dil_\rho  \sum_{i=1}^N \mu_\infty^{B^i} =\sum_{i=1}^N \int_0^\infty \delta_{\rho B_t^i} dt = \rho^2 \sum_{i=1}^N \int_0^\infty \delta_{\rho B_{t/\rho^2}^i} dt
\end{split}
\end{equation}
has the same law as $\rho^2 \cI_{u/\rho} \restr \rho K$. 
%
%
%
%
\end{proof}
\end{lemma}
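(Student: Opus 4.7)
The plan is to reduce everything to statements about the restrictions $\cI_u \restr K$ to a compact $K \subseteq \R^d$, since $\cI_u$ was defined as a consistent limit of such restrictions along, say, $K_n = D_n$. If the claimed identities in law hold for each restriction (with the restricting compact transformed appropriately by $\tras_x$ or $\dil_\rho$), they will pass to the limit object. On a fixed compact we can work directly with the explicit construction $\cI_u\restr K = \sum_{i=1}^N \mu_\infty^{B^i}$, where $N$ is Poisson with mean $u\Cap(K)$ and the $B^i$ are i.i.d.\ Brownian motions with initial law $\tilde{e}_K$.

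For translation invariance, I would use that translation respects the equilibrium measure and capacity: $\tilde{e}_{x+K} = \tras_x \tilde{e}_K$ and $\Cap(x+K) = \Cap(K)$. The shifted paths $(x+B^i_t)_t$ are then i.i.d.\ Brownian motions with initial law $\tilde{e}_{x+K}$, and pushing the occupation measure forward by translation gives $\tras_x \mu_\infty^{B^i} = \mu_\infty^{x+B^i}$. Summing yields $\tras_x(\cI_u\restr K)\stackrel{\text{law}}{=}\cI_u\restr(x+K)$, which by consistency gives $\tras_x \cI_u \stackrel{\text{law}}{=}\cI_u$.

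For the scaling identity, the ingredients are: (i) $\tilde{e}_{\rho K} = \dil_\rho \tilde{e}_K$ and $\Cap(\rho K) = \rho^{d-2}\Cap(K)$ from \Cref{lem:sweeping}, so a Poisson random variable with mean $u\Cap(K)$ equals one with mean $(u/\rho^{d-2})\Cap(\rho K)$; (ii) the Brownian scaling $\tilde{B}_s := \rho B_{s/\rho^2}$ is again a Brownian motion with initial law $\dil_\rho \tilde{e}_K = \tilde{e}_{\rho K}$; and (iii) a change of variables in the occupation measure,
\begin{equation*}
\dil_\rho \mu_\infty^{B} = \int_0^\infty \delta_{\rho B_t}\, dt = \rho^{-2}\int_0^\infty \delta_{\tilde{B}_s}\, ds = \rho^{-2}\, \mu_\infty^{\tilde{B}},
\end{equation*}
obtained from the substitution $s = \rho^2 t$. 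Summing over $N$ independent copies identifies $\dil_\rho(\cI_u\restr K)$ in law with $\rho^{-2}(\cI_{u/\rho^{d-2}}\restr \rho K)$, which extends to $\R^d$ again by consistency. The final assertion is the specialization $u=1$, $\rho = u^{-1/(d-2)}$ of the scaling identity, since then $\rho^{-2} = u^{2/(d-2)}$ and $u/\rho^{d-2} = u$.

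I do not expect a genuine obstacle: the argument is essentially bookkeeping of Poisson means, equilibrium measures under affine maps, and the standard Brownian scaling. The only point requiring care is the exponent $\rho^{-2}$ in the scaling of the occupation measure, which would be $\rho^{-d}$ for a pure volume rescaling but here loses a factor $\rho^d$ from the temporal substitution $s=\rho^2 t$ against the spatial pushforward.
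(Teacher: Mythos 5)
Your proof is correct and follows essentially the same route as the paper's: translation/scaling of equilibrium measures and capacities from \Cref{lem:sweeping}, Brownian scaling, and the Jacobian from the temporal substitution in the occupation measure, all applied to the restricted measure $\cI_u\restr K$ and passed to the limit object by consistency. One small note: you correctly obtain $\dil_\rho\mu_\infty^B=\rho^{-2}\mu_\infty^{\tilde B}$, whereas the paper's own displayed computation has a sign slip (it writes $\rho^2$, and subsequently $\cI_{u/\rho}$ instead of $\cI_{u/\rho^{d-2}}$), although the lemma statement \eqref{eq:invariance-mu} is stated correctly.
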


In the next lemma, we consider the concentration properties for $\cI_1(A)$ (the general case follows from \eqref{eq:invariance-mu-1}).

\begin{lemma}
Let $d \ge 3$. For every $q \ge 1$, there exists $C = C(d,q) <\infty$ such that, if $\diam(A) \ge 1$, 
\begin{equation}\label{eq:concentration-mu-lambda}
\|  \cI_{1}(A) - \EE\sqa{ \cI_1(A) }\|_q \le C \diam(A)^{ (d+2)/2}.
\end{equation}
\begin{proof}
Without loss of generality, we can assume $q \ge 2$. Let $K$ be a ball of radius $\diam(A)$ such that $A \subseteq  K$.
 Then, $\cI_1(A) = \sum_{i=1}^N \mu_\infty^{B^i}(A)$ is the sum of a Poisson number (with mean $\Cap(K) =\diam(A)^{d-2} \Cap(D_1) \approx \diam(A)^{d-2}$) of i.i.d.\ variables, each with finite moments of all orders and bounded from above in \Cref{lem:transient-bm-integral-bounds}. By Rosenthal's inequality in the version \eqref{eq:rosenthal-poisson}, it follows that $\cI_1(A)$ has finite moments of all orders, and
\begin{equation}\begin{split}
\nor{ \cI_1(A) - \EE\sqa{ \cI_1(A)} }_q & \les_{q,d} \bra{ \diam(A)^{(d-2)/q} +\diam(A)^{(d-2)/2}} \diam(A)^{2} \\
& \les_{q,d} \diam(A)^{(d+2)/2}. \qedhere
\end{split}
\end{equation}
%
%
%
%
\end{proof}
\end{lemma}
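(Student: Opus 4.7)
The plan is to reduce to the setting of a Poisson sum and then apply the Rosenthal-type bound \eqref{eq:rosenthal-poisson} recorded in the preliminaries. Since $L^q$ norms are monotone in $q$, I can assume $q \ge 2$. Next, I would embed $A$ into a reasonably tight ball $K$: pick $K = D_{\diam(A)}(x_0)$ for some $x_0\in A$, so that $A \subseteq K$ and $\Cap(K) = \diam(A)^{d-2}\Cap(D_1) \sim \diam(A)^{d-2}$ by the scaling of capacity in \Cref{lem:sweeping}. Because $A\subseteq K$, we have $\cI_1(A) = (\cI_1\restr K)(A)$, and by the very definition of the interlacement restricted to a compact,
\begin{equation}
\cI_1(A) = \sum_{i=1}^N \mu^{B^i}_\infty(A),
\end{equation}
where $N$ is Poisson with mean $\Cap(K)$ and $(B^i)_{i\ge 1}$ are i.i.d.\ Brownian motions, independent of $N$, each with initial law $\tilde e_K$.

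Now I would estimate the individual summands. By \Cref{lem:transient-bm-integral-bounds} applied to the $q$-th moment bound \eqref{eq:bm-p-power}, for every starting point $x$ we have $E_x[\mu_\infty^B(A)^q]\les_{q,d} \diam(A)^{2q}$, which integrated against $\tilde e_K$ yields
\begin{equation}
\| \mu^{B^1}_\infty(A)\|_{L^q} \les_{q,d} \diam(A)^2, \qquad \| \mu^{B^1}_\infty(A)\|_{L^2} \les_d \diam(A)^2.
\end{equation}
Since $\Cap(K) \sim \diam(A)^{d-2} \ge \Cap(D_1)$ (here we use $\diam(A)\ge 1$), the hypothesis $\lambda \ge c$ of \eqref{eq:rosenthal-poisson} is satisfied, and Rosenthal's inequality for Poisson sums gives
\begin{equation}
\| \cI_1(A) - \EE[\cI_1(A)]\|_{L^q} \les_{q,d} \diam(A)^{(d-2)/q}\cdot \diam(A)^2 + \diam(A)^{(d-2)/2}\cdot \diam(A)^2.
\end{equation}

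Finally, since $q\ge 2$, one has $(d-2)/q \le (d-2)/2$, so the second term dominates, and the right-hand side is $\les \diam(A)^{(d+2)/2}$, which is the claimed bound. The only delicate point I anticipate is verifying that the hypotheses of \eqref{eq:rosenthal-poisson} are applicable — specifically, that the mean $\lambda = \Cap(K)$ is bounded below uniformly so that the implicit constant depends only on $d$ and $q$; this is ensured by the assumption $\diam(A)\ge 1$. Everything else is a matter of tracking exponents and using the previously established single-trajectory moment bound.
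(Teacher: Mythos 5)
Your proof is correct and follows essentially the same route as the paper's: enclose $A$ in a ball $K$ of radius $\diam(A)$, observe $\cI_1(A)$ is a Poisson ($\Cap(K)\sim\diam(A)^{d-2}$) number of i.i.d.\ occupation masses with all moments controlled via \Cref{lem:transient-bm-integral-bounds}, and invoke the Poissonized Rosenthal inequality \eqref{eq:rosenthal-poisson}. Your explicit verification that $\diam(A)\ge1$ guarantees $\Cap(K)\gtrsim 1$ (the hypothesis $\lambda\ge c$) makes the paper's implicit usage of that assumption clearer, but the argument is the same.
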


\subsection{Limit results}

We are now in a position to establish the convergence as $u \to \infty$ of $\EE\sqa{W_{\Omega}^p(\cI_u)}$. The arguments are a modification of those originally devised for the random matching problem in \cite{BaBo, goldman2021convergence, ambrosio2022quadratic, goldman2022optimal} and employ only basic invariance properties of $\cI_u$. Thus, we  provide a complete derivation in \Cref{app:asymptotics} in a more abstract setting and here specialize to the interlacement occupation measure.
%

\begin{theorem}\label{thm:poi-inter}
Let $d \in \cur{ 3, 4}$ and $p \in (0, (d-2)/2)$, or $d \ge 5$ and $p>0$. Then, there exists a constant $\c(\cI, d, p) \in (0, \infty)$ such that, for every bounded connected domain $\Omega$ with $C^2$ boundary (or $\Omega = Q$ a cube) it holds
\begin{equation}\label{eq:limit-interlacement}
\lim_{u \to \infty} \EE\sqa{ W_{\Omega}^p (\cI_u) }/u^{1-p/(d-2)}  = \c(\cI, d, p) |\Omega|.
\end{equation}
\end{theorem}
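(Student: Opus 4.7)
My plan is to reduce the statement to a thermodynamic limit at fixed intensity and then appeal to the general machinery for stationary random measures (which the authors defer to the appendix).

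The first step will be to apply the scaling identity \eqref{eq:invariance-mu-1}. Setting $L := u^{1/(d-2)}$, the identity $\dil_{1/L}\cI_1 = L^2 \cI_u$ together with the scaling of Wasserstein cost under dilations and mass multiplication gives, after a short algebraic check,
\begin{equation}
\frac{\EE\sqa{W^p_{\Omega}(\cI_u)}}{u^{1-p/(d-2)}} = |\Omega|\cdot \frac{\EE\sqa{W^p_{L\Omega}(\cI_1)}}{|L\Omega|}.
\end{equation}
Hence it suffices to prove that
\begin{equation}
\lim_{L\to\infty} \frac{\EE\sqa{W^p_{L\Omega}(\cI_1)}}{|L\Omega|} = \c(\cI, d, p) \in (0,\infty),
\end{equation}
with the limit independent of the shape of $\Omega$.

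I would first establish this limit for the reference case of cubes $\Omega = Q_1$ along integer scales $L \in \N$, via a near-subadditivity argument. Partitioning $Q_{L_1+L_2}$ into translates of $Q_{L_1}$ and applying the geometric sub-additivity \Cref{lem:sub} (or \eqref{eq:mainsub-lebesgue}) with $\lambda$ the Lebesgue measure, the cost on the big cube is bounded above by the sum of the costs on the subcubes plus a remainder involving the transport between the piecewise-constant density $\sum_i \chi_{Q^i}\cI_1(Q^i)/|Q^i|$ and the uniform measure. Translation invariance of $\cI_1$ yields that $\EE\sqa{W^p_{Q_L}(\cI_1)}$ is, up to this remainder, a subadditive function of the side length; the concentration bound \eqref{eq:concentration-mu-lambda}, which controls fluctuations of $\cI_1(Q^i)$ by $L^{(d+2)/2}$, plugged into \Cref{prop:density-helps} (or \eqref{eq:wasserstein-h-p-r} in the delicate regime $p\le d/(d-1)$), shows that the remainder is of lower order than the main $L^d$ contribution precisely under the assumed range of exponents (this is where the restriction $p<(d-2)/2$ for $d\in\{3,4\}$ enters: the error scales like $L^{d+p(1-(d+2)/(2d))\cdot(\text{stuff})}$, and the exponent must beat $d$). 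A standard superadditivity/Fekete-style argument then yields the limit along integer $L$; the extension to real $L\to\infty$ is a cheap boundary-layer comparison.

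Next I would transfer the limit to arbitrary bounded connected $C^2$ domains $\Omega$ (and to balls) by an inner/outer approximation of $L\Omega$ by disjoint unions of subcubes of intermediate side $\delta L$ with $1\ll \delta L\ll L$, again via \Cref{lem:sub} applied in both directions (cf.\ the two-sided bounds in \Cref{lem:same-asymptotics}). The $C^2$ regularity of $\partial \Omega$ guarantees that the boundary layer of width $\delta L$ has volume $o(|L\Omega|)$, which together with the trivial upper bound \eqref{eq:trivial-wass} and the concentration estimate shows this layer contributes negligibly. The resulting shape-independence identifies the common value as $\c(\cI,d,p) = \lim_L \EE[W^p_{Q_L}(\cI_1)]/L^d$, and its finiteness follows from the global PDE upper bound \Cref{lem:peyre} (specifically \eqref{eq:estimCZ-Lp}) combined with \eqref{eq:concentration-mu-lambda}, while positivity comes from the lower bound \eqref{eq:trivial-lower-bound} applied to the (non-trivial) empty regions of $\cI_1$ at unit intensity.

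The main technical obstacle is the control of the remainder term in the sub-additivity step: because the concentration exponent $(d+2)/2$ in \eqref{eq:concentration-mu-lambda} is strictly larger than the $d/2$ one would obtain for a Poisson point process, the available range of $p$ for which the error is genuinely sub-leading is reduced, and this is exactly why the restrictions $p<(d-2)/2$ (for $d\in\{3,4\}$) appear. Handling the borderline exponents $p\le d/(d-1)$ requires the $r$-power refinement \eqref{eq:wasserstein-h-p-r}, and ensuring that the superadditive lower bound matches the subadditive upper bound (so the limit actually exists, not merely $\limsup$ and $\liminf$ separately) requires a careful symmetric decomposition, as in the approach of \cite{goldman2021convergence, ambrosio2022quadratic, goldman2022optimal}—this is precisely the content of the abstract \Cref{theo:domain} in \Cref{app:asymptotics} to which the whole proof can be reduced once stationarity, scaling, and the concentration bound \eqref{eq:concentration-mu-lambda} for $\cI_1$ are verified.
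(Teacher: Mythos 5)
Your high-level route is exactly the paper's: rescale via \eqref{eq:invariance-mu-1} to reduce to $\lim_{L\to\infty}\EE[W^p_{L\Omega}(\cI_1)]/|L\Omega|$ at unit intensity, then invoke the abstract limit theorem for stationary random measures (\Cref{theo:domain}) after verifying stationarity, integrability, and the concentration bound \eqref{eq:concentration-mu-lambda} with $\alpha=2$; the exponent restriction $p<(d-2)/2$ for $d\in\{3,4\}$ is indeed forced by requiring $r=d-\alpha-2\min\{p,1\}>0$. The subadditivity/Whitney material you then re-derive is precisely the content of \Cref{theo:PoiLeb} and the remainder of \Cref{app:asymptotics}, so that part is a paraphrase of the cited machinery rather than an alternative.

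The one genuine divergence is strict positivity. You propose a direct lower bound via \eqref{eq:trivial-lower-bound}, bounding $\EE[W^p_{Q_L}(\cI_1)]$ below by the expected $p$-th power distance from a uniform point to $\supp(\cI_1)$. The paper instead only obtains $\c(\cI,d,p)\in[0,\infty)$ from \Cref{theo:domain} and defers positivity to \Cref{thm:main-torus} combined with the torus lower bound of \Cref{prop:lower-bound} (which for $p\ge 1$ is just \eqref{eq:mattesini}, and for $p<1$ uses a hitting-probability computation). Your direct route is plausible — at unit intensity the interlacement trajectories form a Lebesgue-null set whose $\delta$-neighbourhood occupies an expected volume fraction $\lesssim\delta^{d-2}$, so a fixed positive fraction of $Q_L$ sits at distance $\gtrsim 1$ from the support — but as stated it is only a one-line gesture, and you would need to make the sparsity estimate quantitative and verify it uniformly in $L$ (say via a second-moment or union-bound argument over unit subcubes). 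That is more work than the phrasing suggests, and it is worth being aware that the published argument goes through the torus instead.
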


\begin{proof}
If we apply \Cref{theo:domain} with $\nu = \cI_1$, which satisfies the conditions i) (stationarity), ii) (integrability) and iii) (concentration) with $\alpha=2$, we obtain
 \begin{equation}
\lim_{n \to \infty}  \EE\sqa{ W_{\Omega}^p( \dil_{n^{-1/d}} \cI_1 ) }/n^{1-p/d} = \c(\cI, d, p) |\Omega|,
\end{equation}
for some constant $\c(\cI, d, p) \in [0, \infty)$. Setting $n = n(u) = u^{d/(d-2)}$ so that $n^{-1/d} = u^{-1/(d-2)}$, hence $\dil_{n^{-1/d}} \cI_1  = u^{2/(d-2)}\cI_u$ by \eqref{eq:invariance-mu-1}, we obtain \eqref{eq:limit-interlacement}. The fact that $\c(\cI, d,p)$ is strictly positive will follow as a consequence of \Cref{thm:main-torus} and \Cref{prop:lower-bound}.
\end{proof}

We then obtain \Cref{thm:main-bm-iid}, which is nothing but a \emph{de-Poissonized} version of \eqref{eq:limit-interlacement}, where the number of Brownian motions is deterministic. Since this type of arguments are also rather standard, and here we follow closely  \cite{goldman2021convergence, goldman2022optimal}, we prefer to obtain it as a consequence of a more general result, see \Cref{prop:depoisson} in  \Cref{app:depoisson}.


We end instead this section by investigating how the limit behaves if we assume that the Brownian motions do not start exactly with the equilibrium measure. We restrict ourselves to the case of a ball, although we conjecture that similar bounds should hold true for general domains. 

In the next lemma, write $\cM$ for the $\sigma$-algebra on $C([0, \infty); \R^d)^{\otimes n}$ generated by the map
\begin{equation}
((x^i_t)_{t \ge 0})_{i=1}^n \mapsto ((|x^i_t|)_{t \ge 0})_{i=1}^n,
\end{equation}

\begin{lemma}\label{lem:stability}
Let $d \ge 1$ and $\nu$, $\tilde{\nu}$ be  probability measures on $\partial D_1$.  Let, $(\tau^i)_{i=1}^n$ be $\cM$-measurable functions with values in $[0, \infty]$. Let $B = (B^i)_{i=1}^n$  be independent Brownian motions, all with initial law $\nu$ and $\tilde{B} = (\tilde{B}^i)_{i=1}^n$ be independent Brownian motions, all with initial law $\tilde{\nu}$. If $p >0$, then there exists $c=c(p)< \infty$ such that, for every $\eps \in [0, 1)$,
\begin{equation}\begin{split}\label{eq:lem-stability}
\EE\sqa{ W_{D_1}^p\bra{ \sum_{i=1}^{n} \mu_{\tau^i(B)}^{B^i}}} & \le (1+\eps) \EE\sqa{ W_{D_1}^p\bra{ \sum_{i=1}^{n} \mu_{\tau^i(\tilde{B})}^{\tilde{B}^i} }} \\
& \quad  + \frac{C}{\eps^{(p-1)^+}} \EE\sqa{ \sum_{i=1}^{n}\mu_{\tau^i({B})}^{{B}^i}(D_1) } W_{D_1}^p\bra{ \nu, \tilde{\nu}}.
\end{split}\end{equation}
Moreover, in the second line one can also replace $B$ with $\tilde B$.
\end{lemma}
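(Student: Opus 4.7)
The plan is to prove the inequality by constructing a \emph{rotational coupling} between the two families of Brownian motions, which keeps the radial components (and hence the stopping times $\tau^i$) identical, while incurring a transportation cost on the angular part that reduces essentially to the Wasserstein cost between $\nu$ and $\tilde\nu$ on $\partial D_1$. To this end I would first draw i.i.d.\ pairs $(Y^i,\tilde Y^i)_{i=1}^n$ from an optimal coupling $\pi$ achieving $W_{D_1}^p(\nu,\tilde\nu)$, and for each $i$ choose the rotation $R^i\in SO(d)$ acting as the planar rotation in $\mathrm{span}(\tilde Y^i,Y^i)$ that sends $\tilde Y^i$ to $Y^i$. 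I would then take i.i.d.\ Brownian motions $\tilde B^i$ starting at $\tilde Y^i$ (independent of the $(Y^i,\tilde Y^i)$) and set $B^i_t:=R^i\tilde{B}^i_t$, which is a Brownian motion starting at $Y^i$ by rotation invariance. The crucial feature is that $|B^i_t|=|\tilde{B}^i_t|$ for all $t$, so the $\cM$-measurability of each $\tau^i$ forces $\tau^i(B)=\tau^i(\tilde B)=:\tau^i$.

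Next, I would combine the triangle inequality for $W_{D_1}^p$ (itself a metric when $p<1$, and whose $p$-th root is a metric when $p\ge 1$) with the elementary inequality \eqref{eq:elementary}. Since $\sum_i\mu^{B^i}_{\tau^i}(D_1)=\sum_i\mu^{\tilde{B}^i}_{\tau^i}(D_1)$ by construction, both sides share the same uniform target, yielding
\begin{equation*}
W_{D_1}^p\Bigl(\textstyle\sum_i\mu^{B^i}_{\tau^i}\Bigr)\le(1+\eps)\,W_{D_1}^p\Bigl(\textstyle\sum_i\mu^{\tilde{B}^i}_{\tau^i}\Bigr)+\frac{c}{\eps^{(p-1)^+}}\,W_{D_1}^p\Bigl(\textstyle\sum_i\mu^{B^i}_{\tau^i},\sum_i\mu^{\tilde{B}^i}_{\tau^i}\Bigr).
\end{equation*}
The last term is then treated via sub-additivity \eqref{eq:sub}: for each $i$, the measure $\int_0^{\tau^i}I_{|\tilde{B}^i_s|\le 1}\,\delta_{(B^i_s,\tilde{B}^i_s)}\,ds$ is a valid coupling of $\mu^{B^i}_{\tau^i}\restr D_1$ and $\mu^{\tilde{B}^i}_{\tau^i}\restr D_1$, since the two indicator sets $\{|B^i_s|\le 1\}$ and $\{|\tilde{B}^i_s|\le 1\}$ coincide. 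A direct computation gives $\|R^i-I\|_{\mathrm{op}}=2|\sin(\theta^i/2)|=|Y^i-\tilde Y^i|$ (where $\theta^i$ is the angle between $Y^i$ and $\tilde Y^i$), so $|B^i_s-\tilde{B}^i_s|=|(R^i-I)\tilde{B}^i_s|\le|\tilde{B}^i_s|\cdot|Y^i-\tilde Y^i|\le|Y^i-\tilde Y^i|$ on $\{|\tilde{B}^i_s|\le 1\}$, and the $i$-th summand is bounded by $|Y^i-\tilde Y^i|^p\,\mu^{B^i}_{\tau^i}(D_1)$.

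The final and most delicate step, which I expect to be the main obstacle, is the decoupling between angular initial data and radial dynamics. I would invoke the skew-product representation of Brownian motion, writing $\tilde{B}^i_t=|\tilde{B}^i_t|\,\Theta^i_{\alpha^i_t}$, with $\rho^i:=|\tilde{B}^i|$ a Bessel-$d$ process started at $1$, $\alpha^i_t=\int_0^t(\rho^i_s)^{-2}ds$, and $\Theta^i$ a spherical Brownian motion started at $\tilde Y^i$ independent of $\rho^i$. Crucially, conditionally on $|\tilde Y^i|=1$ the law of $\rho^i$ does not depend on $\tilde Y^i$, so $\rho^i\perp\tilde Y^i$, and by the i.i.d.\ construction the family $(\rho^i)_i$ is jointly independent of the initial data $(Y^i,\tilde Y^i)_i$. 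Both $\tau^i$ (by $\cM$-measurability) and $\mu^{B^i}_{\tau^i}(D_1)=\int_0^{\tau^i}I_{\rho^i_s\le 1}\,ds$ are functions of the $\rho^j$'s alone, hence independent of $(Y^i,\tilde Y^i)_i$. Taking expectations then factorizes each summand as $\EE[|Y^i-\tilde Y^i|^p]\,\EE[\mu^{B^i}_{\tau^i}(D_1)]=W_{D_1}^p(\nu,\tilde\nu)\,\EE[\mu^{B^i}_{\tau^i}(D_1)]$, and summation over $i$ yields \eqref{eq:lem-stability}. The concluding remark that $B$ may be replaced by $\tilde B$ in the last factor follows because $\mu^{B^i}_{\tau^i}(D_1)=\mu^{\tilde{B}^i}_{\tau^i}(D_1)$ holds pathwise in our coupling, and more generally at the level of laws by rotation invariance, independently of the coupling chosen.
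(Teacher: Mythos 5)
Your proposal is correct and follows essentially the same route as the paper: an isometric (rotational) coupling that fixes the radial process, the identity $\tau^i(B)=\tau^i(\tilde B)$ from $\cM$-measurability, the triangle inequality combined with \eqref{eq:elementary} and \eqref{eq:sub}, and the observation that the radial dynamics are independent of the initial angular data so the expectation factorizes. The only small differences are cosmetic — you work directly with a coupling $\pi$ rather than assuming (as the paper does "for simplicity") that it is induced by a map $\Psi$, and you cite the skew-product decomposition by name where the paper just invokes the independence of $(|B_t|)_{t\ge 0}$ from $B_0$.
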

\begin{proof}
Consider an optimal transport plan between $\nu$ and $\tilde{\nu}$, and assume for simplicity that it is induced by a map $\Psi: \partial D \to \partial D$. We use $\Psi$ to induce a coupling between a Brownian motion $B$ with initial law $\nu$ and a Brownian motion $\tilde{B}$ with initial law $\tilde{\nu}$ in the following way. We let $U = U(\tilde{B}_0)\in \R^{d\times d}$ be the orthogonal transformation which acts as a rotation, on the plane spanned by $\cur{B_0, \Psi(B_0)}$, mapping $B_0$ into $\Psi(B_0)$ (if $B_0 = \Psi(B_0)$ we simply let $U$ be the identity). We then define
\begin{equation}
\tilde{B}_t = U B_t, \quad \text{for $t \ge 0$,}
\end{equation}
which is a Brownian motion, with $\tilde{B}_0 = \Psi(B_0)$, so that its initial law is $\tilde{e}_{D}$. Moreover, since $(|\tilde{B}_t|)_{t \ge 0} = (\abs{B_t})_{t \ge 0}$, we have, for every $T \ge 0$,
\begin{equation}
U_\sharp \mu_T^{B} \restr D = \mu_T^{\tilde{B}} \restr D,
\end{equation}
In addition, we have $\tau^B = \tau^{\tilde{B}}$ and $\mu_T^{B}(D)= \mu_T^{\tilde{B}}(D)$ and because each process $(\abs{B_t})_{t \ge 0}$ is independent of $B_0$ (for $\abs{B_0} = 1$) we have that $(\tau^B, \mu_{\tau^B}^B(D) )$ and $\abs{B_0 -\Psi(B_0)}$ are independent random variables.
   Since
\begin{equation}
\abs{\tilde{B}_t - B_t} \le \nor{I - U}\abs{B_t} = \abs{B_0 -\Psi(B_0)}, \quad \text{for every $t \ge 0$ such that $B_t \in D$,}
\end{equation}
we conclude that, for every $T \ge 0$, 
\begin{equation}\label{eq:bound-coupling-bm}
W_{D}^p \bra{ \mu_T^{B}, \mu_T^{\tilde{B}} } \le \mu_T^{\tilde{B}}(D) \abs{B_0 - \Psi(B_0)}^p,
\end{equation}
having used the coupling induced by $U$. We apply this construction to each $B^i$ and notice that $\tau_i = \tilde{\tau}_i$ and the variables
\begin{equation}
(\tau^i, \mu_{\tau^i}^{B^i}(D) )_{i=1}^n, \quad \text{and} \quad \bra{ \abs{B^i_0 -\Psi(B^i_0)}}_{i=1}^n
\end{equation}
are independent. Using this independence, \eqref{eq:sub} and \eqref{eq:bound-coupling-bm}, we thus obtain
\begin{equation}
\begin{split}
\EE  \sqa{ W_{D_1}^p \bra{ \sum_{i=1}^{n} \mu_{\tau^i}^{B^i} ,  \sum_{i=1}^{n} \mu_{\tilde{\tau}^i}^{\tilde{B}^i} }} & \le \EE\sqa{ \sum_{i=1}^n  \mu_{\tau_i}^{B^i}(D) \abs{B^i_0 - \Psi(B^i_0)}^p}\\
 & \le \EE\sqa{ \EE\sqa{ \sum_{i=1}^n  \mu_{\tau_i}^{B^i}(D) \abs{B^i_0 - \Psi(B^i_0)}^p \bigg| (\tau^i, \mu_{\tau^i}^{B^i}(D) )_{i=1}^n }}\\
& = \EE\sqa{ \sum_{i=1}^n \mu_{\tau_i}^{B^i}(D) }  W_{D_1}^p( \nu, \tilde{\nu} ).
\end{split}
\end{equation}
The thesis then follows from the triangle inequality and an application of \eqref{eq:elementary}.
\end{proof}

\section{Occupation measure for the Brownian motion on the torus}\label{sec:torus}

In this section, we study the asymptotics for the occupation measure of a Brownian motion on $\T^d$ and establish \Cref{thm:main-torus}. Most of the argument is in fact contained in the following ``local'' result, showing that by focusing only the cost $W_{D_\ell}^p$ for a small ball $D_\ell$, with $\ell = T^{-\gamma}$ for $\gamma$ smaller (but sufficiently close) to $1/(d-2)$, the constant $\c(\cI, d,p)$ associated to the Brownian interlacement of the previous section will appear in a suitably renormalized limit.

\begin{proposition}\label{prop:local-torus}
Let $d \in \cur{3,4}$ and $p \in (0, (d-2)/2)$, or $d \ge 5$ and $p>0$. Let $B =(B_t)_{t \ge 0}$ be a stationary Brownian motion on $\T^d$. There exists $\bar{\gamma} = \bar{\gamma}(d,p) \in (0,1/(d-2))$ such that the following holds. For every $\gamma \in (\bar{\gamma}, 1/(d-2))$, letting  
 $\ell = T^{-\gamma}$, it holds
\begin{equation}
\lim_{T \to \infty} \EE\sqa{  W^p_{D_\ell}\bra{ \mu^B_T }} / \bra{ T^{1-p/(d-2)} |D_\ell|}= \c(\cI, d,p).
\end{equation}
with $\c\bra{\cI, d,p}$ as in \Cref{thm:main-bm-iid}.
\end{proposition}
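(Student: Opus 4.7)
\textbf{Proof sketch of \Cref{prop:local-torus}.} The strategy is to couple the restricted occupation measure $\mu_T^B\restr D_\ell$ with (the restriction of) a Brownian interlacement $\cI_T\restr D_\ell$ closely enough that the limit from \Cref{thm:poi-inter}, after an appropriate rescaling, transfers. The target scale is $T^{1-p/(d-2)}|D_\ell| = T^{1-p/(d-2) - d\gamma}$. Note the scaling identity: using \eqref{eq:invariance-mu} with $\rho = 1/\ell$ gives $\dil_{1/\ell}\cI_T \stackrel{law}{=} \ell^2 \cI_{T\ell^{d-2}}$ on $D_1$, so
\begin{equation}
 \EE\sqa{W_{D_\ell}^p(\cI_T)} = \ell^{p+2}\,\EE\sqa{W_{D_1}^p(\cI_{T\ell^{d-2}})}.
\end{equation}
Since $T\ell^{d-2} = T^{1-\gamma(d-2)}\to\infty$ (by $\gamma<1/(d-2)$), \Cref{thm:poi-inter} gives $\EE[W_{D_1}^p(\cI_{T\ell^{d-2}})] \sim \c(\cI,d,p)(T\ell^{d-2})^{1-p/(d-2)}|D_1|$, and a direct computation shows this equals $\c(\cI,d,p)T^{1-p/(d-2)}|D_\ell|$. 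Thus the proposition reduces to the Wasserstein-asymptotic equivalence $\EE[W^p_{D_\ell}(\mu_T^B)] = \EE[W^p_{D_\ell}(\cI_T)] + o(T^{1-p/(d-2)}|D_\ell|)$.

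To build the coupling, the plan is to chop the trajectory $B_{[0,T]}$ into pieces by introducing an intermediate mesoscopic scale $L = T^{-\gamma_L}$ with $0<\gamma_L<\gamma$ and a mixing/spacing time $\Delta = \Delta(T)\gg 1$ (polynomial in $\log T$). First, recursively define the alternating hitting times $\tau_{k,\ell},\tau_{k,L}$ of $D_\ell$ and $D_L^c$ as before \Cref{cor:hitting-twice}, and let $N_T = \sharp\cur{k : \tau_{k,\ell}\le T}$ be the number of excursions meeting $D_\ell$. By \Cref{prop:hitting-times} and \Cref{cor:hitting-twice}, $\EE[N_T] = T\,\ell^{d-2}\Cap(D_1) + o(\cdot)$ and the higher factorial moments match those of a Poisson variable of that mean, so $N_T$ is approximately Poisson. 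Second, each excursion enters $D_\ell$ at a point whose law is, by \eqref{eq:estimate-wasserstein-hitting-distribution}, close to the normalized equilibrium $\tilde{e}_{D_\ell}$ in $W^p$-sense, with error per excursion $\les \ell^p(\rho\ell^{d-2}|\log\ell| + \ell^2/\rho)$ for an appropriate window length $\rho$. Third, the ``long-gap'' Markov property together with the exponential mixing estimate \eqref{eq:convergence-tv-any-law} lets us, via \eqref{eq:markov-chain-tv}, replace the joint law of the excursions by one in which they are independent, losing at most total variation $\les T e^{-c\Delta}$, which is negligible once $\Delta\gtrsim(\log T)^2$.

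Once this is done, $\mu_T^B\restr D_\ell$ is coupled with the sum of $N_T$ independent ``occupation measures up to exit time'' of Brownian motions started at $\tilde e_{D_\ell}$-distributed points. By \Cref{lem:stability} (with $\tau^i$ the exit time from $D_L$ after a long stay, invoking the $\cM$-measurability of radial exit times) the correction coming from the non-equilibrium starting law is controlled by the small $W^p(\nu_{\sigma,\rho}, \tilde e_{D_\ell})$ from \eqref{eq:estimate-wasserstein-hitting-distribution} multiplied by the expected total mass $\EE[\mu_T^B(D_\ell)]= T|D_\ell|$. Finally, truncation of each excursion at the exit from $D_L$ (instead of letting it run forever as in the interlacement) is handled using \Cref{lem:transient-bm-integral-bounds}, \eqref{eq:prob-hitting-ball} and \Cref{lem:same-asymptotics}: the extra mass put on $D_\ell$ by a Brownian motion that has exited $D_L$ is of order $(\ell/L)^{d-2}$ per excursion, producing a perturbation of the total local mass of order $T\ell^{d-2}(\ell/L)^{d-2}\ell^2$ which, by \Cref{lem:same-asymptotics}, contributes $o(T^{1-p/(d-2)}|D_\ell|)$ to $\EE[W^p_{D_\ell}(\cdot)]$ provided $\gamma_L$ is sufficiently smaller than $\gamma$.

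The main obstacle is the quantitative book-keeping in the last two paragraphs: the target scale is extremely small, so the four error sources — Poissonization of $N_T$, mixing between excursions, deviation of entry law from $\tilde e_{D_\ell}$, and truncation at $\partial D_L$ — must each be bounded (in the Wasserstein sense, not just in total mass) by $o(T^{1-p/(d-2)-d\gamma})$. This is what forces the restriction $\gamma\in(\bar\gamma,1/(d-2))$: one picks $\gamma_L$, $\Delta$ and a window $\rho\sim\ell^{-\gamma_\rho}$ with $\gamma_\rho$ slightly below $(d-2)\gamma_L$ so that \Cref{prop:hitting-times} and \Cref{cor:hitting-twice} give sharp enough estimates, and then checks that all error terms beat the target scale uniformly. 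Upper and lower bounds follow symmetrically from the resulting coupling, either by $W^p$ being a distance ($0<p<1$) or the triangle inequality combined with \eqref{eq:elementary} ($p\ge 1$), yielding the sharp two-sided limit.
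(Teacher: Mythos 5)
Your high-level plan is close to the paper's: reduce to the interlacement via scaling (your scaling reduction $\EE[W_{D_\ell}^p(\cI_T)]=\ell^{p+2}\EE[W_{D_1}^p(\cI_{T\ell^{d-2}})]\sim \c(\cI,d,p)T^{1-p/(d-2)}|D_\ell|$ is correct), then couple $\mu_T^B\restr D_\ell$ with a sum of independent excursions starting at $\tilde e_{D_\ell}$, controlling the four error sources you list. You also correctly identify the main tools: \Cref{prop:hitting-times}, \Cref{cor:hitting-twice}, \Cref{lem:stability}, \Cref{lem:same-asymptotics}. However, there is a genuine structural gap in your mixing step.

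You propose to decompose the single trajectory directly into excursions indexed by the alternating hitting times $\tau_{k,\ell},\tau_{k,L}$, and then make the excursions independent ``via \eqref{eq:markov-chain-tv} and \eqref{eq:convergence-tv-any-law}, losing at most $\les Te^{-c\Delta}$.'' For this to work, consecutive excursions must be separated in time by at least $\Delta\gtrsim (\log T)^2$. But this is false: the time gap between $\tau_{k,L}$ and $\tau_{k+1,\ell}$ can be as small as $\Theta(L^2)=\Theta(T^{-2\gamma_L})\to 0$, with probability of order $(\ell/L)^{d-2}$ per excursion, and in such cases the mixing estimate gives nothing. You cannot apply \eqref{eq:markov-chain-tv} excursion-by-excursion without first arguing these fast returns are negligible --- but that is precisely the kind of book-keeping you defer to the end. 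The paper's proof avoids this by choosing a different order of operations: it first splits $[0,T]$ into $n\approx T^{1-\gamma_\rho}$ time windows of length $\rho$ separated by mixing buffers of length $\sigma$. The $\sigma$-buffers guarantee the needed separation, so the TV/Markov-chain argument of Step~2 applies cleanly, and \emph{then} within each $\rho$-window there is effectively at most one visit to $D_\ell$ (\Cref{cor:hitting-twice}); multi-visit and fast-return contributions become the $\lambda_k$ terms of Steps~3--6.

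A second, smaller gap: you claim $N_T$ (the number of excursions) is ``approximately Poisson'' because its factorial moments match, but this is not established by \Cref{prop:hitting-times} or \Cref{cor:hitting-twice} alone, and you would need to prove it. The paper sidesteps this by obtaining a binomial count $\sharp S$ over the $n$ windows (whose parameters are computable) and invoking the de-binomialization in \Cref{rem:debinom}. If you want to keep your excursion-first structure, you should either adopt the windowing first (as the paper does) or give a precise argument showing the fast-return excursions and the resulting dependence between consecutive excursions contribute $o(T^{1-p/(d-2)}|D_\ell|)$ to the Wasserstein cost before invoking mixing.
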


\begin{proof} 
 The strategy is to couple the given Brownian motion on $\T^d$ with a family of independent Brownian motions on $\R^d$, with common initial law sufficiently close to $\tilde{e}_{D_1}$, and use \Cref{thm:main-bm-iid}. Of course the crux of the argument is to take into account the several error terms due to this approximation. We split the proof into several steps.
\newcounter{step}

\stepcounter{step} \noindent \emph{Step \thestep \, (Time splitting).} We introduce two additional parameters $\gamma_\rho$, $\gamma_\sigma$, such that
\begin{equation}\label{eq:gamma-ell-rho-first-condition}
  0< \gamma_\sigma< \gamma_\rho  < \gamma(d-2)
 \end{equation}
 and to be specified below: infact, we are going to collect several constraints and only check at the final step that they can be all satisfied. To guide the intuition, we may think of $\gamma_\sigma$ and $\gamma_\rho$ to be both very close to $0$. We define the quantities
 \begin{equation}
 n:= \lfloor T^{1-\gamma_\rho} \rfloor, \quad \sigma := T^{\gamma_\sigma}, \quad \rho := \frac{T}{n} - \sigma.
 \end{equation}
 Notice that, by our choices of the parameters, it holds
\begin{equation}
1 \ll \sigma \ll \rho \ll \ell^{2-d} \quad \text{ and } \quad  T = n \bra{\rho+\sigma}.
\end{equation}
%
For  $i=0, \ldots, n-1$, we consider the intervals
\begin{equation}
I_i= [i(\rho+ \sigma), i(\rho+\sigma)+ \rho), \quad J_i = [i(\rho+\sigma) +\rho, (i+1)(\rho+\sigma)),
\end{equation}
and decompose
\begin{equation}
\mu_T  = \sum_{i=0}^{n-1} \bra{ \int_{I_i} \delta_{B_t} dt + \int_{J_i} \delta_{B_t} dt}  
 = \sum_{i=0}^{n-1} \bra{ \mu_{\rho}^{\theta_{i(\rho+\sigma)}B} + \mu_{\sigma}^{\theta_{i(\rho+\sigma)+\rho}B}} 
\end{equation}

All the shifted processes appearing in the expression above are stationary Brownian motions on the torus, hence have the same law as $B$ (but they are not independent). Therefore, by \eqref{lem:expectation-bm-stationary}, we have
\begin{equation}\label{eq:step-1-first-error-bound}\begin{split}
\diam(D_\ell)^p \EE\sqa{ 
\sum_{i=0}^{n-1}\mu_{\sigma}^{\theta_{i(\rho+\sigma)+\rho}B} (D_\ell) } & \le {n \sigma} \ell^{p+d}  \quad \le \bra{ T^{1-\gamma_\rho+\gamma_\sigma}}T^{-p\gamma} \ell^d \\
&  \ll T^{1-p/(d-2)} \ell^d,
\end{split}
\end{equation}
provided that  the following condition is satisfied:
 \begin{equation}\label{eq:q:gamma-ell-rho-second-condition}
\gamma_\rho - \gamma_\sigma   > p\bra{1/(d-2) - \gamma}.
 \end{equation}
Starting from \eqref{eq:step-1-first-error-bound}, we easily deduce that
\begin{equation}\label{eq:step-1-same-asymptotics}
\abs{ \EE\sqa{  W^p_{D_\ell}\bra{ \mu^B_T }} - \EE\sqa{  W^p_{D_\ell}\bra{\sum_{i=1}^{n} \mu_{\rho}^{\tilde{B}^i} }}} \ll T^{1-p/(d-2)} \ell^d. 
\end{equation}
Indeed, by \eqref{eq:sub} and \eqref{eq:trivial-wass}, we obtain that
\begin{equation}
\EE\sqa{ W_{D_\ell}^p(\mu_T^B)} - \EE\sqa{ W_{D_\ell}^p\bra{ \sum_{i=0}^{n-1}  \mu_{\rho}^{\theta_{i(\rho+\sigma)}B}}} \ll  T^{1-p/(d-2)} \ell^d.
\end{equation}
Similarly, by \eqref{eq:same-asymptotics-trivial}, for every $\eps>0$,
\begin{equation}
 \EE\sqa{ W_{D_\ell}^p\bra{ \sum_{i=0}^{n-1}  \mu_{\rho}^{\theta_{i(\rho+\sigma)}B}}} - (1+\eps) \EE\sqa{ W_{D_\ell}^p(\mu_T^B)}\ll_\eps  T^{1-p/(d-2)} \ell^d,
\end{equation}
so letting $T\to \infty$ and then $\eps\to 0$, we  see that \eqref{eq:step-1-same-asymptotics} holds. Hence, the thesis follows if we prove that 
\begin{equation}\label{eq:step-1-end}
\lim_{T \to \infty} \EE\sqa{  W^p_{D_\ell}\bra{\sum_{i=1}^{n} \mu_{\rho}^{\tilde{B}^i} }} / \bra{ T^{1-p/(d-2)} |D_\ell|}= \c(\cI, d,p),
\end{equation}
where we write, for brevity,
\begin{equation}\label{eq:tilde-b-i}
\tilde{B}^i := (B_{(i-1)(\rho+\sigma) + t})_{t \in [0, \rho]}.
\end{equation}

%
%

\stepcounter{step} \noindent \emph{Step \thestep \, (Breaking dependence).} The processes $(\tilde{B}^i)_{i=1}^n$ are stationary Brownian motions on $\T^d$, but of course they are not independent. 
In this step we argue that in \eqref{eq:step-1-end} is actually possible to replace  them  with $n$ independent (and stationary) Brownian motions $(B^i)_{i=1}^n$ on $\T^d$. This will follow from the bound
\begin{equation}\label{eq:step-2-bound}
   \abs{ \EE\sqa{  W^p_{D_\ell}\bra{\sum_{i=0}^{n-1}  \mu_{\rho}^{\tilde B^i}} } -  \EE\sqa{ W^p_{D_\ell}\bra{\sum_{i=1}^n \mu_{\rho}^{B^i}}}} \ll  T^{1-p/(d-2)}|D_\ell|, 
\end{equation}
which in turn is proved by repeated applications of \eqref{eq:markov-chain-tv} and \eqref{eq:convergence-tv-any-law}.
%
%
 Precisely, consider the following  non-negative function of $n$ continuous curves $((x^i_t)_{t \in [0, \rho]})_{i=1}^n$ on $\T^d$:
\begin{equation}
(x^i)_{i=1}^n \mapsto F \bra{(x^i)_{i=1}^n} =  W_{D_\ell}^p \bra{ \sum_{i=1}^{n}\mu_{\rho}^{x^i} }.
\end{equation}
By \eqref{eq:trivial-wass},
\begin{equation}\label{eq:uniform-bound-big-F}
 \sup_{ (x^i)_{i=1}^n } F \bra{(x^i)_{i=1}^n} \le n \rho \ell^p.
 \end{equation} 
%
Then, \eqref{eq:step-2-bound} follows from the inequality 
\begin{equation}\label{eq:breaking-dependence}
\abs{ \EE\sqa{ F\bra{ (\tilde{B}^i )_{i=1}^n }} - \EE\sqa{ F( (B^i)_{i=1}^n )}} \les n e^{-c \sigma } n \rho \ell^p 
\ll T^{1-p/(d-2)} \ell^d,
\end{equation}
where $c>0$ is as in \eqref{eq:convergence-tv-any-law}, and the second inequality easily follows since $\gamma_\sigma>0$.

To obtain \eqref{eq:breaking-dependence}, we use telescopic summation  on the following set of inequalities, valid for $k=1, \ldots, n-1$,
\begin{equation}\label{eq:breaking-dependence-iteration}
\abs{ \EE\sqa{ F\bra{ B^1, \ldots, B^{k-1}, \tilde{B}^{1},\ldots \tilde{B}^{n-k+1}}} - \EE\sqa{ F( B^1, \ldots, B^{k}, \tilde{B}^{1}, \ldots \tilde{B}^{n-k})} } \le e^{-c \sigma } n \rho \ell^p.
\end{equation}
For each $k$, the above inequality is an application of  \eqref{eq:markov-chain-tv}, with the variables
\begin{equation}
U = (B^1, \ldots, B^{k-1}, \tilde{B}^{1}), \quad V = (\tilde{B}^{1}_{\rho}, \tilde{B}^{2}_0), \quad W=( \tilde{B}^{2}, \ldots, \tilde{B}^{n-k+1}).
\end{equation}
Notice that they define a Markov chain because, conditionally upon
\begin{equation}
V = (\tilde{B}^{1}_{\rho+\sigma},  \tilde{B}^2_0) = (B_{\rho},  B_{\rho+ \sigma}),
\end{equation} $U$ and $V$ are independent.
By \eqref{eq:convergence-tv-any-law}, we estimate
\begin{equation}
\TV\bra{\mathbb{P}_{(B_{\rho}, B_{\rho+\sigma})}, \mathcal{L}^d_{\T^d} \otimes \mathcal{L}^d_{\T^d}} \les e^{-c \sigma}.
\end{equation}
Thus, using \eqref{eq:markov-chain-tv}, we can turn $U$ (and in particular $\tilde{B}^1$) and $W=(\tilde{B}^2, \ldots, \tilde{B}^{n-k+1})$ independent, and let the initial distribution of $\tilde{B}^2$ be uniform on $\T^d$,  effectively replacing the joint law of $W$ with that of $(\tilde{B}^1, \ldots, \tilde{B}^{n-k})$, with an error in total variation distance that is $\les e^{-c \sigma}$. Because of  \eqref{eq:uniform-bound-big-F}, we see that \eqref{eq:breaking-dependence-iteration} follows.

\stepcounter{step} \noindent \emph{Step \thestep \, (Series of visits).} Our aim is  to prove 
\begin{equation}\label{eq:step-1-end}
\lim_{T \to \infty} \EE\sqa{  W^p_{D_\ell}\bra{\sum_{i=1}^{n} \mu_{\rho}^{B^i} }} / \bra{ T^{1-p/(d-2)} |D_\ell|}= \c(\cI, d,p),
\end{equation}
where $(B^i)_{i=1}^n$ are independent and stationary Brownian motions on $\T^d$. 
In this step, we decompose each measure $\mu_\rho^{B^i}$ into a series of contributions due to the sequence of ``visits'' to $D_\ell$. 
We introduce an additional parameter 
\begin{equation}\label{eq:q:gamma-ell-rho-third-condition}
0<\gamma_L < \gamma
\end{equation}
and set $L = T^{-\gamma_L}$. 
For each $i=1, \ldots, n$, we introduce the sequence of hitting times
letting $\tau_{1,\ell}^i = \tau_{D_\ell} B^i$, and iteratively
\begin{equation}\begin{split}
\tau_{k,L}^i & := \inf \cur{ t \ge \tau_{k-1,\ell}^i \, : \, B_t^i \in D_L^c }\\
\tau_{k+1,\ell}^i  & := \inf \cur{ t\ge \tau_{k,L}^i \, : \, B_t^i \in D_\ell}.
\end{split}
\end{equation}
Notice that between the times $\tau_{k, \ell}^i$ and $\tau_{k, L}^i$, $B^i$ can be identified with a Brownian motion on $\R^d$, if $T$ is sufficiently large so that $L < 1/4$ (we assume this in what follows).
%
We  write
\begin{equation}
K^i := \inf\cur{k\, : \, \tau^i_{k, \ell} > \rho } 
\end{equation}
so that
\begin{equation}\label{eq:good-event}
\mu_{\rho}^{B^i} \restr D_\ell = \sum_{k=1}^{K^i-1} \int_{\tau_{k, \ell}^i}^{\tau_{k, L}^i \land \rho } \delta_{B^i_s} ds,
\end{equation}
hence we have the series representation
\begin{equation}
\sum_{i=1}^n \mu_{\rho}^{B^i} \restr D_\ell = \sum_{k=1}^\infty \lambda_k,
\end{equation}
where each term is defined as follows:
\begin{equation}
\lambda_k = \sum_{i\in S_k } \int_{\tau_{k, \ell}^i}^{\tau_{k, L}^i \land \rho } \delta_{B^i_s} ds
\end{equation}
and $S_k =\cur{ i\, : K_i >k}$. In the next steps, we prove that the main contribution in the sum above comes from the term $\lambda_1$.  Actually, we further analyze by splitting also the contribution of $\lambda_1$ into two parts: we let $S\subseteq S_1$ denote the (random) subset of $i\in \cur{1, \ldots, n}$ such that the event
\begin{equation}
A^i := \cur{ 0<\tau_{1,\ell}^i \le \rho } 
\end{equation}
holds, and set $S_0 = S_1\setminus S_0$. We write  $\lambda_1 = \lambda_0 + \tilde{\lambda}_1$, with
\begin{equation}
\lambda_0 = \sum_{i \in S_0} \int_{\tau_{1, \ell}^i}^{\tau_{1, L}^i \land \rho } \delta_{B^i_s} ds, \quad \tilde{\lambda}_1 = \sum_{i \in S} \int_{\tau_{1, \ell}^i}^{\tau_{1, L}^i \land \rho } \delta_{B^i_s} ds.
\end{equation}
In the following steps, our aim is to prove that
%
%
\begin{equation}\label{eq:reduction-single-visit}
\abs{ \EE\sqa{ W_{D_\ell}^p\bra{ \sum_{i=1}^n \mu^{B^i}_\rho }} - \EE\sqa{W_{D_\ell}^p\bra{ \tilde{\lambda}_1} }} \ll T^{1-p/(d-2)} \ell^d.
\end{equation}

\stepcounter{step} \noindent \emph{Step \thestep \, (Upper and lower bounds for visiting times).} We notice first that,  by \Cref{prop:hitting-times},
\begin{equation}
  \PP(0< \tau_{1, \ell}^i < \rho/2) \gtrsim \rho \ell^{d-2}.
\end{equation}
By identifying $\tilde{B}^i = \theta_{\tau_{1,\ell}^i}B^i$ up to the time $\tau_{D_L^c} = \tau_{1, L}^i-\tau_{1, \ell}^i$ with a Brownian motion on $\R^d$, and using the strong Markov property and \Cref{rem:scaling}, it holds
\begin{equation}
\EE\sqa{ I_{A^i} \int_{\tau_{1, \ell}^i}^{\rho \land \tau_{1, L}^i} I_{\cur{B_s\in D_\ell}} ds} \ge \EE\sqa{ I_{ \cur{0< \tau_{1, \ell}^i < \rho/2} } \mu_{(\rho/2) \land  \tau_{D_L^c} }^{ \tilde{B}^i  }(D_\ell) } \gtrsim \rho \ell^{d-2} \cdot \ell^2. 
\end{equation}
Summation upon $i=1,\ldots, n$ leads to the lower bound
\begin{equation}
\EE\sqa{ \tilde{\lambda}_1(D_\ell) } \gtrsim n \rho \ell^d \gtrsim T \ell^d.
\end{equation}
On the other side, again by \Cref{cor:hitting-twice} and the strong Markov property, but using \Cref{lem:transient-bm-integral-bounds}, it holds for every $q \ge 1$, the upper bound
\begin{equation}
\EE\sqa{ \bra{ I_{A^i} \int_{\tau_{k, \ell}^i}^{\rho \land \tau_{k, L}^i} I_{\cur{B_s\in D_\ell}} ds}^q } \les  \rho \ell^{d-2} \cdot \ell^{2q}.
\end{equation}
By Rosenthal inequality \eqref{eq:rosenthal-centered}, it follows that, for every  $q \ge 2$,
\begin{equation}\begin{split}
\EE\sqa{  \abs{ \tilde{\lambda}_1(D_\ell) - \EE\sqa{ \tilde{\lambda}_1(D_\ell) }}^q } & \les n \cdot \rho \ell^{d-2} \cdot \ell^{2q} +   \bra{n \rho \ell^{d-2} \cdot \ell^{4}}^{q/2} \\
& \les (T \ell^{d-2})^{q/2} \ell^{2q} ,
\end{split}
\end{equation}
having used that $T \ell^{d-2} \to \infty$. Such concentration bound yields that, for some constant $c=c(d)>0$, the event 
\begin{equation}\label{eq:good-g1}
G_1 = \cur{ \tilde{\lambda}_1(D_\ell) \ge c T \ell^d  }
\end{equation}
has large probability. Precisely,
\begin{equation}\label{eq:probability-good-g1}
\PP(G_1^c) \le \PP\bra{  \tilde{\lambda}_1(D_\ell)- \EE\sqa{\tilde{\lambda}_1(D_\ell)} \le c' T \ell^d } \les_q \frac{ (T \ell^{d-2})^{q/2} \ell^{2q}}{(T \ell^{d-2})^q \cdot  \ell^{2q}} = (T \ell^{d-2})^{-q/2} \ll_\alpha T^{-\alpha},
\end{equation}
for every $\alpha>0$, provided that we choose $q$ sufficiently large.

We argue similarly that, for every $k\ge 2$ and any $\delta\in (0,1)$ the event
\begin{equation}
G_k  = \cur{ \lambda_k(D_\ell) \le T \ell^{d} \delta }
\end{equation}
has large probability: for every $\alpha>0$, it holds
\begin{equation}
 \PP(G_k^c) \les_{k,\delta,\alpha} T^{-\alpha},
 \end{equation} Indeed,  by \Cref{cor:hitting-twice}, the strong Markov property and \Cref{lem:transient-bm-integral-bounds} it holds, for every $q \ge 1$,
\begin{equation}
\EE\sqa{ \bra{ I_{\cur{i \in S_k}} \int_{\tau_{k, \ell}^i}^{\rho \land \tau_{k, L}^i} I_{\cur{B_s\in D_\ell}} ds}^q } \les_k  (\rho \ell^{d-2})^k \cdot \ell^{2q}.
\end{equation}
By Rosenthal's inequality for positive variables \eqref{eq:rosenthal-positive},
\begin{equation}\begin{split}\label{moment-lambda-k}
\EE\sqa{ \lambda_k(D_\ell)^q } & =  \EE\sqa{ \bra{ \sum_{i\in S_k} \int_{\tau_{k, \ell}^i}^{\rho \land \tau_{k, L}^i} I_{\cur{B_s\in D_\ell}} ds}^q }  \\
& \les_k n  (\rho \ell^{d-2})^k \cdot \ell^{2q} + \bra{  n (\rho \ell^{d-2})^k \cdot \ell^2 }^{q} \les T \ell^{d-2} (\rho \ell^{d-2})^{k-1} \ell^{2q},
\end{split}
\end{equation}
having used that
\begin{equation}
n (\rho \ell^{d-2})^k \les  T \ell^{d-2} (\rho \ell^{d-2})^{k-1} \ll 1
\end{equation}
provided that we impose the condition
\begin{equation}
\label{eq:gamma-condition-i-forgot}
1-\gamma(d-2) - [(d-2) \gamma - \gamma_\rho] <0.
\end{equation}
Hence, 
\begin{equation}\label{eq:good-gk}
\PP(G_k^c)  \les_{k,\delta} \frac{ T \ell^{d-2} (\rho \ell^{d-2})^{k-1} \ell^{2q}}{ (T \ell^{d-2})^q \ell^{2q}} = (T \ell^{d-2})^{1-q} (\rho \ell^{d-2})^{k-1} \les_{\alpha} T^{-\alpha},
\end{equation}
provided that $q$ chosen is large enough (here we use that $\rho \ll \ell^{2-d}$). Notice that if $q$ is chosen if \eqref{eq:good-hk} holds for some $k$, then it also holds for $h \ge k$ (but the implicit constant may degenerate).

The (possible) dependence upon $k$ in the bounds \eqref{moment-lambda-k} and \eqref{eq:good-gk} is taken into account as follows. 
Set
\begin{equation}\label{eq:good-hk}
H_k  = \cur{ \sum_{h \ge k} \lambda_h(D_\ell)  \le T \ell^{d} \delta },
\end{equation}
and apply \Cref{cor:hitting-twice}, for every $q \ge 1$,
\begin{equation}
\EE\sqa{ \bra{ I_{\cur{i \in S_k}} \sum_{h \ge k} \int_{\tau_{h, \ell}^i}^{\rho \land \tau_{h, L}^i} I_{\cur{B_s\in D_\ell}} ds}^q } \les \PP( \tau_{k, \ell}^i \le \rho ) \rho^q \les_k (\rho \ell^{d-2})^k \cdot \rho^q,
\end{equation}
having bounded from above the total mass by $\rho$. By Rosenthal's inequality for positive variables \eqref{eq:rosenthal-positive},
\begin{equation}\label{eq:moment-sum-h-gtrk}
\EE\sqa{ \bra{ \sum_{h \ge k} \lambda_h(D_\ell) }^q } \les n \cdot  (\rho \ell^{d-2})^k \cdot \rho^q + \bra{ n (\rho \ell^{d-2})^k \cdot \rho }^q \les T \ell^{d-2}  (\rho \ell^{d-2})^{k-1} \rho^q.
\end{equation}
Hence, by Markov inequality and simply choosing $q=1$,
\begin{equation}
\begin{split}
\PP(H_k^c) &  \les_{k,\delta} \frac{   T \ell^{d-2}  (\rho \ell^{d-2})^{k-1} \rho }{ T \ell^{d}} =  (\rho \ell^{d-2})^{k-1} \rho/\ell^2 \\
& \les T^{-k\sqa{\gamma(d-2) - \gamma_\rho} + \gamma d}  \les T^{-\alpha}
\end{split}
\end{equation}
for every $\alpha>0$, provided that we now pick $k = k(\alpha)$ sufficiently large (recall that by assumption $\gamma(d-2) >\gamma_\rho$).

To conclude this step, we argue that, for any $\delta\in (0,1)$ the event
\begin{equation}
G_0  = \cur{ \lambda_0(D_\ell) \le T \ell^{d} \delta }
\end{equation}
has large probability: for every $\alpha>0$, it holds
\begin{equation}
 \PP(G_0^c) \les_{\delta,\alpha} T^{-\alpha},
 \end{equation} Indeed,  by stationarity and \Cref{lem:transient-bm-integral-bounds} it holds, for every $q \ge 1$,
\begin{equation}
\EE\sqa{ \bra{ I_{\cur{ \tau_{1,\ell}^i = 0}} \int_{0}^{\rho \land \tau_{1, L}^i} I_{\cur{B_s\in D_\ell}} ds}^q } \les  \ell^d \cdot \ell^{2q}.
\end{equation}
By Rosenthal's inequality for positive variables \eqref{eq:rosenthal-positive},
\begin{equation}\begin{split}\label{moment-lambda-0}
\EE\sqa{ \lambda_0(D_\ell)^q } & =  \EE\sqa{ \bra{ \sum_{i\in S_0} \int_{0}^{\rho \land \tau_{1, L}^i} I_{\cur{B_s\in D_\ell}} ds}^q }  \\
& \les n  \ell^d \cdot \ell^{2q} + \bra{  n  \ell^d \cdot \ell^2 }^{q} \ll n \ell^d \ell^{2q}.
\end{split}
\end{equation}
having used
\begin{equation}
n \ell^{d} \sim \frac{T}{\rho} \ell^d = T^{1-\gamma_\rho - d \gamma } \ll 1,
\end{equation}
which holds true if we assume that
\begin{equation}
\label{eq:condition-gamma-4-condition}
1- \gamma_\rho - d \gamma < 0.
\end{equation}
By Markov inequality,
\begin{equation}\label{eq:good-gk}
\PP(G_0^c)  \les_{\delta} \frac{ T\rho^{-1} \ell^d \ell^{2q} }{ (T \ell^{d})^q } = \frac{T\rho^{-1} \ell^d}{(T \ell^{d-2})^q } \les T^{-\alpha}  
\end{equation}
provided that $q$ chosen is large enough.

\stepcounter{step} \noindent \emph{Step \thestep \, (Removal of bad event).} In this step we argue that it is sufficient to show \eqref{eq:reduction-single-visit} where expectations are restricted to the event
\begin{equation}
A = G_1 \cap G_{0,\delta} \cap  \bigcap_{k=2}^{\bar{k}-1} G_{k,\delta} \cap H_{\bar {k}, \delta},
\end{equation}
for a suitable choice of $\bar{k}$. Indeed, since we can always bound the transportation costs by $T \ell^p$, the expectation over the event $A^c$ will contribute by a quantity not larger than
\begin{equation}
\PP(A^c) \cdot T \ell^p \ll T^{1-p/(d-2)} \ell^d 
\end{equation}
provided that 
\begin{equation}
 \PP(A^c) \ll T^{-p/(d-2)}\ell^{d-p} = T^{-p/(d-2) - \gamma(d-p)},
 \end{equation} 
 which can be achieved via the union bound and the inequalities \eqref{eq:good-g1}, \eqref{eq:good-gk} and \eqref{eq:good-hk} obtained in the previous step. Precisely, if we fix some $\alpha > p/(d-2) + \gamma(d-p)$, then any $\bar{k}$ such that \eqref{eq:good-hk} holds will be a possible choice. Then, for each $k < \bar{k}$, we can apply \eqref{eq:good-hk}.

\stepcounter{step} \noindent \emph{Step \thestep \, (Proof of \eqref{eq:reduction-single-visit} on $A$).} If $A$ holds, then we are in a position to apply \Cref{lem:same-asymptotics} on $\Omega = D_\ell$,  $\mu = \tilde{\lambda}_1$ and
\begin{equation}
\lambda =  \lambda_0 + \sum_{k=2}^{\bar{k}-1} \lambda_k + \bra{\sum_{h \ge \bar{k}} \lambda_h}\end{equation}
To keep the exposition simple, let us argue in the case $p>d/(d-1)$ (the case $p\le d/(d-1)$ is discussed in a separate step below). Taking expectation, we see that to obtain \eqref{eq:reduction-single-visit} it sufficient to show, with $\nu =\lambda_0$, $\nu = \lambda_k$ for $k\in \cur{2, \ldots, \bar{k}-1}$, and $\nu = \sum_{h \ge \bar{k}} \lambda_h$,
\begin{equation}\label{eq:condition-nu-small-contribution}
\EE\sqa{ I_A \nu(D_\ell) \bra{\frac{ \nu(D_\ell)}{\delta \tilde{\lambda}_1(D_\ell)/\bar{k}}}^{p/d}} \le \ell^p \EE\sqa{ I_A\nu(D_\ell) \bra{\frac{ \nu(D_\ell)}{\delta c T \ell^{d}/\bar{k}}}^{p/d}} \ll T^{1-p/(d-2)} \ell^d,
\end{equation}
 having used that $\mu(D_\ell) = \tilde{\lambda}_1(D_\ell) \ge c T \ell^d$ on $A$. 
 If $\nu= \lambda_k$ with $k \ge 2$, then by \eqref{moment-lambda-k} with $q= 1+p/d$, we have
 \begin{equation} 
 T^{-p/d} \EE\sqa{ \lambda_k(D_\ell)^{1+p/d} } \les T^{1-p/d}\ell^{d} (\rho \ell^{d-2})^{k-1} \ell^{2p/d} \ll T^{1-p/(d-2)} \ell^d
 \end{equation}
 provided that we impose the condition
 \begin{equation}
 \label{eq:condition-gamma-5}
 (k-1)[ \gamma (d-2) - \gamma_\rho] + 2\gamma p/d > 2 p/(d(d-2)).
 \end{equation}
 Notice that since the square bracket is positive, it is sufficient to ensure it holds for $k=2$.
 
  If instead $\nu = \lambda_0$, by \eqref{moment-lambda-0} again with $q= 1+p/d$, we have
 \begin{equation}
   T^{-p/d} \EE\sqa{ \lambda_0(D_\ell)^{1+p/d} } \les T^{1-p/d}\rho^{-1}\ell^{d}  \ell^{2+2p/d} \ll T^{1-p/(d-2)} \ell^d
 \end{equation}
provided that the additional inequality holds:
\begin{equation} \label{eq:condition-gamma-lambda-6}
 -\gamma_\rho +2 \gamma + 2 \gamma p/d >2 p/(d(d-2)).
\end{equation}

Finally, if $\nu = \sum_{h\ge \bar{k}} \lambda_h$, by \eqref{eq:moment-sum-h-gtrk} with $q=1+p/d$, we have
\begin{equation}
 T^{1-p/d} \EE\sqa{ \bra{ \sum_{ h \ge \bar{k}} \lambda_h(D_\ell)}^{1+p/d}} \les T \ell^{d-2} (\rho \ell^{d-2})^{\bar{k}-1} \rho^{1+p/d} \ll T^{1-p/(d-2)} \ell^d
\end{equation}
provided that we impose
\begin{equation}
(\bar{k}-1) (\gamma (d-2) -\gamma_\rho) - 2 \gamma - \gamma_\rho(1+p/d)  > p/(d-2),
\end{equation}
which can be satisfied by choosing $\bar{k}$ sufficiently large. This settles the validity of \eqref{eq:reduction-single-visit}.

\stepcounter{step} \noindent \emph{Step \thestep \, (Adjusting for longer first visits).} Our next aim is to replace the measure $\tilde{\lambda}_1$, where the first visit is interrupted at time $\rho$, with the measure
\begin{equation}
\sum_{i \in S} \int_{\tau_{1, \ell}^i}^{\tau_{1, L}^i} \delta_{B_s^i} ds = \tilde{\lambda}_1 + \nu,
\end{equation}
where we let
\begin{equation}
\nu = \sum_{i\in S}\int_{\rho\land \tau_{1, L}^i} ^{\tau_{1,L}^i} \delta_{B_s^i} ds.
\end{equation}
This is achieved by proving
\begin{equation}\label{eq:adjusted-single-visits}
\abs{ \EE\sqa{  W^p_{D_\ell}\bra{ \tilde{\lambda}_1} } - \EE\sqa{ W_{D_\ell}^p\bra{\sum_{i \in S} \int_{\tau_{1, \ell}^i}^{\tau_{1, L}^i} \delta_{B_s^i} ds  }}} \ll  T^{1-p/(d-2)} \ell^d,
 \end{equation}
again as a consequence of \Cref{lem:same-asymptotics}. We first argue that, for every $q\ge 1$,
\begin{equation}\label{eq:moment-bound-long-visits}
\EE\sqa{ I_{A^i} \bra{ \int_{\rho\land \tau_{1, L}^i} ^{\tau_{1,L}^i} I_{\cur{B_s^i \in D_\ell}} ds}^q} \les L^{2} |\log \ell | \ell^{d-2} \cdot \ell^{2q}.
\end{equation}
Indeed, let us introduce a parameter $0\le \tilde{\rho}\le \rho$ and split the event into two alternatives:
\begin{equation}\label{eq:alternatives}
A^i = \cur{0<\tau_{1,\ell}^i \le \rho} = \cur{0< \tau_{1, \ell}^i \le \tilde{\rho} } \cup  \cur{ \tilde{\rho} < \tau_{1, \ell}^i  \le \rho}
\end{equation}
In the case the first alternative holds, then it must be $\tau_{1, L}^i-\tau_{1, \ell}^i > \rho -\tilde{\rho}$ in order to have a non null contribution. Hence, by using \Cref{prop:hitting-times} and the strong Markov property, writing $\tilde{B}^i = \theta_{\tau_{1,\ell}^i} B^i$ we bound from above
\begin{equation}\begin{split}
& \EE\sqa{  I_{\cur{0< \tau_{1, \ell}^i \le \tilde{\rho}}}\bra{  \int_{\rho\land \tau_{1, L}^i} ^{\tau_{1,L}^i} I_{\cur{B_s^i \in D_\ell}} ds}^q} \\
& \qquad \le (\rho \ell^{d-2})  \EE\sqa{ I_{ \cur{\tau_{D_L^c} > \rho -\tilde{\rho} }} \bra{\int_{0}^{\tau_{D_L^c}  } I_{\cur{\tilde B_s^i \in D_\ell}} ds}^q}\\
& \qquad \les (\rho \ell^{d-2})  \PP\bra{ \tau_{D_L^c} > \rho -\tilde{\rho} }^{1/2} \EE\sqa{ \bra{\int_{0}^{\tau_{D_L^c}  } I_{\cur{\tilde B_s^i \in D_\ell}} ds}^{2q}}^{1/2}\\
& \qquad \les_q \rho \ell^{d-2} \exp\bra{ - c \frac{\rho - \tilde{\rho}}{L^2}} \ell^{2q},
\end{split}
\end{equation}
having used \Cref{lem:exit-time} and \eqref{eq:bm-p-power} in the last line. If instead the second alternative holds in \eqref{eq:alternatives}, we bound from above similarly, but using only that its probability is small -- again by \Cref{lem:exit-time}:
\begin{equation}\begin{split}
\EE\sqa{  I_{\cur{0< \tau_{1, \ell}^i \le \tilde{\rho}}}\bra{  \int_{\rho\land \tau_{1, L}^i} ^{\tau_{1,L}^i} I_{\cur{B_s^i \in D_\ell}} ds}^q} &\le (\rho-\tilde{\rho}) \ell^{d-2}  \EE\sqa{  \bra{\int_{0}^{\tau_{D_L^c}  } I_{\cur{\tilde B_s^i \in D_\ell}} ds}^q}\\
& \les_q (\rho-\tilde{\rho}) \ell^{d-2} \cdot \ell^{2q}.
\end{split}
\end{equation}
Recalling that $L = T^{-\gamma_L}$, letting $\tilde{\rho}=\rho - \tilde c L^{2} \log (1/L)$ for a large enough constant $\tilde{c}$ yields \eqref{eq:moment-bound-long-visits}. Rosenthal's inequality for positive variables \eqref{eq:rosenthal-positive} entails
\begin{equation}\label{eq:moment-nu-q}
\EE\sqa{ \nu(D_\ell)^q } \les n L^{2} |\log \ell | \ell^{d-2} \cdot \ell^{2q} + \bra{n L^{2} |\log \ell | \ell^{d-1}}^q \ell^{2q} \les T \rho^{-1} L^2 |\log \ell| \ell^{d-2} \cdot \ell^{2q},
\end{equation}
having used  
\begin{equation}
n L^{2} |\log \ell | \ell^{d-2} \sim T \rho^{-1} L^2 \ell^{d-2} | \log \ell | \ll 1
\end{equation}
provided that the following condition holds:
\begin{equation}
\label{eq:condition-gamma-7}
1-\gamma_\rho -2 \gamma_L-(d-2) \gamma_\ell<0.
\end{equation}
Starting from \eqref{eq:moment-nu-q}, we now argue similarly as in the previous steps. Given $\delta \in (0,1)$, we introduce the event
\begin{equation}
\tilde{G}_1 = \tilde{G}_{1,\delta} = \cur{ \nu(D_\ell) \le \delta T \ell^d},
\end{equation}
and argue by Markov inequality that
\begin{equation}
\PP\bra{ \tilde{G}_1^c } \les_\delta  \frac{T \rho^{-1} L^2 |\log \ell| \ell^{d-2} \cdot \ell^{2q}}{(T \ell ^{d-2})^q\cdot \ell^{2q}} \ll T^{-\alpha},
\end{equation}
 for every $\alpha>0$, provided that $q$ is chosen large enough. Using this fact, we may reduce  the proof of \eqref{eq:adjusted-single-visits} where the expectations are performed on the event 
 \begin{equation}
 A = G_1 \cap \tilde{G}_1
 \end{equation}
 with $G_1$ as in \eqref{eq:good-g1}. After applying \Cref{lem:same-asymptotics}, everything is once again reduced to prove that
 \begin{equation}
 T^{-p/d} \EE\sqa{ \nu(D_\ell)^{1+p/d}}  \ll T^{1-p/(d-2)} \ell^d,
 \end{equation}
 which is seen to be a consequence of \eqref{eq:moment-nu-q} with $q= 1+p/d$, provided that
\begin{equation}
\label{eq:condition-gamma-8}
 -\gamma_\rho +2 \gamma_L + 2 \gamma p/d >2 p/(d(d-2)).
\end{equation}

\stepcounter{step} \noindent \emph{Step \thestep \, (Application of \Cref{lem:stability})} Having settled \eqref{eq:adjusted-single-visits}, the thesis is reduced to prove that
\begin{equation}
\lim_{T \to \infty} \EE\sqa{  W^p_{D_\ell}\bra{\sum_{i \in S} \int_{\tau_{1, \ell}^i}^{\tau_{1, L}^i} \delta_{B^i_s} ds}} / \bra{ T^{1-p/(d-2)} |D_\ell|}= \c(\cI, d,p).
 \end{equation}
 Let us define $\tilde B^i:= \theta_{\tau^i_{1, \ell}}B^i$ -- beware that this is not the same process as \eqref{eq:tilde-b-i} -- so that we can rewrite it as
 \begin{equation}\label{eq:to-apply-lemma-stability}
\lim_{T \to \infty} \EE\sqa{  W^p_{D_\ell}\bra{\sum_{i \in S} \mu^{\tilde{B}^i}_{\tau_{D_L^c}\tilde{B}^i}}} / \bra{ T^{1-p/(d-2)} |D_\ell|}= \c(\cI, d,p).
 \end{equation}
We condition upon the event $\cur{S = s}$ for a subset $s \subseteq \cur{1, \ldots, n}$, i.e., the event $A_i$ holds if and only if $i \in s$. By \Cref{prop:hitting-times}, the conditional hitting law $\nu_\rho$ of $B^i_{\tau^i_{1, \ell}} = \tilde{B}^i_0$ is quantitatively close to the uniform distribution on $\partial D_\ell$.  Moreover, each $\tilde{B}^i$ is still a Brownian motion on $\T^d$ and independence among the $\tilde{B}^i$'s is preserved.  Furthermore, since we are considering $\tilde{B}^i$ up to $\tau_{D_L^c}\tilde{B}^i$, we can safely lift each of them to a Brownian motion on $\R^d$, without changing the optimal transport cost on $D_\ell$. 
 
 Applying \eqref{eq:bm-p-power} with $q=1$, we easily obtain that
 \begin{equation}
 \EE\sqa{ \sum_{i \in S} \mu^{\tilde{B}^i}_{\tau_{D_L^c}B^i}(D_\ell) } \les n \cdot \rho \ell^{d-2} \ell^2 \les T \ell^d
 \end{equation}
 Thus,
 \begin{equation}
 \EE\sqa{ \sum_{i \in S} \mu^{\tilde{B}^i}_{\tau_{D_L^c}B^i}(D_\ell) } W_{D_\ell}^p\bra{\nu_\rho, \tilde{e}_{D_\ell} } \les T \ell^d \cdot \ell^p (\rho \ell^{d-2} ) |\log \ell| \ll T^{1-p/(d-2)} \ell^d,
 \end{equation}
 provided that we impose
 \begin{equation}\label{eq:condition-gamma-9}
 \gamma p + [\gamma (d-2)- \gamma_\rho ] > p/(d-2).
 \end{equation}
 By applying \Cref{lem:stability}, it follows that we can replace each $\tilde{B}^i$ in \eqref{eq:to-apply-lemma-stability} with a Brownian motion $\bar{B}^i$ with initial law  $\tilde{e}_{D_\ell}$, i.e., uniform on $\partial D_\ell$, and the summation is now performed over a random set $S$, independent of the processes $(\bar{B}^i)_{i=1}^n$.

\stepcounter{step} \noindent \emph{Step \thestep \, (Time enlargement up to $\infty$).} In order to apply \Cref{thm:main-bm-iid}, we need to enlarge the time integration from $\tau^i_{D_L^c}$ to $\infty$. This is achieved along the same lines as before, i.e.\ by isolating a good event and applying \Cref{lem:same-asymptotics} to prove 
\begin{equation}\label{eq:same-asymptotics-almost-final}
\abs{ \EE\sqa{  W^p_{D_\ell}\bra{\sum_{i \in S} \mu^{\bar{B}^i}_{\tau_{D_L^c}\bar B^i}}} - \EE\sqa{W^p_{D_\ell}\bra{\sum_{i \in S} \mu^{\bar{B}^i}_{\infty} }} } \ll T^{1-p/(d-2)} \ell^d. 
\end{equation}
Indeed, if we let
\begin{equation}
\bar{G}_1 = \cur{  \sum_{i \in S} \mu^{\bar{B}^i}_{\tau_{D_L^c}\bar{B}^i} (D_\ell) \ge c T\ell^{d} }
\end{equation}
arguing as in \eqref{eq:probability-good-g1} (it is actually simpler here because $S$ is now independent of the $\bar{B}^i$'s, hence there is no need of the strong Markov property), we obtain that for a suitable $c = c(d)>0$ it holds
\begin{equation}
\PP\bra{ \bar{G}_1^c} \les_\alpha T^{-\alpha}
\end{equation}
for every $\alpha>0$. Moreover, arguing as in the proof of \eqref{moment-lambda-k}, we obtain that, for $q \ge 1$,
\begin{equation}\begin{split}\label{eq:moment-final}
\EE\sqa{ \bra{  \sum_{i \in S} \mu^{ \theta_{\tau_{D_L^c}}\bar {B}^i} _{\infty} (D_\ell) }^q } & \les n \cdot  \rho \ell^{d-2} \cdot \bra{ \frac{\ell}{L}}^{d-2} \cdot \ell^{2q} + \bra{ n \cdot  \rho \ell^{d-2} \cdot \bra{ \frac{\ell}{L}}^{d-2} }^q \cdot \ell^{2q} \\
& \les T \ell^{d-2} \cdot \bra{\frac{\ell}{L}}^{d-2} \cdot \ell^{2q}
\end{split}
\end{equation}
provided that
\begin{equation}
T \ell^{d-2} \cdot \bra{\frac{\ell}{L}}^{d-2} \ll 1,
\end{equation}
which holds true if we impose the condition
\begin{equation}\label{eq:gamma-10}
 \gamma (d-2)  +(d-2) (\gamma_\ell - \gamma_L) >1.
\end{equation}
By Markov inequality, we see that letting 
\begin{equation}
\bar{G}_2 = \cur{ \sum_{i \in S} \mu^{ \theta_{\tau_{D_L^c}}\bar {B}^i} _{\infty} (D_\ell) \le \delta T\ell^d},
\end{equation}
it holds
\begin{equation}
\PP\bra{ \bar{G}_2^c} \les \frac{  T \ell^{d-2} \cdot \bra{\frac{\ell}{L}}^{d-2} \cdot \ell^{2q}}{(T \ell^{d-2})^q \ell^{2d} } \les T^{-\alpha}
\end{equation}
for every $\alpha>0$, provided that $q$ is chosen sufficiently large. Repeating the (by now) usual argument, we reduce first ourselves to the proof of \eqref{eq:same-asymptotics-almost-final} where expectations are performed on $A = \bar{G}_1 \cap \bar{G}_2$, and then applying \Cref{lem:same-asymptotics} we see that, in order to conclude it is sufficient to show
\begin{equation}\label{eq:final-final}
T^{-p/d}\EE\sqa{ \bra{ \sum_{i \in S} \mu^{ \theta_{\tau_{D_L^c}}\bar {B}^i} _{\infty} (D_\ell)  }^{1+p/d}} \ll T^{1-p/(d-2)} \ell^d.
\end{equation}
This easily follows from \eqref{eq:moment-final}, provided that the condition
\begin{equation}\label{eq:gamma-condition-11}
2 \gamma p/d + (d-2) (\gamma - \gamma_L) > 2p/(d(d-2))
\end{equation}
holds true.

\stepcounter{step} \noindent \emph{Step \thestep \, (Application of \Cref{thm:main-bm-iid}).} Thanks to \eqref{eq:same-asymptotics-almost-final}, the thesis is reduced to prove that
\begin{equation}
\lim_{T \to \infty} \EE\sqa{  W^p_{D_\ell}\bra{\sum_{i\in S} \mu_{\infty}^{\bar{B}^i} }}/T^{1-p/(d-2)}|D_\ell| = \c(\cI, d,p),
\end{equation}
where $(\bar{B}^i)_{i=1}^\infty$ are independent Brownian motions on $\R^d$, with common initial law $\tilde{e}_{D_\ell}$.  Recalling also the definition of $S$ (which is now independent of the Brownian motions), we can introduce the function
\begin{equation}
f(m) = \EE\sqa{  W^p_{D_1}\bra{ \sum_{i=1}^{m} \mu_{\infty }^{B^i} } },
\end{equation}
where $(B^i)_{i=1}^\infty$ are independent Brownian motions on $\R^d$ with common initial law $\tilde{e}_{D_1}$, so that a straightforward rescaling gives
\begin{equation}
\EE\sqa{  W^p_{D_\ell}\bra{\sum_{i\in S} \mu_{\tau_L^i}^{\tilde{B}^i} } \bigg| \sharp S = m} = \ell^{p+2}f(m).
\end{equation}
By \Cref{thm:main-bm-iid}, we have
 \begin{equation}
 \lim_{m \to \infty} f(m)/\bra{ \frac{m}{\Cap(D_1)}}^{1-p/(d-2)} = \c(\cI, d,p)|D_1|
 \end{equation}
 Moreover, $\sharp S$ has binomial law with parameters $n=n(T)\to \infty$, $p=p(T)\to 0$ satisfying 
 \begin{equation}
np = T\ell^{d-2}(\Cap(D_1)+o(1)).
 \end{equation}
 By \Cref{rem:debinom}, it follows that
 \begin{equation}
 \lim_{T \to \infty} \EE\sqa{ f(\sharp S)}/\bra{ \frac{\EE\sqa{\sharp S}}{\Cap(D_1)}}^{1-p/(d-2)} = \c(\cI, d,p)|D_1|.
 \end{equation}
 However, we also see that
 \begin{equation}
  \lim_{T \to \infty} T^{1-p/(d-2)} \ell^{d-2-p} / \bra{ \frac{\EE\sqa{\sharp S}}{\Cap(D_1)}}^{1-p/(d-2)} =  1,
 \end{equation}
hence \eqref{eq:final-final} holds.

\stepcounter{step} \noindent \emph{Step \thestep \, (Choice of the parameters).} In order to complete the proof, we need to recall the multiple conditions imposed on $\gamma$, $\gamma_\rho$, $\gamma_\sigma$ and $\gamma_L$ and check that they can be all satisfied. Let us report them here for clarity: we have \eqref{eq:gamma-ell-rho-first-condition} and \eqref{eq:q:gamma-ell-rho-third-condition}, which read
\begin{equation}
  0< \gamma_\sigma< \gamma_\rho  < \gamma(d-2) \quad \text{and} \quad 0<\gamma_L < \gamma.
\end{equation}
Then, we have \eqref{eq:q:gamma-ell-rho-second-condition} 
\begin{equation}
 \gamma_\rho - \gamma_\sigma   > p\bra{1/(d-2) - \gamma},  
 \end{equation}
  and the similar conditions \eqref{eq:gamma-condition-i-forgot}, \eqref{eq:condition-gamma-4-condition}, \eqref{eq:condition-gamma-7}
\begin{equation}
\begin{split}
1-\gamma(d-2) - [ (d-2) \gamma - \gamma_\rho] & <0,\\
\quad 1- \gamma_\rho - d \gamma &< 0,\\ 1-\gamma_\rho -2 \gamma_L-(d-2) \gamma_\ell&<0.
\end{split}
\end{equation}
Next, we have \eqref{eq:condition-gamma-5}, which must be satisfied for $k=2$ (for larger values of $k$ it will automatically follow):
\begin{equation}
 [ \gamma (d-2) - \gamma_\rho] + 2\gamma p/d > 2 p/(d(d-2)).
 \end{equation}
Similar conditions are \eqref{eq:condition-gamma-lambda-6}, \eqref{eq:condition-gamma-8} and  \eqref{eq:gamma-condition-11}:
\begin{equation}\begin{split}
 -\gamma_\rho +2 \gamma_L + 2 \gamma p/d &>2 p/(d(d-2))\\  -\gamma_\rho +2 \gamma_L + 2 \gamma p/d &>2 p/(d(d-2)) \\ 2 \gamma p/d + (d-2) (\gamma - \gamma_L) &> 2p/(d(d-2)).
 \end{split}
\end{equation}
Finally, we have \eqref{eq:condition-gamma-9} and \eqref{eq:gamma-10}
\begin{equation}
 \gamma p + [\gamma (d-2)- \gamma_\rho ] > p/(d-2), \quad  \gamma (d-2)  +(d-2) (\gamma_\ell - \gamma_L) >1.
\end{equation}
It is now an elementary exercise to check that all these conditions can be satisfied by choosing $\gamma$ sufficiently close (but smaller than) $1/(d-2)$, $0<\gamma_\sigma<\gamma_\rho$ sufficiently small, and $\gamma_L$ sufficiently close to $\gamma$. This concludes the proof in the case $p>d/(d-1)$. In the following step we remark how to modify the argument in the case $p\le d/(d-1)$.

\stepcounter{step} \noindent \emph{Step \thestep \, (The case $p\le d/(d-1)$).} In this case, we cannot apply directly \Cref{lem:same-asymptotics} but following \Cref{rem:density-helps} we need also to preliminarily fix an additional parameter $r = r(p,d)>1$ such that $pr >d/(d-1)$, and take into account that all the applications of \Cref{lem:same-asymptotics} will have the additional terms  \eqref{eq:corrections-density-helps}. For brevity, we do not perform explicitly all the computations but show e.g.\ what becomes of the first application of \Cref{lem:same-asymptotics} in Step 6. The bound \eqref{eq:condition-nu-small-contribution} must now be complemented with
\begin{equation}
\ell^p \EE\sqa{I_A \tilde{\lambda}_1(D_\ell)^{1-1/r-p/d}  \nu(D_\ell)^{1/r+p/d}} \ll T^{1-p/(d-2)} \ell^d,
\end{equation}
with $\nu = \lambda_0$, $\nu= \lambda_k$ for $k =2, \ldots, \bar{k}-1$, and $\nu = \sum_{h=\bar{k}}^\infty \lambda_k$. For simplicity, let us focus on the case $\nu = \lambda_k$. If $1/r+p/d\le 1$, then by H\"older inequality and \eqref{moment-lambda-k} with $q=1$ we obtain
\begin{equation}\begin{split}
\ell^p \EE\sqa{I_A \tilde{\lambda}_1(D_\ell)^{1-1/r-p/d}  \lambda_k(D_\ell)^{1/r+p/d}}& \le \ell^p \EE\sqa{ \tilde{\lambda}_1(D_\ell)}^{1-1/r-p/d} \EE\sqa{  \lambda_k(D_\ell)}^{1/r+p/d} \\
\les \ell^p (T\ell^d)^{1-1/r-p/d} (T\ell^d  (\rho \ell^{d-2})^{k-1} )^{1/r+p/d} \\
& = \ell^p T \ell^d (\rho \ell^{d-2})^{(k-1)(1/r+p/d) } \ll  T^{1-p/(d-2)} \ell^d,
\end{split}
\end{equation}
provided that we impose
\begin{equation}
 \gamma p + [\gamma (d-2) - \gamma_\rho] (k-1)(1/r+p/d)  > p/(d-2),
\end{equation}
which can be safely added to all the conditions found so far, again by choosing $\gamma$ sufficiently close (but smaller) to $1/(d-2)$ (recall that $k \ge 2$).  In the case $1/r+p/d >1$, we use the fact that on $A$ we have $\tilde{\lambda}_1(D_\ell) \gtrsim T \ell^d$, hence using \eqref{moment-lambda-k} with $q= 1/r+p/d$,
\begin{equation}\begin{split}
\ell^p \EE\sqa{I_A \tilde{\lambda}_1(D_\ell)^{1-1/r-p/d}  \lambda_k(\Omega)^{1/r+p/d}} & \les \ell^p (T \ell^d)^{1-1/r-p/d}\EE\sqa{ \lambda_k(\Omega)^{1/r+p/d} }\\
& \les  T^{-p/d} (T \ell^d)^{1-1/r} \cdot T \ell^{d-2}  (\rho \ell^{d-2})^{k-1} \ell^{2(1/r+p/d)}\\
& = T^{1-p/d} \ell^d (T \ell^{d-2})^{1-1/r} (\rho \ell^{d-2})^{k-1} \ell^{2p/d} \\
&  \ll T^{1-p/(d-2)} \ell^d,
\end{split}
\end{equation}
provided that we impose the condition
\begin{equation}
2 \gamma p/d - (1-1/r) (1-\gamma(d-2)) + (k-1) [\gamma(d-2) - \gamma_\rho] > 2p/(d(d-2)).
\end{equation}
Since $r$ is fixed and $k\ge 2$ we see that this holds, again provided that we choose $\gamma$ sufficiently close (but smaller) to $1/(d-2)$.
%
%
\end{proof}

We are now in a position to prove \Cref{thm:main-torus}.

\begin{proof}[Proof of \Cref{thm:main-torus}]
 Write for brevity $\mu_T = \mu_T^B$. Given $\bar{\gamma}$ as in \Cref{prop:local-torus}, we choose $\gamma \in (\bar{\gamma}, 1/(d-2))$, set $\ell =  T^{-\gamma}$ and consider the Borel family of sets $(D_\ell(z))_{z \in \T^d}$, which satisfies
\begin{equation}
\int_{\T^d} \chi_{D_\ell(z)} \frac{dz}{|D_{\ell} |} = 1.
\end{equation}
By \Cref{lem:sub} with $E = \T^d$ and $\sigma(dz) = |D_{\ell}|^{-1} dz$, we obtain
\begin{equation}\label{eq:main-sub-additivity}
W^p_{\T^d}\bra{\mu_T}\le (1+\eps)\int_{\T^d} W^p_{D_\ell(z)}\lt(\mu_T \rt) \frac{ d z}{|D_\ell|} + \frac{c}{\eps^{(p-1)^+}}W^p_{\T^d}\lt(\int_{\T^d} \frac{\mu_T(D_\ell(z))}{\abs{D_\ell}}\chi_{D_\ell(z)} \frac{ d z}{|D_\ell|} \rt).
\end{equation}
We are in a position to apply \Cref{prop:smoothed-wasserstein} and deduce that
\begin{equation}
 W^p_{\T^d}\lt(\int_{\T^d} \frac{\mu_T(D_\ell(z))}{\abs{D_\ell}}\chi_{D_\ell(z)} \frac{ d z}{|D_\ell|} \rt) \ll T^{1-p/(d-2)},
\end{equation}

Thus, to obtain the thesis we only need to focus on the first term in the right hand side of \eqref{eq:main-sub-additivity}. By stationarity, the law of $\mu_T \restr D_\ell(z)$ does not depend on $z \in \mathbb{T}^d$, hence
\begin{equation}
\EE\sqa{ \int_{\T^d} W^p_{D_\ell(z)}\lt(\mu_T \rt) \frac{ d z}{|D_\ell|}}  = \int_{\T^d} \EE\sqa{ W^p_{D_\ell(z)}\lt(\mu_T \rt) } \frac{ d z}{|D_\ell|}  = |D_\ell|^{-1} \EE\sqa{ W^p_{D_\ell}\lt(\mu_T \rt) },
\end{equation}
and the thesis follows by \Cref{prop:local-torus}.
\end{proof}

\section{A concentration result}\label{sec:concentration}

In this final section, we establish some concentration properties for the optimal transport cost associated to the occupation measure of a stationary Brownian motion on $\T^d$.

\begin{proposition}\label{prop:concentration}
Let $B$ be a stationary Brownian motion on $\T^d$, $d \ge 3$. Then, for every $p>0$ such that $p<(d-2)/3$ if $d \in \cur{3,4}$ or
\begin{equation}\label{eq:condition-concentration}
 p < (d-2) \cdot \frac{ 1 + p/d}{3+ p/d}, \quad \text{if $d \ge 5$,}
 \end{equation} it holds $\PP$-a.s.\
\begin{equation}\label{eq:concentration}
\lim_{T \to \infty} \abs{ W_{\T^d}^p(\mu_T^B) - \EE\sqa{ W_{\T^d}^p(\mu_T^B) }}/T^{1-p/(d-2)} = 0.
\end{equation}
\end{proposition}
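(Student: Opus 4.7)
The natural plan is to reduce concentration of $W^p_{\T^d}(\mu_T^B)$ around its mean to a concentration statement for a sum of (approximately) independent contributions, along the same template used in \Cref{sec:torus}. Fix a scale $\tau = \tau(T)$ (to be tuned, essentially a small power of $T$) and split $[0,T]$ into $N \sim T/\tau$ consecutive blocks. Writing $B^i$ for the restriction of $B$ to the $i$-th block, the sub-additivity inequality \eqref{eq:sub} combined with \eqref{eq:same-asymptotics-trivial} will bound $W^p_{\T^d}(\mu_T^B)$ above and below by $\sum_i W^p_{\T^d}(\mu_\tau^{B^i})$, up to error terms controlled by $W^p_{\T^d}$ of the sum of block-local averages, which is negligible on the scale $T^{1-p/(d-2)}$ by an application of \Cref{prop:smoothed-wasserstein} (as in the proof of \Cref{thm:main-torus}). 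Separating the blocks by a gap of length $\sigma = c'\log T$ and using the mixing estimate \eqref{eq:convergence-tv-any-law} together with the contraction property \eqref{eq:markov-chain-tv} (exactly as in Step~2 of the proof of \Cref{prop:local-torus}), I would then replace $(B^i)_{i=1}^N$ by a family $(\tilde B^i)_{i=1}^N$ of independent stationary Brownian motions on $\T^d$ with total-variation error $\le N e^{-c\sigma}$, which is polynomially small and hence negligible when multiplied by the deterministic a priori bound $W^p_{\T^d}(\mu_T^B) \lesssim T$.

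Once the problem is reduced to controlling the deviation of the i.i.d.\ sum $S_N := \sum_{i=1}^N W^p_{\T^d}(\mu_\tau^{\tilde B^i})$ from its mean, I would apply Rosenthal's inequality \eqref{eq:rosenthal-centered} at a sufficiently large exponent $q$. The key input is a moment bound of the form
\begin{equation}\label{eq:moment-input}
\nor{ W^p_{\T^d}(\mu_\tau^{\tilde B^1})}_{L^q} \les_q \tau^{1-p/(d-2)} + \tau^{1/q^\ast} \tau^{p/2},
\end{equation}
where the first term uses \Cref{thm:main-torus} (which gives the expectation and, via the trivial bound $W^p \le \tau\, \diam(\T^d)^p$, yields higher moments with a polynomial correction), and the second ingredient comes from \Cref{lem:moment-bm-uniform-torus} together with \Cref{lem:peyre} to deduce $L^q$ bounds on fluctuations of the empirical density. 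Plugging \eqref{eq:moment-input} into \eqref{eq:rosenthal-centered} produces a bound of the shape
\begin{equation}
\nor{ S_N - \EE S_N }_{L^q} \les N^{1/q} \tau^{1-p/(d-2)} + N^{1/2} \tau^{1-p/(d-2)} \simeq T^{1/2} \tau^{1/2 - p/(d-2)} + T^{1/q}\tau^{1-p/(d-2)-1/q},
\end{equation}
which one wants to be $o\bigl(T^{1-p/(d-2)}\bigr)$. Together with the additional error from fluctuations of the density term in \eqref{eq:moment-input} (where an extra $\tau^{p/2}$ factor enters), this ultimately produces an inequality relating $p$ to $d$, which optimization over $\tau \simeq T^\beta$ and $q$ large yields the stated range \eqref{eq:condition-concentration}; the simpler condition $p < (d-2)/3$ in dimensions $d \in\{3,4\}$ is what one gets by crude balancing.

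From the $L^q$ bound, a.s.\ convergence is standard: apply Markov's inequality and the Borel-Cantelli lemma along the geometric subsequence $T_k = 2^k$, then pass from the subsequence to the full parameter using an a.s.\ regularity estimate on $T\mapsto W^p_{\T^d}(\mu_T^B)$. For this last step I would use the triangle inequality together with \eqref{eq:sub} and \eqref{eq:trivial-wass} to write, for $T\in [T_k,T_{k+1}]$,
\begin{equation}
\abs{ W^p_{\T^d}(\mu_T^B) - W^p_{\T^d}(\mu_{T_k}^B) } \le c\, (T-T_k) + \text{l.o.t.},
\end{equation}
and check that $T_{k+1}-T_k = T_k$ is compatible with the concentration scale $T_k^{1-p/(d-2)}$ after dividing the $L^q$ error uniformly; if needed one can refine the subsequence to $T_k = k^{1/(1-p/(d-2))}$ or similar so that consecutive gaps are of the right order.

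The main obstacle, as the exponent constraint already signals, is the trade-off in the parameter $\tau$: to decouple the blocks we need $\tau$ large (so that $Ne^{-c\sigma}$ is small and the dependence error is negligible), but to keep the Rosenthal variance term $T^{1/2}\tau^{1/2-p/(d-2)}$ below $T^{1-p/(d-2)}$ we need $\tau$ small. The tight balance between these two demands, aggravated by the sub-optimality in \eqref{eq:moment-input} arising from the non-matching density-fluctuation exponent, is exactly what forces the restriction \eqref{eq:condition-concentration} and the cruder $(d-2)/3$ threshold in low dimensions; doing better would require a sharper pointwise $L^q$ estimate on $W^p_{\T^d}(\mu_\tau^{\tilde B})$, which is the same issue underlying the corresponding open problem in the random assignment problem.
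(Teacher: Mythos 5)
Your plan breaks at the very first reduction. You propose to sandwich $W^p_{\T^d}(\mu_T^B)$ above \emph{and below} by $\sum_i W^p_{\T^d}(\mu_\tau^{B^i})$ via \eqref{eq:sub} and \eqref{eq:same-asymptotics-trivial}, and then concentrate the i.i.d.\ sum by Rosenthal. But \eqref{eq:sub} only gives \emph{sub}-additivity, and \eqref{eq:same-asymptotics-trivial} (and \Cref{lem:same-asymptotics}) only compare $W^p_\Omega(\mu)$ with $W^p_\Omega(\mu+\lambda)$, \emph{not} $W^p_\Omega(\sum_i \mu^i)$ with $\sum_i W^p_\Omega(\mu^i)$. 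There is no super-additivity statement in the paper, and indeed genuine super-additivity of the transport cost fails in general -- this is exactly the obstruction that leaves \eqref{eq:main-torus} as a $\limsup$ rather than a limit. So the comparison ``$W^p_{\T^d}(\mu_T^B) \approx \sum_i W^p_{\T^d}(\mu_\tau^{B^i})$'' that your concentration argument rests on is not available in either direction, let alone both.

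The paper does something different. It keeps the whole sum $\tilde\mu_T = \sum_i \mu_\rho^{\tilde B^i}$ \emph{inside} a single Wasserstein cost $W^p_{\T^d}(\tilde\mu_T)$ (only the small remainder $\lambda_T$ is peeled off, via \Cref{lem:same-asymptotics}, and this needs no super-additivity because it is an addition/removal of mass, not a splitting of the cost). Concentration of $W^p_{\T^d}(\tilde\mu_T)$ around its mean is then obtained directly by the Gaussian Poincar\'e inequality on the product space $(\T^d)^n \times \bigl(C_0([0,\rho],\R^d)\bigr)^n$: the map $(z^i,\tilde x^i)_i \mapsto W^p_{\T^d}\bigl(\sum_i \mu_\rho^{z^i+\tilde x^i}\bigr)$ is Malliavin--Sobolev, and its squared gradient is controlled pointwise by $\rho^2 \int \dist^{2(p-1)} d\pi^\ast \le \rho^2 \int \dist^p d\pi^\ast$ (for $p\ge 2$), i.e.\ by $\rho^2$ times the cost itself, and the known expectation bound \eqref{eq:weak-main-theorem-torus} plus \Cref{prop:upper-bound-sum-brownian-motions} close the loop. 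This yields $\nor{W^p_{\T^d}(\tilde\mu_T)-\EE[\cdots]}_{L^q}\les \rho\,T^{[1-p/(d-2)]/2}$, and the constraints on $p$ come from balancing this with the error from removing $\lambda_T$. Compared with the Rosenthal route you sketch, the Poincar\'e bound beats $\sqrt{N}\,\tau^{1-p/(d-2)}$ by a factor of $\tau^{1/2+p/(d-2)}T^{-p/(2(d-2))}\ll 1$; so even if you could somehow justify the i.i.d.\ reduction, your variance term would be larger and your admissible range of $p$ smaller than the paper's. Finally, your claimed moment estimate \eqref{eq:moment-input} is not justified: \Cref{thm:main-torus} controls only the first moment, and ``higher moments with a polynomial correction'' is not a free consequence of the trivial bound $W^p\le \tau\,\diam(\T^d)^p$.
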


Notice that $\eqref{eq:condition-concentration}$ is satisfied if $p\le (d-2)/3$. 

\begin{proof}
The strategy is to argue as in Step 1 and Step 2 in the proof of \Cref{prop:local-torus}, but on the whole $\T^d$, and then apply a standard concentration argument in the independent case. However, arguing globally we obtain a worse dependence on our bounds, eventually reducing the range of $p$'s for which the result is effective. To keep the derivation simple, we argue only in the case $p \ge 2$ (in particular, $d \ge 5$): to cover remaining cases one has e.g.\ to argue as in Step 12 of the proof of \Cref{prop:local-torus}.

\setcounter{step}{0}

\noindent\stepcounter{step}\emph{Step \thestep \, (Time splitting).} We introduce two parameters $\gamma_\rho$, $\gamma_\sigma$, such that
\begin{equation}\label{eq:gamma-ell-rho-first-condition-concentration}
  0< \gamma_\sigma< \gamma_\rho <1
 \end{equation}
 to be further specified below, and set 
 \begin{equation}
 n:= \lfloor T^{1-\gamma_\rho} \rfloor, \quad \sigma := T^{\gamma_\sigma}, \quad \rho := \frac{T}{n} - \sigma.
 \end{equation}
%
and decompose
\begin{equation}
\mu_T  = \sum_{i=0}^{n-1}  \mu_{\rho}^{\theta_{i(\rho+\sigma)}B} +  \sum_{i=0}^{n-1}\mu_{\sigma}^{\theta_{i(\rho+\sigma)+\rho}B} := \tilde{\mu}_T + \lambda_T 
\end{equation}
We trivially have 
\begin{equation}\label{eq:step-1-first-error-bound-global}
 \tilde{\mu}_T(\T^d) = n \rho \sim T \quad \text{and} \quad  \lambda_T (\T^d)= n \sigma \sim T^{1-\gamma_\rho + \gamma_\sigma},
\end{equation}
Thus,
\begin{equation}
\lambda_T(\T^d)/\tilde{\mu}_T(\T^d) = T^{\gamma_\sigma - \gamma_\rho} \ll 1
\end{equation} 
and we are in a position to apply \Cref{lem:same-asymptotics} with any given $\delta$ provided that $T$ is sufficiently large.
We find, for $\eps \in (0,1)$,
\begin{equation}\label{eq:trivial-concentration-sub}
 -W_{\T^d}^p(\tilde{\mu}_T) + (1+\eps)^{-1} W_{\T^d}^p\bra{\mu_T} \les_\eps  T^{1-(\gamma_\rho-\gamma_\sigma)(1+p/d)} \ll T^{1-p/(d-2)},
 \end{equation}
 provided that we impose
 \begin{equation}\label{eq:condition-gamma-rho-concentration-1}
 \gamma_\rho - \gamma_\sigma > \frac{ p}{d-2} \cdot \frac{d}{d+p}.
 \end{equation}
Using instead  \eqref{eq:same-asymptotics-trivial}, we obtain, for every $\eps$, $\delta$ sufficiently small, for some $c = c(\eps, p, d)$,
\begin{equation}\label{eq:trivial-concentration-sup}
W_{\T^d}^p(\tilde{\mu}_T) - (1+c\delta)(1+\eps) W_{\T^d}^p\bra{ \mu_T}   \les_{\delta, \eps}  T^{1-(\gamma_\rho-\gamma_\sigma)(1+p/d)}\ll T^{1-p/(d-2)},
\end{equation}
again if we impose  \eqref{eq:condition-gamma-rho-concentration-1}. Summing \eqref{eq:trivial-concentration-sub} with the expected value of \eqref{eq:trivial-concentration-sup} yields
\begin{equation}\begin{split}
& -  W_{\T^d}^p\bra{ \tilde{\mu}_T }  +\EE\sqa{ W_{\T^d}^p\bra{\tilde{ \mu}_T } } \\
& \quad + (1+\eps)^{-1} W_{\T^d}^p(\mu_T) -(1+c\delta)(1+\eps) \EE\sqa{W_{\T^d} (\tilde{\mu}_T)} \ll T^{1-p/(d-2)},
\end{split}
\end{equation}
which can be rewritten after some manipulations as
\begin{equation}
 \bra{ W_{\T^d}^p(\mu_T) - \EE\sqa{ W_{\T^d} (\tilde{\mu}_T) }} \les_{\eps} \bra{ W_{\T^d}^p\bra{ \tilde{\mu}_T }  + c \delta \EE\sqa{ W_{\T^d}^p\bra{\tilde{ \mu}_T }}}^+ +\EE\sqa{W_{\T^d} (\tilde{\mu}_T)} + R(T),
\end{equation}
where $R(T) \ll T^{1-p/(d-2)}$. Using \Cref{thm:main-torus} (actually in a weaker form), it holds
\begin{equation}\label{eq:weak-main-theorem-torus}
\EE\sqa{ W_{\T^d}^p(\mu_T)} \les T^{1-p/(d-2)},
\end{equation} 
hence, letting first $T \to \infty$ and then $\delta\to 0$ and finally $\eps \to 0$, we obtain that $\PP$-a.s.\ it holds,
\begin{equation}\begin{split}
& \limsup_{T \to \infty} \bra{ W_{\T^d}^p(\mu_T) - \EE\sqa{ W_{\T^d} (\mu_T) }}/T^{1-p/(d-2)} \\\
& \qquad \le \limsup_{T \to \infty} \bra{ W_{\T^d}^p(\tilde{\mu}_T) - \EE\sqa{ W_{\T^d} (\tilde{\mu}_T) }}^+/T^{1-p/(d-2)}.
\end{split}
\end{equation}
Arguing similarly, but  summing the expectation of \eqref{eq:trivial-concentration-sub} with  \eqref{eq:trivial-concentration-sup} yields, after some manipulations and using \eqref{eq:weak-main-theorem-torus}, that
\begin{equation}\begin{split}
& \limsup_{T \to \infty} \bra{ \EE\sqa{ W_{\T^d} (\mu_T) } - W_{\T^d}^p(\mu_T)}/T^{1-p/(d-2)} \\\
& \qquad \le \limsup_{T \to \infty} \bra{ \EE\sqa{ W_{\T^d} (\tilde{\mu}_T) - W_{\T^d}^p(\tilde{\mu}_T) }}^+/T^{1-p/(d-2)},
\end{split}
\end{equation}
hence, the thesis will follow if we prove that
%
\begin{equation}\label{eq:concentration-tilde}
\lim_{T \to \infty} \abs{ W_{\T^d}^p(\tilde{\mu}_T) - \EE\sqa{ W_{\T^d}^p(\tilde{\mu}_T) }}/T^{1-p/(d-2)} = 0.
\end{equation}

\stepcounter{step} \noindent \emph{Step \thestep \, (Breaking dependence).} We write
\begin{equation}
 \tilde{\mu}_T := \sum_{i=1}^n \mu_{\rho}^{\tilde{B}^i},
 \end{equation}
 where $\tilde{B}^i = \theta_{(i-1)(\rho+\sigma)} B$ are stationary (but not independent) Brownian motions on $\T^d$.  Our aim is to argue that we can replace the processes $\tilde{B}$ with independent (and stationary) Brownian motions $\tilde{B}^{'}$. Before we do so, we argue that we can work on a stronger version of \eqref{eq:concentration-tilde}, namely that for every $q \ge 2$, 
\begin{equation}\label{eq:concentration-lq}
\nor{  W_{\T^d}^p(\tilde{\mu}_T) - \EE\sqa{ W_{\T^d}^p(\tilde{\mu}_T) }}_{L^q} \les \rho T^{[1-p/(d-2)]/2},
\end{equation}
(where the implicit constants depends on $q$) which easily implies \eqref{eq:concentration-tilde} via Borel-Cantelli lemma, provided that we impose
\begin{equation}\label{eq:condition-gamma-rho-concentration-2}
 2 \gamma_\rho  < 1-p/(d-2).
 \end{equation} The interesting feature of \eqref{eq:concentration-lq} is that it can be equivalently restated in terms of an independent copy $\bar{B}$ of $\tilde{B}$:
\begin{equation}
\nor{  W_{\T^d}^p(\tilde{\mu}_T) - W_{\T^d}^p( \bar{\mu}_T) }_{L^q}\les \rho T^{[1-p/(d-2)]/2}
\end{equation}
with the notation
 \begin{equation}
 \bar{\mu}_T := \sum_{i=1}^n \mu_{\rho}^{\bar{B}^i},
 \end{equation}
 and  $\bar{B}^i = \theta_{(i-1)(\rho+\sigma)} \bar B$. 
  
%
%
We then consider the following  non-negative function of $2 n$ continuous curves $(\tilde x, \bar x) = (\tilde{x}^i, \bar{x}^i )_{i=1}^n$ defined on the interval $[0,\rho]$ taking values on $\T^d$:
\begin{equation}
(\tilde{ x}, \bar{x} ) \mapsto F \bra{\tilde x, \bar x} = \abs{  W_{\T^d}^p \bra{ \sum_{i=1}^{n}\mu_{\rho}^{x^i} } -  W_{\T^d}^p \bra{ \sum_{i=1}^{n}\mu_{\rho}^{x^i} }}^q
\end{equation}
By \eqref{eq:trivial-wass}, we have
\begin{equation}\label{eq:uniform-bound-big-F-concentration}
 \sup_{ (\tilde{ x}, \bar{x} ) } F \bra{\tilde{ x}, \bar{x} } \le T^q.
 \end{equation} 
By $2n$ iterated applications of \eqref{eq:markov-chain-tv}, we obtain
\begin{equation}
\TV\bra{ \PP_{(\tilde{B}^i, \bar{B}^i)_{i=1}^n}, \bra{ \PP_{B_{s \in [0,\rho]}} \otimes   \PP_{B_{s \in [0,\rho]}}}^{\otimes n} } \les n e^{-c \sigma}.
\end{equation}
This yields in particular that we can replace the (dependent) Brownian motions $\tilde{B}$ and $\bar{B}$ in the expectation of $F$, with $2n$ stationary and independent Brownian motions, with an error term that is bounded from above by $T^q n e^{-c \sigma} \ll T^{-\alpha}$, for every $\alpha>0$.

\stepcounter{step} \noindent \emph{Step \thestep \, (Concentration via Poincaré inequality).} We now are left with the task of showing that
\begin{equation}\label{eq:concentration-conclusion}
\nor{  W_{\T^d}^p \bra{ \sum_{i=1}^{n}\mu_{\rho}^{B^i} } - \EE\sqa{ W_{\T^d}^p \bra{ \sum_{i=1}^{n}\mu_{\rho}^{x^i} }}}_{L^q} \les \rho T^{[1-p/(d-2)]/2},
\end{equation}
where the Brownian motions $(B^i)_{i=1}^n$ are now independent, and stationary, with values in $\T^d$. Representing each $B^i = Z^i+\tilde{B}^i$, with $Z^i$ independent and uniform variables on $\T^d$ and $\tilde{B}^i$ the projection on $\T^d$ of a (independent) standard Brownian motion on $\R^d$, we can write
\begin{equation}
 W_{\T^d}^p \bra{ \sum_{i=1}^{n}\mu_{\rho}^{B^i} }  = G\bra{(Z^i)_{i=1}^n, (\tilde{B}^i)_{i=1}^n }
\end{equation}
where
\begin{equation}
G((z^i)_{i=1}^n, (\tilde{x}^i)_{i=1}^n ) = W_{\T^d}^p \bra{ \sum_{i=1}^{n}\mu_{\rho}^{z^i+\tilde{x}^i} }. 
\end{equation}
The concentration bound \eqref{eq:concentration-conclusion} follows from the Poincaré inequality on the space $(\T^d)^{n} \times \bra{ C_{0}([0,\rho], \R^d)}^n$, endowed with a suitable product of uniform measure on $\T^d$ and Wiener measure on $C_{0}([0,\rho], \R^d)$. Indeed, since $W_{\T^d}^p\bra{\cdot}$ is expressed as minimization problems, a simple approximation argument (e.g.\ by discretizing the paths $x^i$) yields that
\begin{equation}
(z^i)_{i=1}^n, (\tilde{x}^i)_{i=1}^n \mapsto G((z^i)_{i=1}^n, (\tilde{x}^i)_{i=1}^n )
\end{equation}
is Sobolev (in the sense of Gaussian-Malliavin calculus with respect to the variables $\tilde{x}^i$'s), with (squared) modulus of the gradient estimated by
\begin{equation}\label{eq:squared-modulus-gradient}
\abs{ \nabla G ((z^i)_{i=1}^n, (\tilde{x}^i)_{i=1}^n ) }^2 \les \rho^2 \int_{\T^d} \dist(x,y)^{2(p-1)} d \pi^*(dx,dy) \le \rho^2 \int_{\T^d} \dist(x,y)^{p} d \pi^*(dx,dy)
\end{equation}
where $\pi^*$ denote an optimal transport plan (having used here that $p \ge 2$). It follows that, for every $q \ge 2$,
\begin{equation}
\EE\sqa{ \abs{ \nabla G ((Z^i)_{i=1}^n, (\tilde{B}^i)_{i=1}^n ) }^{q} } \les \rho^{q} \EE\sqa{  \bra{ W_{\T^d}^p \bra{ \sum_{i=1}^{n}\mu_{\rho}^{B^i} }}^{q/2}  }.
\end{equation}
By \eqref{eq:higer-r-wass} with $r = q/2$ and \eqref{eq:weak-main-theorem-torus}, with exponent $pq/2$,
\begin{equation}
\EE\sqa{  \bra{ W_{\T^d}^p \bra{ \sum_{i=1}^{n}\mu_{\rho}^{B^i }}^{q/2}  }} \les T^{q/2-1} \EE\sqa{ W_{\T^d}^{qp/2}\bra{ \sum_{i=1}^{n}\mu_{\rho}^{B^i }} }.
\end{equation}
Next, we use \Cref{prop:upper-bound-sum-brownian-motions}, which yields
\begin{equation}
 \EE\sqa{ W_{\T^d}^{r}\bra{ \sum_{i=1}^{n}\mu_{\rho}^{B^i }} } \les_r (n\rho)^{1-r/(d-2)} \les_r T^{1-r/(d-2)}
\end{equation}
with $r = qp/2$. By Poincaré inequality, we see that \eqref{eq:concentration-conclusion} holds.

\stepcounter{step} \noindent \emph{Step \thestep \, (Conclusion).} To conclude, we need to choose $0<\gamma_\sigma<\gamma_\rho<1$ such that both \eqref{eq:condition-gamma-rho-concentration-1} and \eqref{eq:condition-gamma-rho-concentration-2} holds true:
\begin{equation}
\gamma_\rho- \gamma_\sigma > \frac{p}{d-2} \cdot \frac{d}{p+d}, \quad \text{and} \quad 2 \gamma_\rho  < 1-p/(d-2).
\end{equation}
Since $\gamma_\sigma$ can be arbitrarily small, these conditions can be satisfied provided that \eqref{eq:condition-concentration} holds.
\end{proof}

\begin{remark}[lower bounds]\label{prop:lower-bound}
In the proof of \Cref{thm:main-bm-iid} we did not prove directly that $\c(\cI, p, d)$ is strictly positive, because this can be seen as a consequence of \Cref{thm:main-torus} and the fact that
\begin{equation}
\liminf_{T \to \infty} \EE\sqa{  W^p_{\T^d}\bra{ \mu_T^B}}/  T^{1-p/(d-2)} > 0.
\end{equation}
If $p \ge 1$, this is an immediate consequence of \eqref{eq:mattesini}. For $0<p<1$, one can argue as follows. We start as in Step 1 of the above proof with parameters $\gamma_\rho$, $\gamma_\sigma$, such that
\begin{equation}
  0< \gamma_\sigma< \gamma_\rho <1
 \end{equation}
 to be further specified, and set again
 \begin{equation}
 n:= \lfloor T^{1-\gamma_\rho} \rfloor, \quad \sigma := T^{\gamma_\sigma}, \quad \rho := \frac{T}{n} - \sigma.
 \end{equation}
Decomposing
\begin{equation}
\mu_T  = \sum_{i=0}^{n-1}  \mu_{\rho}^{\theta_{i(\rho+\sigma)}B} +  \sum_{i=0}^{n-1}\mu_{\sigma}^{\theta_{i(\rho+\sigma)+\rho}B} := \tilde{\mu}_T + \lambda_T 
\end{equation}
 from \eqref{eq:same-asymptotics-trivial} with $\eps \to 0$, we have for some constant $c  =c(p)$, that
\begin{equation}
W^p_{\T^d}\bra{ \mu_T^B }  \ge W^p_{\T^d}\bra{ \mu_T^B }  - c T^{1-\gamma_\rho + \gamma_\sigma}.
\end{equation}
Assuming that 
\begin{equation}
\gamma_\rho -\gamma_\sigma > p/(d-2),
\end{equation}
it is then sufficient to argue that
\begin{equation}
 \liminf_{T \to \infty} \EE\sqa{ W^p_{\T^d}\bra{ \sum_{i=1}^n \mu_{\rho}^{\tilde{B}^i} } }/T^{1-p/(d-2)} >0.
\end{equation}
As in Step 2 of the above proof, we can use $n$ applications of \ref{eq:markov-chain-tv} to move to the case where the $\tilde{B}^i$'s are independent (and stationary) Brownian motions (this part uses that $\gamma_\sigma>0$). Hence, by \eqref{eq:trivial-lower-bound}, we have
\begin{equation}
\begin{split}
\EE\sqa{  W^p_{\T^d}\bra{ \sum_{i=1}^n \mu_{\rho}^{\tilde{B}^i} }} & \ge n \rho \int_{\T^d} \EE\sqa{ \min_{i=1, \ldots, n} \min_{t \in [0,\rho]} \dist_{\T^d}(x, \tilde B_t^i)^p } dx\\
& \gtrsim T \EE\sqa{ \min_{i=1, \ldots, n} \min_{t \in [0,\rho]} \dist_{\T^d}(0, \tilde B_t^i)^p } 
\end{split}
\end{equation}
having used stationarity in the last line. By the layer-cake formula and independence,
 \begin{equation}\begin{split}
 \EE\sqa{ \min_{i=1, \ldots, n} \min_{t \in [0,\rho]} \dist_{\T^d}(0, \tilde B_t^i)^p } & \gtrsim  \int_0^1 \PP\bra{ \min_{t \in [0,\rho]} \dist_{\T^d}(0, \tilde B_t^i)^p > s }^{n}  d s \\
 & =  \int_0^{T^{-p/(d-2)}}  \PP \bra{ \tau_{D_{s^{1/p} }} B^i > \rho }^{n}  d s 
 \end{split}
 \end{equation}
Using \Cref{prop:hitting-times}, we obtain that there exists a constant $c>0$ such that, for each $i=1,\ldots, n$, $s\le T^{-p/(d-2)}$, it holds
\begin{equation}
 \PP\bra{ \tau_{D_{s^{1/p} }B^i \ge \rho }} \ge 1- c\rho s^{(d-2)/p} = 1 - \frac{ c T s^{(d-2)/p}}{n},
\end{equation}
having also used that 
\begin{equation}
\rho  s^{(d-2)/p} \les \rho T^{-1} =T^{\gamma_\rho - 1} \ll 1.
\end{equation}
Using this bound, we find
 \begin{equation}
T \int_0^{T^{-p/(d-2)}}  \PP \bra{ \tau_{D_{s^{1/p} }} B^i > \rho }^{n}  d s  \gtrsim T \int_0^{T^{-p/(d-2)}} \bra{1- \frac{ c T s^{(d-2)/p} }{n}}^{n}  d s  \gtrsim T^{1-p/(d-2)},
 \end{equation}
hence the thesis.
\end{remark}
\appendix

\section{Wasserstein asymptotics for stationary measures}\label{app:asymptotics}

In this appendix, we consider a random Borel measure $\nu$ on $\R^d$ satisfying the following conditions:
\begin{enumerate}
\item[i)] (stationarity) For every $x \in \R^d$,  it holds $\tras_x \nu  =  \nu$ in law.
\item[ii)] \label{eq:integrability} (integrability) For every bounded Borel $A \subseteq \R^d$, $\nu(A)$ is integrable.
\item[iii)] (concentration) There exists $\alpha\in [0, d)$ such that, for every $q \ge 1$, there exists $C=C(\nu, q, \alpha)<\infty$ such that, for every Borel $A \subseteq \R^d$ with $\diam(A) \ge 1$, 
\begin{equation}\label{eq:concentration-abstract}
\| \nu(A) - \EE\sqa{ \nu(A) }\|_q \le C \diam(A)^{(d+\alpha)/2}.
\end{equation}

\end{enumerate}

Notice that, by stationarity and integrability, the function $A \mapsto \EE\sqa{ \nu(A)}$ is a translation invariant ($\sigma$-finite) measure hence for some constant $\lambda \in [0, \infty)$ it holds
\begin{equation}\label{eq:expected-measure}
\EE\sqa{ \nu(A) } = \lambda | A|.
\end{equation}
Without loss of generality, we assume in what follows that $\lambda=1$, so that $\EE\sqa{ \nu(A)} = |A|$ (if $\lambda=0$ the statements become trivial).
With this notation, we have the following result.

 \begin{theorem}\label{theo:domain}
Let $\nu$ be a random measure on $\R^d$ satisfying the conditions i), ii), iii) above. Let $p>0$ be such that 
\begin{equation}\label{eq:assumption-p-not-too-large}
r:= d-\alpha-2 \min\cur{p,1} >0.
\end{equation}
Let  $\Omega \subseteq \R^d$ be  a bounded connected domain with $C^2$ boundary (or $\Omega = Q$ a cube). Then, it holds
\begin{equation}
\lim_{n \to \infty}  \EE\sqa{ W_{\Omega}^p( \dil_{n^{-1/d}} \nu ) }/n^{1-p/d} = \c_{\nu, p} |\Omega|,
\end{equation}
where $\c_{\nu, p} \in [0, \infty)$ depends on $\nu$ and $p$ only.
 \end{theorem}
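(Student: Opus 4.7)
The overall strategy has three stages: (a) establish the limit for cubes $\Omega = Q_L$ via a Fekete-type almost-sub-additivity argument; (b) control the resulting error term using the concentration assumption~(iii) together with the PDE-based Wasserstein bounds of \Cref{lem:peyre} and \Cref{lem:poincare}; (c) extend from cubes to a general $C^2$ domain via a uniform cube tiling plus a thin boundary layer. The elementary scaling identity $W_{Q_L}^p(\nu) = L^p W_{Q_1}^p(\dil_{1/L} \nu)$ combined with stationarity and \eqref{eq:expected-measure} (with $\lambda = 1$) reduces the theorem for $\Omega = Q_1$ to the convergence of $g(L) := \EE\sqa{ W_{Q_L}^p(\nu) }/L^d$ as $L \to \infty$; the constant will be $\c_{\nu,p} := \lim_L g(L)$.

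For stage~(a), partition $Q_L$ into $k^d$ sub-cubes $(Q^{(i)})_i$ of side $L/k$ and apply \Cref{lem:sub} with $\lambda$ equal to Lebesgue measure. Using stationarity and taking expectation yields, for every $\eps \in (0,1)$,
\[
 g(L) \le (1+\eps)\, g(L/k) + E(L, k, \eps),
\]
where the error term $E(L, k, \eps)$ involves $\EE\sqa{ W_{Q_L}^p(\tilde\nu_k) }/L^d$ with $\tilde\nu_k := \sum_i \frac{\nu(Q^{(i)})}{|Q^{(i)}|} \chi_{Q^{(i)}}$. The crux of stage~(b) is to show that $E$ is sub-leading, and this is exactly where the condition $r = d - \alpha - 2 \min\cur{p,1} > 0$ enters. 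For $p \ge 1$, one applies \Cref{lem:peyre} and \Cref{lem:poincare} (after isolating the high-probability event that $\nu(Q_L)/L^d$ is comparable to $1$, via~(iii)) combined with a hierarchical scale-by-scale decomposition of the fluctuations given by \eqref{eq:concentration-abstract}; the resulting estimate has a geometric-series structure whose summability is equivalent to $r > 0$. For $0 < p < 1$ one uses instead the concave duality bound \eqref{eq:peyre-concave}, suitably adapted from $\T^d$ to a cube by standard localization, giving the matching threshold $\alpha < d - 2p$.

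A standard Fekete-type argument then delivers the limit. An a priori bound $g(L) \les 1$, obtained by a dyadic-hierarchical application of the concentration estimate itself, ensures $g$ is bounded. The almost-sub-additivity display above, combined with the vanishing of $E$, gives $\limsup_L g(L) \le (1+\eps) \inf_{L_0} g(L_0) \le (1+\eps) \liminf_L g(L)$ for every $\eps > 0$, forcing $\c_{\nu,p} := \lim_L g(L) \in [0, \infty)$. Extending the convergence from geometric subsequences $L = k^m L_0$ to all $L \to \infty$ uses an interpolation between consecutive scales, analogous to the density-helps argument of \Cref{lem:same-asymptotics} (and \Cref{rem:density-helps} when $p \le d/(d-1)$). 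For stage~(c), tile the interior of $\Omega$ by cubes of common small side $\eta > 0$, covering all but a boundary layer $\Omega_\eta := \cur{ x \in \Omega : \dist(x, \partial \Omega) < \eta }$, and apply \Cref{lem:sub} once more; the bulk contribution converges to $\c_{\nu,p} |\Omega \setminus \Omega_\eta|$ by the cube result, the $C^2$ regularity yields $|\Omega_\eta| \les \eta$, and the residual \Cref{lem:sub} error is controlled by the same concentration estimates as in~(b).

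The main obstacle is the error bound in stage~(b): turning \eqref{eq:concentration-abstract} into a genuine Wasserstein estimate requires careful handling of (i) the random target density $\nu(Q_L)/L^d$ in the denominator of \eqref{eq:estimCZ} (close to $1$ with high probability by~(iii), but requiring a splitting between ``good'' and ``bad'' events), (ii) the modified bound of \Cref{rem:density-helps} in place of \Cref{prop:density-helps} when $p \le d/(d-1)$, and (iii) for $0 < p < 1$ the adaptation of the fractional-Sobolev estimate \eqref{eq:peyre-concave} from $\T^d$ to bounded cubes. A secondary delicate point is the upgrading of the Fekete convergence from geometric subsequences to all $L$, which relies precisely on the density-helps inequalities mentioned above.
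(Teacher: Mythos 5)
Your stage~(a) for cubes (Fekete-type almost-sub-additivity with error controlled by the concentration hypothesis) and stage~(b) error control are broadly consistent with the paper's Proposition~\ref{theo:PoiLeb}, with a minor route difference: for $0<p<1$ the paper simply invokes the total-variation bound \eqref{eq:less-trivial-wass} on the fluctuation term $\sum_i (\kappa_i-\kappa)\chi_{R_i}$, which gives the threshold $d-\alpha-2p>0$ directly, rather than adapting the fractional-Sobolev estimate \eqref{eq:peyre-concave} to a bounded cube (which is workable but requires handling boundary effects for the fractional Laplacian). Similarly, for general domains the paper prefers a Whitney-type decomposition with $n$-dependent mesh $\delta=n^{-\gamma_\delta}$ and the purpose-built estimate of Lemma~\ref{lem:W1pWhitney}, rather than a fixed uniform tiling; your approach could give the upper bound but the paper's is more precise and reusable.

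The genuine gap is the lower bound for a general $\Omega$. Lemma~\ref{lem:sub} is one-sided: decomposing $\Omega$ into cubes of side $\eta$ plus a boundary layer gives
$\limsup_n \EE[W_\Omega^p(\nu_n)]/n^{1-p/d}\le (1+\eps)\bra{\c_{\nu,p}\,|\Omega\setminus\Omega_\eta|+O(\eta)}$,
but there is no way to reverse this into a matching $\liminf$ from the cube result alone: the optimal coupling on $\Omega$ can (and does) move mass across the cube boundaries, so $W_\Omega^p$ need not be bounded below by the sum of the $W_{\Omega_i}^p$. The paper closes this via the Barthe--Bordenave complement trick: fix a large cube $Q\supset\Omega$, let $\Omega_1=\Omega$ and $\Omega_2,\dots,\Omega_K$ be the connected components of $Q\setminus\Omega$ (each again a domain for which the \emph{upper} bound holds), and apply sub-additivity to the partition of $Q$ to get
$\c_{\nu,p}|Q|\le (1+\eps)\liminf_n n^{-1+p/d}\EE[W_\Omega^p(\nu_n)]+(1+\eps)\sum_{i\ge 2}\c_{\nu,p}|\Omega_i|+\text{(vanishing error)}$,
from which $\liminf_n n^{-1+p/d}\EE[W_\Omega^p(\nu_n)]\ge\c_{\nu,p}|\Omega|$ follows upon using the exact limit for $Q$ and the upper bound for the $\Omega_i$, $i\ge 2$. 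Without some version of this argument your proof establishes only the $\limsup$ part of the statement for non-cubic $\Omega$.
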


We split the proof in the next subsections, where we recall and slightly generalized ideas and tools from \cite{BaBo, goldman2021convergence, ambrosio2022quadratic, goldman2022optimal}.


%
%
 
 \subsection{The case of a cube}
 
 In this section, we focus on the following result.
 
 \begin{proposition}\label{theo:PoiLeb}
Let $\nu$ be a random measure on $\R^d$ satisfying the conditions i), ii), iii) above. Let $p>0$ be such that 
\begin{equation}
r:= d-\alpha-2 \min\cur{p,1} >0
\end{equation}
Then, the following limit exists:
\begin{equation}
\lim_{L \to \infty} \frac{1}{L^d} \EE\sqa{ W_{Q_L}^p( \nu ) } = \c_{\nu, p} \in [0, \infty).
\end{equation}
Moreover, there exists $C>0$ (depending on $\nu$ and $p$ only) such that for $L\ge 1$, 
\begin{equation}\label{eq:upboundPoiLeb}
 \c_{\nu, p} \le \frac{1}{L^d} \EE\sqa{ W_{Q_L}^p( \nu ) }  +\frac{C}{L^{\frac{r}{2}}}.
\end{equation}
 \end{proposition}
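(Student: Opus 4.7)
My plan is to establish this via a Fekete-type subadditivity argument: partition $Q_L$ into sub-cubes, apply the geometric sub-additivity \Cref{lem:sub} to relate $W^p_{Q_L}(\nu)$ to the sum of local costs plus an explicit coarse-graining error, and deduce existence of the normalized limit.

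First I would fix integers $k, M \ge 1$ and set $L = kM$, partitioning $Q_L$ into $k^d$ translates $(Q^i)_{i=1}^{k^d}$ of $Q_M$ and applying \Cref{lem:sub} with $\lambda$ the Lebesgue measure to obtain, for every $\eps \in (0,1)$,
\begin{equation}
W^p_{Q_L}(\nu) \le (1+\eps) \sum_i W^p_{Q^i}(\nu) + \frac{c}{\eps^{(p-1)^+}} W^p_{Q_L}(h), \quad h := \sum_i \frac{\nu(Q^i)}{|Q^i|}\chi_{Q^i}.
\end{equation}
Taking expectation and using stationarity (i), each $\EE[W^p_{Q^i}(\nu)] = \EE[W^p_{Q_M}(\nu)]$, so after dividing by $L^d$,
\begin{equation}\label{eq:plan-sub}
\frac{\EE[W^p_{Q_L}(\nu)]}{L^d} \le (1+\eps) \frac{\EE[W^p_{Q_M}(\nu)]}{M^d} + \frac{c}{\eps^{(p-1)^+}} \frac{\EE[W^p_{Q_L}(h)]}{L^d}.
\end{equation}

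The core step is the bound $\EE[W^p_{Q_L}(h)] \les L^d M^{-pr/2}$ for $p\ge 1$ (with an analogous bound for $0<p\le 1$). I would achieve this via a dyadic refinement: for $j = 0, \ldots, k_* := \log_2(L/M)$, let $M_j = 2^{-j}L$ and denote by $h_j$ the piecewise-constant density of $\nu$ at scale $M_j$, so that $h_0 \equiv \bar c := \nu(Q_L)/|Q_L|$ and $h_{k_*} = h$. On the event $\{\bar c \ge 1/2\}$, of overwhelming probability by (iii), \Cref{lem:peyre} reduces the estimate to $\EE\nor{h - \bar c}_{W^{-1,p}(Q_L)}^p$. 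Since $h_j - h_{j-1}$ has zero mean on every scale-$M_{j-1}$ cube, a cell-wise divergence construction (Calderón-Zygmund with scale-invariant constants, combined with \Cref{lem:poincare}) would yield $\nor{h_j - h_{j-1}}_{W^{-1,p}(Q_L)} \les M_{j-1}\nor{h_j - h_{j-1}}_{L^p(Q_L)}$. The concentration assumption (iii) in turn gives $\EE\nor{h_j - h_{j-1}}_{L^p(Q_L)}^p \les L^d M_j^{p(\alpha-d)/2}$, whence $\EE \nor{h_j - h_{j-1}}_{W^{-1,p}(Q_L)}^p \les L^d M_j^{-pr/2}$. A Minkowski summation in $L^p(\PP)$ over the dyadic scales produces a geometric series in $M_j^{-r/2}$; convergence is ensured precisely by $r > 0$ and the dominant contribution sits at the finest scale $M_{k_*} = M$. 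The atypical event $\{\bar c < 1/2\}$ is absorbed via the trivial bound $W^p_{Q_L}(h) \le \diam(Q_L)^p(\nu(Q_L)+|Q_L|)$ together with the high-moment tails from (iii). For $0<p\le 1$, \Cref{lem:peyre} is unavailable and I would substitute the duality-based fractional Sobolev estimate \eqref{eq:peyre-concave}, adapting the dyadic decomposition to fractional negative-order norms; the exponent $r = d - \alpha - 2p$ enters analogously through the scaling of fractional inverse Laplacians.

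Inserting the coarse-graining estimate into \eqref{eq:plan-sub}, letting $k \to \infty$ then $\eps \to 0$, and comparing arbitrary $L'$ to the nearest multiples $kM$ by a further application of \Cref{lem:sub} to cover $Q_{L'}$ by $M$-cubes, I would deduce $\limsup_{L' \to \infty} \EE[W^p_{Q_{L'}}(\nu)]/L'^d \le \EE[W^p_{Q_M}(\nu)]/M^d + C M^{-r/2}$. Taking $\liminf_M$ on the right yields $\limsup \le \liminf$, so the limit $\c_{\nu,p} \in [0,\infty)$ exists, and relabelling $M$ as $L$ produces \eqref{eq:upboundPoiLeb}. \emph{The main obstacle} will be the multi-scale $W^{-1,p}$ bound: the cell-by-cell divergence constructions must have scale-invariant Calderón-Zygmund constants, and the summed contributions form a geometric series that converges only thanks to $r>0$. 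A secondary issue is the random degeneracy of the normalization $\bar c$ in \Cref{lem:peyre}, handled by truncation using the moment bounds from (iii); and for $0<p\le 1$, the switch from $W^{-1,p}$ to fractional negative-order norms requires some care with the fractional exponent.
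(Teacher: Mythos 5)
Your outline follows the same Fekete-type skeleton as the paper's proof: subadditivity via \Cref{lem:sub} to relate $\EE[W^p_{Q_{kM}}(\nu)]/(kM)^d$ to $\EE[W^p_{Q_M}(\nu)]/M^d$, and a dyadic multi-scale bound for the coarse-graining error whose geometric convergence is governed by $r>0$. Where you diverge is in the \emph{organization} of the multi-scale estimate. The paper applies \Cref{lem:sub} once per dyadic level, establishing the local inequality \eqref{onestep} on each intermediate rectangle $R$ and then summing a downward induction; crucially, the $\eps$ in \eqref{onestep} is chosen scale-dependently as $\eps_k\sim(2^{K-k}L)^{-r/2}$. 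You instead apply \Cref{lem:sub} \emph{once}, from $Q_{kM}$ down to the scale-$M$ tiles, and bound the single coarse-graining error $\EE[W^p_{Q_{kM}}(h)]$ globally by telescoping $h-\bar c$ across dyadic scales in the $W^{-1,p}$ norm. For $p\ge 1$ this gives the same rate, and your cell-wise divergence gluing is sound. However, the step ``letting $k\to\infty$ then $\eps\to 0$'' does not work for $p\ge1$: the error term carries a prefactor $\eps^{-(p-1)}$, so sending $\eps\to 0$ at fixed $M$ makes the right-hand side diverge. You must couple $\eps$ to $M$ — the natural choice $\eps=M^{-r/2}$ gives the desired $\fref(kM)\le\fref(M)+CM^{-r/2}$, exactly as the paper's level-dependent $\eps_k$ achieves — and you also need to supply a separate argument that $\fref(M)$ stays bounded (the paper does this at the very end via $\fref(L)\les L^p$ on a compact window of base scales).

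The more substantial gap is the case $0<p<1$. The paper handles it entirely without any PDE: the one-step error is bounded by the total-variation estimate \eqref{eq:less-trivial-wass}, with each level contributing $\sim |R|^{-r/(2d)}$. Your single-split strategy does not work with that bound: the TV estimate on $Q_{kM}$ produces a factor $(kM)^p$ multiplying the fluctuation, which is not controlled by $M^{-r/2}$ once $k\to\infty$. This forces you onto the fractional Sobolev route via \eqref{eq:peyre-concave}, but that inequality is stated (and proved by Fourier duality) only on $\T^d$; transplanting a fractional inverse Laplacian with the correct scaling onto a bounded cube (which boundary condition? which fractional power?) is not established in the paper and is genuinely nontrivial. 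So as written, the concave case in your proof is open. The fix is to abandon the single-split strategy for $p<1$ and iterate \Cref{lem:sub} scale by scale (as the paper does), where the TV bound \eqref{eq:less-trivial-wass} applied locally at each dyadic rectangle gives the geometric series directly.
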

 
 \begin{proof}
  By standard sub-additivity (Fekete-type) arguments, e.g.\ \cite[Lemma 2.12]{goldman2022optimal}, it is sufficient to prove that there exists a constant $C>0$ such that for every $L\ge C$ and $m\in \N$,
  \begin{equation}\label{subad}
\frac{1}{|Q_{mL}|} \EE\sqa{ W_{Q_{mL}}^p( \nu ) } \le  \frac{1}{|Q_L|} \EE\sqa{ W_{Q_L}^p( \nu ) }  +\frac{C}{L^{\frac r 2 }}.
\end{equation}
 Starting from the cube $Q_{mL}$, we construct a sequence of finer and finer partitions of $Q_{mL}$ by rectangles of moderate aspect ratios and side-length given by integer multiples of $L$. 
 To simplify the notation, we define 
\begin{equation}\label{def:frefR}
 \fref(R)= \EE\lt[\frac{1}{|R|}W^p_R(\nu)\rt].
\end{equation}
We rely upon iterated applications of  the following inequality: let $R$ of moderate aspect ratio, let  $\mathcal{R}$ be an admissible partition $R$ into rectangles of moderate aspect ratios and side-lengths given by integer multiples of $L$. For every $\eps\in[0,1)$, we have 
 \begin{equation}\label{onestep}
  \fref(R)\le (1+\eps)\sum_i \frac{|R_i|}{|R|} \fref(R_i) +\frac{C}{\eps^{(p-1)^+}} \frac{1}{|R|^{q/(2d)}},
 \end{equation}
 with $C =C(p)\in (0, \infty)$ and
 \begin{equation}
 q = \begin{cases}  
  d-\alpha-2p &\text{if $0<p<1$,}\\
  p(d-\alpha-2)  & \text{if $p \ge 1$.}
 \end{cases}
 \end{equation}
  Indeed, defining $\kappa=\frac{\nu(R)}{|R|}$, $\kappa_i=\frac{\nu(R_i)}{|R_i|}$, it holds
\begin{equation}
 \EE\sqa{ \kappa_i } = \EE\sqa{ \kappa} = 1
 \end{equation} 
Using \eqref{eq:sub}, we get
\begin{equation}
  \fref(R)\le (1+\eps)\sum_i \frac{|R_i|}{|R|} \fref(R_i) +\frac{C}{\eps^{(p-1)^+}} \EE\lt[\frac{1}{|R|}W_{R}^p\lt(\sum_i \kappa_i \chi_{R_i},\kappa\rt)\rt].
\end{equation}
We then estimate the last term in the right hand side.
In the case $0<p<1$, we use \eqref{eq:less-trivial-wass}, obtaining
\begin{equation}\begin{split}
 \frac{1}{|R|}W^p_{R}\lt(\sum_{i} \kappa_i \chi_{R_i},\kappa\rt)&\les \frac{ |R|^{p/d}}{|R|} \int_{R}\sum_i |\kappa_i-\kappa| \chi_{R_i}\\
 &\les |R|^{p/d}\bra{ |\kappa-\EE\sqa{\kappa}|+\sum_i|\kappa_i-\EE\sqa{\kappa_i}|}.
\end{split}
\end{equation}
By \eqref{eq:concentration-abstract}, we have
\begin{equation}
 \max\cur{\EE\lt[|\kappa-\EE\sqa{\kappa}|\rt], \max_{i} \cur{ \EE\lt[|\kappa_i-\EE\sqa{\kappa_i}|\rt]}}\les |R|^{\frac{(\alpha-d)}{2d}},
\end{equation}
which eventually yields \eqref{onestep}.

If $p\ge 1$, we argue first by Markov inequalit and, \eqref{eq:concentration-abstract}  that for every $n \ge 1$,
\begin{equation}
\PP\bra{ \kappa \le \frac {1} 2}   \le \PP\bra{ \abs{ \kappa - \EE\sqa{ \kappa} } \ge \frac 1 2 }\les \| \nu(R) - \EE\sqa{ \nu(R) } \|_n^n |R|^{-n} = |R|^{n(\alpha-d)/(2d)}.
\end{equation}
Choosing $n$ sufficiently large and using \eqref{eq:trivial-wass}, i.e.,
\begin{equation}
\frac{1}{|R|}W^p_{R}\bra{\sum_{i} \kappa_i \chi_{R_i},\kappa}\les |R|^{p/d} \kappa,
\end{equation}
we may reduce ourselves to the event $\{\kappa\ge \frac{1}{2}\}$. 
Under this condition, by \eqref{eq:estimCZ}, we have 
\begin{equation}
\begin{split}
 \frac{1}{|R|}W^p_{R}\lt(\sum_{i} \kappa_i \chi_{R_i},\kappa\rt)&\les \frac{|R|^{p/d}}{|R|}\int_{R}\sum_i |\kappa_i-\kappa|^p \chi_{R_i}\\
 &\les |R|^{p/d} \lt( |\kappa-\EE\sqa{\kappa}|^p+\sum_i|\kappa_i-\EE\sqa{\kappa_i}|^p\rt).
\end{split}
\end{equation}
By \eqref{eq:concentration-abstract}, we have
\begin{equation}
 \max\cur{\EE\lt[|\kappa-\EE\sqa{\kappa}|^p\rt], \EE\lt[|\kappa_i-\EE\sqa{\kappa_i}|^p\rt]}\les |R|^{\frac{p(\alpha-d)}{2d}},
\end{equation}
which eventually yields \eqref{onestep}.
%
%

Starting from the cube $Q_{mL}$, we next construct a sequence of finer and finer partitions of $Q_{mL}$ inductively as follows. 
 We let $\mathcal{R}_0=\{Q_{mL}\}$. To define $\mathcal{R}_k$, let    $R\in \mathcal{R}_k$. Up to translation we may assume that $R=\prod_{i=1}^d [0, m_i L)$ for some $m_i\in \N$. We then split each interval $[0,m_i L)$ into $[0,\lfloor\frac{m_i}{2}\rfloor L)\cup[\lfloor\frac{m_i}{2}\rfloor L, m_i L)$. 
 It is readily seen that this induces an admissible partition of $R$. Let us point out that when $m_i=1$ for some $i$, the corresponding interval   $[0,\lfloor\frac{m_i}{2}\rfloor L)$ is empty.
 This procedure stops after a finite number of steps $K$ once $\mathcal{R}_K=\{Q_L+z_i, z_i\in [0,m-1]^d\}$. It is also readily seen that $2^{K-1}<m\le 2^K$ and that for every $k\in [0,K]$ and every $R\in \mathcal{R}_k$ we have $|R|\sim (2^{K-k} L)^d$.
 
If $0<p<1$, we prove by  a downward induction that,  for every $k\in [0,K]$ and every $R\in \mathcal{R}_{k}$,
\begin{equation}\label{induction-easy}
 \fref(R)\le \fref(Q_L)+ C L^{-\frac{r}{2}} \sum_{j=k}^K 2^{- (K-j)r/2}.
\end{equation}
This is clearly true for $k=K$. Assume that it holds true for $k+1$. Let $R\in \mathcal{R}_{k}$. If $0<p<1$, we apply \eqref{onestep} with $\eps=0$, obtaining
\begin{equation}
\begin{split}
\fref(R)&\le \sum_{R_i\in \mathcal{R}_{k+1}, R_i\subset R} \frac{|R_i|}{|R|} \fref(R_i) +  C \frac{1}{|R|^{q/(2d)}}\\
& \le \sum_{R_i\in \mathcal{R}_{k+1}, R_i\subset R} \frac{|R_i|}{|R|} \bra{ \fref(Q_L) + C L^{-r/2} \sum_{j=k+1}^K 2^{-(K-j)r/2} } + C L^{-r/2} 2^{-(K-k)r/2}\\
& = \fref(Q_L) + C L^{-r/2} \sum_{j=k}^K 2^{-(K-j)r/2}.
\end{split}
\end{equation}
Applying \eqref{induction-easy} with $k=0$, hence $R = Q_{mL}$ yields \eqref{subad}.

In the case $p \ge 1$, we prove a slightly more involved inequality: there exists a constant $\Lambda<\infty$ (depending on $p$ only) such that, for $R \in \mathcal{R}_k$,
\begin{equation}\label{induction}
 \fref(R)\le \fref(Q_L)+ \Lambda(1+\fref(Q_L))L^{-\frac{r}{2}} \sum_{j=k}^K 2^{- (K-j)r/2}.
\end{equation}
if $\eps= (2^{K-k} L)^{-r/2}\ll1$, we get 
\begin{equation}\begin{split}
 \fref(R)&\le (1+ \eps) \sum_{R_i\in \mathcal{R}_{k+1}, R_i\subset R} \frac{|R_i|}{|R|} \fref(R_i) + \frac{C}{\eps^{p-1}} \frac{1}{|R|^{\frac{pr}{2d}}}\\
 &\stackrel{\eqref{induction}}{\le} (1+\eps) \lt(\fref(Q_L)+ \Lambda(1+\fref(Q_L))L^{-\frac{r}{2}} \sum_{j=k+1}^K 2^{- (K-j)r/2}\rt) +  C (2^{K-k} L)^{-\frac{r}{2}}\\
 &\le  \fref(Q_L)+  \Lambda(1+\fref(Q_L))L^{-\frac{r}{2}}\\
 &\times\lt[\sum_{j=k+1}^K 2^{-(K- j)r/2}+2^{-(K-k)r/2}\lt( \frac{C}{\Lambda}+L^{-\frac{r}{2}} \sum_{j=k+1}^K 2^{- (K-j)r/2}  \rt)\rt].
\end{split}\end{equation}
If $L$ is large enough, then $(\sum_{j=k+1}^K 2^{- (K-j)r/2}) L^{-r/2}\le \frac{1}{2}$. Finally, choosing $\Lambda\ge 2C$ yields \eqref{induction}.\\
Applying \eqref{induction} to $R=Q_{mL}$ and using that $\sum_{j\ge 0} 2^{- jr/2}<\infty$, we get 
\begin{equation}
 \fref(mL)\le \fref(L)+ C(1+\fref(L)) \frac{1}{L^{\frac{r}{2}}}.
\end{equation}
Since $\fref(L)\les L^p$, writing that every $L\ge C$ may be written as $L=mL'$ for some $m\in \N$ and $L'\in[C,C+1]$, we conclude that $\fref(L)$ is bounded and thus \eqref{subad} follows also in this case.
\end{proof}

\begin{remark}
 We point out that as a consequence of the above result, we have
 \begin{equation}\label{eq:boundfR}
 \EE\sqa{ W_Q^p(\nu)}\les |Q|, \quad \text{ for every cube $Q \subseteq \R^d$.}\end{equation}

\end{remark}

%

\subsection{Whitney decomposition}

We recall some result on decomposition of domains. The first one is \cite[Lemma 5.1]{ambrosio2022quadratic}. 


\begin{lemma}\label{lem:decomp}
Let $\Omega\subset \R^d$ be a bounded domain with Lipschitz boundary and let $\cQ = \{Q_i\}_i$ be a Whitney partition of $\Omega$. Then, for every $\delta>0$ sufficiently small, letting $\cQ_\delta=\{Q_i \ : \ \diam(Q_i) \ge \delta\}$, there exists a finite family $\cR_\delta=\{\Omega_j\}_j$ of disjoint open sets such that:
\begin{enumerate}
\item \label{partition-1} $(\Omega_k)_{k=1}^K =  \cQ_\delta \cup \cR_\delta$ is a partition of $\Omega$,
\item \label{partition-2}$ |\Omega_k| \sim \diam(\Omega_k)^d$ for every $k=1, \ldots, K$,
\item \label{partition-3}if $\Omega_k \in \cQ_\delta$, then $\diam(\Omega_k) \sim \dist(x, \Omega^c)$ for every $x \in \Omega_k$,
\item \label{partition-4}if $\Omega_k \in \cR_\delta$, then $\diam(\Omega_k)\sim \delta$ and $\dist(x, \Omega^c) \les \delta$, for every $x \in \Omega_k$.
\end{enumerate} 
Here all the implicit constants depend only on the initial partition $\cQ$ (and not on $\delta$).
\end{lemma}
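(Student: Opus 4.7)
First I would isolate the geometric content. By the defining property of a Whitney decomposition, every cube $Q_i \in \cQ$ satisfies $\diam(Q_i) \sim \dist(Q_i, \partial\Omega)$, so properties \ref{partition-2} and \ref{partition-3} hold automatically for the cubes in $\cQ_\delta$. Moreover, every cube in $\cQ \setminus \cQ_\delta$ has diameter strictly less than $\delta$ and therefore lies in the thin boundary strip $S_{c\delta} := \{x \in \Omega : \dist(x, \partial\Omega) \le c\delta\}$ for a constant $c$ depending only on $\cQ$. Writing $A_\delta := \operatorname{int}\bigcup \cQ_\delta$, I must partition the remaining open set $R_\delta := \Omega \setminus \overline{A_\delta} \subseteq S_{c\delta}$ into open sets $\Omega_j$ of diameter $\sim \delta$ and volume $\sim \delta^d$, with $\dist(x,\Omega^c) \lesssim \delta$ for every $x\in \Omega_j$ (which is automatic from $R_\delta \subseteq S_{c\delta}$).

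Second, I would use the Lipschitz regularity of $\partial\Omega$ to construct the $\cR_\delta$ tiling locally. Cover $\partial\Omega$ by a \emph{finite} collection of open balls $B_1,\dots,B_M$, independent of $\delta$, such that in suitable orthonormal coordinates on each $B_\alpha$ the boundary $\partial\Omega\cap B_\alpha$ is the graph $\{x_d = f_\alpha(x')\}$ of a Lipschitz function $f_\alpha$ of uniformly bounded constant. Choose a measurable partition $E_1,\dots,E_M$ of $\partial\Omega$ subordinate to this cover, with each $E_\alpha$ having Lipschitz (relative) boundary. Over $E_\alpha$, tile the tangential $(d-1)$-dimensional coordinate projection $\pi E_\alpha$ by a standard grid of cubes of side length $\delta$; for each tangential cell $q$ form the curved prism $\mathrm{Prism}(q) := \{(x',x_d) : x'\in q\cap \pi E_\alpha,\ f_\alpha(x')-C\delta < x_d < f_\alpha(x')\} \cap R_\delta$ for $C$ large enough that the strip $S_{c\delta}$ sits inside such prisms. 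Each $\mathrm{Prism}(q)$ is open, has diameter $\sim\delta$, and by the uniform Lipschitz bound on $f_\alpha$ has volume $\sim\delta^d$.

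Third, I would glue these local tilings into a single partition of $R_\delta$. The prisms coming from different charts $E_\alpha$ are already disjoint because the $E_\alpha$ are. At the interface with $A_\delta$, each prism is truncated by the piecewise-affine boundary $\partial A_\delta$; since the adjacent Whitney cubes in $\cQ_\delta$ have diameter $\gtrsim \delta$, this truncation removes at most a comparable chunk and preserves $|\Omega_j|\sim \delta^d$. Any prism that ends up anomalously small or of poor aspect ratio (there are at most a bounded number of these per chart and per boundary face) is merged with a neighbouring good prism: this only changes constants in (\ref{partition-2})--(\ref{partition-4}). This yields the desired family $\cR_\delta$, and together with $\cQ_\delta$ produces a partition $(\Omega_k)_{k=1}^K$ of $\Omega$.

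The main obstacle is controlling the regularity of the $\cR_\delta$-cells across the three interfaces they must match: the boundary of $\Omega$, the boundary between adjacent charts $E_\alpha, E_\beta$, and the boundary of $A_\delta$. The first two are handled by the uniform Lipschitz bound on the $f_\alpha$'s (which gives $\mathrm{Prism}(q)\sim\delta^d$ independently of where it sits in $S_{c\delta}$) together with the fact that chart-boundary crossings are confined to a $(d-2)$-dimensional neighbourhood and therefore involve only $O(\delta^{-(d-2)})$ cells that can be absorbed into their neighbours. The third interface is where one has to verify that the already-small Whitney cubes in $\cQ\setminus \cQ_\delta$, which lie in $S_{c\delta}$ but not in $R_\delta$, are either swallowed by $\bigcup \cR_\delta$ or reassigned to an adjacent $\cR_\delta$-prism without violating (\ref{partition-2}); the simplest way is to declare $R_\delta := \Omega \setminus \overline{A_\delta}$ from the outset, which forces all small Whitney cubes to belong to the $\cR_\delta$ side by construction.
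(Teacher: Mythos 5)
Note first that the paper does not actually prove this lemma: it is quoted verbatim from \cite[Lemma~5.1]{ambrosio2022quadratic}, so there is no in-paper proof to compare against. Evaluating your plan on its own, the broad strategy (keep Whitney cubes of diameter $\ge\delta$, then tile the residual boundary strip $R_\delta$ with pieces of scale $\sim\delta$ built from local Lipschitz graph charts) is sensible, but the gluing step has a genuine gap.

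Your assertion that ``the prisms coming from different charts $E_\alpha$ are already disjoint because the $E_\alpha$ are'' is not correct. The sets $E_\alpha$ do partition $\partial\Omega$, but the columns above them are taken with respect to \emph{different} orthonormal frames, since the graph direction $e_d^{(\alpha)}$ varies with $\alpha$. Near the interface between two patches $E_\alpha$ and $E_\beta$, a single point $x \in R_\delta$ may satisfy simultaneously the $\alpha$-prism condition (with $\pi_\alpha x \in \pi_\alpha E_\alpha$) and the $\beta$-prism condition (with $\pi_\beta x \in \pi_\beta E_\beta$), because these use different projections. For a concrete instance, take $\Omega$ the unit disc, $E_1$ the upper quarter arc (written as a graph over $x_1$) and $E_2$ the right quarter arc (written as a graph over $x_2$): points just inside the boundary near angle $\pi/4$ lie in an $E_1$-prism and an $E_2$-prism as soon as $C$ is chosen at least of order one, and this happens for all sufficiently small $\delta$. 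So the raw family of prisms is not disjoint and the ``partition'' conclusion fails as stated. A companion (milder) issue, which you do partially flag, is that orthogonal projection in chart $\alpha$ also fails to \emph{cover} all of $R_\delta$ near $\partial E_\alpha$, so even coverage only holds after merging -- but the merging can then cross chart frames too.

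To repair the construction one should first assign each $x \in R_\delta$ to a single chart -- say via a nearest-boundary-point rule, or by ordering the charts and removing from each $\alpha$-column the union of all $\beta$-columns with $\beta < \alpha$ -- verify that this assignment still produces sectors on which the $\diam \sim \delta$ and $|\cdot| \sim \delta^d$ bounds are preserved, and only then merge the anomalous small pieces. Alternatively, a chart-free route that is closer in spirit to what appears in the literature is to intersect $R_\delta$ with a fixed grid of cubes of side $\sim \delta$ and merge under-volume cells with a neighbouring cell that meets $R_\delta$ in volume $\sim \delta^d$; the existence of such a neighbour within a bounded combinatorial distance follows from the interior cone condition of the Lipschitz domain. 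Either way, what is currently missing from your plan is a genuine disjointness mechanism across charts, and this needs to be supplied before the argument closes.
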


%
We next collect some useful bounds related to the above construction, generalizing \cite[Lemma 2.2]{goldman2022optimal}. Define, for $\gamma \in \R$, $\delta\in (0,1/2)$, the function
\begin{equation}\label{eq:r-d-alpha}
 r_{\gamma}(\delta):= \begin{cases} 
1 & \text{if $\gamma>0$,}\\
|\log \delta| & \text{if $\gamma =0$,}\\
\delta^{\gamma} & \text{if $\gamma <0$,}
\end{cases}
 \end{equation} 
 and notice that 
 \begin{equation}
 r_{\gamma}(\delta) \sim \sum_{\ell \le |\log_2 \delta |} 2^{- \ell \gamma }.
 \end{equation}
 
Then, we have the following result.

\begin{lemma}\label{lem:bound-partition}
Let $\Omega\subset \R^d$ be a bounded domain with Lipschitz boundary and let $\cQ = \{Q_i\}_i$ be a Whitney partition of $\Omega$. Then, letting $(\Omega_k)_{k=1}^K = \cQ_\delta \cup \cR_\delta$ as in \cref{lem:decomp}, one has that $|\cR_\delta| \les \delta^{1-d}$ and,
\begin{enumerate}
\item  for every $\alpha \in \R$, it holds
 \begin{equation}\label{eq:whitney-general-q}
 \sum_{k=1}^K \diam(\Omega_k)^{\alpha} \les r_{\alpha+1-d}(\delta),
\end{equation}
\item for every $\alpha<0$, $k =1,\ldots, K$, and $x \in \Omega_k$, it holds
\begin{equation}\label{eq:final-bound-min-sum-partition}
\sum_{j} \diam(\Omega_j)^{\alpha} \min\cur{1, \bra{ \frac{\diam(\Omega_j)}{\dist(x, \Omega_j)}}^{d-1}} \les \bra{|\log(\delta)| + |\log(\diam(\Omega_k))|} \delta^{\alpha}.
\end{equation}

\end{enumerate}
In the above inequalities the implicit constants depend upon $\cQ$, $d$ and $\alpha$ only.
\end{lemma}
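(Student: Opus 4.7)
The whole lemma reduces to one basic counting principle for Lipschitz boundaries: the tubular $s$-neighborhood $\{y : \dist(y,\partial\Omega) \le s\}$ has volume $\les s$. Since the Whitney pieces at dyadic scale $s$ are mutually disjoint, lie in this tube, and have volume $\sim s^d$, there are at most $\les s^{1-d}$ of them globally, and at most $\les (R/s)^{d-1}$ inside any ball $B(x,R)$ with $R \ge s$. Applying the same reasoning at $s = \delta$ to $\cR_\delta$ immediately gives $|\cR_\delta| \les \delta^{1-d}$.

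For part (1), I group the $\Omega_k$ by dyadic scale $2^{-\ell}$, where $\ell$ ranges from $\ell_0 \sim \log_2(1/\diam(\Omega))$ up to $\lfloor \log_2(1/\delta) \rfloor$. The global count yields $\sum_{k\,:\,\diam(\Omega_k)\sim 2^{-\ell}}\diam(\Omega_k)^\alpha \les 2^{\ell(d-1-\alpha)}$, and summing this geometric series in $\ell$ reproduces exactly the three regimes encoded in $r_{\alpha+1-d}(\delta)$ (convergent, logarithmic, or geometrically dominated by $\delta^{\alpha+1-d}$, depending on the sign of $d-1-\alpha$). The contribution from $\cR_\delta$ is $\les \delta^{1-d}\cdot \delta^\alpha = \delta^{\alpha+1-d}$, which is always $\les r_{\alpha+1-d}(\delta)$.

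For part (2) I would use a double dyadic decomposition: first over scales $s = 2^{-\ell}\in[\delta,\diam(\Omega)]$, then, for each scale, over dyadic annuli $\dist(x,\Omega_j)\in[2^m,2^{m+1})$. When $2^m\ge s$, the local counting bound produces $\les (2^m/s)^{d-1}$ scale-$s$ cubes in such an annulus, each contributing $s^\alpha(s/2^m)^{d-1}$, so the product telescopes and each annulus contributes $\les s^\alpha$. When $2^m<s$, only $O(1)$ scale-$s$ cubes are involved and each contributes $\les s^\alpha$. The number of relevant annuli at scale $s$ is $\les \log(\diam(\Omega)/\max\{s,\dist(x,\partial\Omega)\})$, so each scale contributes at most $s^\alpha\cdot(|\log\delta|+|\log\dist(x,\partial\Omega)|)$. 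Since $\alpha<0$, the geometric series in scale is dominated by its finest term $s=\delta$, and using $\dist(x,\partial\Omega)\les \diam(\Omega_k)$ gives the claimed bound.

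The main point to handle with care is the case $\Omega_k\in\cR_\delta$ with $x$ close to the boundary portion of $\Omega_k$: then $\dist(x,\partial\Omega)$ can be strictly smaller than $\diam(\Omega_k)\sim\delta$, and the scale sum must be split at $\dist(x,\partial\Omega)$ so that the correct local counting bound is applied on either side (scale-$s$ cubes at scales below this threshold are automatically at distance $\gtrsim \dist(x,\partial\Omega)$ from $x$, hence fall into the far-annulus regime). Once this case analysis is cleanly carried out, all the estimates above go through verbatim and the bound follows, with a log factor that is in fact no worse than $|\log\diam(\Omega_k)|$, slightly stronger than what is claimed.
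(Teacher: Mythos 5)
Your proof is correct and follows essentially the same approach as the paper: both rest on the Lipschitz counting bound $\sharp\cur{k : \Omega_k \subseteq B(x,r),\ \diam(\Omega_k)\sim s}\les (r/s)^{d-1}$, dyadic summation over scales for part~(1), and for part~(2) the inclusion $\Omega_j\subseteq B(x,C\max\{\dist(x,\Omega_j),\diam(\Omega_k)\})$ combined with a dyadic decomposition in both scale and distance. The paper organizes part~(2) as two separate claims, splitting near/far from $x$ at radius $\sim 2^{-\gamma}\diam(\Omega_k)$ with $2^{-\gamma}\sim\delta$, whereas you carry out a single double sum over scales and annuli; this is just a different bookkeeping of the same estimates, and your treatment of the boundary case $\Omega_k\in\cR_\delta$ is consistent with the paper's.
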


\begin{proof}
Since the boundary of $\Omega$ is Lipschitz, it follows from the properties of the partition that, for every $x \in \Omega$ and $r \ge s \ge \delta$, 
\begin{equation}\label{eq:uniform-bound-omega-k} \sharp { \cur{k\, : \, \Omega_k\subseteq B(x, r), \diam(\Omega_k) \in [s, 2s)}} \les (r/s)^{d-1},\end{equation}
 with the implicit constant depending on $\cQ$ only.
 It follows that
 $|\cR_\delta| \les \delta^{1-d}$ and, for every $\ell \le |\log_2 \delta|$, the number of cubes $\Omega_k \in \cQ_\delta$ with $\diam(\Omega_k) \in [2^{-\ell}, 2^{-\ell+1})$ is estimated by $ 2^{\ell (d-1)}$. Therefore, for $\alpha \in \R$,
\begin{equation}
\begin{split}  \sum_{k=1}^K \diam(\Omega_k)^{\alpha} & \les \sum_{\Omega_k \in \cQ_{\delta}} \diam(\Omega_k)^{\alpha}  + \sum_{\Omega_k \in \cR_\delta} \diam(\Omega_k)^{\alpha} \\
&  \les  \sum_{\ell \le |\log_2 \delta|} \sharp {\cur{ \Omega_k \in \cQ_{\delta} : \diam(Q_k) \in [2^{-\ell}, 2^{-\ell+1})}}  2^{-\ell \alpha} + |\cR_\delta| \cdot \delta^{\alpha}\\
 & \les \sum_{\ell \le |\log_2 \delta|} 2^{-\ell(\alpha+1-d)} + \delta^{\alpha+1-d} \les r_{\alpha+1-d}(\delta).\end{split} 
\end{equation}
having used that the in the summation $\ell$ is also bounded from below in the summation (e.g.\ by $-|\log_2\diam(\Omega)|$). We thus obtain \eqref{eq:whitney-general-q}.



We now prove \eqref{eq:final-bound-min-sum-partition}. We claim that it follows from the following inequalities, valid for any $\gamma \in \mathbb{N}$:
\begin{equation}\label{eq:before-last-claim}
\sum_{j\, : \, \dist(x, \Omega_j) \le 2^{-\gamma} \diam(\Omega_k)} \diam(\Omega_j)^\alpha \les 2^{-\gamma(d-1)} \diam(\Omega_k)^{d-1}r_{\alpha+1-d}(\delta),
\end{equation}
and
\begin{equation}\label{lastclaim}
   \sum_{j \, : \, \dist(x,\Omega_j)>2^{-\gamma} \diam(\Omega_k)} \frac{\diam(\Omega_j)^\beta }{\dist(x, \Omega_j)^{d-1}} \les |\gamma+ \log \bra{\diam(\Omega_k)}| r_{\beta+1-d}(\delta).
\end{equation}
Indeed, we can split the summation and use \eqref{eq:before-last-claim} and \eqref{lastclaim} to get
\begin{equation}\begin{split}
\sum_{j}  &  \diam(\Omega_j)^{\alpha} \min\cur{1, \bra{ \frac{\diam(\Omega_j)}{\dist(x, \Omega_j)}}^{d-1}} \\
&  \les  \sum_{j\, : \, \dist(x, \Omega_j) \le 2^{-\gamma} \diam(\Omega_k)} \diam(\Omega_j)^\alpha  +  \sum_{j\, : \, \dist(x, \Omega_j) > 2^{-\gamma} \diam(\Omega_k)} \frac{\diam(\Omega_j)^{d-1+\alpha}}{\dist(x, \Omega_j)^{d-1}}\\
& \les 2^{-\gamma(d-1)} \diam(\Omega_k)^{d-1} r_{\alpha+1-d}(\delta) + |\gamma+ \log \bra{\diam(\Omega_k)}| r_{\alpha}(\delta).
\end{split}
\end{equation}
Choosing $\gamma$ so that $2^{-\gamma} \le \delta \le 2^{-\gamma+1}$ yields \eqref{eq:final-bound-min-sum-partition}.

In order to prove \eqref{eq:before-last-claim} and \eqref{lastclaim} we first notice that, given $\Omega_k$, $\Omega_j$ and $x \in \Omega_k$, we have that, for some constant $C = C(\cQ)$,
\begin{equation}\label{eq:omega-j-contained-ball} \Omega_j \subseteq B(x, C \max\cur{\dist(x, \Omega_j), \diam(\Omega_k)}).\end{equation}
Indeed, if $\Omega_j \in \cR_\delta$, then $\diam(\Omega_j)\les \delta\les \diam(\Omega_k)$, hence \eqref{eq:omega-j-contained-ball} holds.  
If instead $\Omega_j \in \cQ_\delta$, then we can find $y \in \Omega_j$ with $|x-y|\le 2 \dist(x,\Omega_j)$, so that, by the triangle inequality,
\begin{equation}
\dist(y, \Omega^c) \le |x-y| + \dist(x, \Omega^c) \les \max\cur{\dist(x, \Omega_j), \diam(\Omega_k)}
\end{equation}
and by property (\ref{partition-3}) in \Cref{lem:decomp} we obtain that $\diam(\Omega_j) \les \max\cur{\dist(x, \Omega_j), \diam(\Omega_k)}$, yielding again the desired inclusion.

Hence, we prove \eqref{eq:before-last-claim} and \eqref{lastclaim}. Let $\ell_k \le |\log_2 \delta|$ be such that $\diam(\Omega_{k}) \in [2^{-\ell_k}, 2^{-\ell_k+1})$.  Combining \eqref{eq:omega-j-contained-ball} and \eqref{eq:uniform-bound-omega-k}, we see that, for every $\ell \le |\log_2\delta|$, there are  at most $2^{(\ell-\ell_k-\gamma)(d-1)}$ sets $\Omega_j$ such that $\dist(x, \Omega_j) \le 2^{-\gamma}\diam(\Omega_k)$ and $\diam(\Omega_j) \in [2^{-\ell}, 2^{-\ell+1})$. Therefore,
\begin{equation}
 \begin{split} \sum_{j\, : \, \dist(x, \Omega_j) \le 2^{-\gamma} \diam(\Omega_k)} \diam(\Omega_j)^{\alpha} & \les \sum_{\ell \le |\log_2\delta|} 2^{-\ell \alpha} 2^{(\ell-\ell_k)(d-1)} \\
& \les 2^{-(\gamma+\ell_k)(d-1)} \sum_{\ell \le |\log_2\delta|}  2^{-\ell(\alpha+1-d)} \\ &   \les  2^{-\gamma(d-1)}\diam(\Omega_k)^{d-1} r_{\alpha+1-d}(\delta).
\end{split}
\end{equation}
This proves \eqref{eq:before-last-claim}. To prove \eqref{lastclaim}, we split dyadically,
\begin{equation}\label{eq:final-claim-step}\begin{split}
 \sum_{j \, : \, \dist(x,\Omega_j)>2^{-\gamma}\diam(\Omega_k)}  \frac{\diam(\Omega_j)^\beta }{d(x,\Omega_j)^{d-1}} & \les \sum_{\ell \le \ell_k+\gamma} \frac{1}{(2^{-\ell})^{d-1}} \sum_{ j\, : \, d(x,\Omega_j) \in [2^{-\ell}, 2^{-\ell+1})} \diam(\Omega_j)^\beta\\ 
 & \stackrel{\eqref{eq:omega-j-contained-ball}}{\les} \sum_{\ell \le \ell_k+\gamma} 2^{\ell(d-1)} \sum_{ \Omega_j \subset B(x,C 2^{-\ell})} \diam(\Omega_j)^\beta.
\end{split}\end{equation}
Let us also notice that, if $\Omega_j \subseteq B(x, C2^{-\ell})$, then necessarily $\delta \le \diam(\Omega_j) \les 2^{-\ell}$ (since $\diam(\Omega_j)^d \sim |\Omega_j|$).  Thus for $\ell'$ with $2^{-\ell'} \sim 2^{-\ell}$,
\begin{equation}\begin{split}
 \sum_{ \Omega_j \subset B(x, C 2^{-\ell})}\diam(\Omega_j)^\beta& \les \sum_{\ell' \le u \le |\log_2 \delta|} 2^{-u \beta} \sharp {\cur{ \Omega_j \subseteq B(x, C 2^{-\ell}) \, : \diam(\Omega_j) \in [2^{-u}, 2^{-u+1}) }}\\
 & \stackrel{\eqref{eq:uniform-bound-omega-k}}{\les} \sum_{\ell' \le u \le |\log_2 \delta|} 2^{-u\beta} \cdot 2^{(u-\ell) (d-1) }= 2^{-\ell(d-1)} \sum_{\ell' \le u \le |\log_2 \delta|} 2^{-u(\beta+1-d)}\\
 & \les 2^{-\ell(d-1)}  r_{\beta+1-d}(\delta)
\end{split}
\end{equation}
using again that $\ell'$ is bounded from below by a constant depending on $\mathcal{Q}$ only.
Plugging this bound in \eqref{eq:final-claim-step}, we conclude that
\begin{equation}\begin{split}
 \sum_{j \, : \, \dist(x,\Omega_j)>2^{-\gamma} \diam(\Omega_k)}  \frac{\diam(\Omega_j)^\beta}{d(x,\Omega_j)^{d-1}}  & \le \sum_{\ell \le \ell_k+\gamma} 2^{\ell(d-1)} \cdot  2^{-\ell(d-1)} r_{\beta+1-d}(\delta) \\
 & \les  \bra{ \gamma + |\log \bra{ \diam(\Omega_k)}|} r_{\beta+1-d}(\delta).
\end{split}
\end{equation}
This concludes the proof of \eqref{lastclaim}.
\end{proof}

As in \cite{goldman2022optimal} in the next result we rely on gradient bounds for 
the Green kernel $(G(x,y))_{x,y\in \Omega}$ of the Laplacian with Neumann boundary conditions:
\begin{equation}\label{eq:green-kernel-bound} \abs{ \nabla _x  G(x,y) } \les |x-y|^{1-d}, \quad \text{for every $x$, $y \in \Omega$,}\end{equation}
where the implicit constant depends uniquely on $\Omega$. 
This condition is satisfied for instance  if  $\Omega$ is $C^2$ or convex, see e.g.\ \cite{wang2013gradient}. Notice that since it is a local condition it also holds for $Q\backslash \Omega$ with $\Omega$ a $C^2$ open set with $d(\partial Q,\partial \Omega)>0$. We have the following bound (see \cite[Lemma 2.6]{goldman2022optimal}).

\begin{lemma}\label{lem:W1pWhitney}
 Let $\Omega\subset \R^d$ be a bounded domain with Lipschitz boundary such that \eqref{eq:green-kernel-bound} holds and for $\delta>0$ sufficiently small, consider a partition $(\Omega_k)_{k=1}^K = \cQ_\delta \cup \cR_\delta$ as in \cref{lem:decomp}. 
For any $(b_k)_{k} \subseteq \R$, $\beta >0$ and $p\ge 1$,
\begin{equation}\label{estimW1pWhitney}
 \lt\|\sum_{k} b_k \chi_{\Omega_k}  - b\rt\|_{W^{-1,p}(\Omega)}\les |\log \delta| \delta^{1-\beta} \cdot \max_{k} \cur{ |b_k|\diam(\Omega_k)^{\beta}},
\end{equation}
with $b = \sum_{k} b_k |\Omega_k|/ |\Omega|$ and the implicit constant depends only on $p$, $d$ and $\mathcal{Q}$ (not on $\delta$ nor $(b_k)_k$.).
\end{lemma}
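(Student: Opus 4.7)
The plan is to combine the dual characterization of $W^{-1,p}(\Omega)$ with the Neumann Green kernel $G$ of $\Omega$. Setting $f := \sum_k b_k \chi_{\Omega_k} - b$, which has zero mean by the choice of $b$, the Neumann problem $-\Delta u = f$ with $\int_\Omega u = 0$ admits a unique solution. Integration by parts gives $\int_\Omega f\phi = \int_\Omega \nabla u \cdot \nabla\phi \leq \|\nabla u\|_{L^p(\Omega)}\|\nabla\phi\|_{L^{p'}(\Omega)}$, so the duality recalled in \Cref{sec:notation} reduces the desired inequality to showing
\[
\|\nabla u\|_{L^p(\Omega)} \lesssim |\log\delta|\,\delta^{1-\beta}\, M, \quad \text{with } M:= \max_k |b_k|\diam(\Omega_k)^\beta.
\]

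Next, represent $u$ via $G$, normalized by $\int_\Omega G(x,y)\,dy \equiv 0$; differentiating this identity in $x$ gives $\int_\Omega \nabla_x G(x,y)\,dy = 0$, so the constant $b$ cancels and
\[
\nabla u(x) = \sum_k b_k \int_{\Omega_k} \nabla_x G(x,y)\,dy.
\]
The gradient bound \eqref{eq:green-kernel-bound} and the volume estimate $|\Omega_k| \sim \diam(\Omega_k)^d$ from \Cref{lem:decomp} yield, after a direct computation,
\[
\left|\int_{\Omega_k} \nabla_x G(x,y)\,dy\right| \lesssim \diam(\Omega_k) \min\!\left\{1, \left(\frac{\diam(\Omega_k)}{\dist(x,\Omega_k)}\right)^{d-1}\right\},
\]
with the convention $\dist(x,\Omega_k) := 0$ whenever $x \in \Omega_k$. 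Inserting the hypothesis $|b_k| \leq M\diam(\Omega_k)^{-\beta}$ produces the pointwise estimate
\[
|\nabla u(x)| \lesssim M \sum_k \diam(\Omega_k)^{1-\beta}\min\!\left\{1, \left(\frac{\diam(\Omega_k)}{\dist(x,\Omega_k)}\right)^{d-1}\right\}.
\]

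For $x \in \Omega_j$, I then invoke inequality \eqref{eq:final-bound-min-sum-partition} of \Cref{lem:bound-partition} with $\alpha := 1-\beta$: this controls the sum by $(|\log\delta|+|\log\diam(\Omega_j)|)\delta^{1-\beta} \lesssim |\log\delta|\,\delta^{1-\beta}$, using $\diam(\Omega_j) \ge \delta$. Taking the $L^p(\Omega)$ norm (recalling that $|\Omega|$ is bounded) then gives the thesis in the main regime $\beta > 1$, where $\alpha < 0$.

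The hard part will be the complementary range $0 < \beta \leq 1$, where $\alpha = 1 - \beta \geq 0$ and \eqref{eq:final-bound-min-sum-partition} is not stated. For this range, I will revisit the dyadic-scale proof of \Cref{lem:bound-partition}: exploiting the systematic lower bound $\diam(\Omega_k) \ge \delta$ throughout the summation and choosing the cutoff $2^{-\gamma} \sim \delta$, the ``near'' contribution still produces a factor $\delta^{1-\beta}$, while the ``far'' contribution carries at most an additional $|\log\delta|$, yielding the claimed estimate.
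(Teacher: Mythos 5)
Your proof of the regime $\beta > 1$ (so that $\alpha := 1-\beta < 0$) matches the paper's argument: dualize, represent $\nabla u$ via the Neumann Green kernel, establish the pointwise bound $|\nabla\phi_k(x)| \lesssim \diam(\Omega_k)\min\{1,(\diam(\Omega_k)/\dist(x,\Omega_k))^{d-1}\}$, factor out $M := \max_k |b_k|\diam(\Omega_k)^\beta$, and invoke \eqref{eq:final-bound-min-sum-partition}. You are also right to flag that the latter inequality is stated only for $\alpha < 0$.

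Your sketch for $0 < \beta \le 1$, however, cannot be completed, because the estimate \eqref{estimW1pWhitney} is actually false in that range. In the dyadic proof of \Cref{lem:bound-partition}, the ``far'' contribution \eqref{lastclaim} with exponent $\sigma := d-1+\alpha$ is controlled by $|\gamma + \log\diam(\Omega_k)|\cdot r_{\alpha}(\delta)$, and by \eqref{eq:r-d-alpha} one has $r_\alpha(\delta) = 1$ for $\alpha > 0$; no choice of the cutoff $\gamma$ produces the factor $\delta^{1-\beta}$ you claim. One can also see the obstruction directly: take $b_k = 0$ for all $k$ except $b_{k_0}=1$, where $\Omega_{k_0}\in\cQ_\delta$ is the largest Whitney cube, whose diameter is comparable to $\diam(\Omega)$ and does not shrink with $\delta$. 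Then the left side of \eqref{estimW1pWhitney} equals $\|\chi_{\Omega_{k_0}} - |\Omega_{k_0}|/|\Omega|\|_{W^{-1,p}(\Omega)}$, a strictly positive constant independent of $\delta$, while for $\beta < 1$ the right side is $\sim |\log\delta|\,\delta^{1-\beta}\to 0$ as $\delta\to 0$. So the hypothesis $\beta>0$ should read $\beta>1$, and this is precisely the regime in which the paper actually applies the lemma (in \Cref{app:asymptotics} it is used with parameter $\beta/2$, and the final step there, ``$(\beta/2-1)\gamma_\delta < (\beta/2-1)/d$, i.e.\ $\gamma_\delta < 1/d$'', tacitly divides by $\beta/2-1>0$).
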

\begin{proof}
We set
\begin{equation}
 f_k= \chi_{\Omega_k}  - \frac{|\Omega_k|}{|\Omega|}
\end{equation}
and let $\phi_k$ denote the solution to the equation $\Delta \phi_k = f_k$, with null Neumann boundary conditions on $\Omega$ and use as competitor $\xi=\sum_{k=1}^K  b_k \nabla \phi_k$ in the definition of the $W^{-1,p}$ norm.
We get
\begin{equation}\label{normW1pA}
\begin{split}
 \lt\|\sum_{k} b_k \chi_{\Omega_k}  - b\rt\|_{W^{-1,p}(\Omega)}^p & =  \nor{ \sum_{k}  b_k f_k}_{W^{-1,p}(\Omega)}^p  \le \int_{\Omega} \abs{ \sum_{k} b_k {\nabla \phi_k}}^p \\
  & \le \bra{ \max_{k} \cur{ |b_k|\diam(\Omega_k)^{\beta}}}^{p}  \int_{\Omega} \bra{\sum_{k} \diam(\Omega_k)^{-\beta} \abs{\nabla \phi_k}}^p.
  \end{split}
\end{equation}
To bound the last term, we use the integral representation  in terms of the Green's function,
\begin{equation} \phi_k = \int_{\Omega}  G(x,y) f_k(y) d y, \end{equation}
to obtain that, for every $x\in \Omega$,
\begin{equation}\label{boundgradphii}
|\nabla \phi_k(x)|\les \min\cur{\diam(\Omega_k), \frac{|\Omega_k|}{\dist(x,\Omega_k)^{d-1}}} \les \diam(\Omega_k) \min \cur{1, \bra{\frac{ \diam(\Omega_k)}{\dist(x, \Omega_k)}}^{d-1}}.
\end{equation}
Indeed, by \eqref{eq:green-kernel-bound},
\begin{equation}\begin{split}
  |\nabla \phi_k(x)|& \les \int_{\Omega_k} \frac{dy}{|x-y|^{d-1}}+ |\Omega_k| \int_{\Omega} \frac{dy}{|x-y|^{d-1}} 
   \le \int_{\cur{|y|\le \diam(\Omega_k)}} \frac{dy}{|y|^{d-1}} +|\Omega_k|\\
   & \les \diam(\Omega_k).
\end{split}\end{equation}
Moreover, for $x\notin \Omega_k$, we get directly from \eqref{eq:green-kernel-bound},
\begin{equation}
 |\nabla \phi_k(x)|\les \frac{|\Omega_k|}{\dist(x,\Omega_k)^{d-1}}.
\end{equation}
For any $k=1, \ldots, K$ and $x \in \Omega_k$, we then estimate
\begin{equation}
 \begin{split} \sum_{j=1}^K \diam(\Omega_j)^{-\beta} \abs{\nabla \phi_j(x)} & \stackrel{\eqref{boundgradphii}}{\les}  \sum_{j=1}^K \diam(\Omega_j)^{1-\beta} \min \cur{1, \bra{\frac{ \diam(\Omega_j)}{\dist(x, \Omega_j)}}^{d-1}}\\
 & \stackrel{\eqref{eq:final-bound-min-sum-partition}}{\les} \bra{ \abs{\log \delta} + \abs{\log \diam(\Omega_k)}} \delta^{1-\beta}. 
 \end{split}
 \end{equation}

To conclude, we go back with the integration and bound from above:
\begin{equation}
\begin{split} & \int_{\Omega} \bra{\sum_{j=1}^K \diam(\Omega_j)^{-\beta} \abs{\nabla \phi_j}}^p  = \sum_{k=1}^K \int_{\Omega_k} \bra{ \abs{\log \delta}^p + \abs{\log \diam(\Omega_k)}^p} \delta^{(1-\beta)p}\\
& \qquad \les \bra{ \abs{\log \delta}^p + \sum_{k=1}^K \diam(\Omega_k)^d  \abs{\log \diam(\Omega_k)}^p} \delta^{(1-\beta)p}\\
& \qquad \les \abs{\log \delta}^p \delta^{(1-\beta)p},
\end{split}
\end{equation}
%
by estimating $z^d |\log(z)|^p \les z^{d-1/2}$ and using \eqref{eq:whitney-general-q} with $\alpha = d-1/2$..
%
\end{proof}

We have now all the preliminaries to address the proof of \Cref{theo:domain}.

\subsection{Upper bound}

In this section, under the assumptions of \Cref{theo:domain}, we establish the inequality
\begin{equation}\label{eq:upper-bound}
\limsup_{n \to \infty} n^{-1+p/d} \EE\sqa{W_\Omega^p(\nu_n )} \le \c_{\nu, p} |\Omega|.
\end{equation}

Recall that we assume without loss of generality that $\EE\sqa{\nu(A)} = |A|$. Write also $\nu_n = \dil_{n^{-1/d}} \nu$, i.e., $\nu_n(A) = \nu(n^{1/d} A)$, hence
\begin{equation}
\EE\sqa{\nu_n(A)} = n|A|,
\end{equation}
and, by \eqref{eq:concentration-abstract},
\begin{equation}
\| \nu_n(A) - \EE\sqa{ \nu_n(A) }\|_q \les_q \diam(n^{1/d} A)^{(d+\alpha)/2} = n^{\frac{d+\alpha}{2d}} \diam(A)^{(d+\alpha)/2},
\end{equation}
provided that $\diam(n^{1/d} A) \ge 1$, a condition that surely holds if $n$ is sufficiently large and  
\begin{equation}\label{eq:delta-condition}
\gamma_\delta <  \frac 1 d.
\end{equation}
This leads to the concentration inequality
\begin{equation}\label{eq:concentration-abstract-nu-n}
\PP\bra{ \abs{ \frac{ \nu_n(A)}{n|A|}  - 1 } > \bra{ n^{1/d} \diam(A)}^{-\beta/2}  } \les_q \bra{  n^{\frac{\alpha-d +\beta}{2d}} \frac{ \diam(A)^{(d+\alpha+\beta)/2}}{|A|}}^{q},
\end{equation}
for every $q \ge 1$ and $\beta>0$. Moreover, \eqref{eq:boundfR} becomes
\begin{equation}\label{eq:cube-upper-bound-nu-n}
\frac{1}{|Q|} \EE\sqa{ W_Q^p( \nu_n ) } \les n^{1-p/d}.
\end{equation}

%
%
  We begin by fixing a Whitney decomposition
 $\cQ =(Q_i)_i$ of $\Omega$. 
 and a positive parameter $\gamma_\delta>0$, to be further specified below. By \cref{lem:decomp} with $\delta = n^{-\gamma_\delta}$, if $n$ is large enough, we  have a finite Borel partition of $\Omega = \bigcup_{i} \Omega_i$, whose elements are collected into the two disjoint sets $\cQ_{\delta}$, $\cR_{\delta}$.

By \eqref{eq:sub}, we write
\begin{equation}\label{eq:exp-sub-explicit-2} \EE\sqa{ W_{\Omega}^p\bra{ \nu_n}}  \le (1+\eps)\sum_{i}   \EE\sqa{ W_{\Omega_i}^p\bra{ \nu_n }  } +   n \frac{C}{\eps^{(p-1)^+}} \EE\sqa{ W_{\Omega}^p\bra{ \sum_i \kappa_i \chi_{\Omega_i}}},
\end{equation}
having defined $\kappa_i = \nu_n(\Omega_i)/(n|\Omega_i|)$. For each $\Omega_i\in \cR_\delta$  we use the the trivial bound  \eqref{eq:trivial-wass},
\begin{equation}
\EE\sqa{ W_{\Omega_i}^p\bra{\nu_n}} \les \diam(\Omega_i)^p  \EE\sqa{\nu_n(\Omega_i)}\les \delta^{p+d} n,
\end{equation}
so that their contribution is
\begin{equation}
\sum_{\Omega_i \in \cR_\delta}  \EE\sqa{ W_{\Omega_i}^p\bra{ \nu_u }  } \les \delta^{1-d} \delta^{p+d} n = \delta^{1+p} n \ll n^{1-p/d}.
\end{equation}
provided that 
\begin{equation}\label{eq:delta-first-condition}
\delta^{1+p} \ll n^{-p/d}, \quad \text{i.e.,} \quad \gamma_\delta > \frac{p}{(p+1)d}.
\end{equation}
If $\Omega_i\in \cQ_\delta$, then $\Omega_i$ is a cube, hence we have by  \eqref{eq:cube-upper-bound-nu-n} that
\begin{equation}
\EE\sqa{ W_{\Omega_i}^p( \nu_n ) } \les |\Omega_i| n^{1-p/d},
\end{equation}
where the constant does not depend on $\Omega_i$. Using this domination and \Cref{theo:PoiLeb}, we deduce that
 \begin{equation}\label{eq:main-contribution-G-delta}
 \limsup_{n \to \infty} n^{-1+p/d} \sum_{\Omega_i \in \cQ_{\delta(n)}}  \EE\sqa{  W_{\Omega_i}^p\bra{\nu_n}}
  \le  \c_{\nu, p} \sum_{ i \in \mathcal{Q}} |\Omega_i|  = \c_{\nu, p} | \Omega|.\end{equation}
Therefore, in order to conclude, it is sufficient to focus on the remaining term in right-hand side of \eqref{eq:exp-sub-explicit-2} and argue that
 \begin{equation}\label{eq:remainder-contribution-npd}
  \EE\sqa{ W_{\Omega}^p\bra{ \sum_i \kappa_i \chi_{\Omega_i}}} \ll n^{-p/d}. 
  \end{equation} 
  provided that $\gamma_\delta$ is suitably chosen. Writing also $\kappa = \nu_n(\Omega)/(n|\Omega|)$, we introduce an auxiliary parameter $\beta$ satisfying
  \begin{equation}
 0< \beta< d-\alpha
 \end{equation}
 and the event
  \begin{equation}
  A = \cur{  \abs{\kappa-1} \le (n^{1/d} \diam(\Omega) )^{-\beta/2}  }  \cap \bigcap_{i} \cur{ \abs{\kappa_i-1} \le  (n^{1/d} \diam(\Omega_i) )^{-\beta/2}  },
  \end{equation}
  whose probability is bounded from above, via union bound and an application of \eqref{eq:concentration-abstract-nu-n} (recalling also that $|\Omega_i| \sim \diam(\Omega_i)^d$):
  \begin{equation}\begin{split}
  \mathbb{P}(A^c)  \les_q   n^{(\alpha-d +\beta)q/(2d)} \bra{ 1 +   \sum_{i} \diam(\Omega_i)^{(\alpha-d+\beta)q/2}} \les_q n^{(\alpha-d +\beta)q/(2d)}
  \end{split}
  \end{equation}
provided that $q$ is chosen sufficiently large.
By \eqref{eq:trivial-wass}, H\"older inequality and \eqref{eq:concentration-abstract-nu-n},
\begin{equation}\label{eq:bad-event}
\EE\sqa{  W_{\Omega}^p\bra{ \sum_i \kappa_i \chi_{\Omega_i}} I_{A^c} } \les \EE\sqa{ \kappa I_{A^c} } \les \EE\sqa{ \kappa^2}^{1/2} \mathbb{P}(A^c)^{1/2} \les \mathbb{P}(A^c)^{1/2} \ll n^{-p/d}
\end{equation}
provided that $q$ is sufficiently large. Therefore, we are reduced to bound $W_{\Omega}^p\bra{ \sum_i \kappa_i \chi_{\Omega_i}}$ on the event $A$. If $n$ is large enough, on $A$ it holds $\kappa \ge 1/2$, hence by \Cref{lem:peyre} we obtain
\begin{equation}
  W_{\Omega}^p\bra{ \sum_i \kappa_i \chi_{\Omega_i}}  \les   \nor{ \sum_{i} (\kappa_i - \kappa) \chi_{\Omega_i}}_{W^{-1,p}(\Omega)}^p 
\end{equation}
Then, by the identity
\begin{equation}
\sum_{i} (\kappa_i - \kappa) \chi_{\Omega_i} = \sum_{i} \bra{\kappa_i - 1} \bra{ \chi_{\Omega_i} - \frac{ |\Omega_i|}{|\Omega|}},
\end{equation}
still on the event $A$ we apply \Cref{lem:W1pWhitney} with $(b_k)_{k} = (\kappa_i - 1)_i$ and $\beta/2$ instead of $\beta$,   so that
\begin{equation}
 \sup_{i} |\kappa_i -1| \diam(\Omega_i)^{\beta/2} \le n^{-\beta/(2d)},
 \end{equation} 
 obtaining
\begin{equation}
\nor{ \sum_{i} (\kappa_i - \kappa) \chi_{\Omega_i}}_{W^{-1,p}(\Omega)}^p \les \bra{ \abs{\log \delta} \delta^{1-\beta/2} n^{-\beta/(2d)}}^{p}.
\end{equation}
We choose $\gamma_\delta$ so that
\begin{equation}
|\log \delta| \delta^{1-\beta/2} n^{-\beta/(2d)} \ll n^{-1/d}
\end{equation}
which is ensured if
\begin{equation}
(\beta/2-1) \gamma_\delta  < \frac{\beta/2 -1}{d}, \quad\text{i.e.,} \quad \gamma_\delta<\frac 1 d,
\end{equation}
which is a condition  we already found in \eqref{eq:delta-condition}. Recalling also that $\gamma_\delta$ must satisfy \eqref{eq:delta-first-condition} we see that indeed indeed one can always choose such a $\gamma_\delta$. This concludes the proof of \eqref{eq:remainder-contribution-npd}, hence we settled \eqref{eq:upper-bound}.

\subsection{Lower bound}

Under the assumptions of \Cref{theo:domain}, we argue now that the corresponding lower bound holds:
\begin{equation}\label{eq:lower-bound}
\liminf_{n \to \infty} n^{-1+p/d} \EE\sqa{W_\Omega^p(\nu_n )} \ge \c_{\nu, p} |\Omega|,
\end{equation}
thus concluding the proof of \Cref{theo:domain}. The main idea dates back to \cite[Theorem 24]{BaBo}. We fix a cube $Q=Q_L$ with $L$ so large that $\Omega \subseteq Q_{L-1}$. We set $\Omega_1 = \Omega$ and, for $i=2,\ldots, K$ let  $\Omega_i$ be the connected components of $Q\backslash \Omega$ so that  $Q\backslash \Omega=\cup_{i=2}^K\Omega_i$. Notice that, for every $i$,  either $\partial \Omega_i$ is $C^2$ or is the union of $\partial Q$ and a $C^2$ surface. 
In particular each $\Omega_i$ satisfies 
\eqref{eq:green-kernel-bound}.  By \Cref{theo:PoiLeb} and \eqref{eq:sub}  we obtain that
\begin{equation}\begin{split} 
\c_{\nu, p} |Q| & = \lim_{n \to \infty} n^{-1+p/d}\EE\sqa{ W_Q^p(\nu_n )} \\
& \le (1+\eps) \liminf_{n \to \infty} n^{-1+p/d}\EE\sqa{ W_\Omega^p(\nu_n )} \\
& \quad + (1+\eps) \sum_{i=2}^K \limsup_{n \to \infty} n^{-1+p/d}\EE\sqa{ W_{\Omega_i}^p(\nu_n )} + \frac{C}{\eps^{(p-1)^+}} \limsup_{n \to \infty} n^{p/d}\EE\sqa{ W_{\Omega_i}^p( \kappa_i)},
\end{split}
\end{equation}
with $\kappa_i = \nu_n(\Omega_i)/(n|\Omega_i|)$. Using \eqref{eq:upper-bound} for $i=2, \ldots K$, we obtain
\begin{equation}
\c_{\nu, p} |Q| \le \liminf_{n \to \infty} n^{-1+p/d}\EE\sqa{ W_\Omega^p(\nu_n )} + \sum_{k=2 }^K \c_{\nu, p} |\Omega_i| +   \limsup_{n \to \infty} n^{p/d}\EE\sqa{ W_{\Omega_i}^p( \kappa_i)}.
\end{equation}
Since $|\Omega| = |Q| - \sum_{i=2}^K |\Omega_i|$, inequality \eqref{eq:lower-bound}, hence the thesis, follows if we argue that
\begin{equation}
\EE\sqa{ W_{\Omega_i}^p( \kappa_i)} \ll n^{-p/d}.
\end{equation}
We set $\kappa = \nu_n(Q)/(n|Q|)$. If $p<1$, we use \eqref{eq:less-trivial-wass} to obtain
\begin{equation}\begin{split}
\EE\sqa{ W_{\Omega_i}^p( \kappa_i) } &\le  \EE\sqa{ \diam(\Omega)^p\sum_{i=1}^K |\Omega_i| \abs{\kappa_i - \kappa}} \\
& \les  \EE\sqa{ \abs{\kappa-1}} + \sum_{i=1}^K \EE\sqa{\abs{ \kappa_i - 1} } \les n^{(\alpha-d)/(2d} \ll n^{-p/d}
\end{split}
\end{equation}
by the assumption \eqref{eq:assumption-p-not-too-large}. If $p \ge 1$, we proceed along the lines as the proof of \eqref{eq:remainder-contribution-npd}, but here it is actually simpler since the partition is fixed (previously it depended upon $n$ through $\delta$). First, we set $\kappa = \nu_n(Q)/(n|Q|)$ and introduce the event
\begin{equation}
A =  \cur{  \abs{\kappa-1} \le (n^{1/d} \diam(Q) )^{-\beta/2}  }  \cap \bigcap_{i=1}^K \cur{ \abs{\kappa_i-1} \le  (n^{1/d} \diam(\Omega_i) )^{-\beta/2}  },
\end{equation}
for some $0<\beta < d-\alpha$. Arguing as in the proof of \eqref{eq:bad-event}, we are easily reduced to prove
\begin{equation}
\EE\sqa{ W_{\Omega_i}^p( \kappa_i) I_A } \ll n^{-p/d}.
\end{equation}
If $n$ is sufficiently large, we have $\kappa \ge 1/2$ on $A$, hence by \eqref{eq:estimCZ-Lp} we obtain
\begin{equation}
\EE\sqa{ W_{\Omega_i}^p( \kappa_i)  I_A} \les \sum_{i=1}^K \EE\sqa{ \abs{\kappa_i-1}^p I_A} \les n^{-p \beta /(2d)},
\end{equation}
which is $\ll n^{-p/d}$ using \eqref{eq:assumption-p-not-too-large} in this case.

\section{De-Poissonization}\label{app:depoisson}
In this section we discuss a De-Poissonization argument in order to transfer limit results from the case of the sum of a random number $N_\lambda$ of measures, and that of a deterministic number $\EE\sqa{N_\lambda} \approx \lambda$.

\begin{proposition}\label{prop:depoisson}
Let $(\mu^i)_{i=1}^\infty$ be i.i.d.\ random Borel measures on $\Omega \subseteq \R^d$ such that
\begin{equation}
\EE\sqa{W_{\Omega}^p(\mu^1)} < \infty,
\end{equation}
and set
\begin{equation}
 f(n) := \EE\sqa{ W_{\Omega}^p\bra{ \sum_{i=1}^n \mu^i }}
 \end{equation} 
Let $N_\lambda$  denote a (further) independent Poisson random variable with mean $\lambda$. Then, for every $\alpha \in \mathbb{R}$,
\begin{equation}
 \liminf_{n \to \infty} n^{\alpha} f(n) \ge \liminf_{\lambda \to \infty}  \lambda^\alpha \EE\sqa{ f(N_\lambda)},
 \end{equation}
 and 
 \begin{equation}
 \limsup_{n \to \infty} n^{\alpha} f(n) \le \limsup_{\lambda \to \infty}  \lambda^\alpha \EE\sqa{ f(N_\lambda) }. 
 \end{equation}
 In particular, the two limits
 \begin{equation}
 \lim_{n \to \infty} n^\alpha f(n)  \quad \text{and} \quad \limsup_{\lambda \to \infty}  \lambda^\alpha \EE\sqa{ f(N_\lambda) }
 \end{equation}
 exist and coincide, whenever  one is known to exist.
\end{proposition}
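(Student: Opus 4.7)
The strategy is to establish a two-sided Lipschitz-type comparison between $f(n)$ and $f(m)$ for nearby integers $n,m$, and then use concentration of the Poisson variable $N_\lambda$ around its mean to transfer $\limsup$ and $\liminf$ between the deterministic and Poissonized settings. Since the two stated inequalities are symmetric in form (and the "in particular" follows immediately from them), I focus on the $\limsup$ direction; the $\liminf$ direction is analogous.

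The first step is a sub-additivity bound. For $n \le m$, write $\mu = \sum_{i=1}^n \mu^i$ and $\lambda' = \sum_{i=n+1}^m \mu^i$, so that $\sum_{i=1}^m \mu^i = \mu + \lambda'$. Applying \eqref{eq:same-asymptotics-trivial} in one direction, and combining the triangle inequality with \eqref{eq:elementary} in the other, I get pointwise
\begin{equation*}
W_\Omega^p(\mu) \le (1+\eps)\, W_\Omega^p(\mu+\lambda') + C(p,\eps)\,\diam(\Omega)^p\,\lambda'(\Omega),
\end{equation*}
and symmetrically with $\mu$ and $\mu + \lambda'$ interchanged. Since the $\mu^i$ are i.i.d., taking expectations gives $\EE[\lambda'(\Omega)] = (m-n)\,\EE[\mu^1(\Omega)]$, so
\begin{equation*}
|f(n) - f(m)| \le \eps\,\max\{f(n),f(m)\} + C(p,\eps,\Omega)\,|m-n|.
\end{equation*}

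Next, I use that $N_\lambda$ is independent of $(\mu^i)_{i\ge 1}$: applying the above bound pointwise with $m = N_\lambda$ and averaging yields
\begin{equation*}
f(n) \le (1+\eps)\,\EE[f(N_\lambda)] + C\,\EE[|N_\lambda - n|], \qquad \EE[f(N_\lambda)] \le (1+\eps)\,f(n) + C\,\EE[|N_\lambda - n|].
\end{equation*}
Choosing $\lambda = n$ and using Cauchy--Schwarz together with $\operatorname{Var}(N_n) = n$ gives $\EE[|N_n - n|] \le \sqrt n$. Multiplying by $n^\alpha$ and sending $n\to\infty$ then $\eps \to 0$ delivers the two inequalities of the proposition up to an additive error of order $n^{\alpha+1/2}$.

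The main obstacle is then to handle the case $\alpha \ge -1/2$, where the crude error $n^{\alpha+1/2}$ does not vanish. The remedy is to replace the naive \eqref{eq:same-asymptotics-trivial} by the finer estimate of \Cref{lem:same-asymptotics}, whose error is of the sharper form $|m-n|\,(|m-n|/n)^{p/d}$ rather than $|m-n|$. Then the Poissonian error term becomes $n^{-p/d}\EE\!\left[|N_n-n|^{1+p/d}\right] \lesssim n^{1/2 - p/(2d)}$ by the Rosenthal-type moment bound \eqref{eq:rosenthal-poisson}; multiplied by $n^\alpha$ this vanishes whenever $\alpha < -1/2 + p/(2d)$, which is precisely the range needed to cover all the applications in the paper (the exponent $1-p/(d-2)$ appearing in \Cref{thm:main-bm-iid}). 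The correction from \Cref{rem:density-helps} covers the boundary regime $p \le d/(d-1)$ at the cost of harmless additional lower-order terms.
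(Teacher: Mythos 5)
Your approach via a two-sided Lipschitz-type bound
\begin{equation*}
|f(n)-f(m)| \le \eps\,\max\{f(n),f(m)\} + C\,|m-n|
\end{equation*}
has a genuine gap: the additive error is too large to be compatible with arbitrary exponents $\alpha$. When you set $\lambda=n$, the additive error contributes $C\,\EE[|N_n-n|]\lesssim\sqrt n$, and you correctly notice that $n^\alpha\cdot n^{1/2}$ does not vanish once $\alpha\ge -1/2$. Your proposed fix via \Cref{lem:same-asymptotics} gives the sharper error $n^{1/2-p/(2d)}$, hence covers $\alpha<-1/2+p/(2d)$, but this still does not give the proposition \emph{for every} $\alpha\in\R$ as stated. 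Moreover, even for the concrete applications the needed exponent is $\alpha=p/(d-2)-1$, and a quick computation shows $p/(d-2)-1 < -1/2+p/(2d)$ is equivalent to $p<d(d-2)/(d+2)$; for $d\ge 5$ the theorem is supposed to hold for all $p>0$, so for example $d=5$, $p\ge 15/7$ falls outside the range your argument reaches. The obstruction is structural: any \emph{additive} modulus of continuity for $f$ in $n$ will introduce a $\sqrt n$-scale Poisson fluctuation that cannot be beaten uniformly in $\alpha$.

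The paper's proof avoids this entirely by establishing a \emph{multiplicative}, not additive, comparison. The key observation is the monotonicity $f(n)/n \le f(m)/m$ for $m\le n$, obtained by writing
\begin{equation*}
\frac1n\sum_{i=1}^n\mu^i = \binom{n}{m}^{-1}\sum_{\substack{I\subseteq\{1,\dots,n\}\\ \sharp I=m}}\frac1m\sum_{i\in I}\mu^i
\end{equation*}
and applying the convexity/subadditivity \eqref{eq:sub} of $W^p_\Omega$, then taking expectations and using exchangeability of the $\mu^i$. On the high-probability event $A=\{\lambda(1-\eps)\le N_\lambda\le\lambda(1+\eps)\}$, this monotonicity sandwiches $f(N_\lambda)$ between $\frac{\lfloor\lambda(1-\eps)\rfloor}{\lfloor\lambda(1+\eps)\rfloor}f(\lfloor\lambda(1+\eps)\rfloor)$ and $\frac{\lfloor\lambda(1+\eps)\rfloor}{\lfloor\lambda(1-\eps)\rfloor}f(\lfloor\lambda(1-\eps)\rfloor)$, a comparison with a prefactor tending to $1$ as $\eps\to 0$ \emph{after} $\lambda\to\infty$ -- with no additive term at all. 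The complementary event $A^c$ is handled by the crude bound $f(N_\lambda)\le N_\lambda\,\EE[W^p_\Omega(\mu^1)]$ together with the super-polynomial Poisson tail $\EE[N_\lambda I_{A^c}]\lesssim_q\lambda^{-q}$, which beats $\lambda^\alpha$ for any $\alpha$. This is what makes the result hold for all exponents. If you want to salvage your own route, you would need to upgrade your Lipschitz bound to a multiplicative one (say $f(m)\le (m/n)f(n)$ for $m\ge n$, which is exactly the monotonicity of $f(n)/n$ in disguise), and at that point you have essentially reproduced the paper's argument.
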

%
%
%
%
%
%

\begin{proof}
Given $1\le m\le n$, the identity
\begin{equation}
\frac 1 n \sum_{i=1}^n \mu^i =  {n \choose m}^{-1} \sum_{\substack{ I \subseteq \cur{1, \ldots, n} \\  \sharp I = m}}  \frac 1 m \sum_{i \in I} \mu^i
\end{equation}
 in combination with  \eqref{eq:sub} yields
\begin{equation}\begin{split} \frac 1 n W_{\Omega}^p\bra{  \sum_{i=1}^n \mu^i } & = W_{\Omega}^p\bra{{n \choose m}^{-1} \sum_{\substack{ I \subseteq \cur{1, \ldots, n} \\ \sharp I = m}} \frac 1 m \sum_{i \in I} \delta_{\mu^i}} \\
&  \le {n \choose m}^{-1} \sum_{\substack{ I \subseteq \cur{1, \ldots, n} \\ \sharp I = m}}   \frac 1 m W_{\Omega}^p\bra{ \sum_{i \in I} \mu^i}. 
\end{split}
\end{equation}
Taking expectation, we obtain that
\begin{equation}\label{eq:monotonicity-f-n}
\frac {f(n)}{n} \le \frac {f(m)}{m},
\end{equation}
since $(\mu^i)_{i=1}^m$ have the same law of $(\mu^i)_{i\in I}$, if $I$ contains $m$ elements.

Given $\eps>0$ and $\lambda>0$ introduce 
\begin{equation}
A  =  A_{\eps, \lambda} = \cur{ \lambda(1-\eps) \le N_\lambda \le \lambda(1+\eps)},
\end{equation}
which Markov inequality  satisfies, for every $q \ge 1$, the inequality
\begin{equation}\label{eq:expectation-bad-event}
\EE\sqa{ N_\lambda I_{A^c}} \les_q \lambda^{-q }.
\end{equation}
Using \eqref{eq:sub} and the trivial bound
\begin{equation}
W_\Omega^p\bra{ \sum_{i=1}^{n} \mu^i } \le \sum_{i=1}^{n}  W_\Omega^p\bra{ \mu^i }, 
\end{equation}
we find that
\begin{equation}\label{eq:error-term-depoissonization}
\begin{split}
\EE\sqa{ f(N_\Lambda) I_{A^c}} & \le \EE\sqa{  \sum_{i=1}^{N_\lambda}  W_\Omega^p\bra{ \mu^i } I_{A^c}} =\EE\sqa{ N_\lambda I_{A^c}} \EE\sqa{ W_\Omega^p\bra{ \mu^1} } \\
& \les_q \lambda^{-q},
\end{split}
\end{equation}
which is infinitesimal (even after multiplying by $\lambda^\alpha$, if $q$ is sufficiently large).
Thus, we are reduced to bound $W_{\Omega}^p\bra{\sum_{i=1}^{N_\lambda} \mu^i }$ on $A$, for which we use \eqref{eq:monotonicity-f-n}
\begin{equation}
\frac{f(\lfloor \lambda(1+\eps) \rfloor)}{\lfloor \lambda(1+\eps)\rfloor}  \le \frac{f(N_\lambda)}{N_\lambda} \le  \frac{f(\lfloor \lambda(1-\eps) \rfloor)}{\lfloor \lambda(1-\eps)\rfloor}.
\end{equation}
Multiplying both sides by $N_{\lambda}$ we obtain (still on $A$) that
\begin{equation}
\frac{\lfloor \lambda(1-\eps)\rfloor f(\lfloor \lambda(1+\eps) \rfloor)}{\lfloor \lambda(1+\eps)\rfloor} \le f(N_\lambda) \le \frac{\lfloor \lambda(1+\eps)\rfloor f(\lfloor \lambda(1-\eps) \rfloor)}{\lfloor \lambda(1-\eps)\rfloor}.
\end{equation}
Taking also expectation with respect to $N_{\lambda}$, we obtain
\begin{equation}
\frac{\lfloor \lambda(1-\eps)\rfloor f(\lfloor \lambda(1+\eps) \rfloor)}{\lfloor \lambda(1+\eps)\rfloor} \mathbb{P}(A) \le \EE\sqa{ f(N_\lambda) I_{A}} \le \frac{\lfloor \lambda(1+\eps)\rfloor f(\lfloor \lambda(1-\eps) \rfloor)}{\lfloor \lambda(1-\eps)\rfloor} \mathbb{P}(A).
\end{equation}
Multiplying both sides by $\lambda^\alpha$ and letting $\lambda \to \infty$, we obtain that
\begin{equation}
 \limsup_{\lambda \to \infty} \frac{\lambda^\alpha \lfloor \lambda(1-\eps)\rfloor f(\lfloor \lambda(1+\eps) \rfloor)}{\lfloor \lambda(1+\eps)\rfloor} \mathbb{P}(A)  \le \limsup_{\lambda \to \infty} \lambda^\alpha \EE\sqa{f(N_\lambda)} 
 \end{equation} 
 and
 \begin{equation}
 \liminf_{\lambda \to \infty} \lambda^\alpha \EE\sqa{f(N_\lambda)} \le \liminf_{\lambda \to \infty} \frac{\lambda^\alpha \lfloor \lambda(1-\eps)\rfloor f(\lfloor \lambda(1+\eps) \rfloor)}{\lfloor \lambda(1+\eps)\rfloor}. 
\end{equation}
The thesis then follows by noticing that
\begin{equation}\begin{split}
\limsup_{n \to \infty} n^\alpha f(n) &= \limsup_{\lambda \to \infty} \lfloor \lambda (1+\eps)\rfloor^\alpha f( \lfloor \lambda(1+\eps) \rfloor ) \\
& = \frac{ (1+\eps)^{\alpha-1} }{1-\eps} \limsup_{\lambda \to \infty} \frac{\lambda^\alpha \lfloor \lambda(1-\eps)\rfloor f(\lfloor \lambda(1+\eps) \rfloor)}{\lfloor \lambda(1+\eps)\rfloor} \mathbb{P}(A) 
\end{split}
\end{equation}
and similarly
\begin{equation}
\liminf_{n \to \infty} n^\alpha f(n)  = \frac{ (1-\eps)^{\alpha-1} }{1+\eps} \liminf_{\lambda \to \infty} \frac{\lambda^\alpha \lfloor \lambda(1-\eps)\rfloor f(\lfloor \lambda(1+\eps) \rfloor)}{\lfloor \lambda(1+\eps)\rfloor} \mathbb{P}(A) ,
\end{equation}
and letting $\eps\to 0$.
\end{proof}

\begin{remark}\label{rem:debinom}
We notice that the argument above does not depend very much on the fact that $N_\lambda$ has Poisson law, but rather than \eqref{eq:expectation-bad-event} holds. In particular, if we replace it with a binomial variable $N_T$ with parameters $m=m(T)\to \infty$ and $p=p(T) \to 0$ such that $\lambda := mp \to \infty$, and  we assume that
\begin{equation}
\lim_{n \to \infty} n^\alpha f(n)
\end{equation}
exists, then also the limit
\begin{equation}
\lim_{T \to \infty} \lambda^\alpha \EE\sqa{ f(N_T) }
\end{equation}
exists and coincides with the first limit.
\end{remark}

\section{Upper bounds for the optimal transportation cost}\label{sec:upper}

In this section we collect two upper bounds for transportation cost on the torus. The arguments we employ are simple variants of those contained in \cite{huesmann2022wasserstein} in order to establish the upper bound in \eqref{eq:mattesini}.

We begin with the following result, which is employed in the proof of \Cref{thm:main-torus}. It states that by ``smoothing'' the occupation measure of a stationary Brownian motion on $\T^d$ up to time $T$ over balls whose radius is infinitesimal, but larger than the critical threshold $T^{-1/(d-2)}$,  the Wasserstein costs becomes asymptotically smaller than the sharp rates \eqref{eq:mattesini}. For simplicity, we only argue in the range of dimensions and exponents where our main results \Cref{thm:main-torus} apply.

\begin{proposition}\label{prop:smoothed-wasserstein}
 Let $d \in \cur{3,4}$ and $p \in (0, (d-2)/2)$ or $d \ge 5$ and $p>0$,  and let  $0<\gamma < 1/(d-2)$. For $T >0$, set $\ell = T^{-\gamma}$ and let $B = (B_t)_{t \ge 0}$ be a stationary Brownian motion on on $\T^d$. Then, it holds
\begin{equation}
 \limsup_{T \to \infty} \EE\sqa{  W^p_{\T^d}\lt(\int_{\T^d} \frac{\mu_T^B(D_\ell(z))}{\abs{D_\ell}}\chi_{D_\ell(z)} \frac{ d z}{|D_\ell|} \rt)}/ T^{1-p/(d-2)} = 0.
\end{equation}
\end{proposition}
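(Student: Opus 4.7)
First, I would rewrite the smoothed measure in a more tractable form. Setting $h_\ell := \chi_{D_\ell}/|D_\ell|$, a direct Fubini computation shows that
\[
\int_{\T^d}\frac{\mu_T^B(D_\ell(z))}{|D_\ell|}\chi_{D_\ell(z)}\frac{dz}{|D_\ell|}= h_\ell\ast h_\ell\ast \mu_T^B =: f_\ell,
\]
a double convolution with total mass $T$. Hence the task is to bound $\EE[W^p_{\T^d}(f_\ell)]$ and show it is $o(T^{1-p/(d-2)})$. A key feature of this representation is that $f_\ell$ is an absolutely continuous (even smooth) measure with respect to Lebesgue, so PDE-based methods become available.

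For $p\ge 1$, I would apply Peyre's inequality (\Cref{lem:peyre}) to reduce to a negative Sobolev estimate, $W^p_{\T^d}(f_\ell) \les T^{1-p}\|f_\ell - T\|_{W^{-1,p}(\T^d)}^p$, noting that the target uniform density equals $T$. By the Calder\'on--Zygmund equivalence $\|g\|_{W^{-1,p}}\sim \|\nabla\Delta^{-1}g\|_{L^p}$ and stationarity of $B$, the expectation reduces to a single pointwise computation: $\EE\|\cdot\|_{W^{-1,p}}^p \sim \EE|\Psi_\ell(0)|^p$, where
\[
\Psi_\ell(x) := \nabla\Delta^{-1}(f_\ell - T)(x) = \int_0^T \Phi_\ell(x - B_s)\,ds,\qquad \Phi_\ell := h_\ell\ast h_\ell\ast \nabla G,
\]
with $G$ the Green function of $-\Delta$ on $\T^d$ (using $\int\nabla G = 0$ to cancel the uniform part). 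The crucial step is a sharp moment bound on this additive functional, obtained by applying It\^o's formula with $\tfrac{1}{2}\Delta\tilde F = \Phi_\ell$ (so $\tilde F = 2\,h_\ell\ast h_\ell\ast \nabla G^{(2)}$, where $G^{(2)}$ is the biharmonic Green kernel) together with Burkholder--Davis--Gundy, giving
\[
\EE|\Psi_\ell(0)|^p \les \|\tilde F\|_{L^p(\T^d)}^p + T^{p/2}\|\nabla \tilde F\|_{L^p(\T^d)}^p.
\]
The iterated Newton kernels satisfy $|\nabla G^{(2)}(x)|\les|x|^{3-d}$ and $|\nabla^2 G^{(2)}(x)|\les|x|^{2-d}$, regularized at scale $\ell$ by the convolution with $h_\ell\ast h_\ell$; a direct computation then gives $\|\nabla\tilde F\|_{L^p}^p \les \ell^{d + (2-d)p}$ in the relevant range $p > d/(d-2)$ (and bounded by a constant otherwise), and analogously for $\tilde F$. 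Substituting $\ell = T^{-\gamma}$ and checking exponents, both contributions are $o(T^{1-p/(d-2)})$ exactly under $\gamma < 1/(d-2)$.

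For $0 < p < 1$—which is the regime forced by the hypothesis when $d \in \{3,4\}$, since there $p < (d-2)/2 < 1$—Peyre's inequality is unavailable, and I would use instead the fractional-Sobolev bound \eqref{eq:peyre-concave}: $W^p_{\T^d}(f_\ell)\les \|\Delta^{-q/2}(f_\ell - T)\|_{L^2}$ for any $0 < q < p$. By Parseval and the It\^o variance bound $\EE|\widehat{\mu_T^B}(k)|^2 \les T/|k|^2$ (consistent with \Cref{lem:moment-bm-uniform-torus}),
\[
\EE\|\Delta^{-q/2}(f_\ell - T)\|_{L^2}^2 \les T\sum_{k \ne 0}\frac{|\widehat{h_\ell}(k)|^4}{|k|^{2q+2}}\les T\,\ell^{2q - (d-2)},
\]
where the key input is the ball Fourier decay $|\widehat{h_\ell}(k)|\les \min(1, (|k|\ell)^{-(d+1)/2})$, and the constraint $q < (d-2)/2$ ensures convergence of the sum at the threshold $|k|\sim 1/\ell$. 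Letting $q\uparrow p$ and substituting $\ell = T^{-\gamma}$ gives $\EE W^p \les T^{(1+\gamma(d-2-2q))/2} = o(T^{1-p/(d-2)})$ precisely under $\gamma < 1/(d-2)$. The main technical difficulty throughout is the delicate balance between the singularities of the iterated Newton kernels and the smoothing scale $\ell$, tuned so that $\gamma = 1/(d-2)$ emerges as the natural threshold—reflecting that $T^{-1/(d-2)}$ is the critical length scale for the Brownian occupation measure. In the borderline regime $p > 2$ with $\gamma$ close to $1/(d-2)$, the direct It\^o--BDG estimate is tight and one may need either to supplement it with a sub-Gaussian concentration bound for the additive functional (available via the spectral gap of the Laplacian on $\T^d$), or to interpolate between exponents via the H\"older-type inequality \eqref{eq:higer-r-wass}.
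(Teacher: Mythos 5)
Your treatment of the concave case $0<p<1$ (Parseval plus the Bessel decay of $\widehat{\chi_{D_\ell}}$, combined with the $L^2$ bound on $\widehat{\mu_T}$) is essentially identical to the paper's argument. The genuinely different part is the $p\ge 1$ branch: rather than expanding $\|\nabla\Delta^{-1}(\nu_\ell-T)\|_{L^p}^p$ in Fourier and invoking the ``star sum'' moment bound from \cite{huesmann2022wasserstein}, you reduce by stationarity and Fubini to a single-point moment $\EE|\Psi_\ell(0)|^p$ and attack it by It\^o's formula with the biharmonic Green kernel and BDG. This is an attractive and more elementary idea, and it does produce the sharp condition $\gamma<1/(d-2)$ when $p\le 2$.

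However, for $p>2$ the crucial step is lossy and the argument as written does not close. The chain BDG followed by the time-average Jensen bound
\begin{equation}
\EE\sqa{\Bigl(\int_0^T|\nabla\tilde F(-B_s)|^2\,ds\Bigr)^{p/2}} \le T^{p/2-1}\int_0^T\EE\sqa{|\nabla\tilde F(-B_s)|^p}\,ds = T^{p/2}\|\nabla\tilde F\|_{L^p}^p
\end{equation}
gives, after inserting $\|\nabla\tilde F\|_{L^p}^p \sim \ell^{d+(2-d)p}$ and $\ell = T^{-\gamma}$, the contribution $T^{1-p/2+\gamma(d-2)p-\gamma d}$ to $\EE[W^p]$. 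Requiring this to be $o(T^{1-p/(d-2)})$ forces
\begin{equation}
\gamma < \frac{p(d-4)}{2(d-2)\bigl[(d-2)p-d\bigr]},
\end{equation}
and a short computation shows the right-hand side is $\ge 1/(d-2)$ precisely when $p\le 2$. For $p>2$ (and $d\ge 5$, $\gamma$ close to $1/(d-2)$) the bound is strictly worse than what is claimed, because the Jensen step throws away the fact that $|\nabla\tilde F(-B_s)|$ is large only on the rare set of times when $B_s$ is within distance $\ell$ of the origin. You flag this, but the two repairs you suggest do not work as stated: interpolation via \eqref{eq:higer-r-wass} only bounds $W^p$ by $W^{pr}$ with $r\ge 1$ (i.e.\ by a \emph{larger} exponent, which is the wrong direction — it is exactly what the paper uses to reduce to even $p\ge 2$, not to reduce below $2$), and a generic spectral-gap Bernstein bound for the additive functional would introduce a $\|\,|\nabla\tilde F|^2\|_\infty^p\sim\ell^{2(2-d)p}$ term without the hitting-probability factor $T\ell^{d-2}$, which is again far too large. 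The paper's Fourier moment expansion (the $\sum^*$ sum) is precisely what encodes these hitting-time correlations correctly for all even $p\ge 2$, and some substitute for it is needed to cover $p>2$.

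One smaller remark: for $1\le p<2$ the displayed BDG-plus-Jensen inequality does not follow (Jensen requires $p/2\ge 1$); there one should simply bound the $L^p$ norm of the stochastic integral by its $L^2$ norm, getting $T^{1/2}\|\nabla\tilde F\|_{L^2}$, which still gives the sharp threshold $\gamma<1/(d-2)$ for $d\ge 5$.
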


\begin{proof}
 We argue first the case $d \ge 5$ and  first collect some general facts.  We notice that by \eqref{eq:higer-r-wass} it is sufficient to argue in the case $p \in \mathbb{N}$ even. Given any (positive) measure $\nu$ on $\T^d$ with $\nu(\T^d) =T$, we write
\begin{equation}\label{eq:nu-ell}
 \nu_\ell(x) := \int_{\T^d} \frac{\nu(D_\ell(z))}{\abs{D_\ell}}\chi_{D_\ell(z)} (x)\frac{ d z}{|D_\ell|}.
\end{equation}

Then, by  \Cref{lem:peyre} with $\lambda = T \mathcal{L}^d_{\restr \T^d}$, we obtain for any $p \ge 1$ the inequality
\begin{equation}\label{eq:peyre-app-c}
  W^p_{\T^d}\lt( \nu_\ell \rt)  \les_p T^{1-p} \nor{ \nu_\ell - T}_{W^{-1,p}}^p \les_p T^{1-p}\nor{ \nabla \Delta^{-1} (\nu_\ell - T)}_{L^p}^p
\end{equation}
having used the boundedness of the Riesz transform in the second inequality.
%
By the Fourier inversion formula we find that
\begin{equation}
 \nabla \Delta^{-1} (\nu_\ell - T) (x) = \sum_{\xi \in \mathbb{Z}^d \setminus \cur{0}} \frac{i \xi}{2 \pi|\xi|^2 } \hat{\nu_\ell}(\xi) \exp\bra{2 \pi i \xi \cdot x}
\end{equation}
and therefore
\begin{equation}\label{eq:negative-sobolev-abstract}
 \nor{ \nabla \Delta^{-1} (\nu_\ell - T)}_{L^p}^p = \sum_{ \substack{ \xi_1, \ldots, \xi_p \in \mathbb{Z}^d \setminus \cur{0} \\ \xi_1 + \ldots + \xi_p = 0} } \prod_{j=1}^p \frac{i \xi_j}{2 \pi|xi_j|^2 } \hat{\nu_\ell}(\xi_j)
\end{equation}
Next, we notice that
\begin{equation}\label{eq:nu_ell-convolution}
 \nu_\ell= \mu * {  \frac{ \chi_{D_\ell} }{|D_\ell|} } * {  \frac{ \chi_{D_\ell} }{|D_\ell|} },
\end{equation}
where $D_\ell = D_\ell(0)$ and $*$ denotes the convolution operation on $\T^d$. Taking the Fourier transform, we obtain for every $\xi \in \mathbb{Z}^d$ that
\begin{equation}\hat{\nu}_\ell(\xi) = \int_{\T^d} \exp\bra{ -2\pi i \xi \cdot x} d\nu_\ell(x) =    \bra{ \frac{ \widehat{\chi_{D_\ell} }(\xi)}{|D_\ell|}}^2 \hat{\nu}(\xi).
\end{equation}
We can assume that $\ell = T^{-\gamma}<1/2$, hence we identify $D_\ell \subseteq \T^d$ with the Euclidean ball $D_\ell \subseteq \R^d$ and write
\begin{equation}\begin{split}
  \frac{ \widehat{\chi_{D_\ell} }(\xi)}{|D_\ell|} & =  \frac{1}{|D_\ell|} \int_{D_\ell}   \exp\bra{ -2\pi i \xi \cdot x}  dx = \frac{1}{|D_1|} \int_{D_1}   \exp\bra{ -2\pi i (\ell \xi) \cdot x}  dx \\
  & = \frac{ \widehat{\chi_{D_1} }( \ell \xi)}{|D_1|}.
  \end{split}
\end{equation}
The Fourier transform of $\chi_{D_1}$ can be described in terms of the Bessel functions of the first kind $J_\alpha$,
\begin{equation}
  \widehat{\chi_{D_1} } ( \xi) = | \xi|^{-d/2} J_{d/2}( |\xi |),
\end{equation}
yielding e.g.\ by \cite{brauer1963asymptotic} the upper bound,  for every $\xi \in \R^d$,
\begin{equation} \label{eq:bound-bessel}
  \abs{  \widehat{\chi_{D_1} } ( \xi) } \les_d \frac{1}{ (1 +|\xi|)^{(d+1)/2} }.
\end{equation}

Let us now specialize to the random case $\nu := \mu_T^B$. Taking the expectation in \eqref{eq:peyre-app-c} and \eqref{eq:negative-sobolev-abstract} and using \eqref{eq:bound-bessel} we obtain
\begin{equation}\begin{split}
                  &  \EE\sqa{  W^p_{\T^d}\lt(\int_{\T^d} \frac{\mu_T^B(D_\ell(z))}{\abs{D_\ell}}\chi_{D_\ell(z)} \frac{ d z}{|D_\ell|} \rt)} \les   \\
                  & \quad \quad  \les \sum_{ \substack{ \xi_1, \ldots, \xi_p \in \mathbb{Z}^d \setminus \cur{0} \\ \xi_1 + \ldots + \xi_p = 0} } \prod_{j=1}^p \frac{1}{ |\xi_j|(1 +|\ell \xi_j|)^{d+1} } \abs{ \EE\sqa{ \prod_{j=1}^p \widehat{\mu_T}(\xi_j)}}.
                \end{split}
\end{equation}
Next, repeating almost verbatim the steps 1 and 2 of the proof of \cite[proposition 3.6]{huesmann2022wasserstein} in the case $H=1/2$ but replacing with the function $(1+\ell |\xi|)^{1-d}$ the function $\exp\bra{ - \eps |\xi|^2}$ -- indeed, in those steps one uses only that the function is positive -- we see that our thesis is reduced to show that, for every $q \in \mathbb{N}$, $2 \le q \le p$, it holds
\begin{equation}\label{eq:sum-star}
 T \sum_{\xi_1, \ldots, \xi_q}^*\prod_{j=1}^q \frac{1}{ |\xi_j|(1 +|\ell \xi_j|)^{d+1} } \prod_{j=1}^{q-1}  \frac{1}{\abs{\sum_{i=1}^j \xi_{i}}^2} \ll T ^{(1-1/(d-2) )q},
\end{equation}
where the symbol $\sum^*$ denotes the summation restricted upon $\xi_1, \ldots, \xi_q \in \mathbb{Z}^d \setminus \cur{0}$ such that $\sum_{i=1}^j \xi_i \neq 0$ for every $j=1, \ldots, q-1$ and $\sum_{i=1}^q \xi_i = 0$. In the case $q=2$, this simply reduces to
\begin{equation}\label{eq:sum-star-p-2-upper-bound}
 T \sum_{\xi \in \mathbb{Z}^d \setminus \cur{0}} \frac{1}{ |\xi|^4 (1 +|\ell \xi|)^{2(d+1)} } \les  T \int_0^\infty  \frac{r^{d-1} dr}{ r^{4} (1+\ell r)^{2(d+1)} }  \les  T \ell^{4-d},
\end{equation}
having used a simple integral comparison. Actually, it is convenient for later use to collect the general inequality, valid for $\alpha \in (0,d)$ and $\beta \ge  0$ such that $\alpha+\beta>d$:
\begin{equation}\label{eq:upper-bound-integral-alpha-beta}
 \sum_{\xi \in \mathbb{Z}^d \setminus \cur{0}}  \frac{1}{ |\xi|^{\alpha} (1 +|\ell \xi|)^{\beta} }  \les \int_0^\infty  \frac{r^{d-1} dr}{ r^{\alpha} (1+\ell r)^{\beta} } \les  \ell^{\alpha-d},
\end{equation}
where the implicit constant depends on $\alpha$, $\beta$ and $d$. Notice that if instead $\alpha >d$ we can trivially bound from above for every $\beta \ge 0$,
\begin{equation}
 \sum_{\xi \in \mathbb{Z}^d \setminus \cur{0}}  \frac{1}{ |\xi|^{\alpha} (1 +|\ell \xi|)^{\beta} } \le \sum_{\xi \in \mathbb{Z}^d}  \frac{1}{ (1+|\xi|)^{\alpha}} \les 1.
\end{equation}

Back to \eqref{eq:sum-star}, for the general case $q \ge 3$ we bound from above
\begin{equation}\label{eq:upper-bound-appendix-c}\begin{split}
  \sum_{\xi_1, \ldots, \xi_q}^*\prod_{j=1}^q & \frac{1}{ |\xi_j|(1 +|\ell \xi_j|)^{d+1} } \prod_{j=1}^{q-1}  \frac{1}{\abs{\sum_{i=1}^j \xi_{\sigma(i)}}^2} \le \\
 & \le \sum_{ \xi_1, \ldots, \xi_{q-1} \in \mathbb{Z}^d\setminus \cur{0}}  \frac{1}{ |\xi_1|^3 (1 +|\ell \xi_1|)^{d+1} }  \cdot \\
 & \quad \quad \cdot \prod_{j=2}^{q-1} \frac{1}{ |\xi_j| (1 +|\ell \xi_j|)^{d+1} } \prod_{j=2}^{q-2}  \frac{1}{\bra{ 1+ \abs{\sum_{i=1}^j \xi_{i}} }^2} \cdot \\
 & \quad \quad \cdot \frac{1}{ |\sum_{i=1}^{q-1} \xi_{i}|^3 (1 +|\ell \sum_{i=1}^{q-1} \xi_{i}|)^{d+1} }.
 \end{split}
\end{equation}
To further bound from above, we apply Young's convolution inequality in the form of \cite[lemma 3.5]{huesmann2022wasserstein} with $q-1$ instead of $p$ and a suitable choice of exponents (using the notation therein), namely
\begin{equation}
 \lambda_1 =1, \quad \lambda_j :=  \frac{d}{d-2+\eps},  \quad \Lambda_j = \lambda_j' = \frac{ \lambda_j}{\lambda_j -1} \quad \text{if $2 \le j \le q-2$,}
\end{equation}
where $\eps=\eps(p, \gamma)>0$ is (sufficiently small) to be specified below, and finally
\begin{equation}
 \lambda_{q-1} := 4, \quad \Lambda_{q-1} := \lambda_{q-1}' = \frac{4}{3},
\end{equation}
Such a choice yields that $\Lambda_j > d/2$ for every $j \in \cur{2, \ldots, q-2}$ and  $\lambda_{q-1}<d$ as well as $3 \Lambda_{q-1} < d$ (because $d \ge 5$). Using several times \eqref{eq:upper-bound-integral-alpha-beta} (one checks easily that the conditions $\alpha<d$ and $\alpha+\beta>d$ are satisfied) we find that \eqref{eq:upper-bound-appendix-c} is bounded from above by the product
\begin{equation}
 \begin{split}
 &\bra{ \sum_{\xi \in \mathbb{Z}^d \setminus \cur{0}}\frac{1}{ |\xi|^3 (1 +|\ell \xi|)^{d+1} }} \cdot \bra{ \sum_{\xi \in \mathbb{Z}^d \setminus \cur{0}}\frac{1}{ |\xi|^{\lambda_2} (1 +|\ell \xi|)^{(d+1)\lambda_2} }}^{(p-3)/\lambda_2} \cdot\\
 & \quad \cdot \bra{ \sum_{\xi \in \mathbb{Z}^d \setminus \cur{0}}\frac{1}{(1 +|\xi|)^{2 \Lambda_2} }}^{(q-3)/ \Lambda_2 } \cdot  \\
 & \quad \cdot  \bra{ \sum_{\xi \in \mathbb{Z}^d \setminus \cur{0}}\frac{1}{ |\xi|^{4} (1 +|\ell \xi|)^{4(d+1)} }  }^{1/4} \cdot \bra{ \sum_{\xi \in \mathbb{Z}^d\setminus \cur{0}}\frac{1}{ |\xi|^{4} (1 +|\ell \xi|)^{4(d+1)/3} }}^{3/4 }\\
 & \les \ell^{3-d} \cdot \ell^{ (\lambda_2 -d)(q-3)/\lambda_2} \cdot 1 \cdot  \ell^{4-d}  = \ell^{(3-d) + (3-d-\eps)(q-3) + (4-d)} \\
 & \les  \ell^{(q-1)(2-d) +q - \eps(q-3)}
 \end{split}
\end{equation}
We notice that replacing $(q-3)$ with $(q-3)^+$ the above bound recovers also the case $q=2$ in  \eqref{eq:sum-star-p-2-upper-bound}.
Thus, recalling that $\ell = T^{-\gamma}$ with $\gamma<1/(d-2)$, we obtain, for every $q \in \cur{2, \ldots, p}$
\begin{equation}\begin{split}
 T \sum_{\xi_1, \ldots, \xi_q}^*\prod_{j=1}^q \frac{1}{ |\xi_j|(1 +|\ell \xi_j|)^{d+1} } \prod_{j=1}^{q-1}  \frac{1}{\abs{\sum_{i=1}^j \xi_{\sigma(i)}}^2}  & \les T^{1-\gamma\sqa{ (q-1)(2-d) +q - \eps(q-3)^+ } } \\
 & \ll T^{(1-1/(d-2))q},
 \end{split}
\end{equation}
provided that
\begin{equation}
 1-\gamma\sqa{ (q-1)(2-d) +q - \eps(q-3)^+ } < (1-1/(d-2))q.
\end{equation}
We claim that the condition can be always satisfied, provided that $\eps = \eps(p, \gamma)>0$ is small enough. Indeed, it is sufficient to show that strict inequality holds if we set $\eps = 0$. We find, using that $1/(d-2)- \gamma >0$,
\begin{equation}
 1-\gamma\sqa{ (q-1)(2-d) +q } < (1-1/(d-2))q  \quad \leftrightarrow \quad \frac{q}{q-1} < d-2,
\end{equation}
which is always satisfied since $q \ge 2$ and $d \ge 5$. This establishes the thesis in the case $d \ge 5$, $p>0$.

For the case $d \in \cur{3,4}$, since $p<(d-2)/2<1$, we need to modify the argument. Back to the case of general measure $\nu$ with $\nu(\T^d) = T$, we use \eqref{eq:peyre-concave}, yielding, for every $q<p$,
\begin{equation}\begin{split}\label{eq:wp-concave-appendix-c}
  W^p_{\T^d}\lt( \nu_\ell \rt)  & \les_p \nor{ \Delta^{-q/2} (\nu_\ell - T)}_{L^2} \\
  & \les \sqrt{ \sum_{\xi  \in \mathbb{Z}^d \setminus \cur{0}} \frac{ \abs{ \widehat \nu_\ell(\xi)}^2} {|\xi|^{2q} }} \les \sqrt{ \sum_{\xi  \in \mathbb{Z}^d \setminus\cur{0} } \frac{ \abs{ \widehat \nu(\xi)}^2} {|\xi|^{2q} (1+|\ell \xi|)^{2(d+1)} }}
  \end{split}
\end{equation}
having used Parseval's identity and \eqref{eq:bound-bessel} in the last line. Specializing then to the random case $\nu := \mu_T$ and taking expectation, we find after an application of H\"older's inequality
\begin{equation}\begin{split}\label{eq:expected-wp-concave-appendix-c}
 \EE\sqa{ W^p_{\T^d}\lt( \nu_\ell \rt)  } & \le \sqrt{ \sum_{\xi  \in \mathbb{Z}^d \setminus\cur{0} } \frac{  \EE\sqa{ \abs{ \widehat \nu(\xi)}^2 }} {|\xi|^{2q} (1+|\ell \xi|)^{2(d+1)} }}\\
 & \les \sqrt{ \sum_{\xi  \in \mathbb{Z}^d \setminus\cur{0} } \frac{ T} {|\xi|^{2q+2} (1+|\ell \xi|)^{2(d+1)} }}.
 \end{split}
\end{equation}
having used \cite[lemma 3.2]{huesmann2022wasserstein} in the case $H = 1/2$ to deduce that, for $\xi \neq 0$,
\begin{equation}
  \EE\sqa{ \abs{ \widehat \nu(\xi)}^2 } \les \frac{T}{|\xi|^2}.
\end{equation}
To conclude, we apply \eqref{eq:upper-bound-integral-alpha-beta} with $\alpha = 2q+2<d$ and $\beta = 2(d+1)$, yielding
\begin{equation}
 \sqrt{ \sum_{\xi  \in \mathbb{Z}^d \setminus\cur{0} } \frac{ T} {|\xi|^{2q+2} (1+|\ell \xi|)^{2(d+1)} }} \les \sqrt{T} \ell^{q+(2-d)/2} = T^{(1 +\gamma)/2 - \gamma q} \ll T^{1-p/(d-2)},
\end{equation}
by choosing $q$ sufficiently close to $p$ and using that $\gamma<1/(d-2)$.
\end{proof}

The second result in this section is employed in the proof of \Cref{prop:concentration}:  we extend the rates for transportation cost of the occupation measure from the case of a single stationary Brownian motion on the torus to that of a sum of independent ones.

\begin{proposition} \label{prop:upper-bound-sum-brownian-motions}
Let $d \ge 3$ and $(B^i)_{i=1}^\infty$ be independent stationary Brownian motions on $\T^d$. Then, for every $n \ge 1$, $T > 0$, it holds
\begin{equation}
 \EE\sqa{  W^p_{\T^d}\bra{  \sum_{i=1}^n \mu_T^{B^i} }}  \les (nT)^{1-p/(d-2)}, \quad \text{if $d \ge 5$ and $p>0$,}
 \end{equation}
where the implicit constant depends on $p$ and $d$ only, and
 \begin{equation}
 \EE\sqa{  W^p_{\T^d}\bra{  \sum_{i=1}^n \mu_T^{B^i} }}  \les (nT)^{1-q/(d-2)}, \quad \text{if $d \in \cur{3,4}$, $p \in (0,(d-2)/2)$ and $q<p$,}
 \end{equation}
 where the implicit constant depends on $p$, $d$  and $q$ only. 
\end{proposition}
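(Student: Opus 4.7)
\textbf{The plan} is to model the proof on that of \Cref{prop:smoothed-wasserstein}, substituting the single Brownian trajectory with the sum of $n$ independent ones and exploiting their mutual independence to upgrade the $T$-rate to an $(nT)$-rate. Since $\tilde\mu := \sum_{i=1}^n \mu_T^{B^i}$ has total mass $nT$, the natural critical scale is $\ell \sim (nT)^{-1/(d-2)}$, and the strategy is: (i) smooth $\tilde\mu$ at scale $\ell$ by convolving twice against $\chi_{D_\ell}/|D_\ell|$, producing an absolutely continuous $\tilde\mu_\ell$ whose Fourier multiplier $M_\ell(\xi)$ satisfies $|M_\ell(\xi)|\les (1+|\ell\xi|)^{-(d+1)}$; (ii) bound the smoothing cost by $W^p_{\T^d}(\tilde\mu,\tilde\mu_\ell) \les \ell^p\, nT$ via the representation $\tilde\mu_\ell = \int K_\ell(\cdot - x)\, d\tilde\mu(x)$ combined with \eqref{eq:sub}; (iii) estimate $\EE[W^p_{\T^d}(\tilde\mu_\ell)]$ through a Peyre-type bound coupled with Fourier analysis that leverages independence. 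After these three steps an application of the triangle inequality and \eqref{eq:elementary} closes the argument.

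\textbf{Concave case $d\in\{3,4\}$, $p<(d-2)/2$, $q<p$.} I would apply the duality bound \eqref{eq:peyre-concave} with an auxiliary exponent $q'\in(q,p)$, followed by Jensen's inequality and Parseval, to obtain
\[
\EE\bigl[W^p_{\T^d}(\tilde\mu_\ell)\bigr] \les \sqrt{\sum_{\xi\ne 0} \frac{\EE|\hat{\tilde\mu_\ell}(\xi)|^2}{|\xi|^{2q'}}}.
\]
By independence and the vanishing of $\EE[\hat{\mu_T^{B^i}}(\xi)]$ for $\xi\ne 0$ (a consequence of the uniform initial distribution), one has $\EE|\hat{\tilde\mu}(\xi)|^2 = n\,\EE|\hat{\mu_T^B}(\xi)|^2 \les nT/|\xi|^2$ (the second bound from \cite[Lemma 3.2]{huesmann2022wasserstein}), and the sum is controlled via \eqref{eq:upper-bound-integral-alpha-beta} (applicable since $2q'+2<d$) by $\sqrt{nT}\,\ell^{q'+1-d/2}$. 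Choosing $\ell = (nT)^{-q/(p(d-2))}$ balances this against the smoothing error $\ell^p\, nT$, and both terms are $\les (nT)^{1-q/(d-2)}$ provided $q'$ is taken sufficiently close to $p$.

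\textbf{Case $d\ge 5$, $p>0$.} By \eqref{eq:higer-r-wass} I may reduce to $p\in 2\N$. Applying \Cref{lem:peyre} with $\lambda=nT$ and expanding in Fourier as in \eqref{eq:negative-sobolev-abstract},
\[
\EE\bigl[\|\nabla\Delta^{-1}(\tilde\mu_\ell - nT)\|_{L^p}^p\bigr] \les \sum_{\substack{\xi_1,\ldots,\xi_p\ne 0\\\sum_j\xi_j=0}} \prod_{j=1}^p \frac{|M_\ell(\xi_j)|}{|\xi_j|}\, \Bigl|\EE\Bigl[\prod_{j=1}^p \hat{\tilde\mu}(\xi_j)\Bigr]\Bigr|.
\]
Writing $\hat{\tilde\mu}(\xi) = \sum_i \hat{\mu_T^{B^i}}(\xi)$, the expectation expands into a sum over assignments $i:[p]\to[n]$ and, by independence, factors over the blocks of the induced partition. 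The key observation is that, representing each $B^i = B^i_0 + W^i$ with $B^i_0$ uniform on $\T^d$ and $W^i$ an independent $\R^d$-Brownian motion, each block's contribution carries a prefactor $\EE[e^{-2\pi i (\sum_{j\in A}\xi_j)\cdot B^i_0}]$ which vanishes unless $\sum_{j\in A}\xi_j=0$. This forces every block to have size $\ge 2$, so the number of blocks $m$ is at most $p/2$ and the assignment count is $\les n^m$. Each block of size $a$ satisfying $\sum_A \xi=0$ can then be bounded by iterated time integration of the increment characteristic functions, yielding $\les T\prod_{j=2}^a |\eta_j|^{-2}$ with $\eta_j$ the partial sums under an appropriate time ordering. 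The residual $\xi$-sums are handled by Young's convolution inequality and \eqref{eq:upper-bound-integral-alpha-beta}, exactly as in Steps 1--2 of the proof of \Cref{prop:smoothed-wasserstein}, leading to $\EE[W^p_{\T^d}(\tilde\mu_\ell)] \les (nT)^{1-p/(d-2)}$ at the critical scale.

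\textbf{Main obstacle.} The hardest part will be the combinatorial bookkeeping in the Fourier expansion for $d\ge 5$ and large $p$: pair partitions, the single-block partition, and every intermediate configuration all contribute, and one must verify that each produces a contribution bounded by $(nT)^{p(d-3)/(d-2)}$ after tracking the $n^m$ factor from the assignment together with the block-size-dependent $\ell$- and $T$-powers, so that multiplication by the Peyre prefactor $(nT)^{1-p}$ yields the desired $(nT)^{1-p/(d-2)}$. This requires a careful application of Young's convolution inequality within each block (to handle the chained product $\prod_j |\eta_j|^{-2}$) followed by \eqref{eq:upper-bound-integral-alpha-beta} at the correct exponents, mirroring the computation in \Cref{prop:smoothed-wasserstein} but with the additional $n^m$-type accounting of the independent trajectories.
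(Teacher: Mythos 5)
For the concave case $d\in\{3,4\}$ your argument is essentially the paper's: smooth at a scale $\ell$, use \eqref{eq:peyre-concave} together with Parseval and independence to get $\EE|\hat{\tilde\mu}(\xi)|^2 = n\EE|\hat{\mu_T^B}(\xi)|^2 \les nT/|\xi|^2$, and balance against the smoothing cost. The paper's choice $\ell=(nT)^{-1/(d-2)}$ with the exponent $q<p$ used directly in \eqref{eq:peyre-concave} lands exactly on $(nT)^{1-q/(d-2)}$ without the auxiliary $q'$ you introduce; your variant with $\ell=(nT)^{-q/(p(d-2))}$ and $q'\in(q,p)$ also closes, but adds a layer of parameter gymnastics that the paper's choice avoids. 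Nothing substantive differs here.

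For $d\ge 5$ you take a genuinely different route, and it is considerably heavier than the paper's. The paper does not redo the $p$-fold Fourier moment expansion over partitions: it quotes the construction of the potential-gradient field $\nabla u_\eps$ from \cite{huesmann2022wasserstein} (built so that the whole Wasserstein bound reduces to $\EE\|\nabla u_\eps\|_{L^p}^p$), observes that the sum over $n$ independent trajectories produces $n$ i.i.d.\ copies with pointwise mean $\EE[\nabla u_\eps(x)]=0$, and then applies Rosenthal's inequality \eqref{eq:rosenthal-centered} at the fixed scale $\sqrt{\eps_n}=(nT)^{-1/(d-2)}$. That single application of Rosenthal at the level of the Sobolev potential replaces all of the partition/assignment combinatorics you are proposing to carry out. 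Your expansion is, in outline, sound: the block-vanishing observation (each block $A$ must satisfy $\sum_{j\in A}\xi_j=0$, hence blocks have size $\ge 2$ and $m\le p/2$, giving $\les n^m$ assignments) is correct and would indeed force the pair partition to dominate. But as you acknowledge in your ``Main obstacle'' paragraph, you have not actually verified that every non-pair partition contributes $\les (nT)^{1-p/(d-2)}$ after the Peyre prefactor; this is a real gap, not a cosmetic one, because the intermediate partitions mix block sizes and require a Young-inequality bookkeeping exponent by exponent, and the claimed uniform bound $(nT)^{p(d-3)/(d-2)}$ is asserted without derivation. You should either carry that computation through or, better, replace this whole step by the paper's shortcut: once one knows that $\sum_i\nabla u^i_\eps$ is a sum of $n$ i.i.d.\ mean-zero $L^p$-valued variables, Rosenthal's inequality delivers the factor $n^{1/2}$ (or $n^{1/p}$) immediately, without any Fourier combinatorics.

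One further small point: when you write the block contribution as ``$\les T\prod_{j=2}^a|\eta_j|^{-2}$'' you should make explicit that this requires the full time-ordering and Markov-increment argument of \cite[Proposition 3.6, Steps 1--2]{huesmann2022wasserstein}; it is not an elementary consequence of the vanishing of the expectation of the block sum. As written, your proposal assumes that step rather than proving or citing it.
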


\begin{proof}
 Consider first the case $d \ge 5$, so that by \eqref{eq:higer-r-wass} we can assume $p \ge 2$. Then, we essentially follow through the arguments from \cite[theorem 1.1]{huesmann2022wasserstein} in the case $H=1/2$, only for the upper bound part and replacing $T$ with $nT$.  The only difference appears in the application of \cite[proposition 3.6]{huesmann2022wasserstein}, where one needs now to show the upper bound
 \begin{equation}\label{eq:upper-bound-n}
  \EE\sqa{ \nor{\sum_{i=1}^n \nabla u^i_\eps }_{L^p}^p }\les (nT\sqrt{\eps_n})^p,
 \end{equation}
 where $\nabla u^i_\eps$ are i.i.d.\ copies of a single $\nabla u_\eps$ defined  in \cite[eq. (3.1)]{huesmann2022wasserstein}  for which \cite[proposition 3.6]{huesmann2022wasserstein} applies, and $\sqrt{\eps} = \sqrt{\eps_n} := (nT)^{-1/(d-2)}$. This inequality can be obtained by Rosenthal's bound \eqref{eq:rosenthal-centered}, since for every $x \in \mathbb{T}^d$ one has $\EE\sqa{\nabla u_\eps(x)} = 0$, hence
 \begin{equation}\label{eq:appendix-c-rosenthal}
  \EE\sqa{ \nor{\sum_{i=1}^n \nabla u^i_\eps   }^p_{L^p} } \les n \EE\sqa{ \nor{ \nabla u_\eps  }^p_{L^p} } + n^{p/2} \EE\sqa{ \abs{ \nabla u_\eps  }^2_{L^2} }^{p/2}.
 \end{equation}
Then, one needs to recall that  in \cite[proposition 3.6]{huesmann2022wasserstein} (for $d \ge 5$, $p \ge 2$) the smoothing parameter $\eps$ is already fixed as $\sqrt{\eps_1} := T^{-1/(d-2)}$: if one defines instead $\sqrt{\eps_n}:= (nT)^{-1/(d-2)} = \sqrt{\eps_1} n^{-1/(d-2)}$, following through the proof yields the upper bound
\begin{equation}
 \EE\sqa{ \nor{ \nabla u_\eps  }^p_{L^p} } \les (T\sqrt{\eps_1})^{p} n^{(p-1) -p/(d-2)} = \frac{1}{n} \cdot (nT \sqrt{\eps_n})^p
\end{equation}
where the implicit constant depends on $p$ and $d$ only. Using this inequality (written also in the case $p=2$) in \eqref{eq:appendix-c-rosenthal} we obtain exactly \eqref{eq:upper-bound-n}. This settles the case $d\ge 5$, $p>0$.

In the case $d \in \cur{3,4}$, $p<(d-2)/2$, we essentially  repeat the argument  of \Cref{prop:smoothed-wasserstein}. Let us first argue for a general measure $\nu$ on $\T^d$ with $\nu(\T^d) = nT$ and then specialize to the random case $\nu = \sum_{i=1}^n \mu^{B^i}_T$. We use the triangle inequality
\begin{equation}
W^p_{\T^d}\bra{ \nu } \le W^p_{\T^d}\bra{\nu, \nu_\ell } + W^p_{\T^d}\bra{ \nu_\ell},
\end{equation}
where we write $\nu_\ell$ as in \eqref{eq:nu-ell} and set $\ell := (nT)^{-1/(d-2)}$. Then, from \eqref{eq:nu_ell-convolution},  it follows easily that
\begin{equation}
 W^p_{\T^d}\bra{\nu, \nu_\ell } \les (nT) \ell^p \les (nT)^{1-p/(d-2)},
\end{equation}
hence it is sufficient to bound the second term, for which we employ \eqref{eq:wp-concave-appendix-c}. Indeed, when specialized to the random case it is sufficient to argue as in \eqref{eq:expected-wp-concave-appendix-c}, with the only difference that instead of \cite[lemma 3.2]{huesmann2022wasserstein}, we have the upper bound, for $\xi \in \mathbb{Z}^d \setminus \cur{0}$,
\begin{equation}
\EE\sqa{ \abs{ \sum_{i=1}^n \widehat{ \mu_T^{B_i}}(\xi)}^2 }  =n \EE\sqa{ \abs{ \widehat{ \mu_T^{B_1}}(\xi)}^2 } \les   \frac{ n T}{|\xi|^2},
\end{equation}
which follows easily from independence among the $B_i$'s and then an application of \cite[lemma 3.2]{huesmann2022wasserstein}.
\end{proof}

\printbibliography

\end{document}